\DeclareFontFamily{OMX}{lmex}{}
\DeclareFontShape{OMX}{lmex}{m}{n}{<->lmex10}{}
\theoremstyle{plain}
\newtheorem{theo}{Theorem}[section]
\newtheorem{prop}[theo]{Proposition}
\newtheorem{conj}[theo]{Conjecture}
\newtheorem{coro}[theo]{Corollary}
\newtheorem{lemm}[theo]{Lemma}
\newtheorem*{slem}{Sublemma}
\theoremstyle{definition}
\newtheorem{defi}[theo]{Definition}
\newtheorem{nota}[theo]{Notation}
\theoremstyle{remark}
\newtheorem{rema}[theo]{Remark}
\DeclareMathOperator{\card}{card}
\DeclareMathOperator{\supp}{supp}
\DeclareMathOperator{\Hom}{Hom}
\DeclareMathOperator{\Ext}{Ext}
\DeclareMathOperator{\Set}{Set}
\DeclareMathOperator{\Mod}{Mod}
\DeclareMathOperator{\Ban}{Ban}
\DeclareMathOperator{\Lie}{Lie}
\DeclareMathOperator{\Fil}{Fil}
\DeclareMathOperator{\Gr}{Gr}
\DeclareMathOperator{\Indt}{Ind-\!}
\DeclareMathOperator{\Prot}{Pro-\!}
\DeclareMathOperator{\Res}{Res}
\DeclareMathOperator{\Nrm}{Nrm}
\DeclareMathOperator{\cind}{c-ind}
\DeclareMathOperator{\Ind}{Ind}
\DeclareMathOperator{\Ord}{Ord}
\newcommand{\middlevert}{\;\middle|\;}
\newcommand{\llbrack}{[\![}
\newcommand{\rrbrack}{]\!]}
\newcommand{\iso}{\overset{\sim}{\longrightarrow}}
\newcommand{\vertsim}{\rotatebox{-90}{\(\sim\)}}
\newcommand{\h}{\overset{\mathrm{H}}{\cdot}}
\newcommand{\Ev}{\mathrm{Ev}}
\newcommand{\ev}{\mathrm{ev}}
\newcommand{\cont}{\mathrm{cont}}
\newcommand{\sm}{\mathrm{sm}}
\newcommand{\adm}{\mathrm{adm}}
\newcommand{\ladm}{\mathrm{l.adm}}
\newcommand{\admu}{\mathrm{adm,u}}
\newcommand{\lfin}{\mathrm{l.fin}}
\newcommand{\fl}{\mathrm{fl}}
\newcommand{\op}{\mathrm{op}}
\newcommand{\der}{\mathrm{der}}
\newcommand{\ord}{\mathrm{ord}}
\newcommand{\N}{\mathbb{N}}
\newcommand{\Z}{\mathbb{Z}}
\newcommand{\Q}{\mathbb{Q}}
\newcommand{\Fp}{\mathbb{F}_p}
\newcommand{\Fpbar}{\overline{\Fp}}
\newcommand{\Zp}{\Z_p}
\newcommand{\Qp}{\Q_p}
\newcommand{\GL}{\mathrm{GL}}
\newcommand{\Rc}[1][\bullet]{\mathrm{R}^{#1}}
\newcommand{\ROrd}[1][\bullet]{\Rc[{#1}]\!\Ord}
\newcommand{\Hc}[1][\bullet]{\mathrm{H}^{#1}}
\newcommand{\HOrd}[1][\bullet]{\Hc[{#1}]\!\Ord}
\newcommand{\Lh}[1][\bullet]{\mathrm{L}_{#1}}
\newcommand{\Hh}[1][\bullet]{\mathrm{H}_{#1}}
\newcommand{\Hb}{\mathbf{H}}
\newcommand{\Gb}{\mathbf{G}}
\newcommand{\Sb}{\mathbf{S}}
\newcommand{\Zc}{\mathcal{Z}}
\newcommand{\Zcb}{\boldsymbol{\Zc}}
\newcommand{\Nc}{\mathcal{N}}
\newcommand{\Ncb}{\boldsymbol{\Nc}}
\newcommand{\Tb}{\mathbf{T}}
\newcommand{\Zb}{\mathbf{Z}}
\newcommand{\Bb}{\mathbf{B}}
\newcommand{\Ub}{\mathbf{U}}
\newcommand{\Pb}{\mathbf{P}}
\newcommand{\Lb}{\mathbf{L}}
\newcommand{\Nb}{\mathbf{N}}
\newcommand{\Qb}{\mathbf{Q}}
\newcommand{\Gbt}{\widetilde{\Gb}}
\newcommand{\Gt}{\widetilde{G}}
\newcommand{\Pbt}{\widetilde{\Pb}}
\newcommand{\Pt}{\widetilde{P}}
\newcommand{\Lbt}{\widetilde\Lb}
\newcommand{\Lt}{\widetilde{L}}
\newcommand{\lt}{\tilde{l}}
\newcommand{\Nbt}{\widetilde{\Nb}}
\newcommand{\Nt}{\widetilde{N}}
\newcommand{\nt}{\tilde{n}}
\newcommand{\Zbt}{\widetilde{\Zb}}
\newcommand{\Zt}{\widetilde{Z}}
\newcommand{\zt}{\tilde{z}}
\newcommand{\Sbt}{\widetilde{\Sb}}
\newcommand{\St}{\widetilde{S}}
\newcommand{\Bbt}{\widetilde{\Bb}}
\newcommand{\Bt}{\widetilde{B}}
\newcommand{\Ubt}{\widetilde{\Ub}}
\newcommand{\Ut}{\widetilde{U}}
\newcommand{\Wt}{\widetilde{W}}
\newcommand{\wt}{\tilde{w}}
\newcommand{\ellt}{\tilde{\ell}}
\newcommand{\dt}{\tilde{d}}
\newcommand{\alphat}{\tilde{\alpha}}
\newcommand{\deltat}{\tilde{\delta}}
\newcommand{\lambdat}{\tilde{\lambda}}
\newcommand{\mut}{\tilde{\mu}}
\newcommand{\sigmat}{\tilde{\sigma}}
\newcommand{\X}{\mathrm{X}}
\newcommand{\Ec}{\mathcal{E}}
\newcommand{\Oc}{\mathcal{O}}
\newcommand{\Ic}{\mathcal{I}}
\newcommand{\Cc}{\mathcal{C}}
\newcommand{\Clis}{\Cc^\sm}
\newcommand{\Clisc}{\Clis_\mathrm{c}}
\newcommand{\Cf}{\mathfrak{C}}
\newcommand{\IW}{{}^IW}
\newcommand{\Iw}{{}^I\!w}
\newcommand{\IpW}{{}^{I'}W}
\newcommand{\Ipw}{{}^{I'}\!\!w}
\newcommand{\JIW}{{}^{J \cap \Iw^J{}^{-1}(I)}W}
\newcommand{\JIw}{{}^{J \cap \Iw^J{}^{-1}(I)}w}
\newcommand{\JIpW}{{}^{J \cap \Ipw^J{}^{-1}(I')}W}
\title{Parabolic induction and extensions}
\author{Julien Hauseux\thanks{This research was partly supported by EPSRC grant EP/L025302/1.}}
\date{}
\begin{document}

\maketitle

\begin{abstract}
Let $G$ be a $p$-adic reductive group.
We determine the extensions between admissible smooth mod $p$ representations of $G$ parabolically induced from supersingular representations of Levi subgroups of $G$, in terms of extensions between representations of Levi subgroups of $G$ and parabolic induction.
This proves for the most part a conjecture formulated by the author in a previous article and gives some strong evidence for the remaining part.
In order to do so, we use the derived functors of the left and right adjoints of the parabolic induction functor, both related to Emerton's $\delta$-functor of derived ordinary parts.
We compute the latter on parabolically induced representations of $G$ by pushing to their limits the methods initiated and expanded by the author in previous articles.
\end{abstract}

\tableofcontents

\section{Introduction}

The study of representations of a $p$-adic reductive group $G$ over a field of characteristic $p$ has a strong motivation in the search for a possible mod $p$ Langlands correspondence for $G$.
Recently, Abe, Henniart, Herzig and Vignéras (\cite{AHHV}) gave a complete classification of the irreducible admissible smooth representations of $G$ over an algebraically closed field of characteristic $p$ in terms of supersingular representations of the Levi subgroups of $G$ and parabolic induction, generalising the results of Barthel and Livné for $\GL_2$ (\cite{BL}), Herzig for $\GL_n$ (\cite{Her}) and Abe for a split $G$ (\cite{Abe}).

Two major difficulties come into play when trying to extend the mod $p$ Langlands correspondence beyond $\GL_2(\Qp)$.
First, the supersingular representations of $G$ remain completely unknown, except for some reductive groups of relative semisimple rank $1$ over $\Qp$ (\cite{Abd,Che,KK}) using the classification of Breuil for $\GL_2(\Qp)$ (\cite{Br1}).
Second, it is expected that such a correspondence would involve representations of $G$ with many irreducible constituents (see e.g. \cite{BH}).
This phenomenon already appears for $\GL_2(\Qp)$ when the Galois representation is an extension between two characters, in which case the associated representation of $\GL_2(\Qp)$ is an extension between two principal series (\cite{Col}).
This raises the question of the extensions between representations of $G$.

\medskip

In this article, we intend to compute the extensions between admissible smooth mod~$p$ representations of $G$ parabolically induced from supersingular representations of Levi subgroups of $G$, in terms of extensions between representations of Levi subgroups of $G$ and parabolic induction.

In order to do so, we use the derived functors of the left and right adjoints of the parabolic induction functor, namely the Jacquet functor and the ordinary parts functor (\cite{Em1}), both related to Emerton's $\delta$-functor of derived ordinary parts (\cite{Em2}).
We compute the latter on parabolically induced representations of $G$ by pushing to their limits the methods initiated in \cite{JH} and expanded in \cite{JHB}.

These computations have also been used to study the deformations of parabolically induced admissible smooth mod $p$ representations of $G$ in a joint work with T.~Schmidt and C.~Sorensen (\cite{JHSS}).

\subsection*{Presentation of the main results}

We let $F/\Qp$ and $k/\Fp$ be finite extensions.
We fix a connected reductive algebraic \mbox{$F$-group} $\Gb$, a minimal parabolic subgroup $\Bb \subseteq \Gb$ and a maximal split torus $\Sb \subseteq \Bb$.
We write the corresponding groups of $F$-points $G$, $B$, $S$, etc.
We let $\Delta$ denote the set of simple roots of $\Sb$ in $\Bb$.
To each $\alpha \in \Delta$ correspond a simple reflection $s_\alpha$ and a root subgroup $\Ub_\alpha \subset \Bb$.
We put $\Delta^1 \coloneqq \{\alpha \in \Delta \mid \dim_F \Ub_\alpha = 1\}$.

Let $\Pb = \Lb \Nb$ be a standard parabolic subgroup.
We write $\Delta_\Lb \subseteq \Delta$ for the corresponding subset and we put $\Delta_\Lb^\perp \coloneqq \{\alpha \in \Delta \mid \langle \alpha, \beta^\vee \rangle = 0 \ \forall \beta \in \Delta_\Lb\}$ and $\Delta_\Lb^{\perp,1} \coloneqq \Delta_\Lb^\perp \cap \Delta^1$.
For $\alpha \in \Delta_\Lb^{\perp,1}$, conjugation by (any representative of) $s_\alpha$ stabilises $\Lb$ and $\alpha$ extends to an algebraic character of $\Lb$ (see the proof of Lemma \ref{lemm:alpha}).

We let $\Pb^-$ denote the opposite parabolic subgroup.
Recall the parabolic induction functor $\Ind_{P^-}^G$ from the category of admissible smooth representations of $L$ over $k$ to the category of admissible smooth representations of $G$ over $k$, which is $k$-linear, fully faithful and exact (\cite{Em1}).
In particular, it induces a $k$-linear injection $\Ext_L^1 \hookrightarrow \Ext_G^1$.

Let $\sigma$ be an admissible smooth representation of $L$ over $k$.
For $\alpha \in \Delta_\Lb^{\perp,1}$, we consider the admissible smooth representation $\sigma^\alpha \otimes (\omega^{-1} \circ \alpha)$ of $L$ over $k$ where $\sigma^\alpha$ is the \mbox{$s_\alpha$-conjugate} of $\sigma$ and $\omega : F^\times \to \Fp^\times \subseteq k^\times$ is the mod $p$ cyclotomic character.
We say that $\sigma$ is \emph{supersingular} if it is absolutely irreducible and $\Fpbar \otimes_k \sigma$ is supersingular (\cite{AHHV}).

In \cite{JHB}, we formulated the following conjecture.
In cases (iii) and (iv), `Otherwise' means that the conditions of case (ii) are not all satisfied.

\begin{conj}[{\cite[Conjecture 3.17]{JHB}}] \label{intro:conj:Ext1}
Assume $\Gb$ split with connected centre and simply connected derived subgroup.
Let $\Pb = \Lb \Nb,\Pb' = \Lb' \Nb'$ be standard parabolic subgroups and $\sigma,\sigma'$ be supersingular representations of $L,L'$ respectively over $k$.
Assume $\Ind_{P^-}^G \sigma, \Ind_{P'^-}^G \sigma'$ irreducible or $p \neq 2$.
\begin{enumerate}
\item If $\Pb' \not \subseteq \Pb$ and $\Pb \not \subseteq \Pb'$, then $\Ext_G^1 \left( \Ind_{P'^-}^G \sigma', \Ind_{P^-}^G \sigma \right) = 0$.
\item If $F=\Qp$, $\Pb'=\Pb$ and $\sigma' \cong \sigma^\alpha \otimes \left( \omega^{-1} \circ \alpha \right) \not \cong \sigma$ for some $\alpha \in \Delta_\Lb^\perp$, then
\begin{equation*}
\dim_k \Ext_G^1 \left( \Ind_{P^-}^G \sigma', \Ind_{P^-}^G \sigma \right) = 1.
\end{equation*}
\item Otherwise if $\Pb' \subseteq \Pb$, then the functor $\Ind_{P^-}^G$ induces a $k$-linear isomorphism
\begin{equation*}
\Ext_L^1 \left( \Ind_{L \cap P'^-}^L \sigma', \sigma \right) \iso \Ext_G^1 \left( \Ind_{P'^-}^G \sigma', \Ind_{P^-}^G \sigma \right).
\end{equation*}
\item Otherwise if $\Pb \subseteq \Pb'$, then the functor $\Ind_{P'^-}^G$ induces a $k$-linear isomorphism
\begin{equation*}
\Ext_{L'}^1 \left( \sigma', \Ind_{L' \cap P^-}^{L'} \sigma \right) \iso \Ext_G^1 \left( \Ind_{P'^-}^G \sigma', \Ind_{P^-}^G \sigma \right).
\end{equation*}
\end{enumerate}
\end{conj}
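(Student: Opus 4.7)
The plan is to reduce $\Ext_G^1$ between parabolically induced representations to Ext computations on the Levi side, via the adjunctions of $\Ind_{P^-}^G$. Since $\Ind_{P^-}^G$ is exact with right adjoint Emerton's ordinary parts functor $\Ord_P$ (which therefore preserves injectives), there is a Grothendieck spectral sequence
\begin{equation*}
E_2^{i,j} = \Ext_{L'}^i\bigl(\sigma', \ROrd[j]_{P'}(\Ind_{P^-}^G \sigma)\bigr) \Longrightarrow \Ext_G^{i+j}\bigl(\Ind_{P'^-}^G \sigma', \Ind_{P^-}^G \sigma\bigr),
\end{equation*}
and, dually, via the left adjoint (the derived Jacquet functor, related to derived ordinary parts of the opposite parabolic through Emerton's $\delta$-functor), an analogous sequence with $\ROrd[j]_{P'}(\Ind_{P^-}^G \sigma)$ replaced by $\Lh[j] J_P(\Ind_{P'^-}^G \sigma')$. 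Thus knowledge of $\ROrd[j]_{P'} \circ \Ind_{P^-}^G$ (or of the derived Jacquet functor applied to $\Ind_{P'^-}^G \sigma'$) for $j \le 1$ should determine $\Ext_G^1$.

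The central technical task is therefore to compute $\ROrd[j]_{P'}(\Ind_{P^-}^G \sigma)$ for $j \in \{0,1\}$. Following and pushing to their limits the methods of \cite{JH,JHB}, I would filter $\Ind_{P^-}^G \sigma$ using the closed Bruhat cells in $P' \backslash G / P$ indexed by a set of minimal-length double coset representatives. Each graded quotient is, up to a Shapiro-type identification, compactly induced from an intersection of the form $L' \cap w P^- w^{-1}$, and applying $\ROrd[\bullet]_{P'}$ to it reduces to a local cohomological problem on the corresponding unipotent radical. The result I would aim for is that, for $j = 0$, only $w = 1$ contributes, giving $\Ind_{L' \cap P^-}^{L'} \sigma$ when $P \subseteq P'$ and $0$ when $P \not\subseteq P'$, while for $j = 1$ the only surviving terms come from length-one reflections $s_\alpha$ with $\alpha \in \Delta_\Lb^{\perp,1}$, and only when $F = \Qp$, contributing the twisted conjugate $\sigma^\alpha \otimes (\omega^{-1} \circ \alpha)$.

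Granted these computations, the four cases are handled by inspecting the appropriate spectral sequence together with the full faithfulness of $\Ind_{P^-}^G$. Case (iv): the $E_2$-page has $E_2^{1,0} = \Ext_{L'}^1(\sigma', \Ind_{L' \cap P^-}^{L'} \sigma)$, while supersingularity of $\sigma'$ combined with the failure of the conditions of (ii) forces $E_2^{0,1} = 0$, yielding the desired edge-map isomorphism. Case (iii) is symmetric via the derived Jacquet side. Case (ii): the unique non-vanishing $\ROrd[1]_{P'}$-term accounts for a one-dimensional contribution in $E_2^{0,1}$, while supersingularity with $\sigma' \not\cong \sigma$ rules out any $\Ext_{L'}^1$-contribution. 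Case (i): when neither inclusion holds, no $w$ produces a graded piece contributing a non-zero $\Hom_{L'}$ or $\Ext_{L'}^1$ to $\sigma'$, so both $E_2^{0,1}$ and $E_2^{1,0}$ vanish in the relevant spectral sequence.

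The hard part is the degree-one computation of $\ROrd[1]_{P'}$, in particular the precise identification of which double cosets contribute, the Shapiro-type reduction to a problem on a single root subgroup $U_\alpha \cong F$, and the dichotomy between $F = \Qp$ and $F \ne \Qp$ which controls whether the first cohomology of a smooth character on $U_\alpha$ vanishes. Additional delicate bookkeeping is required to rule out contributions from $\ROrd[j]_{P'}$ with $j \ge 2$ into total degree one, and to establish the precise compatibility between the two adjunctions so that cases (iii) and (iv) can be handled by the respective spectral sequences.
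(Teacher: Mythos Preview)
Your outline is broadly aligned with the paper's strategy, but there are two genuine gaps.

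\textbf{Case (i) is not proved unconditionally in the paper.} Your sketch asserts that ``no $w$ produces a graded piece contributing a non-zero $\Hom_{L'}$ or $\Ext_{L'}^1$,'' but this is exactly the step that fails. When $I = \Delta_\Lb$ and $J = \Delta_{\Lb'}$ are incomparable, the $\Iw^J = 1$ graded piece contributes $\HOrd[1]_{P_J}(\cind_{P_I^-}^{P_I^-P_J}\sigma)$, which one expects to be $\Ind_{L_J \cap P_I^-}^{L_J}(\HOrd[1]_{L_I \cap P_J}\sigma)$. The paper's Theorem~\ref{theo:HOrd} only identifies the \emph{graded pieces} of the Bruhat filtration on this object, and only as $B_{J,w_J}$-representations (not $B_J$-representations), so one cannot reassemble them into the parabolically induced $L_J$-representation. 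This is precisely the content of Conjecture~\ref{conj:HOrd} for $\Iw^J = 1$, which remains open; see Remark~\ref{rema:IwJ=1} and Proposition~\ref{prop:Ext1}. Your Shapiro-type identification glosses over this equivariance problem.

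\textbf{Case (ii) needs a lower bound.} Your spectral-sequence argument only yields $\dim_k \Ext_G^1 \leq 1$: the edge map lands in a one-dimensional $\Hom_L$. To get equality you must exhibit a non-split extension. The paper does this (Proposition~\ref{prop:Ext1prelim}~(i)) by passing to a $z$-extension with simply connected derived subgroup, tensoring off a character so that $\sigma$ extends across the rank-one Levi $L'$ corresponding to $\alpha$, and then importing a known non-trivial self-extension of a principal series of $L' \cong \GL_2$-type. Separately, your claim that ``supersingularity with $\sigma' \not\cong \sigma$ rules out any $\Ext_L^1$-contribution'' requires the connected-centre hypothesis: one uses Lemma~\ref{lemm:gen} (existence of fundamental coweights) to show $\sigma$ and $\sigma'$ have distinct central characters, whence $\Ext_L^1(\sigma',\sigma) = 0$. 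Without connected centre this can fail.

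A minor point: the paper works with $\HOrd_P$ (Emerton's $\delta$-functor) rather than $\ROrd_P$ throughout, using the exact sequences \eqref{SEExt1} and \eqref{SEExt2} which require only degrees $0$ and $1$; the full spectral sequence with $\ROrd$ is invoked only once (proof of Theorem~\ref{theo:Ext1Zcnx}~(ii)) together with the injection $\ROrd[1]_P \hookrightarrow \HOrd[1]_P$. Your use of $\ROrd$ is fine for the five-term sequence but would need Emerton's Conjecture~3.7.2 for anything beyond.
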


We prove cases (ii), (iii) and (iv) of this conjecture and give some strong evidence for case (i).
We actually work without any assumption on $\Gb$ and our results hold true for broader classes of representations (see §~\ref{ssec:Ext} for more precise statements).
We also prove similar results for unitary continuous $p$-adic representations (see §~\ref{ssec:Extcont}).

We treat the cases $F=\Qp$ and $F \neq \Qp$ separately.
They are in fact the degree $1$ case of a more general (but conditional to a conjecture of Emerton) result on the $k$-linear morphism $\Ext_L^n \to \Ext_G^n$ induced by $\Ind_{P^-}^G$ in all degrees $n \leq [F:\Qp]$ (see Remark \ref{rema:Extn}).

\begin{theo}[Theorem \ref{theo:Ext1Qp}] \label{intro:theo:Ext1Qp}
Assume $F=\Qp$.
Let $\Pb = \Lb \Nb,\Pb' = \Lb' \Nb'$ be standard parabolic subgroups and $\sigma,\sigma'$ be supersingular representations of $L,L'$ respectively over $k$.
\begin{enumerate}
\item If $\Pb'=\Pb$ and $\sigma' \not \cong \sigma^\alpha \otimes (\omega^{-1} \circ \alpha)$ for all $\alpha \in \Delta_\Lb^{\perp,1}$, then the functor $\Ind_{P^-}^G$ induces a $k$-linear isomorphism
\begin{equation*}
\Ext_L^1 \left( \sigma', \sigma \right) \iso \Ext_G^1 \left( \Ind_{P^-}^G \sigma', \Ind_{P^-}^G \sigma \right).
\end{equation*}
\item If $\Pb' \subsetneq \Pb$, then the functor $\Ind_{P^-}^G$ induces a $k$-linear isomorphism
\begin{equation*}
\Ext_L^1 \left( \Ind_{L \cap P'^-}^L \sigma', \sigma \right) \iso \Ext_G^1 \left( \Ind_{P'^-}^G \sigma', \Ind_{P^-}^G \sigma \right).
\end{equation*}
\item If $\Pb \subsetneq \Pb'$, then the functor $\Ind_{P'^-}^G$ induces a $k$-linear isomorphism
\begin{equation*}
\Ext_{L'}^1 \left( \sigma', \Ind_{L' \cap P^-}^{L'} \sigma \right) \iso \Ext_G^1 \left( \Ind_{P'^-}^G \sigma', \Ind_{P^-}^G \sigma \right).
\end{equation*}
\end{enumerate}
\end{theo}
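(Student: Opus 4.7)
The plan is to derive the three isomorphisms from the derived versions of Emerton's adjunctions: the right adjoint $\Ord_{P'}$ of $\Ind_{P'^-}^G$ (extended to the $\delta$-functor $\ROrd_{P'}$ of derived ordinary parts) provides, for every admissible smooth representation $\pi$ of $G$, a Grothendieck spectral sequence
\[
E_2^{i,j} = \Ext_{L'}^i\bigl(\sigma', \ROrd[j]_{P'}(\pi)\bigr) \Longrightarrow \Ext_G^{i+j}\bigl(\Ind_{P'^-}^G \sigma', \pi\bigr),
\]
while the left adjoint of $\Ind_{P^-}^G$ (accessible via derived ordinary parts by duality) gives a similar spectral sequence converging to $\Ext_G^{i+j}(\pi', \Ind_{P^-}^G \sigma)$. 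I would use the first with $\pi = \Ind_{P^-}^G \sigma$ for cases (i) and (iii), and the second with $\pi' = \Ind_{P'^-}^G \sigma'$ for case (ii). Extracting the five-term exact sequence reduces the theorem to (a) identifying $E_2^{1,0}$ with the claimed source and (b) proving $E_2^{0,1} = 0$.

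For step (a), the degree-zero terms are computed via transitivity of parabolic induction combined with the identity $\Ord_{Q} \circ \Ind_{Q^-}^G \simeq \mathrm{id}$ (and its dual). In case (i), $\Ord_P(\Ind_{P^-}^G \sigma) = \sigma$. In case (iii), $\Pb \subsetneq \Pb'$ gives $\Ind_{P^-}^G \sigma = \Ind_{P'^-}^G(\Ind_{L'\cap P^-}^{L'} \sigma)$, hence $\Ord_{P'}(\Ind_{P^-}^G \sigma) = \Ind_{L'\cap P^-}^{L'} \sigma$. In case (ii), dually, $\Ind_{P'^-}^G \sigma' = \Ind_{P^-}^G(\Ind_{L\cap P'^-}^L \sigma')$, so the degree-zero derived left adjoint returns $\Ind_{L\cap P'^-}^L \sigma'$. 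Substituting into $E_2^{1,0}$ produces the claimed $\Ext^1$ group in each of the three cases.

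Step (b), the vanishing of $E_2^{0,1} = \Hom_{L'}(\sigma', \ROrd[1]_{P'}(\Ind_{P^-}^G \sigma))$ (or its dual analogue), is the main obstacle and rests on the computation of the derived ordinary parts of a parabolically induced representation carried out earlier in the paper, itself a generalisation of the methods of \cite{JH,JHB}. For $F = \Qp$, that computation produces a finite filtration on $\ROrd[1]_{P'}(\Ind_{P^-}^G \sigma)$, induced by the Bruhat decomposition of $G$, whose graded pieces are (up to parabolic induction inside $L'$ from Levi subgroups strictly smaller than $L'$) twists of the form $\sigma^\alpha \otimes (\omega^{-1}\circ\alpha)$ with $\alpha \in \Delta^1$. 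In case (i) the surviving graded pieces are exactly the $\sigma^\alpha \otimes (\omega^{-1}\circ\alpha)$ for $\alpha \in \Delta_\Lb^{\perp,1}$, and the hypothesis together with the absolute irreducibility of $\sigma'$ yields $\Hom_L = 0$ on each; in cases (ii) and (iii) the graded pieces are parabolic inductions from strictly smaller Levi subgroups and cannot accept a morphism from (respectively to) a supersingular representation by the classification of \cite{AHHV}. The delicate point will be to track the Bruhat/Weyl combinatorics so that every graded piece is accounted for, after which $E_2^{0,1}$ vanishes and the differential $d_2\colon E_2^{0,1}\to E_2^{2,0}$ does not need to be analysed.
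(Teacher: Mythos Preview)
Your overall framework is exactly right and matches the paper's: two exact sequences coming from the right adjoint $\Ord$ (Emerton's \eqref{SEExt1}) and from the left adjoint $(-)_N$ (the paper's new sequence \eqref{SEExt2}), with the key input being the computation of $\HOrd[1]$ (resp.\ $\Hh[1]$) on a parabolically induced representation. The gap is in \emph{which} sequence you assign to which case.

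You propose to handle (i) and (iii) with $\Ord_{P'}$ applied to $\Ind_{P^-}^G\sigma$, and (ii) with the Jacquet functor of $P^-$ applied to $\Ind_{P'^-}^G\sigma'$. In case (iii) this means computing $\HOrd[1]_{P'}(\Ind_{P^-}^G\sigma)$ with $\Delta_\Lb\subsetneq\Delta_{\Lb'}$; equivalently, after rewriting $\Ind_{P^-}^G\sigma=\Ind_{P'^-}^G(\Ind_{L'\cap P^-}^{L'}\sigma)$, it means applying Corollary~\ref{coro:HOrd} with coefficient $\Ind_{L'\cap P^-}^{L'}\sigma$, which is \emph{not} cuspidal. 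The hypothesis of Corollary~\ref{coro:HOrd}~(ii) then fails: for instance in $\GL_4$ with $\Delta_\Lb=\{\alpha_1\}$, $\Delta_{\Lb'}=\{\alpha_1,\alpha_2\}$, the root $\alpha_3$ lies in $\Delta^1\setminus(\Delta_{\Lb'}\cup\Delta_{\Lb'}^\perp)$ but $\Ord_{L'\cap s_{\alpha_3}P's_{\alpha_3}^{-1}}(\Ind_{L'\cap P^-}^{L'}\sigma)\neq 0$. What survives for the Bruhat cell $s_{\alpha_3}$ is then only known via Theorem~\ref{theo:HOrd} as a $B_{\Lb'}$-filtered object whose graded pieces match those of a parabolic induction; the actual $L'$-structure is precisely Conjecture~\ref{conj:HOrd} for $\Iw^J=s_{\alpha_3}$, which the paper does not prove. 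Without it you cannot invoke \cite{AHHV} to kill $\Hom_{L'}(\sigma',-)$. Case (ii) has the mirror problem.

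The paper's fix is to swap the pairing: treat (i) and (ii) together via \eqref{SEExt1} with $\Ord_P$ applied to $\Ind_{P^-}^G\sigma$ (coefficient $\sigma$ supersingular, so Corollary~\ref{coro:HOrd}~(ii) applies directly and gives $\bigoplus_{\alpha\in\Delta_\Lb^{\perp,1}}\sigma^\alpha\otimes(\omega^{-1}\circ\alpha)$); and treat (iii) via \eqref{SEExt2} with $\Hh[1](N'^-,-)$ applied to $\Ind_{P'^-}^G\sigma'$ (coefficient $\sigma'$ supersingular, Corollary~\ref{coro:HJ}~(ii) applies). In both cases one stays in the $I=J$ situation of the corollaries, where the full $L$-structure is established. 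The vanishing of the $\Hom$ term then follows, for (ii), from right cuspidality of $\sigma$ and the adjunction $\Hom_L(\Ind_{L\cap P'^-}^L\sigma',-)\cong\Hom_{L'}(\sigma',\Ord_{L\cap P'}(-))$; and for (iii), from left cuspidality of $\sigma'$ and Frobenius reciprocity.
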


If $\Pb'=\Pb$, we do not know the exact dimension of the cokernel of the $k$-linear injection $\Ext_L^1(\sigma',\sigma) \hookrightarrow \Ext_G^1(\Ind_{P^-}^G \sigma',\Ind_{P^-}^G \sigma)$ induced by $\Ind_{P^-}^G$ in general, but we prove that it is at most $\card\{\alpha \in \Delta_\Lb^{\perp,1} \mid \sigma' \cong \sigma^\alpha \otimes (\omega^{-1} \circ \alpha)\}$ (see Remark \ref{rema:Ext1Qp} for more details).
Further, we compute it when $\Gb$ is split with connected centre (see Theorem \ref{intro:theo:Ext1Zcnx} below). 
Note that in cases (ii) and (iii), the source of the isomorphism can be non-zero (\cite{Hu}).

\begin{theo}[Theorem \ref{theo:Ext1F}] \label{intro:theo:Ext1F}
Assume $F \neq \Qp$.
Let $\Pb = \Lb \Nb$ be a standard parabolic subgroup.
The functor $\Ind_{P^-}^G$ induces a $k$-linear isomorphism
\begin{equation*}
\Ext_L^1 \left( \sigma', \sigma \right) \iso \Ext_G^1 \left( \Ind_{P^-}^G \sigma', \Ind_{P^-}^G \sigma \right)
\end{equation*}
for all admissible smooth representations $\sigma,\sigma'$ of $L$ over $k$.
\end{theo}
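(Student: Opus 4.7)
The plan is to reduce the statement to a vanishing result for the first derived ordinary part of a parabolic induction, and then invoke the explicit computation of derived ordinary parts carried out in the body of the paper. Emerton's ordinary parts functor $\Ord_P$ is the right adjoint of the exact functor $\Ind_{P^-}^G$ on admissible smooth representations over $k$ (see \cite{Em1}), so it preserves injectives, and the $\delta$-functor $\HOrd_P$ of \cite{Em2} computes its right derived functors. The Grothendieck spectral sequence then reads
\begin{equation*}
E_2^{i,j} = \Ext_L^i\bigl(\sigma', \HOrd[j]_P(\Ind_{P^-}^G \sigma)\bigr) \Rightarrow \Ext_G^{i+j}\bigl(\Ind_{P^-}^G \sigma', \Ind_{P^-}^G \sigma\bigr).
\end{equation*}

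First I would extract the five-term exact sequence in low degrees. Since $\Ord_P(\Ind_{P^-}^G \sigma) \cong \sigma$ by the basic adjunction of \cite{Em1}, this reads
\begin{equation*}
0 \to \Ext_L^1(\sigma', \sigma) \to \Ext_G^1\bigl(\Ind_{P^-}^G \sigma', \Ind_{P^-}^G \sigma\bigr) \to \Hom_L\bigl(\sigma', \HOrd[1]_P(\Ind_{P^-}^G \sigma)\bigr),
\end{equation*}
with leftmost arrow the natural map induced by $\Ind_{P^-}^G$. The theorem is therefore reduced to showing that, when $F \neq \Qp$, the $L$-representation $\HOrd[1]_P(\Ind_{P^-}^G \sigma)$ admits no nonzero map from any admissible smooth representation of $L$.

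To obtain this vanishing I would invoke the computation of $\HOrd_P(\Ind_{P^-}^G \sigma)$ carried out in the body of the paper by pushing to their limits the Bruhat filtration techniques of \cite{JH,JHB}. That computation exhibits $\HOrd[j]_P(\Ind_{P^-}^G \sigma)$ as an iterated extension of explicit $L$-representations indexed by distinguished double-coset representatives in the Weyl group, each graded piece being built from $\sigma$ and its $s_\alpha$-conjugates for $\alpha \in \Delta_\Lb^{\perp,1}$ twisted by smooth characters of $F^\times$ arising from the continuous cohomology of one-dimensional root subgroups. The main obstacle, and the point at which the hypothesis $F \neq \Qp$ is decisive, is to verify that no graded piece contributing in degree $j = 1$ has an admissible smooth subrepresentation: the twisting characters that would produce one — in particular the algebraic ones like $\omega^{-1} \circ \alpha$ responsible for the extra extensions in case~(i) of Theorem~\ref{intro:theo:Ext1Qp} — only become available in cohomological degree $[F:\Qp] \geq 2$, essentially because the continuous dual of $F$ has rank $[F:\Qp]$ as an $\Fp$-vector space and the relevant algebraic contributions sit in top degree. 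Once this vanishing is granted, the rightmost $\Hom_L$-group in the five-term sequence is zero for every admissible smooth $\sigma'$, and the theorem follows.
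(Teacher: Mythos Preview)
Your strategy matches the paper's: feed $\pi = \Ind_{P^-}^G \sigma$ into the exact sequence \eqref{SEExt1}, use $\Ord_P(\Ind_{P^-}^G\sigma)\cong\sigma$, and kill the rightmost term. Two points deserve correction, though.

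First, the claim that ``the $\delta$-functor $\HOrd_P$ computes the right derived functors of $\Ord_P$'' is precisely \cite[Conjecture~3.7.2]{Em2}, which is open. The spectral sequence you write down is therefore not known with $\HOrd[j]_P$ in the $E_2$-term; what is known unconditionally is the exact sequence \eqref{SEExt1} itself (via the injection $\ROrd[1]_P \hookrightarrow \HOrd[1]_P$, cf.\ \cite[Remark~3.7.3]{Em2}). Since you only use the five-term sequence, your argument survives, but the justification should be rewritten to cite \eqref{SEExt1} directly rather than a conjectural spectral sequence.

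Second, the last paragraph is substantially more complicated than necessary and its phrasing (``no graded piece \ldots\ has an admissible smooth subrepresentation'') is off target. The paper's Corollary~\ref{coro:HOrd}~(i) gives the clean statement $\HOrd[n]_P(\Ind_{P^-}^G\sigma)=0$ for all $0<n<[F:\Qp]$; when $F\neq\Qp$ this applies to $n=1$ and the $\Hom_L$-term vanishes outright. The reason is exactly the degree shift you allude to: each graded piece indexed by $\Iw^J\neq 1$ involves $\HOrd[n-[F:\Qp]d_{\Iw^J}]$ with $d_{\Iw^J}\geq 1$, hence a negative index when $n=1$ and $[F:\Qp]\geq 2$; the piece for $\Iw^J=1$ reduces to $\HOrd[1]_L\sigma=0$. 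There is no need to discuss admissible subrepresentations or twisting characters at all.
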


In particular, Theorem \ref{intro:theo:Ext1Qp} (ii) and (iii) hold true for any admissible smooth representations $\sigma,\sigma'$ of $L,L'$ respectively over $k$ when $F \neq \Qp$ (see Corollary \ref{coro:Ext1F}).

We complete Theorem \ref{intro:theo:Ext1Qp} (i) when $\Gb$ is split with connected centre (see also Remark \ref{rema:ExtnZcnx} for a more general, but conditional to a conjecture of Emerton, result on the $k$-linear morphism $\Ext_L^{[F:\Qp]} \to \Ext_G^{[F:\Qp]}$ induced by $\Ind_{P^-}^G$).

\begin{theo}[Theorem \ref{theo:Ext1Zcnx}] \label{intro:theo:Ext1Zcnx}
Assume $F=\Qp$ and $\Gb$ split with connected centre.
Let $\Pb = \Lb \Nb$ be a standard parabolic subgroup and $\sigma,\sigma'$ be supersingular representations of $L$ over $k$.
\begin{enumerate}
\item If $\sigma' \cong \sigma^\alpha \otimes (\omega^{-1} \circ \alpha) \not \cong \sigma$ for some $\alpha \in \Delta_\Lb^\perp$, then $\Ext_L^1(\sigma',\sigma)=0$ and
\begin{equation*}
\dim_k \Ext_G^1 \left( \Ind_{P^-}^G \sigma', \Ind_{P^-}^G \sigma \right) = 1.
\end{equation*}
\item If either $\sigma' \cong \sigma$ and $p \neq 2$, or $\sigma' \not \cong \sigma^\alpha \otimes (\omega^{-1} \circ \alpha)$ for any $\alpha \in \Delta_\Lb^\perp$, then the functor $\Ind_{P^-}^G$ induces a $k$-linear isomorphism
\begin{equation*}
\Ext_L^1 \left( \sigma', \sigma \right) \iso \Ext_G^1 \left( \Ind_{P^-}^G \sigma', \Ind_{P^-}^G \sigma \right).
\end{equation*}
\item If $p=2$, then the functor $\Ind_{P^-}^G$ induces a $k$-linear injection
\begin{equation*}
\Ext_L^1 \left( \sigma', \sigma \right) \hookrightarrow \Ext_G^1 \left( \Ind_{P^-}^G \sigma', \Ind_{P^-}^G \sigma \right)
\end{equation*}
whose cokernel is of dimension $\card \{ \alpha \in \Delta_\Lb^\perp \mid \sigma' \cong \sigma^\alpha \}$.
\end{enumerate}
\end{theo}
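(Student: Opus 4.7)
The plan is to refine Theorem \ref{intro:theo:Ext1Qp}(i) under the extra hypothesis that $\Gb$ is split with connected centre, so that $\Delta^1 = \Delta$ and hence $\Delta_\Lb^{\perp,1} = \Delta_\Lb^\perp$. Theorem \ref{intro:theo:Ext1Qp}(i), strengthened by the subsequent remark on its cokernel, already furnishes a $k$-linear injection $\Ext_L^1(\sigma',\sigma) \hookrightarrow \Ext_G^1(\Ind_{P^-}^G \sigma', \Ind_{P^-}^G \sigma)$ whose cokernel has dimension at most $\card\{\alpha \in \Delta_\Lb^\perp \mid \sigma' \cong \sigma^\alpha \otimes (\omega^{-1} \circ \alpha)\}$. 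The whole proof then amounts to computing $\Ext_L^1(\sigma',\sigma)$ on the left and matching this upper bound on the right in each of the three cases.

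For the source, I would invoke the Ext-vanishing inputs for supersingular representations of a Levi established earlier in the paper. Since $\Gb$ is split with connected centre, every Levi $\Lb$ inherits a connected centre; when $\sigma' \not\cong \sigma$ are non-isomorphic absolutely irreducible supersingulars of $L$, a central character argument gives $\Ext_L^1(\sigma',\sigma) = 0$, which supplies the vanishing in case (i). For case (ii) with $\sigma' \cong \sigma$ and $p \neq 2$, I would rule out any $\alpha \in \Delta_\Lb^\perp$ satisfying $\sigma \cong \sigma^\alpha \otimes (\omega^{-1} \circ \alpha)$: the central character $\chi$ of $\sigma$ would then have to satisfy $\chi = (\chi \circ s_\alpha) \cdot (\omega^{-1} \circ \alpha)|_{Z_L}$, but since $\alpha \in \Delta_\Lb^\perp$ the cocharacter side produces an element $z \in Z_L$ fixed by $s_\alpha$ with $\alpha(z) \in F^\times$ of non-trivial reduction modulo $p$, on which $\omega^{-1} \circ \alpha$ is non-trivial precisely because $p \neq 2$, giving a contradiction. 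The upper bound is therefore zero and the injection is an isomorphism. When $p = 2$, on the contrary, $\omega^{-1} \circ \alpha$ is identically trivial, so the condition collapses to $\sigma' \cong \sigma^\alpha$ and the relevant count becomes $\card\{\alpha \in \Delta_\Lb^\perp \mid \sigma' \cong \sigma^\alpha\}$, matching case (iii).

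The main obstacle will be achieving \emph{equality} in the upper bound on the cokernel in cases (i) and (iii), i.e.\ constructing, for each $\alpha \in \Delta_\Lb^\perp$ with $\sigma' \cong \sigma^\alpha \otimes (\omega^{-1} \circ \alpha)$, an explicit non-split class in $\Ext_G^1(\Ind_{P^-}^G \sigma', \Ind_{P^-}^G \sigma)$ not lying in the image of $\Ext_L^1(\sigma',\sigma)$, and checking linear independence as $\alpha$ varies. The natural construction uses the intermediate standard parabolic $\Pb_\alpha$ defined by $\Delta_{\Lb_\alpha} = \Delta_\Lb \sqcup \{\alpha\}$: inside the rank-one-modulo-$L$ Levi $L_\alpha$, the representation $\Ind_{L_\alpha \cap P^-}^{L_\alpha} \sigma$ admits a length-two subquotient realising a non-split extension between $\sigma$ and $\sigma^\alpha \otimes (\omega^{-1} \circ \alpha)$, reflecting the well-known rank-one $F = \Qp$ phenomenon. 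Applying the exact functor $\Ind_{P_\alpha^-}^G$ produces the desired class in $\Ext_G^1(\Ind_{P^-}^G \sigma', \Ind_{P^-}^G \sigma)$, and its non-triviality modulo $\Ext_L^1(\sigma',\sigma)$ is detected by composing Emerton's adjunctions: its image under the connecting map lands non-trivially in the $\alpha$-isotypic component of $\Hom_L(\sigma', \Rc[1] \Ord_P(\Ind_{P^-}^G \sigma))$, which was computed in earlier sections. Varying $\alpha$ yields linearly independent classes and produces the required dimension in both cases.
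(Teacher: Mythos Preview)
Your argument for case~(ii) with $\sigma' \cong \sigma$ and $p \neq 2$ has a genuine gap. You try to exclude any $\alpha \in \Delta_\Lb^\perp$ with $\sigma \cong \sigma^\alpha \otimes (\omega^{-1} \circ \alpha)$, but no such exclusion is possible: by Lemma~\ref{lemm:alpha}, whenever the central character $\zeta$ of $\sigma$ satisfies $\zeta \circ \alpha^\vee = \omega^{-1}$ one automatically has $\sigma^\alpha \otimes (\omega^{-1} \circ \alpha) \cong \sigma$, and this occurs for plenty of supersingular $\sigma$ regardless of $p$ (already for $\GL_2(\Qp)$ with $\Pb=\Bb$). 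Your proposed test element $z \in Z_L$ fixed by $s_\alpha$ necessarily has $\alpha(z)=1$, since $s_\alpha$-invariance of the underlying cocharacter $\lambda$ forces $\langle\alpha,\lambda\rangle=0$; so it detects nothing. For such $\alpha$ the upper bound from Remark~\ref{rema:Ext1Qp} is strictly positive, and your plan offers no mechanism to make the cokernel vanish.

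The paper's mechanism is an $\Ext^2$ argument. The five-term exact sequence \eqref{SE5} of the spectral sequence \eqref{SSExt} continues the injection $\Ext_L^1 \hookrightarrow \Ext_G^1$ through $\Hom_L(\sigma,\ROrd[1]_P(\Ind_{P^-}^G\sigma))$ and into $\Ext_L^2(\sigma,\sigma) \to \Ext_G^2(\Ind_{P^-}^G\sigma,\Ind_{P^-}^G\sigma)$. Via the injection $\ROrd[1]_P \hookrightarrow \HOrd[1]_P$, Corollary~\ref{coro:HOrd}~(ii) together with Lemma~\ref{lemm:gen} bounds the $\Hom$-term by $\card\{\alpha \in \Delta_\Lb^\perp : \zeta\circ\alpha^\vee=\omega^{-1}\}$. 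The key new input is Proposition~\ref{prop:Ext2prelim}: for $p\neq 2$ and $\Zb$ connected, the map $\Ext_L^2 \to \Ext_G^2$ has kernel of dimension at least that same cardinal, which forces the connecting map out of $\Ext_G^1$ to be zero. This degree-$2$ comparison is the idea missing from your proposal.

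For the lower bounds in (i) and (iii), your intermediate-Levi idea is in the right spirit. The paper's implementation (Proposition~\ref{prop:Ext1prelim}) is a bit different: it passes to a $z$-extension so that $\Gb^\der$ is simply connected, then uses the commutative diagram \eqref{diagExt} to transport rank-one principal-series extensions for the auxiliary Levi $\Lb'$ (with $\Delta_{\Lb'}=\{\alpha\}$ in~(i), resp.\ $\Delta_{\Lb'}=\{\alpha:\zeta\circ\alpha^\vee=1\}$ in~(iii)) into $\Ext_G^1$, appealing to the explicit computations in \cite{JHC}. This packaging also delivers, in one stroke, the linear independence over varying $\alpha$ needed in case~(iii), which in your sketch still requires a separate argument.
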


Finally, we treat the case where there is no inclusion between the two parabolic subgroups, assuming a special case of Conjecture \ref{intro:conj:HOrd} below (see also Remark \ref{rema:IwJ=1}).

\begin{prop}[Proposition \ref{prop:Ext1}]
Let $\Pb = \Lb \Nb,\Pb' = \Lb' \Nb'$ be standard parabolic subgroups and $\sigma,\sigma'$ be supersingular representations of $L,L'$ respectively over $k$.
Assume Conjecture \ref{intro:conj:HOrd} is true for $A=k$, $n=1$ and $\Iw^J=1$.
If $\Pb' \not \subseteq \Pb$ and $\Pb \not \subseteq \Pb'$, then
\begin{equation*}
\Ext_G^1 \left( \Ind_{P'^-}^G \sigma', \Ind_{P^-}^G \sigma \right) = 0.
\end{equation*}
\end{prop}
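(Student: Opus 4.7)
The plan is to invoke the Grothendieck spectral sequence arising from the adjoint pair $(\Ind_{P'^-}^G, \Ord_{P'})$: since $\Ind_{P'^-}^G$ is exact, one has for any admissible smooth representation $\pi$ of $G$ over $k$
$$
E_2^{a,b} = \Ext_{L'}^a\bigl(\sigma', \HOrd[b]_{P'}(\pi)\bigr) \Longrightarrow \Ext_G^{a+b}\bigl(\Ind_{P'^-}^G \sigma', \pi\bigr).
$$
Specialising to $\pi = \Ind_{P^-}^G \sigma$, the associated five-term exact sequence reads
$$
0 \to \Ext_{L'}^1\bigl(\sigma', \Ord_{P'}(\Ind_{P^-}^G \sigma)\bigr) \to \Ext_G^1\bigl(\Ind_{P'^-}^G \sigma', \Ind_{P^-}^G \sigma\bigr) \to \Hom_{L'}\bigl(\sigma', \HOrd[1]_{P'}(\Ind_{P^-}^G \sigma)\bigr),
$$
so it suffices to prove that the outer terms vanish.

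For the left-hand term, I would rely on the unconditional description of $\Ord_{P'}(\Ind_{P^-}^G \sigma)$ carried out in \cite{JH} and extended in \cite{JHB}. The Bruhat filtration on $\Ind_{P^-}^G \sigma$ induced by the $P'$-orbits on $G/P$ yields a filtration of $\Ord_{P'}(\Ind_{P^-}^G \sigma)$ whose non-trivial graded pieces are parabolically induced in $L'$ from representations of proper Levi subgroups of $L'$; since $\sigma'$ is supersingular, it admits no non-zero $\Ext_{L'}^1$ into any such parabolic induction, and dévissage yields the vanishing.

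For the right-hand term, the same Bruhat analysis, applied this time to $\HOrd[1]_{P'}(\Ind_{P^-}^G \sigma)$, groups its contributions by the relevant minimal double-coset representatives $\Iw^J$. For every $\Iw^J \neq 1$, the corresponding graded piece reduces by the devices of \cite{JH,JHB} to an ordinary-parts computation on a representation of strictly smaller rank and is again parabolically induced in $L'$ from a proper Levi subgroup, so supersingularity of $\sigma'$ kills the $\Hom_{L'}(\sigma', -)$ on it. The only remaining stratum, indexed by $\Iw^J = 1$, is precisely where the assumed special case of Conjecture \ref{intro:conj:HOrd} intervenes: under the noncomparability hypotheses $\Pb' \not\subseteq \Pb$ and $\Pb \not\subseteq \Pb'$, the conjectural formula forces this graded piece to be parabolically induced in $L'$ from a proper Levi as well, yielding the desired vanishing.

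The main obstacle is the systematic devissage of the contributions to $\HOrd[1]_{P'}$ indexed by $\Iw^J \neq 1$, each of which must be reduced, via a Hochschild--Serre and Bruhat-stratification argument in the spirit of \cite{JH,JHB}, to derived ordinary parts of representations of proper Levis, where supersingularity of $\sigma'$ can be leveraged together with the \cite{AHHV} classification. Once this reduction is in place, the single conjectural input for $\Iw^J = 1$, combined with the noncomparability of $\Pb$ and $\Pb'$, closes the argument.
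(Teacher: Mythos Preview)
Your overall framework---the exact sequence coming from the adjunction $(\Ind_{P'^-}^G,\Ord_{P'})$---is exactly the one the paper uses (it is Emerton's sequence \eqref{SEExt1}). But you have swapped the roles of $\sigma$ and $\sigma'$ in the vanishing arguments, and this leads to a genuine gap.

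For the left term, you propose to use supersingularity of $\sigma'$ to kill $\Ext_{L'}^1(\sigma',\text{parabolically induced})$. This is not something you can cite: the vanishing of $\Ext^1$ between a supersingular and a parabolically induced representation is precisely the kind of statement the paper is in the business of proving, and invoking it here is circular. The paper's argument is much simpler and avoids this entirely: by \cite{AHV} one has $\Ord_{P'}(\Ind_{P^-}^G\sigma)\cong\Ind_{L'\cap P^-}^{L'}(\Ord_{L\cap P'}\sigma)$, and since $\Pb\not\subseteq\Pb'$ the parabolic $\Lb\cap\Pb'$ is proper in $\Lb$, so $\Ord_{L\cap P'}\sigma=0$ by \emph{right cuspidality of $\sigma$}. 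The left term is zero on the nose.

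For the $\Iw^J\neq 1$ strata in $\HOrd[1]_{P'}$, you again appeal to supersingularity of $\sigma'$ via ``parabolically induced from a proper Levi of $L'$''. But you are only assuming the conjecture for $\Iw^J=1$, so you have no such description for the other strata; and even under the conjecture, the inducing parabolic $\Lb_J\cap\Pb_{J\cap\Iw^J{}^{-1}(I)}$ need not be proper for arbitrary $\Iw^J$. The paper instead shows these strata are \emph{zero}: by the unconditional Proposition~\ref{prop:HOrd1}(i), one has $\HOrd[1]_{P_J}(\cind_{P_I^-}^{P_I^-\Iw^JP_J}\sigma)=0$ whenever $\Iw^J\neq 1$, because either $d_{\Iw^J}>1$ (so the relevant degree is negative) or $d_{\Iw^J}=1$ and then $\Ord_{L_I\cap P_{I\cap\Iw^J(J)}}\sigma=0$ by right cuspidality of $\sigma$ again.

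Only the $\Iw^J=1$ piece remains, and here your reasoning is correct: the conjectural isomorphism identifies it with $\Ind_{L'\cap P^-}^{L'}(\HOrd[1]_{L\cap P'}\sigma)$, and since $\Pb'\not\subseteq\Pb$ this is induced from a proper parabolic, so \emph{left cuspidality of $\sigma'$} kills the $\Hom_{L'}(\sigma',-)$ via Frobenius reciprocity. In summary: right cuspidality of $\sigma$ handles the left term and the $\Iw^J\neq 1$ strata; left cuspidality of $\sigma'$ handles only the final $\Iw^J=1$ stratum.
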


As a consequence, Conjecture \ref{intro:conj:Ext1} is true under the same assumption when $\Gb$ is split with connected centre (without assuming the derived subgroup of $\Gb$ simply connected).

\begin{coro}[Corollary \ref{coro:Ext1Zcnx}]
Assume $\Gb$ split with connected centre.
If Conjecture \ref{intro:conj:HOrd} is true for $A=k$, $n=1$ and $\Iw^J=1$, then Conjecture \ref{intro:conj:Ext1} is true.
\end{coro}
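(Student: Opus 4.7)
The plan is a case-by-case assembly of the results already established. Since $\Gb$ is split we have $\Delta^1=\Delta$, so $\Delta_\Lb^{\perp,1}=\Delta_\Lb^\perp$ and the twist conditions appearing in the earlier theorems coincide verbatim with those appearing in Conjecture \ref{intro:conj:Ext1}. The argument proceeds by checking each of the four clauses in turn.

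Case (i) is handled by Proposition \ref{prop:Ext1}, which is the step that uses the hypothesis on Conjecture \ref{intro:conj:HOrd}. Case (ii) is exactly Theorem \ref{intro:theo:Ext1Zcnx} (i). For cases (iii) and (iv), I would subdivide according to whether the containment is strict. If $\Pb' \subsetneq \Pb$ (respectively $\Pb \subsetneq \Pb'$), one applies Theorem \ref{intro:theo:Ext1Qp} (ii)--(iii) when $F=\Qp$ and Corollary \ref{coro:Ext1F} when $F \neq \Qp$. If instead $\Pb'=\Pb$, the ``Otherwise'' clause excludes case (ii), so either $\sigma' \not\cong \sigma^\alpha \otimes (\omega^{-1} \circ \alpha)$ for every $\alpha \in \Delta_\Lb^\perp$ (and then Theorem \ref{intro:theo:Ext1Qp} (i) for $F=\Qp$ or Theorem \ref{intro:theo:Ext1F} for $F \neq \Qp$ yields the isomorphism), or such an $\alpha$ exists but the condition $\not\cong \sigma$ of case (ii) fails, forcing $\sigma' \cong \sigma$, in which case Theorem \ref{intro:theo:Ext1Zcnx} (ii) concludes as soon as $p \neq 2$.

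The main obstacle is then the single residual subcase $F=\Qp$, $p=2$, $\Pb'=\Pb$, $\sigma' \cong \sigma$, and $\sigma \cong \sigma^\alpha$ for some $\alpha \in \Delta_\Lb^\perp$ (noting that $\omega$ is trivial when $p=2$), for which Theorem \ref{intro:theo:Ext1Zcnx} (iii) would produce a cokernel of dimension $\card\{\alpha \in \Delta_\Lb^\perp \mid \sigma \cong \sigma^\alpha\} \geq 1$ and so contradict the isomorphism predicted by case (iii)/(iv). To rule out this subcase I would invoke the remaining hypothesis of Conjecture \ref{intro:conj:Ext1}, namely that $\Ind_{P^-}^G \sigma$ be irreducible when $p=2$: by the Abe--Henniart--Herzig--Vignéras classification of \cite{AHHV}, a supersingular $\sigma$ satisfying $\sigma \cong \sigma^\alpha$ for some $\alpha \in \Delta_\Lb^\perp$ extends to a parabolic strictly larger than $\Pb$, whence $\Ind_{P^-}^G \sigma$ is reducible, a contradiction. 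This excludes the bad subcase, and the four clauses of Conjecture \ref{intro:conj:Ext1} then hold under the stated assumptions.
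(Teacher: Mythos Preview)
Your proof is essentially the same as the paper's, which organises cases (iii)/(iv) as: Corollary~\ref{coro:Ext1F} when $F\neq\Qp$; Theorem~\ref{theo:Ext1Qp} (ii)/(iii) when $F=\Qp$ and $\Pb'\neq\Pb$; Theorem~\ref{theo:Ext1Zcnx} (ii) when $F=\Qp$, $\Pb'=\Pb$, $p\neq 2$; and Theorem~\ref{theo:Ext1Zcnx} (iii) when $F=\Qp$, $\Pb'=\Pb$, $p=2$, using the irreducibility hypothesis exactly as you do (the paper cites \cite[Lemma~5.8]{Abe} together with Lemma~\ref{lemm:gen} for the equivalence between irreducibility of $\Ind_{P^-}^G\sigma$ and $\sigma^\alpha\not\cong\sigma$ for all $\alpha\in\Delta_\Lb^\perp$, rather than \cite{AHHV} directly).

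There is one small gap in your case organisation for $\Pb'=\Pb$. Your dichotomy ``either $\sigma'\not\cong\sigma^\alpha\otimes(\omega^{-1}\circ\alpha)$ for all $\alpha$, or such an $\alpha$ exists and then $\sigma'\cong\sigma$'' implicitly assumes $F=\Qp$: the ``Otherwise'' condition negates \emph{all} of $F=\Qp$, the twist isomorphism, and $\sigma'\not\cong\sigma$, so when $F\neq\Qp$ one can perfectly well have $\sigma'\cong\sigma^\alpha\otimes(\omega^{-1}\circ\alpha)\not\cong\sigma$ and still be in ``Otherwise''. This subcase is not covered by your dichotomy, though it is of course handled immediately by Theorem~\ref{theo:Ext1F}. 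The cleanest fix is to dispose of $F\neq\Qp$ entirely via Theorem~\ref{theo:Ext1F} before running your $\Pb'=\Pb$ analysis, which is exactly how the paper proceeds.
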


\subsection*{Strategy of proof and methods used}

Let $E/\Qp$ be a finite extension with ring of integers $\Oc$ and residue field $k$.
We work more generally with smooth representations over an Artinian local $\Oc$-algebra $A$ with residue field $k$.

The main tools to compute extensions between parabolically induced representations are two exact sequences related to Emerton's $\delta$-functor of derived ordinary parts (see below \eqref{intro:SEExt1} which is due to Emerton and \eqref{intro:SEExt2} which is a new feature of this article).

Using these, most of the previous results reduce to computing the derived ordinary parts of parabolically induced representations.
We formulate a conjecture on these computations (see Conjecture \ref{intro:conj:HOrd} below).
We prove it in low degree (see Theorem \ref{intro:theo:HOrd} below) and give some strong evidence for it in general.

We proceed in two steps: first we construct filtrations of parabolically induced representations related to the Bruhat decomposition; second we partially compute the derived ordinary parts of the associated graded representations using some dévissages.

\subsubsection*{Derived ordinary parts and extensions}

Let $\Pb \subseteq \Gb$ be a parabolic subgroup and $\Lb \subseteq \Pb$ be a Levi factor.
We let $\Pb^- \subseteq \Gb$ denote the parabolic subgroup opposed to $\Pb$ with respect to $\Lb$.
Emerton (\cite{Em1,Em2}) constructed a cohomological $\delta$-functor $\HOrd_P$ from the category of admissible smooth representations of $G$ over $A$ to the category of admissible smooth representations of $L$ over $A$, which is the right adjoint functor $\Ord_P$ of $\Ind_{P^-}^G$ in degree $0$.
From this, he derived a natural exact sequence of $A$-modules
\begin{equation} \label{intro:SEExt1}
0 \to \Ext_L^1 \left( \sigma, \Ord_P \pi \right) \to \Ext_G^1 \left( \Ind_{P^-}^G \sigma, \pi \right) \to \Hom_L \left( \sigma, \HOrd[1]_P \pi \right)
\end{equation}
for all admissible smooth representations $\sigma$ and $\pi$ of $L$ and $G$ respectively over $A$.

In §~\ref{ssec:second}, we construct a second exact sequence in which parabolic induction is on the right.
The construction is somewhat dual to that of \eqref{intro:SEExt1} but not exactly (see Remark \ref{rema:dual} (ii)).
We let $d$ denote the integer $\dim_F \Nb$ and $\delta$ denote the algebraic character of the adjoint representation of $\Lb$ on $\det_F (\Lie \Nb)$.
The key fact is that the $A$-linear functors
\begin{equation*}
\Hh\! \left( N, - \right) \coloneqq \HOrd[{[F:\Qp]} d-\bullet]_P \otimes \left( \omega \circ \delta \right).
\end{equation*}
form a homological $\delta$-functor from the category of admissible smooth representations of $G$ over $A$ to the category of admissible smooth representations of $L$ over $A$, which is isomorphic to the left adjoint functor $(-)_N$ of $\Ind_P^G$ in degree $0$ (hence the notation).
From this and using a result of Oort (\cite{Oort}) to compute extensions using pro-categories (see §~\ref{ssec:pro}), we derive a natural exact sequence of $A$-modules
\begin{equation} \label{intro:SEExt2}
0 \to \Ext_L^1 \left( \pi_N, \sigma \right) \to \Ext_G^1 \left( \pi, \Ind_P^G \sigma \right) \to \Hom_L \left( \Hh[1]\! \left( N, \pi \right), \sigma \right)
\end{equation}
for all admissible smooth representations $\pi$ and $\sigma$ of $G$ and $L$ respectively over $A$.

\subsubsection*{Computation of derived ordinary parts}

We let $W$ be the Weyl group of $(\Gb,\Sb)$.
For $I \subseteq \Delta$, we write $\Pb_I = \Lb_I \Nb_I$ for the corresponding standard parabolic subgroup, $\Bb_I \subseteq \Lb_I$ for the minimal parabolic subgroup $\Bb \cap \Lb_I$ and $W_I \subseteq W$ for the subgroup generated by $(s_\alpha)_{\alpha \in I}$.

Let $I,J \subseteq \Delta$, $\sigma$ be a locally admissible smooth representation of $L_I$ over $A$ and $n \in \N$.
We intend to compute the smooth representation of $L_J$ over $A$
\begin{equation*}
\HOrd[n]_{P_J} \left( \Ind_{P_I^-}^G \sigma \right).
\end{equation*}

In §~\ref{ssec:fil}, we use the generalised Bruhat decomposition $G = \bigsqcup_{\Iw^J \in \IW^J} P_I^- \Iw^J P_J$ where $\IW^J$ is the system of representatives of minimal length of the double cosets $W_I \backslash W / W_J$ (see §~\ref{ssec:double}) to define a natural filtration $\Fil_{P_J}^\bullet(\Ind_{P_I^-}^G \sigma)$ of $\Ind_{P_I^-}^G \sigma$ by $A[P_J]$-submodules indexed by $\IW^J$ (with the Bruhat order).
We also adapt the notion of graded representation associated to such a filtration (in particular, the grading has values in $\IW^J$) and we prove that for all $\Iw^J \in \IW^J$, there is a natural $A[P_J]$-linear isomorphism
\begin{equation*}
\Gr_{P_J}^{\Iw^J} \left( \Ind_{P_I^-}^G \sigma \right) \cong \cind_{P_I^-}^{P_I^- \Iw^J P_J} \sigma.
\end{equation*}
We prove that $\Fil_{P_J}^\bullet(\Ind_{P_I^-}^G \sigma)$ induces a filtration of $\HOrd[n]_{P_J} (\Ind_{P_I^-}^G \sigma)$ by $A[L_J]$-sub\-mod\-ules indexed by $\IW^J$ (see Proposition \ref{prop:FilPJ}).

Finally, we associate to each $\Iw^J \in \IW^J$ an integer $d_{\Iw^J}$ and an algebraic character $\delta_{\Iw^J}$ of $\Lb_{J \cap \Iw^J{}^{-1}(I)}$ (see Notation \ref{nota:dw} and Remark \ref{rema:dIwJ}), and we formulate the following conjecture.

\begin{conj}[Conjecture \ref{conj:HOrd}] \label{intro:conj:HOrd}
Let $\sigma$ be a locally admissible smooth representation of $L_I$ over $A$, $\Iw^J \in \IW^J$ and $n \in \N$.
There is a natural $A[L_J]$-linear isomorphism
\begin{multline*}
\HOrd[n]_{P_J} \left( \cind_{P_I^-}^{P_I^- \Iw^J P_J} \sigma \right) \\
\cong \Ind_{L_J \cap P_{J \cap \Iw^J{}^{-1}(I)}^-}^{L_J} \left( \left( \HOrd[n-{[F:\Qp]} d_{\Iw^J}]_{L_I \cap P_{I \cap \Iw^J(J)}} \sigma \right)^{\Iw^J} \otimes \left( \omega^{-1} \circ \delta_{\Iw^J} \right) \right).
\end{multline*}
\end{conj}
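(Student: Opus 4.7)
The plan is to exploit the geometry of the Bruhat cell $P_I^- \Iw^J P_J$ by decomposing the cell representation as a parabolic induction inside $L_J$, commuting $\HOrd[n]_{P_J}$ past this outer induction, and then isolating a core computation on the fibre of the cell that produces both the degree shift by $[F:\Qp]d_{\Iw^J}$ and the twist by $\omega^{-1} \circ \delta_{\Iw^J}$. The first step is geometric: since $\Iw^J$ has minimal length in its double coset, conjugation by $\Iw^J$ identifies $\Lb_{J \cap \Iw^J{}^{-1}(I)}$ with the standard Levi $\Lb_{I \cap \Iw^J(J)} \subseteq \Lb_I$, and the multiplication map realises $P_I^- \Iw^J P_J$ as a fibration whose base is $L_J / (L_J \cap P_{J \cap \Iw^J{}^{-1}(I)}^-)$ and whose fibre is a nilpotent piece of $F$-dimension $d_{\Iw^J}$. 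Transporting this structure across the definition of compact induction should yield a natural $A[P_J]$-linear isomorphism
\begin{equation*}
\cind_{P_I^-}^{P_I^- \Iw^J P_J} \sigma \;\cong\; \Ind_{L_J \cap P_{J \cap \Iw^J{}^{-1}(I)}^-}^{L_J} \bigl( \mathcal{C}(\sigma) \bigr),
\end{equation*}
where $\mathcal{C}(\sigma)$ is a smooth $A[L_J \cap P_{J \cap \Iw^J{}^{-1}(I)}]$-representation assembled from a fibre compact induction of $\sigma^{\Iw^J}$.

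I would then commute $\HOrd[n]_{P_J}$ past the outer parabolic induction. Setting $Q \coloneqq P_{J \cap \Iw^J{}^{-1}(I)}$, a Frobenius-type adjunction for $\Ord_P$ together with the universal property of $\delta$-functors should produce a natural isomorphism
\begin{equation*}
\HOrd[n]_{P_J} \Bigl( \Ind_{L_J \cap Q^-}^{G}(-) \Bigr) \;\cong\; \Ind_{L_J \cap Q^-}^{L_J} \Bigl( \HOrd[n]_{Q}(-) \Bigr),
\end{equation*}
reducing the conjecture to computing $\HOrd[n]_Q(\mathcal{C}(\sigma))$. A Hochschild--Serre-style dévissage of Emerton's pro-$p$ cohomology along the filtration of the unipotent radical of $Q$ by the fibre nilpotent piece should then produce the expected shift by $[F:\Qp]d_{\Iw^J}$ together with the twist by $\omega^{-1} \circ \delta_{\Iw^J}$ (arising as the Tate twist of the top exterior power of the fibre Lie algebra), leaving $\HOrd[n - [F:\Qp]d_{\Iw^J}]_{L_I \cap P_{I \cap \Iw^J(J)}} \sigma$ after the $\Iw^J$-twist is absorbed on the Levi side.

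The main obstacle lies in this last dévissage. Emerton's $\HOrd[\bullet]$ is defined as an inverse limit of finite-level cohomologies of a compact open pro-$p$ subgroup, and there is currently no clean Hochschild--Serre spectral sequence controlling it in arbitrary degrees. In low degree one can run the argument directly and obtain the isomorphism (cf.\ Theorem \ref{intro:theo:HOrd}), but in full generality one would need either a sufficiently sharp vanishing theorem for higher $\HOrd$ on compactly induced cell representations or a genuinely new spectral sequence for the derived ordinary parts functor; this is precisely why the statement remains conjectural beyond small $n$.
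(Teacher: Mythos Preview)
First, a framing point: this statement is a \emph{conjecture} in the paper, not a theorem, so there is no complete proof to compare against. What the paper offers is (a) the construction of a natural $L_J$-equivariant morphism \eqref{HOrd} in the conjectured direction, (b) Theorem~\ref{theo:HOrd}, which shows that both sides carry natural filtrations by $B_J$-subrepresentations indexed by $\JIW_J$ whose associated graded pieces are naturally isomorphic (though only $B_{J,w_J}$-equivariantly on each piece), and (c) Proposition~\ref{prop:HOrd1}, which verifies the conjecture when the right-hand side vanishes, when $\Iw^J(J)\subseteq I$, or when $n=0$ and $\Iw^J=1$. The obstruction the paper isolates is the failure to upgrade the graded isomorphisms to $B_J$-equivariant ones so that they glue; see Remark~\ref{rema:IwJ=1}.

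Your Step~1 is not well-posed as written. The displayed isomorphism has a $P_J$-representation on the left and an $L_J$-representation on the right; inflating the right-hand side to $P_J$ would force $N_J$ to act trivially, which it does not on $\cind_{P_I^-}^{P_I^- \Iw^J P_J}\sigma$. There is no $P_J$-equivariant identification of the cell representation with a parabolic induction \emph{inside} $L_J$; what one has instead is the $B$-filtration $\Fil_B^\bullet$ of \S\ref{ssec:fil}, and the paper works with that.

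More seriously, your Step~2 is circular. Writing $Q=P_{J\cap\Iw^J{}^{-1}(I)}$, the ``commutation'' $\HOrd[n]_{P_J}\circ\Ind_{Q^-}^G \cong \Ind_{L_J\cap Q^-}^{L_J}\circ\HOrd[n]_Q$ (or any reasonable variant of what you wrote) is precisely the content of the conjecture in the special case $\Iw^J=1$ with $I$ replaced by $J\cap\Iw^J{}^{-1}(I)$. Remark~\ref{rema:IwJ=1} makes this explicit: even granting Emerton's Conjecture~3.7.2, the map \eqref{homdelta} is only known to be an isomorphism in degree~$0$; whether it is an isomorphism in higher degree is equivalent to the $\Iw^J=1$ case of Conjecture~\ref{conj:HOrd}. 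So you cannot invoke it as an input. The adjunction and universality you appeal to only produce a \emph{morphism} of $\delta$-functors, not an isomorphism.

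Finally, you locate the main obstacle in Step~3 (the d\'evissage), but the paper actually carries out that d\'evissage successfully: Proposition~\ref{prop:dev} and Proposition~\ref{prop:calc} give exactly the degree shift by $[F:\Qp]d_{\Iw^J}$ and the twist by $\omega^{-1}\circ\delta_{\Iw^J}$, and there is no missing spectral sequence there. The genuine gap is the gluing/equivariance problem flagged above, which your outline does not address.
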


We give some strong evidence for this conjecture (see Theorem \ref{theo:HOrd}): we prove that these two representations have natural filtrations by $A[B_J]$-submodules indexed by $\JIW_J$ (the system of representatives of minimal length of the right cosets $W_{J \cap \Iw^J{}^{-1}(I)} \backslash W_J$) such that the associated graded representations are naturally isomorphic (see the subsection below).

We prove this conjecture in several cases (see Proposition \ref{prop:HOrd1}): whenever the right-hand side is either zero or a trivially induced representation, in which cases the aforementioned filtrations of both sides are trivial; when $n=0$, in which case we deduce the result from the computation of $\Ord_{P_J}(\Ind_{P_I^-}^G \sigma)$ in \cite{AHV}.
This allows us to compute $\HOrd_{P_J}(\Ind_{P_I^-}^G \sigma)$ in low degree when there is an inclusion between $I$ and $J$ (see Proposition \ref{prop:HOrd2}).
In particular, we obtain the following result in the case $I=J$.

\begin{theo}[Corollary \ref{coro:HOrd}] \label{intro:theo:HOrd}
Let $\Pb = \Lb \Nb$ be a standard parabolic subgroup and $\sigma$ be a locally admissible smooth representation of $L$ over $A$.
\begin{enumerate}
\item For all $n \in \N$ such that $0<n<[F:\Qp]$, we have $\HOrd[n]_P (\Ind_{P^-}^G \sigma) = 0$.
\item If $\Ord_{L \cap s_\alpha P s_\alpha^{-1}} \sigma = 0$ for all $\alpha \in \Delta^1 \backslash (\Delta_\Lb \cup \Delta_\Lb^\perp)$, then there is a natural $A[L]$-linear isomorphism
\begin{equation*}
\HOrd[{[F:\Qp]}]_P \left( \Ind_{P^-}^G \sigma \right) \cong \bigoplus_{\alpha \in \Delta_\Lb^{\perp,1}} \sigma^\alpha \otimes \left( \omega^{-1} \circ \alpha \right).
\end{equation*}
\end{enumerate}
\end{theo}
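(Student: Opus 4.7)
The plan is to recover both statements as the $I = J$ specialisation of Proposition~\ref{prop:HOrd2}. I would start from the filtration $\Fil_P^\bullet(\Ind_{P^-}^G \sigma)$ constructed in §~\ref{ssec:fil}, whose graded pieces are the representations $\cind_{P^-}^{P^- \Iw P} \sigma$ indexed by $\Iw \in \IW^I$. Applying the $\delta$-functor $\HOrd_P$ and invoking Proposition~\ref{prop:FilPJ} gives a filtration of $\HOrd[n]_P(\Ind_{P^-}^G \sigma)$ by $A[L]$-submodules whose graded pieces are subquotients of $\HOrd[n]_P(\cind_{P^-}^{P^- \Iw P} \sigma)$. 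It then suffices to evaluate each of these, which is what Proposition~\ref{prop:HOrd1} achieves in the cases of Conjecture~\ref{intro:conj:HOrd} needed here.

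For part~(i), every non-trivial $\Iw \in \IW^I$ satisfies $d_\Iw \geq 1$, so the conjectural formula only involves $\HOrd[n-{[F:\Qp]}d_\Iw]$ of a Levi representation, which vanishes automatically whenever $n < [F:\Qp]$. The remaining index $\Iw = 1$ contributes $\HOrd[n]_L \sigma$ relative to the trivial unipotent subgroup of $L$, which is zero for $n > 0$. Every graded piece thus vanishes in the range $0 < n < [F:\Qp]$, and therefore so does $\HOrd[n]_P(\Ind_{P^-}^G \sigma)$.

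For part~(ii), at $n = [F:\Qp]$ only the indices $\Iw$ with $d_\Iw = 1$ survive: these are exactly the simple reflections $s_\alpha$ with $\alpha \in \Delta^1 \setminus \Delta_\Lb$, the remaining non-trivial $\Iw$ having $d_\Iw \geq 2$. For such an $\Iw = s_\alpha$, the formula of Conjecture~\ref{intro:conj:HOrd} reduces to an induction into $L$ of $(\Ord_{L \cap s_\alpha P s_\alpha^{-1}} \sigma)^{s_\alpha} \otimes (\omega^{-1} \circ \alpha)$, once one identifies the character $\delta_{s_\alpha}$ with $\alpha$ (which is a consequence of $\dim_F \Ub_\alpha = 1$). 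When $\alpha \in \Delta_\Lb^{\perp,1}$, conjugation by $s_\alpha$ stabilises $\Lb$ by Lemma~\ref{lemm:alpha}, so the induction is from $L$ to itself and the $\Ord$ functor on $L$ is the identity, producing exactly the claimed summand $\sigma^\alpha \otimes (\omega^{-1} \circ \alpha)$. When $\alpha \in \Delta^1 \setminus (\Delta_\Lb \cup \Delta_\Lb^\perp)$, the contribution is killed by the hypothesis $\Ord_{L \cap s_\alpha P s_\alpha^{-1}} \sigma = 0$.

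The main technical obstacle will be to promote the resulting filtration of $\HOrd[{[F:\Qp]}]_P(\Ind_{P^-}^G \sigma)$ into the asserted direct sum decomposition. This should be controlled by tracking the action of the centre of $\Lb$: the twist $\omega^{-1} \circ \alpha$ pins down a distinct central character on each summand indexed by $\alpha \in \Delta_\Lb^{\perp,1}$, leaving no room for non-trivial extensions between the successive graded pieces and forcing the filtration to split canonically. This splitting is precisely what Proposition~\ref{prop:HOrd2} packages, and specialising its statement to $I = J$ then yields the corollary.
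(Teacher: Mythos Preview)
Your overall approach is correct and matches the paper's: the corollary is indeed the specialisation of Proposition~\ref{prop:HOrd2} to $I=J$, and your analysis of parts~(i) and~(ii) graded piece by graded piece is accurate.

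However, your proposed mechanism for obtaining the direct sum in part~(ii) is not right, and would not work. A locally admissible $\sigma$ need not have a central character, and even when it does, nothing prevents distinct $\alpha,\beta \in \Delta_\Lb^{\perp,1}$ from producing the same central character on $\sigma^\alpha \otimes (\omega^{-1}\circ\alpha)$ and $\sigma^\beta \otimes (\omega^{-1}\circ\beta)$ (for instance when $p=2$, so $\omega=1$, and $\sigma^\alpha \cong \sigma^\beta$). So a central-character splitting argument cannot be made to go through in this generality.

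The actual reason no splitting is needed is structural. In the proof of Proposition~\ref{prop:HOrd2}, the filtration of $\HOrd[n]_{P_J}(\Ind_{P_I^-}^G\sigma)$ indexed by $\IW^J$ is coarsened via the length function $\ell:\IW^J\to\N$, and Lemma~\ref{lemm:lin} gives a natural $L_J$-equivariant isomorphism
\[
\Gr_{P_J}^{\ell,i}\bigl(\HOrd[n]_{P_J}(\Ind_{P_I^-}^G\sigma)\bigr)\;\cong\;\bigoplus_{\ell(\Iw^J)=i}\HOrd[n]_{P_J}\bigl(\cind_{P_I^-}^{P_I^-\Iw^J P_J}\sigma\bigr).
\]
For $n=[F:\Qp]$ and $I=J$, you have already argued that only the $\Iw=s_\alpha$ with $\alpha\in\Delta_\Lb^{\perp,1}$ contribute, and these all have length~$1$; the length-$0$ piece (coming from $\Iw=1$) vanishes since $\HOrd[{[F:\Qp]}]_L\sigma=0$. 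Hence the length-filtration has a single nonzero graded piece, at degree~$1$, and that graded piece is \emph{already} the desired direct sum by Lemma~\ref{lemm:lin}. There is no extension problem to solve.
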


Note that for all $\alpha \in \Delta \backslash \Delta_\Lb$, $\Lb \cap s_\alpha \Pb s_\alpha^{-1}$ is the standard parabolic subgroup of $\Lb$ corresponding to $\Delta_\Lb \cap s_\alpha (\Delta_\Lb)$ and it is proper if and only if $\alpha \not \in \Delta_\Lb^\perp$.
In particular, the condition in (ii) is satisfied when $\sigma$ is supersingular.

In §~\ref{ssec:HJ}, we adapt the previous results in order to partially compute $\Hh[n](N_J,\Ind_{P_I}^G \sigma)$.
In particular, we obtain an analogue of Theorem \ref{intro:theo:HOrd} (see Corollary \ref{coro:HJ}).

\subsubsection*{Filtrations and dévissages}

Let $\Iw^J \in \IW^J$.
We explain the partial computation of the smooth representation of $L_J$ over $A$
\begin{equation*}
\HOrd[n]_{P_J} \left( \cind_{P_I^-}^{P_I^- \Iw^J P_J} \sigma \right).
\end{equation*}

In §~\ref{ssec:fil}, we use again the Bruhat decomposition to construct a natural filtration $\Fil_B^\bullet(\cind_{P_I^-}^{P_I^- \Iw^J P_J} \sigma)$ by $A[B]$-submodules indexed by $\JIW_J$, and we prove that for all $w_J \in \JIW_J$ there is a natural $A[B]$-linear isomorphism
\begin{equation*}
\Gr_B^{w_J} \left( \cind_{P_I^-}^{P_I^- \Iw^J P_J} \sigma \right) \cong \cind_{P_I^-}^{P_I^- \Iw^J w_J B} \sigma.
\end{equation*}
We prove that $\Fil_B^\bullet(\cind_{P_I^-}^{P_I^- \Iw^J P_J} \sigma)$ induces a filtration of $\HOrd[n]_{P_J}(\cind_{P_I^-}^{P_I^- \Iw^J P_J} \sigma)$ by $A[B_J]$-submodules indexed by $\JIW_J$ (see Proposition \ref{prop:FilB}).
Likewise, we construct a natural filtration $\Fil_{B_J}^\bullet(\Ind_{L_J \cap P_{J \cap \Iw^J{}^{-1}(I)}^-}^{L_J} \sigmat)$ by $A[B_J]$-submodules indexed by $\JIW_J$ for any smooth representation $\sigmat$ of $L_{J \cap \Iw^J{}^{-1}(I)}$ over $A$.

Let $w_J \in \JIW_J$ and set $\Iw \coloneqq \Iw^J w_J$ and $\pi_{\Iw} \coloneqq \cind_{P_I^-}^{P_I^- \Iw B} \sigma$.
We want to compute the $A$-module $\Hc[n](N_{J,0},\pi_{\Iw})$ endowed with the Hecke action of $B_J^+$ (see §~\ref{ssec:dev}), where $N_{J,0} \subseteq N_J$ is a compact open subgroup and $B_J^+ \subseteq B_J$ is the open submonoid stabilising $N_0$ by conjugation (we use similar notation for subgroups of $\Nb_J$ and $\Bb_J$ by taking intersections with $N_{J,0}$ and $B_J^+$ respectively).

In §~\ref{ssec:isocind}, we define closed subgroups $\Nb_{J,\Iw} \subseteq \Nb_J$ and $\Bb_{J,w_J} \subseteq \Bb_J$ such that there is a semidirect product $\Bb_{J,w_J} \ltimes \Nb_{J,\Iw}$ and we give an explicit description of the actions of $N_{J,\Iw}$ and $B_{J,w_J}$ on $\pi_{\Iw}$ for all $w_J \in \JIW_J$.
Then, we compute the $A$-module $\Hc[n](N_{J,\Iw,0},\pi_{\Iw})$ with the Hecke action of $B_{J,w_J}^+$ (see Proposition \ref{prop:calc}).

The idea is to use a semidirect product $\Nb_{J,\Iw} = \Nb''_{J,\Iw} \ltimes \Nb'_{J,\Iw}$ (also defined in §~\ref{ssec:isocind}) where $\Nb'_{J,\Iw} \subseteq \Nb_{J,\Iw}$ is a closed subgroup stable under conjugation by $\Bb_{J,w_J}$ such that $\pi_{\Iw}$ is ${N'_{J,\Iw,0}}$-acyclic and there is an $A[B_{J,w_J}^+]$-linear surjection with a locally nilpotent kernel from $\pi_{\Iw}^{N'_{J,\Iw,0}}$ onto $\Gr_{B_J}^{w_J}(\Ind_{L_J \cap P_{J \cap \Iw^J{}^{-1}(I)}^-}^{L_J}(\sigma_{|L_{I \cap \Iw^J(J)}})^{\Iw^J})$.
Then, taking the $N''_{J,\Iw,0}$-cohomology changes $\sigma_{|L_{I \cap \Iw^J(J)}}$ into $\HOrd[n]_{L_I \cap P_{I \cap \Iw^J(J)}} \sigma$ in the target and the inflation map is an $A[B_{J,w_J}^+]$-linear isomorphism between the source and $\Hc[n](N_{J,\Iw,0},\pi_{\Iw})$.

Finally, by a technical result on dévissages (see Proposition \ref{prop:dev}) and a finiteness property of the $A$-modules $\Hc(N_{J,\Iw,0},\pi_{\Iw})$, we can compute the $A$-module $\Hc[n](N_{J,0},\pi_{\Iw})$ with the Hecke action of $B_{J,w_J}^+$ from $\Hc[n](N_{J,\Iw,0},\pi_{\Iw})$.
It is this dévissage that introduces the degree shift and the twist (i.e. $d_{\Iw^J}$ and $\delta_{\Iw^J}$) in the formulas.

\subsection*{Notation and terminology}

Let $F/\Qp$ be a finite extension.
A linear algebraic $F$-group will be denoted by a boldface letter like $\Hb$ and the group of its $F$-points $\Hb(F)$ will be denoted by the corresponding ordinary letter $H$.
We will also write $\Hb^\der$ for its derived subgroup and $\Hb^\circ$ for its identity component.
The group of algebraic characters of $\Hb$ will be denoted by $\X^*(\Hb)$, the group of algebraic cocharacters of $\Hb$ will be denoted by $\X_*(\Hb)$, and we will write $\langle-,-\rangle : \X^*(\Hb) \times \X_*(\Hb) \to \Z$ for the natural pairing.
We now turn to reductive groups.
The main reference for these is \cite{BT}.

Let $\Gb$ be a connected reductive algebraic $F$-group.
We write $\Zb$ for the centre of $\Gb$.
Let $\Sb \subseteq \Gb$ be a maximal split torus.
We write $\Zcb$ (resp. $\Ncb$) for the centraliser (resp. normaliser) of $\Sb$ in $\Gb$ and $W$ for the Weyl group $\Ncb/\Zcb=\Nc/\Zc$.
We write $\Phi \subseteq \X^*(\Sb)$ for the set of roots of $\Sb$ in $\Gb$ and $\Phi_0 \subseteq \Phi$ for the subset of reduced roots.
To each $\alpha \in \Phi$ correspond a coroot $\alpha^\vee \in \X_*(\Sb)$, a reflection $s_\alpha \in W$ and a root subgroup $\Ub_\alpha \subset \Gb$ (which is denoted by $\Ub_{(\alpha)}$ in \cite{BT}).
For $\alpha,\beta \in \Phi$, we write $\alpha \perp \beta$ if and only if $\langle \alpha,\beta^\vee \rangle = 0$.
For $I \subseteq \Delta$, we put $I^\perp \coloneqq \{ \alpha \in \Delta \mid \alpha \perp \beta \ \forall \beta \in I \}$.

Let $\Bb \subseteq \Gb$ be a minimal parabolic subgroup containing $\Sb$.
We write $\Ub$ for the unipotent radical of $\Bb$ (so that $\Bb=\Zcb\Ub$), $\Phi^+ \subseteq \Phi$ for the subset of roots of $\Sb$ in $\Ub$ and $\Delta \subseteq \Phi^+$ for the subset of simple roots.
We set $\Phi_0^+ \coloneqq \Phi_0 \cap \Phi^+$.
A simple reflection is a reflection $s_\alpha \in W$ with $\alpha \in \Delta$.
A reduced decomposition of $w \in W$ is any decomposition into simple reflections $w=s_1 \dots s_n$ with $n \in \N$ minimal, which is called the length of $w$ and denoted by $\ell(w)$.
We write $w_0$ for the element of maximal length in $W$.

We say that $\Pb = \Lb \Nb$ is a standard parabolic subgroup if $\Pb \subseteq \Gb$ is a parabolic subgroup containing $\Bb$ with unipotent radical $\Nb$ and $\Lb \subseteq \Pb$ is the Levi factor containing $\Sb$ (we say that $\Lb$ is a standard Levi subgroup).
In this case, we write $\Pb^-$ for the parabolic subgroup of $\Gb$ opposed to $\Pb$ with respect to $\Lb$ (i.e. $\Pb \cap \Pb^-=\Lb$) and $\Nb^-$ for the unipotent radical of $\Pb^-$.
We write $\Zb_\Lb$ for the centre of $\Lb$, $\Bb_\Lb \subseteq \Lb$ for the minimal parabolic subgroup $\Bb \cap \Lb$, $\Ub_\Lb \subseteq \Bb_\Lb$ for the unipotent radical $\Ub \cap \Lb$ (so that $\Bb_\Lb=\Zcb\Ub_\Lb$ and $\Ub = \Ub_\Lb \ltimes \Nb$) and $\Delta_\Lb \subseteq \Delta$ for the subset of simple roots of $\Sb$ in $\Ub_\Lb$.

Each parabolic subgroup of $\Gb$ is conjugate to exactly one standard parabolic subgroup and the map $\Pb = \Lb \Nb \mapsto \Delta_\Lb$ yields a bijection between standard parabolic subgroups of $\Gb$ and subsets of $\Delta$.
For $I \subseteq \Delta$, we write $\Pb_I = \Lb_I \Nb_I$ for the corresponding standard parabolic subgroup (i.e. $\Delta_{\Lb_I}=I$), $\Zb_I$, $\Bb_I$, $\Ub_I$ instead of $\Zb_{\Lb_I}$, $\Bb_{\Lb_I}$, $\Ub_{\Lb_I}$ respectively, $W_I \subseteq W$ for the subgroup generated by $(s_\alpha)_{\alpha \in I}$ (so that $\Pb_I = \Bb W_I \Bb$), $w_{I,0}$ for the element of maximal length in $W_I$, $\Phi_I \subseteq \Phi$ for the subset of roots of $\Sb$ in $\Lb_I$ and $\Phi_I^+\subseteq \Phi^+$ for the subset of roots of $\Sb$ in $\Ub_I$.

\medskip

Let $E/\Qp$ be a finite extension with ring of integers $\Oc$ and residue field $k$.
We let $A$ be an Artinian local $\Oc$-algebra with residue field $k$.
We write $\varepsilon : F^\times \to \Zp^\times \subseteq \Oc^\times$ for the $p$-adic cyclotomic character (defined by $\varepsilon(x) = \Nrm_{F/\Qp}(x) \vert \Nrm_{F/\Qp}(x) \vert_p$ for all $x \in F^\times$) and $\omega : F^\times \to A^\times$ for its image in $A^\times$.

We use the terminology and notation of \cite[§~2]{Em1} for representations of a $p$-adic Lie group $H$ over $A$.
An $H$-representation is a smooth representation of $H$ over $A$ and a morphism between $H$-representations is $A$-linear.
We write $\Mod_H^\sm(A)$ for the category of $H$-representations and $H$-equivariant morphisms, and $\Mod_H^\adm(A)$ (resp. $\Mod_H^\ladm(A)$, $\Mod_H^\sm(A)^{Z_H-\lfin}$) for the full subcategory of admissible (resp. locally admissible, locally $Z_H$-finite) $H$-representations (here $Z_H$ denotes the centre of $H$).

Assume $H \subseteq G$ is closed and $\pi$ is an $H$-representation.
For $g \in G$, we write $\pi^g$ for the $g^{-1}Hg$-representation with the same underlying $A$-module as $\pi$ on which $g^{-1}hg$ acts as $h$ for all $h \in H$.
If $g \in H$, then $g^{-1}Hg=H$ and the action of $g$ on $\pi$ induces a natural $H$-equivariant isomorphism $\pi \iso \pi^g$.

Assume furthermore $\Zc \subseteq H$.
For $w \in W$, we write $\pi^w$ for the $n^{-1}Hn$-representation $\pi^n$ where $n \in \Nc$ is any representative of $w$ (neither $n^{-1}Hn$ nor $\pi^n$ depend on the choice of $n$ up to isomorphism).
For $\alpha \in \Delta$, we simply write $\pi^\alpha$ instead of $\pi^{s_\alpha}$.

For a topological space $X$ and an $A$-module $V$, we write $\Clis(X,V)$ for the $A$-module of locally constant functions $f : X \to V$ and $\Clisc(X,V)$ for the $A$-submodule consisting of those functions with compact support (the support of $f$ is the open and closed subset $\supp f \coloneqq f^{-1}(V \backslash \{0\}) \subseteq X$).

\subsection*{Acknowledgements}

I am grateful to C.~Breuil whose ideas motivated the present results, and for several comments on a preliminary version of this paper.
I thank N.~Abe for his help with the proof of Proposition \ref{prop:Ext1prelim}, as well as G.~Henniart, V.~Paškūnas and M.-F.~Vignéras for answering my questions.
I also thank M.-F.~Vignéras for her encouragement.
Finally, I want to thank the anonymous referee for his/her very careful reading of the manuscript.

\numberwithin{theo}{subsection}

\section{Generalised Bruhat filtrations}

The aim of this section is to define filtrations of parabolically induced representations and describe the associated graded representations.
In §~\ref{ssec:double}, we review some properties of the representatives of minimal length of certain double cosets in $W$ and some variants of the Bruhat decomposition.
In §~\ref{ssec:fil}, we define the notion of filtration indexed by a poset and we construct filtrations of induced representations indexed by subsets of $W$ with the Bruhat order using the previous decompositions.
In §~\ref{ssec:isocind}, we define several subgroups of $\Ub$ that we use to describe the graded representations associated to the previous filtrations as spaces of locally constant functions with compact support.

\subsection{Doubles cosets} \label{ssec:double}

First, we recall some facts about certain right cosets in $W$ (cf. \cite[Proposition 3.9]{BTC}).
For any $I \subseteq \Delta$, we define a system of representatives of the right cosets $W_I \backslash W$ by setting
\begin{equation*}
\IW \coloneqq \left\{ w \in W \middlevert w \text{ is of minimal length in } W_I w \right\}.
\end{equation*}
For all $w \in W$, there exists a unique decomposition $w = w_I \Iw$ with $w_I \in W_I$ and $\Iw \in \IW$.
This decomposition is characterised by the equality
\begin{equation*}
\Phi_I^+ \cap w \left( \Phi^+ \right) = \Phi_I^+ \cap w_I \left( \Phi_I^+ \right).
\end{equation*}
In particular, we have $\Iw^{-1}(\Phi_I^+) \subseteq \Phi^+$.
Furthermore, we have $\ell(w) = \ell(w_I) + \ell(\Iw)$.

We now recall some properties of certain double cosets in $W$ (cf. \cite[Lemma 5.4]{DM}).
For any $I,J \subseteq \Delta$, we define a system of representatives of the double cosets $W_I \backslash W / W_J$ by setting
\begin{equation*}
\IW^J \coloneqq \IW \cap \left( ^JW \right)^{-1}.
\end{equation*}
For all $\Iw \in \IW$, there exists a unique decomposition $\Iw = \Iw^J w_J$ with $\Iw^J \in \IW^J$ and $w_J \in W_J$.
In fact $w_J \in \JIW_J$.
This decomposition is characterised by the equality
\begin{equation} \label{KostantIJ}
\Phi_J^+ \cap \Iw^{-1} \left( \Phi^+ \right) = \Phi_J^+ \cap w_J^{-1} \left( \Phi_J^+ \right).
\end{equation}
In particular, we have $\Iw^J(\Phi_J^+) \subseteq \Phi^+$.
Furthermore, we have $\ell(\Iw) = \ell(\Iw^J) + \ell(w_J)$.
Conversely, for all $\Iw^J \in \IW^J$ and $w_J \in W_J$, we have $\Iw^J w_J \in \IW$ if and only if $w_J \in \JIW_J$.
Note that the projections $W \twoheadrightarrow \IW$ and $\IW \twoheadrightarrow \IW^J$ respect the Bruhat order\footnote{The Bruhat order on $W$ is defined by $w \leq w'$ if and only if there exist a reduced decomposition $w'=s_1 \dots s_{\ell(w')}$ and integers $1 \leq i_1 < \dots < i_{\ell(w)} \leq \ell(w')$ such that $w = s_{i_1} \dots s_{i_{\ell(w)}}$.} (cf. \cite[Proposition 2.5.1]{BB})

\begin{lemm} \label{lemm:IwJ}
We have the following equalities in $\Gb$.
\begin{enumerate}
\item $\Lb_I \cap \Iw^J \Ub_J \Iw^J{}^{-1} = \Ub_{I \cap \Iw^J(J)}$
\item $\Lb_I \cap \Iw^J \Lb_J \Iw^J{}^{-1} = \Lb_{I \cap \Iw^J(J)}$
\item $\Lb_I \cap \Iw^J \Nb_J \Iw^J{}^{-1} = \Lb_I \cap \Nb_{I \cap \Iw^J(J)}$
\item $\Lb_I \cap \Iw^J \Pb_J \Iw^J{}^{-1} = \Lb_I \cap \Pb_{I \cap \Iw^J(J)}$
\end{enumerate}
\end{lemm}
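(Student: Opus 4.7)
The plan is to reduce each equality to a root-system identity using the properties of $\Iw^J \in \IW^J$: namely $\Iw^J(\Phi_J^+) \subseteq \Phi^+$, $\Iw^J{}^{-1}(\Phi_I^+) \subseteq \Phi^+$, together with the length additivities $\ell(u \Iw^J) = \ell(u) + \ell(\Iw^J)$ for $u \in W_I$ and $\ell(\Iw^J v) = \ell(\Iw^J) + \ell(v)$ for $v \in W_J$ (the first from $\Iw^J \in \IW$, the second from $\Iw^J \in ({}^JW)^{-1}$ by inversion). Set $I' \coloneqq I \cap \Iw^J(J)$ and $J' \coloneqq J \cap \Iw^J{}^{-1}(I)$; both lie in $\Delta$, and $\Iw^J$ restricts to a bijection $J' \iso I'$, which extends to $\Iw^J(\Phi_{J'}) = \Phi_{I'}$.

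The central identity is $\Phi_I^+ \cap \Iw^J(\Phi_J^+) = \Phi_{I'}^+$. The $\supseteq$ direction is immediate. For $\subseteq$, write $\alpha = \Iw^J(\beta)$ with $\beta = \sum_{\delta \in J} b_\delta \delta \in \Phi_J^+$, $b_\delta \in \Z_{\geq 0}$. Since each $\Iw^J(\delta) \in \Phi^+$ has non-negative $\Delta$-coefficients and $\alpha \in \Phi_I^+$ has no coefficient outside $I$, one obtains $\Iw^J(\delta) \in \Phi_I^+$ whenever $b_\delta > 0$. The crucial observation is that $\Iw^J(\delta)$ is then simple: from $s_{\Iw^J(\delta)} \in W_I$ and the identity $s_{\Iw^J(\delta)} \Iw^J = \Iw^J s_\delta$, the length additivities give
\[ \ell(s_{\Iw^J(\delta)}) + \ell(\Iw^J) = \ell(s_{\Iw^J(\delta)} \Iw^J) = \ell(\Iw^J s_\delta) = \ell(\Iw^J) + 1, \]
so $\ell(s_{\Iw^J(\delta)}) = 1$, i.e., $\Iw^J(\delta) \in \Delta$, hence $\Iw^J(\delta) \in I$. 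Thus $\beta$ is supported on $J'$ and $\alpha \in \Phi_{I'}^+$. The same argument with signs reversed yields $\Phi_I^- \cap \Iw^J(\Phi_J^-) = \Phi_{I'}^-$; moreover $\Phi_I^\pm \cap \Iw^J(\Phi_J^\mp) = \emptyset$ because $\Iw^J$ preserves signs on $\Phi_J$, so $\Phi_I \cap \Iw^J(\Phi_J) = \Phi_{I'}$.

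The four equalities now follow by comparing root data. For (i), $\Iw^J \Ub_J \Iw^J{}^{-1} \subseteq \Ub$, so $\Lb_I \cap \Iw^J \Ub_J \Iw^J{}^{-1} = \Ub_I \cap \Iw^J \Ub_J \Iw^J{}^{-1}$ is the unipotent subgroup with root set $\Phi_I^+ \cap \Iw^J(\Phi_J^+) = \Phi_{I'}^+$, namely $\Ub_{I'}$. For (ii), $\Phi_I \cap \Iw^J(\Phi_J) = \Phi_{I'}$ together with $\Zcb \subseteq \Lb_I \cap \Iw^J\Lb_J \Iw^J{}^{-1}$ (as $\Iw^J$ normalises $\Zcb$) identifies the intersection with $\Lb_{I'}$ by comparison of root systems for connected reductive subgroups of $\Gb$ containing the common maximal split torus $\Sb$. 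For (iii), $\Phi_I \cap \Iw^J(\Phi^+) = \Phi_I^+$ (applying $\Iw^J{}^{-1}(\Phi_I^\pm) \subseteq \Phi^\pm$), so subtracting gives $\Phi_I \cap \Iw^J(\Phi^+ \setminus \Phi_J^+) = \Phi_I^+ \setminus \Phi_{I'}^+$, which is the root set of $\Lb_I \cap \Nb_{I'}$. Finally, (iv) follows by combining (ii) and (iii): $\Iw^J \Pb_J \Iw^J{}^{-1}$ is a parabolic subgroup of $\Gb$ containing $\Sb$, so its intersection with $\Lb_I$ is a parabolic subgroup of $\Lb_I$ whose Levi factor and unipotent radical coincide with those of $\Lb_I \cap \Pb_{I'}$ by (ii) and (iii).

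The main obstacle is the simple-root deduction in the central identity: one genuinely needs the minimality of $\Iw^J$ on both left and right cosets (equivalently, in the double coset) to force $\Iw^J(\delta)$ to be simple once it lies in $\Phi_I^+$, which is precisely what matches the convention that the index $I \cap \Iw^J(J)$ on the right-hand sides is read as a subset of $\Delta$.
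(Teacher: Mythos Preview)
Your proof is correct and follows essentially the same strategy as the paper: reduce each equality to the root identity $\Phi_I \cap \Iw^J(\Phi_J^+) = \Phi_{I \cap \Iw^J(J)}^+$, then pass to Lie algebras. The only difference is in how you show that $\Iw^J(\delta) \in \Phi_I^+$ forces $\Iw^J(\delta) \in I$: the paper argues by expanding $\Iw^J(\delta)$ in the basis $I$ and applying $\Iw^J{}^{-1}$ (the result lies in $\Delta$, so only one coefficient survives), whereas you use the length computation $\ell(s_{\Iw^J(\delta)}) = 1$ via the two-sided minimality of $\Iw^J$; both arguments are standard and equally short.
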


\begin{proof}
First, we prove the following equalities in $\Phi$:
\begin{gather}
\Phi_I \cap \Iw^J(J) = I \cap \Iw^J(J), \label{eq0} \\
\Phi_I \cap \Iw^J \left( \Phi_J^+ \right) = \Phi_{I \cap \Iw^J(J)}^+. \label{eqi}
\end{gather}
We prove the non-trivial inclusion of \eqref{eq0}.
Assume $\Phi_I \cap \Iw^J(J) \neq \emptyset$ and let $\alpha \in \Phi_I \cap \Iw^J(J)$.
Since $\Iw^J(J) \subseteq \Phi^+$, $\alpha \in \Phi_I^+$ so that there exists $(r_\beta)_{\beta \in I} \in \N^I$ such that $\alpha = \sum_{\beta \in I} r_\beta \beta$.
Then $\Iw^J{}^{-1}(\alpha) = \sum_{\beta \in I} r_\beta \Iw^J{}^{-1}(\beta) \in \Delta$.
Since $\Iw^J{}^{-1}(\beta) \in \Phi^+$ for all $\beta \in I$, $r_\beta=0$ for all $\beta \in I \backslash \{\alpha\}$ and $r_\alpha=1$.
Thus $\alpha \in I$.
We prove the non-trivial inclusion of \eqref{eqi}.
Assume $\Phi_I \cap \Iw^J (\Phi_J^+) \neq \emptyset$ and let $\alpha \in \Phi_I \cap \Iw^J (\Phi_J^+)$.
There exists $(r_\beta)_{\beta \in J} \in \N^J$ such that $\alpha = \sum_{\beta \in J} r_\beta \Iw^J(\beta)$.
Since $\Iw^J(\beta) \in \Phi^+$ for all $\beta \in J$, $\Iw^J(\beta) \in \Phi_I^+$ so that $\Iw^J(\beta) \in I$ by \eqref{eq0} for all $\beta \in J$ such that $r_\beta > 0$.
Thus $\alpha \in \Phi_{I \cap \Iw^J(J)}^+$.

Now, by considering the Lie algebras, \eqref{eqi} yields (i), \eqref{eqi} and its opposite yield (ii), the equality $\Phi_I \cap \Iw^J(\Phi^+ \backslash \Phi_J^+)=\Phi_I^+ \backslash \Phi_{I \cap \Iw^J(J)}^+$ (which follows from \eqref{eqi} and the fact that $\Phi_I \cap \Iw^J(\Phi^+)=\Phi_I^+$ since $\Iw^J \in \IW$) yields (iii), and we deduce (iv) from (ii) and (iii).
\end{proof}

Finally, we give certain decompositions in double cosets (for the notion of `lower set', see foonote \ref{lowerset} p.~\pageref{lowerset}).

\begin{lemm} \phantomsection \label{lemm:Bruhat}
\begin{enumerate}
\item We have $G = \bigsqcup_{\Iw^J \in \IW^J} P_I^- \Iw^J P_J$ and for any lower set $\IW^J_1 \subseteq \allowbreak \IW^J$, the subset $P_I^- \IW^J_1P_J \subseteq G$ is open.
\item We have $P_I^- \Iw^J P_J = \bigsqcup_{w_J \in \JIW_J} P_I^- \Iw^J w_J B$ and for any lower set $W'_J \subseteq \allowbreak \JIW_J$, the subset $P_I^- \Iw^J W'_J B \subseteq P_I^- \Iw^J P_J$ is open.
\item We have $L_J = \bigsqcup_{w_J \in \JIW_J} L_J \cap P_{J \cap \Iw^J{}^{-1}(I)}^- w_J B_J$ and for any lower set $W'_J \subseteq \allowbreak \JIW_J$, the subset $L_J \cap P_{J \cap \Iw^J{}^{-1}(I)}^- W'_J B_J \subseteq L_J$ is open.
\end{enumerate}
\end{lemm}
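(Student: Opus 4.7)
The plan is to obtain each of the three disjoint decompositions from standard Bruhat theory and to deduce the openness statements from the closure relations of the induced locally closed stratifications.

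For the decompositions, claim~(i) is the generalised Bruhat decomposition of $G$ with respect to the pair $(\Pb_I^-, \Pb_J)$, which follows from $G = \bigsqcup_{w \in W} B^- w B$ combined with $P_I^- = B^- W_I B^-$ and $P_J = B W_J B$; disjointness uses that $\IW^J$ parametrises $W_I \backslash W / W_J$. Claim~(iii) is the same statement applied inside $L_J$, noting that $L_J \cap P_{J \cap \Iw^J{}^{-1}(I)}$ is the standard parabolic subgroup of $L_J$ corresponding to the subset $K \coloneqq J \cap \Iw^J{}^{-1}(I) \subseteq J$ and that $\JIW_J$ is by definition the system of minimal length representatives of $W_K \backslash W_J$. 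For~(ii), I would combine~(iii) with $P_J = L_J \cdot N_J$ (and the fact that $N_J$ can be absorbed into $B = B_J N_J$) and with the inclusion $\Iw^J (L_J \cap P_K^-) \Iw^J{}^{-1} \subseteq P_I^-$. The latter is checked on root systems: $L_J \cap P_K^-$ is generated by the root subgroups $\Ub_\alpha$ for $\alpha \in \Phi_K \cup (\Phi_J^- \setminus \Phi_K)$, and $\Iw^J$ sends $\Phi_K$ into $\Phi_{I \cap \Iw^J(J)} \subseteq \Phi_I$ (by Lemma~\ref{lemm:IwJ}) and $\Phi_J^-$ into $\Phi^-$ (from $\Iw^J(\Phi_J^+) \subseteq \Phi^+$). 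Disjointness in~(ii) then follows because $\{\Iw^J w_J \mid \Iw^J \in \IW^J,\ w_J \in \JIW_J\} = \IW$ and the cells $P_I^- \Iw B$ are the distinct pieces of the $P_I^- \backslash G / B$ Bruhat decomposition.

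For the openness statements, I would invoke the closure relations
\begin{equation*}
\overline{P_I^- \Iw^J P_J} = \bigsqcup_{\Ipw^J \geq \Iw^J} P_I^- \Ipw^J P_J
\end{equation*}
in the $F$-analytic topology on $G$; the direction of the inequality is reversed relative to the usual Schubert closure because $\Pb_I^-$ appears on the left. This follows from the analogous Zariski closure relation (which, using that the projection $W \twoheadrightarrow \IW^J$ respects the Bruhat order, reduces to the closure of the single cell $B^- w B$) together with the standard comparison of Zariski and analytic closures for locally closed subvarieties of the smooth $F$-group $\Gb$. Given these relations, if $\IW^J_1$ is a lower set then $\IW^J \setminus \IW^J_1$ is an upper set, so
\begin{equation*}
G \setminus P_I^- \IW^J_1 P_J = \bigcup_{\Iw^J \in \IW^J \setminus \IW^J_1} \overline{P_I^- \Iw^J P_J}
\end{equation*}
is a finite union of closed sets, hence closed; thus $P_I^- \IW^J_1 P_J$ is open. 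The openness assertions in~(ii) and~(iii) follow by exactly the same argument applied respectively to the stratification of $P_I^- \Iw^J P_J$ by the cells $P_I^- \Iw^J w_J B$ indexed by $\JIW_J$ and to the Bruhat stratification of $L_J$ itself.

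The main technical obstacle will be checking the closure relations in the $F$-analytic topology; this is standard for $p$-adic reductive groups but requires some care, and can alternatively be circumvented by passing to the partial flag varieties $G/P_J$, $G/B$, and $L_J/B_J$, where the openness of the unions of Schubert-type cells indexed by lower sets is classical.
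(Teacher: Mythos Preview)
Your proposal is correct, and for (i), (iii), and the openness assertions you follow essentially the same line as the paper (closure relations for the cells $P_I^- \Iw B$, complement argument, and the observation that (iii) is just (i) inside $L_J$).

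The one place where your route differs from the paper is the decomposition in (ii). The paper obtains $P_I^- \Iw^J P_J = \bigcup_{w_J} P_I^- \Iw^J w_J B$ by a direct Bruhat-cell manipulation: it inserts $w_0 B w_0 = B^- \subseteq P_I^-$ on the left and uses $P_J = B W_J B$ together with the rule $BvBwB = BvwB$ for length-additive products to simplify $P_I^- w_0 B w_0 \Iw^J w_{J,0} B w_J B$ step by step. You instead pull (ii) back from (iii): writing $P_J = L_J N_J$ and $L_J = \bigsqcup_{w_J}(L_J \cap P_K^-) w_J B_J$ with $K = J \cap \Iw^J{}^{-1}(I)$, you absorb $L_J \cap P_K^-$ into $P_I^-$ via the inclusion $\Iw^J(L_J \cap P_K^-)\Iw^J{}^{-1} \subseteq P_I^-$, which you check on roots using Lemma~\ref{lemm:IwJ}. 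Your argument is more structural and makes the role of Lemma~\ref{lemm:IwJ} transparent; the paper's computation avoids that lemma but is less conceptual. Both give the same stratification, and both then read off the openness from the closure order on $\IW$ (implicitly using that $w_J \leq w_J'$ in $\JIW_J$ is equivalent to $\Iw^J w_J \leq \Iw^J w_J'$ in $\IW$, a standard fact about parabolic Bruhat order that neither you nor the paper spells out).
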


\begin{proof}
We have $G = \bigsqcup_{\Iw \in \IW} P_I^- \Iw B$ and for any $\Iw \in \IW$, the closure of $P_I^- \Iw B$ in $G$ is $\bigsqcup_{\Iw' \geq \Iw} P_I^- \Iw' B$ (this can be deduced from the Bruhat decomposition, cf. e.g. \cite[§~2.3]{JH}).
Furthermore, for any $\Iw^J \in \IW^J$ we have
\begin{align*}
P_I^- \Iw^J P_J &= \bigcup_{w_J \in W_J} P_I^- w_0 B w_0 \Iw^J w_{J,0} B w_J B \\
&= \bigcup_{w_J \in W_J} P_I^- w_0 B w_0 \Iw^J w_{J,0} w_J B \\
&= \bigcup_{w_J \in W_J} P_I^- \Iw^J w_J B
\end{align*}
(the first and third equalities follow from the inclusion $w_0Bw_0 = B^- \subseteq P_I^-$ and the decomposition $P_J = B W_J B$, and the second equality follows from \cite[Lemme 3.4 (iv)]{BTC} and \cite[Proposition 2.5.4]{BB}).
From this we deduce (ii), and also (i) using the fact that the projection $\IW \twoheadrightarrow \IW^J$ is order-preserving.
Finally, (iii) is (i) for the double cosets $L_J \cap P_{J \cap \Iw^J{}^{-1}(I)}^- \backslash L_J / B_J$ instead of $P_I^- \backslash G / P_J$.
\end{proof}

\begin{rema}[Case $w_J=1$] \label{rema:bigcell}
Note that $P_I^- \Iw^J B$ is $P_{J \cap \Iw^J{}^{-1}(I)}$-invariant by right translation.
In general, the stabiliser of $P_I^- \Iw B$ in $G$ for the action by right translation is the (non-standard) parabolic subgroup $B \Iw^{-1} W_I \Iw B$.
Likewise, $L_J \cap P_{J \cap \Iw^J{}^{-1}(I)}^- B_J$ is $L_J \cap P_{J \cap \Iw^J{}^{-1}(I)}$-invariant by right translation.
\end{rema}

\subsection{Definition of filtrations} \label{ssec:fil}

\subsubsection*{Filtration indexed by a poset}

Let $H$ be a $p$-adic Lie group, $\pi$ be an $H$-representation and $(\Wt,\leq)$ be a poset.
A filtration of $\pi$ indexed by $\Wt$ is a morphism of complete lattices $\Fil_H^\bullet \pi$ from the complete lattice of lower sets\footnote{A lower set of $\Wt$ is a subset $\Wt'$ such that $\wt \leq \wt' \Rightarrow \wt \in \Wt'$ for any $\wt \in \Wt$ and $\wt' \in \Wt'$. \label{lowerset}} of $\Wt$ to the complete lattice of $H$-subrepresentations of $\pi$, i.e. an $H$-subrepresentation $\Fil_H^{\Wt'} \pi \subseteq \pi$ for each lower set $\Wt' \subseteq \Wt$ such that for any family $(\Wt_i)_{i \in \Ic}$ of lower sets of $\Wt$, we have the following equalities in $\pi$:
\begin{gather*}
\Fil_H^{\bigcap_{i \in \Ic} \Wt_i} \pi = \bigcap_{i \in \Ic} \Fil_H^{\Wt_i} \pi, \\
\Fil_H^{\bigcup_{i \in \Ic} \Wt_i} \pi = \sum_{i \in \Ic} \Fil_H^{\Wt_i} \pi.
\end{gather*}
When $\Wt$ is finite, these two equalities are equivalent (by induction) to the following conditions: $\Fil_H^\bullet \pi$ is inclusion-preserving with $\Fil_H^\emptyset \pi = 0$ and $\Fil_H^{\Wt} \pi = \pi$ (i.e. the empty family case), and for any lower sets $\Wt_1, \Wt_2 \subseteq \Wt$ the short sequence of $H$-representations
\begin{equation*}
0 \to \Fil_H^{\Wt_1 \cap \Wt_2} \pi \to \Fil_H^{\Wt_1} \pi \oplus \Fil_H^{\Wt_2} \pi \to \Fil_H^{\Wt_1 \cup \Wt_2} \pi \to 0,
\end{equation*}
defined by $v \mapsto (v,-v)$ and $(v_1,v_2) \mapsto v_1+v_2$, is exact.

Each $\wt \in \Wt$ defines a principal lower set $\{\wt' \in \Wt \mid \wt' \leq \wt\}$ and we write $\Fil_H^{\wt} \pi$ for the corresponding $H$-subrepresentation of $\pi$.
Note that for any lower set $\Wt' \subseteq \Wt$, we have the following equality in $\pi$:
\begin{equation*}
\Fil_H^{\Wt'} \pi = \sum_{\wt' \in \Wt'} \Fil_H^{\wt'} \pi.
\end{equation*}
In particular, we can recover the whole filtration from the $H$-subrepresentations of $\pi$ corresponding to the elements of $\Wt$, hence the terminology.
We define the graded representation $\Gr_H^\bullet \pi$ associated to the filtration $\Fil_H^\bullet \pi$ by setting
\begin{equation*}
\Gr_H^{\wt} \pi \coloneqq \Fil_H^{\wt} \pi / \sum_{\wt' < \wt} \Fil_H^{\wt'} \pi
\end{equation*}
for each $\wt \in \Wt$.

Let $\ellt : \Wt \to \Z$ be a monotonic map (i.e. $\wt \leq \wt' \Rightarrow \ellt(\wt) \leq \ellt(\wt')$ for any $\wt,\wt' \in \Wt$).
For each $n \in \Z$, we set
\begin{equation*}
\Fil_H^{\ellt,n} \pi \coloneqq \sum_{\ellt(\wt) \leq n} \Fil_H^{\wt} \pi.
\end{equation*}
We obtain a filtration of $\pi$ indexed by $\Z$ (in the usual sense).

\begin{lemm} \label{lemm:lin}
Assume $\ellt : \Wt \to \Z$ is strictly monotonic (i.e. $\wt < \wt' \Rightarrow \ellt(\wt) < \ellt(\wt')$ for any $\wt,\wt' \in \Wt$).
For all $n \in \Z$, there is a natural $H$-equivariant isomorphism
\begin{equation*}
\Gr_H^{\ellt,n} \pi \cong \bigoplus_{\ellt(\wt)=n} \Gr_H^{\wt} \pi.
\end{equation*}
\end{lemm}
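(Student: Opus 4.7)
The plan is to construct natural $H$-equivariant morphisms $\Gr_H^{\wt} \pi \to \Gr_H^{\ellt,n} \pi$ for each $\wt$ with $\ellt(\wt) = n$, assemble them into $\Phi : \bigoplus_{\ellt(\wt)=n} \Gr_H^{\wt} \pi \to \Gr_H^{\ellt,n} \pi$, and verify bijectivity directly from the two axioms of the filtration. Set $S_n \coloneqq \{\wt \in \Wt : \ellt(\wt) = n\}$, $\Wt_m \coloneqq \{\wt \in \Wt : \ellt(\wt) \leq m\}$ and $\Wt_{<\wt} \coloneqq \{\wt' \in \Wt : \wt' < \wt\}$; the last two are lower sets, and the identity $\Fil_H^{\Wt'} \pi = \sum_{\wt' \in \Wt'} \Fil_H^{\wt'} \pi$ recalled in the paper gives both $\Fil_H^{\ellt,n} \pi = \Fil_H^{\Wt_n} \pi$ and $\sum_{\wt' < \wt} \Fil_H^{\wt'} \pi = \Fil_H^{\Wt_{<\wt}} \pi$.

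The first thing to extract from strict monotonicity is that distinct elements of $S_n$ are pairwise incomparable in $\Wt$ and that $\Wt_{<\wt} \subseteq \Wt_{n-1}$ for every $\wt \in S_n$. The latter shows that $\Fil_H^{\wt} \pi \hookrightarrow \Fil_H^{\Wt_n} \pi$ descends modulo $\Fil_H^{\Wt_{n-1}} \pi$ to the desired morphism $\Gr_H^{\wt} \pi \to \Gr_H^{\ellt,n} \pi$, and summing yields $\Phi$. Surjectivity is immediate from the union axiom applied to the decomposition $\Wt_n = \Wt_{n-1} \cup \bigcup_{\wt \in S_n} \{\wt' \leq \wt\}$, which gives $\Fil_H^{\Wt_n} \pi = \Fil_H^{\Wt_{n-1}} \pi + \sum_{\wt \in S_n} \Fil_H^{\wt} \pi$.

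For injectivity I would argue by contradiction. Given an element of the kernel of $\Phi$, lift it to some $(v_\wt)_{\wt \in S_n}$ with $v_\wt \in \Fil_H^{\wt} \pi$, finite support and $\sum_\wt v_\wt \in \Fil_H^{\Wt_{n-1}} \pi$, and suppose for contradiction that some $\wt_0 \in S_n$ has $v_{\wt_0} \notin \Fil_H^{\Wt_{<\wt_0}} \pi$. Let $T \coloneqq \Wt_{n-1} \cup \bigcup_{\wt} \{\wt' \leq \wt\}$, the union running over the finitely many $\wt \in S_n \setminus \{\wt_0\}$ with $v_\wt \neq 0$; the union axiom gives $v_{\wt_0} \in \Fil_H^{\wt_0} \pi \cap \Fil_H^T \pi$. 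By the pairwise incomparability of distinct elements of $S_n$ one checks that $\{\wt' \leq \wt_0\} \cap T = \Wt_{<\wt_0}$, so the intersection axiom yields
\begin{equation*}
v_{\wt_0} \in \Fil_H^{\wt_0} \pi \cap \Fil_H^T \pi = \Fil_H^{\Wt_{<\wt_0}} \pi,
\end{equation*}
a contradiction. The only point needing any thought is this incomparability computation; all other steps are formal from the filtration axioms, and naturality and $H$-equivariance hold by construction.
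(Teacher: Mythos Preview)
Your proof is correct and follows essentially the same approach as the paper's. Both arguments hinge on the identity $\{\wt' \leq \wt_0\} \cap \bigl(\Wt_{n-1} \cup \bigcup_{\wt \in S_n \setminus \{\wt_0\}} \{\wt' \leq \wt\}\bigr) = \Wt_{<\wt_0}$, which is exactly the lower-set equality the paper displays; the only cosmetic difference is that you work directly with $\bigoplus_{\wt \in S_n} \Gr_H^{\wt}\pi$ and argue injectivity by contradiction, whereas the paper starts from the larger $\bigoplus_{\ellt(\wt)\leq n} \Fil_H^{\wt}\pi$ and computes the kernel of the composite surjection onto $\Gr_H^{\ellt,n}\pi$ directly.
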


\begin{proof}
Let $n \in \Z$.
By definition of $\Fil_H^{\ellt,n} \pi$ and $\Gr_H^{\ellt,n} \pi$, there are natural $H$-equivariant surjections
\begin{equation} \label{projGr}
\bigoplus_{\ellt(\wt) \leq n} \Fil_H^{\wt} \pi \twoheadrightarrow \Fil_H^{\ellt,n} \pi \twoheadrightarrow \Gr_H^{\ellt,n} \pi.
\end{equation}
The kernel of \eqref{projGr} contains $\bigoplus_{\ellt(\wt) \leq n} \Fil_H^{\wt} \pi \cap \Fil_H^{\ellt,n-1} \pi$, and $\Fil_H^{\wt} \pi \cap \Fil_H^{\ellt,n-1} \pi = \Fil_H^{\wt} \pi$ for all $\wt \in \Wt$ such that $\ellt(\wt)<n$.
Now, for any $\wt_0 \in \Wt$ such that $\ellt(\wt_0)=n$, we have the following equality in $\pi$:
\begin{equation*}
\Fil_H^{\wt_0} \pi \cap \sum_{\underset{\wt \neq \wt_0}{\ellt(\wt) \leq n}} \Fil_H^{\wt} \pi = \sum_{\wt < \wt_0} \Fil_H^{\wt} \pi,
\end{equation*}
which results from the following equality in $\Wt$:
\begin{equation*}
\left\{ \wt' \in \Wt \middlevert \wt' \leq \wt_0 \right\} \cap \bigcup_{\underset{\wt \neq \wt_0}{\ellt(\wt) \leq n}} \left\{ \wt' \in \Wt \middlevert \wt' \leq \wt \right\} = \bigcup_{\wt<\wt_0} \left\{ \wt' \in \Wt \middlevert \wt' \leq \wt \right\},
\end{equation*}
which in turn follows from the fact that $\wt_0 \not \leq \wt$ for all $\wt \in \Wt \backslash \{\wt_0\}$ such that $\ellt(\wt) \leq n$ by strict monotonicity of $\ellt$.
We deduce that the kernel of \eqref{projGr} is $\bigoplus_{\ellt(\wt) \leq n} \Fil_H^{\wt} \pi \cap \Fil_H^{\ellt,n-1} \pi$, and that $\Fil_H^{\wt} \pi \cap \Fil_H^{\ellt,n-1} \pi = \sum_{\wt'<\wt} \Fil_H^{\wt'} \pi$ for all $\wt \in \Wt$ such that $\ellt(\wt)=n$.
We conclude that \eqref{projGr} induces an isomorphism as in the statement.
\end{proof}

\subsubsection*{Filtrations of induced representations}

Let $I,J \subseteq \Delta$ and $\sigma$ be an $L_I$-representation.
Recall that for any locally closed subset $X \subseteq G$ and for any open subset $Y \subseteq X$, both $P_I^-$-invariant by left translation, there is a natural short exact sequence of $A$-modules
\begin{equation*}
0 \to \cind_{P_I^-}^Y \sigma \to \cind_{P_I^-}^X \sigma \to \cind_{P_I^-}^{X \backslash Y} \sigma \to 0
\end{equation*}
(cf. \cite[Proposition 1.8]{BZ}, see also the proof of \cite[Proposition 2.1.3]{JH}).
Note that there is a natural $A$-linear isomorphism $\cind_{P_I^-}^G \sigma \iso \Ind_{P_I^-}^G \sigma$ since $P_I^- \backslash G$ is compact.

For each lower set $\IW^J_1 \subseteq \IW^J$, we define a $P_J$-subrepresentation of $\Ind_{P_I^-}^G \sigma$ by setting
\begin{equation*}
\Fil_{P_J}^{\IW^J_1} \left( \Ind_{P_I^-}^G \sigma \right) \coloneqq \cind_{P_I^-}^{P_I^- \IW^J_1 P_J} \sigma.
\end{equation*}
Using Lemma \ref{lemm:Bruhat} (i), we obtain a filtration of $\Ind_{P_I^-}^G \sigma$ indexed by $\IW^J$ such that for all $\Iw^J \in \IW^J$, there is a natural $P_J$-equivariant isomorphism
\begin{equation} \label{GrPJ}
\Gr_{P_J}^{\Iw^J} \left( \Ind_{P_I^-}^G \sigma \right) \cong \cind_{P_I^-}^{P_I^- \Iw^J P_J} \sigma.
\end{equation}

Let $\Iw^J \in \IW^J$.
For each lower set $W'_J \subseteq \JIW_J$, we define a $B$-sub\-rep\-res\-ent\-a\-tion of $\cind_{P_I^-}^{P_I^- \Iw^J P_J} \sigma$ by setting
\begin{equation*}
\Fil_B^{W'_J} \left( \cind_{P_I^-}^{P_I^- \Iw^J P_J} \sigma \right) \coloneqq \cind_{P_I^-}^{P_I^- \Iw^J W'_J B} \sigma.
\end{equation*}
Using Lemma \ref{lemm:Bruhat} (ii), we obtain a filtration of $\cind_{P_I^-}^{P_I^- \Iw^J P_J} \sigma$ indexed by $\JIW_J$ such that for all $w_J \in \JIW_J$, there is a natural $B$-equivariant isomorphism
\begin{equation} \label{GrB}
\Gr_B^{w_J} \left( \cind_{P_I^-}^{P_I^- \Iw^J P_J} \sigma \right) \cong \cind_{P_I^-}^{P_I^- \Iw^J w_J B} \sigma.
\end{equation}

Likewise, for any $L_{J \cap \Iw^J{}^{-1}(I)}$-representation $\sigmat$ and using Lemma \ref{lemm:Bruhat} (iii), we define for each lower set $W'_J \subseteq \JIW_J$ a $B_J$-subrepresentation of $\Ind_{L_J \cap P_{J \cap \Iw^J{}^{-1}(I)}^-}^{L_J} \sigmat$ by setting
\begin{equation*}
\Fil_{B_J}^{W'_J} \left( \Ind_{L_J \cap P_{J \cap \Iw^J{}^{-1}(I)}^-}^{L_J} \sigmat \right) \coloneqq \cind_{L_J \cap P_{J \cap \Iw^J{}^{-1}(I)}^-}^{L_J \cap P_{J \cap \Iw^J{}^{-1}(I)}^- W'_J B_J} \sigmat
\end{equation*}
and we obtain a filtration of $\Ind_{L_J \cap P_{J \cap \Iw^J{}^{-1}(I)}^-}^{L_J} \sigmat$ indexed by $\JIW_J$ such that for all $w_J \in \JIW_J$, there is a natural $B_J$-equivariant isomorphism
\begin{equation} \label{GrBJ}
\Gr_{B_J}^{w_J} \left( \Ind_{L_J \cap P_{J \cap \Iw^J{}^{-1}(I)}^-}^{L_J} \sigmat \right) \cong \cind_{L_J \cap P_{J \cap \Iw^J{}^{-1}(I)}^-}^{L_J \cap P_{J \cap \Iw^J{}^{-1}(I)}^- w_J B_J} \sigmat.
\end{equation}

\begin{rema}[Case $w_J=1$] \label{rema:Fil1}
Note that $\Gr_B^1 (\cind_{P_I^-}^{P_I^- \Iw^J P_J} \sigma) \cong \cind_{P_I^-}^{P_I^- \Iw^J B} \sigma$ is a $P_{J \cap \Iw^J{}^{-1}(I)}$-subrepresentation of $\cind_{P_I^-}^{P_I^- \Iw^J P_J} \sigma$ and likewise $\Gr_{B_J}^1 (\Ind_{L_J \cap P_{J \cap \Iw^J{}^{-1}(I)}^-}^{L_J} \sigmat) \cong \allowbreak \cind_{L_J \cap P_{J \cap \Iw^J{}^{-1}(I)}^-}^{L_J \cap P_{J \cap \Iw^J{}^{-1}(I)}^- B_J} \sigmat$ is an $L_J \cap P_{J \cap \Iw^J{}^{-1}(I)}$-subrepresentation of $\Ind_{L_J \cap P_{J \cap \Iw^J{}^{-1}(I)}^-}^{L_J} \sigmat$ (see Remark \ref{rema:bigcell}).
\end{rema}

\subsection{Computation of the associated graded representations} \label{ssec:isocind}

For each $w \in W$, we define a closed subgroup of $\Ub$ stable under conjugation by $\Zcb$ by setting
\begin{equation*}
\Ub_w \coloneqq \Ub \cap w^{-1} \Ub w
\end{equation*}
and we let $\Bb_w \subseteq \Bb$ be the closed subgroup $\Zcb \Ub_w$.
For any order on $\Phi^+ \cap w^{-1}(\Phi_0^+)$, the product induces an isomorphism of $F$-varieties
\begin{equation} \label{isoprod}
\prod_{\alpha \in \Phi^+ \cap w^{-1}(\Phi_0^+)} \Ub_\alpha \iso \Ub_w.
\end{equation}

\medskip

Let $I \subseteq \Delta$ and $\Iw \in \IW$.
We define closed subgroups of $\Ub_{\Iw}$ stable under conjugation by $\Zcb$ by setting
\begin{gather*}
\Ub'_{\Iw} \coloneqq \Ub \cap \Iw^{-1} \Nb_I \Iw \\
\Ub''_{\Iw} \coloneqq \Ub \cap \Iw^{-1} \Ub_I \Iw
\end{gather*}
and we let $\Bb''_{\Iw} \subseteq \Bb_{\Iw}$ be the closed subgroup $\Zcb \Ub''_{\Iw}$.
We have semidirect products $\Ub_{\Iw} = \Ub''_{\Iw} \ltimes \Ub'_{\Iw}$ and $\Bb_{\Iw} = \Bb''_{\Iw} \ltimes \Ub'_{\Iw}$.

Let $\sigma$ be an $L_I$-representation.
The product induces an isomorphism of $F$-varieties
\begin{equation*}
P_I^- \times \left\{ \Iw \right\} \times U'_{\Iw} \iso P_I^- \Iw B,
\end{equation*}
hence an $A$-linear isomorphism
\begin{equation} \label{isocind}
\cind_{P_I^-}^{P_I^- \Iw B} \sigma \cong \Clisc\! \left( U'_{\Iw}, \sigma^{\Iw} \right)
\end{equation}
via which $U'_{\Iw}$ acts on $\Clisc(U'_{\Iw},\sigma^{\Iw})$ by right translation and the action of $b'' \in B''_{\Iw}$ on $f \in \Clisc(U'_{\Iw},\sigma^{\Iw})$ is given by
\begin{equation*}
\left( b'' \cdot f \right) \left( u' \right) = b'' \cdot f \left( b''^{-1} u' b'' \right)
\end{equation*}
for all $u' \in U'_{\Iw}$.

\medskip

Let $J \subseteq \Delta$.
We write $\Iw = \Iw^J w_J$ with $\Iw^J \in \IW^J$ and $w_J \in W_J$.
We define closed subgroups of $\Nb_J$ and $\Ub_J$ stable under conjugation by $\Zcb$ by setting
\begin{gather*}
\Nb_{J,\Iw} \coloneqq \Nb_J \cap \Ub_{\Iw} = \Nb_J \cap \Iw^{-1} \Ub \Iw \\
\Ub_{J,w_J} \coloneqq \Ub_J \cap \Ub_{\Iw} = \Ub_J \cap \Iw^{-1} \Ub \Iw = \Ub_J \cap w_J^{-1} \Ub_J w_J
\end{gather*}
the last equality resulting from \eqref{KostantIJ}, and we let $\Bb_{J,w_J} \subseteq \Bb_J$ be the closed subgroup $\Zcb \Ub_{J,w_J}$.
We have semidirect products $\Ub_{\Iw} = \Ub_{J,w_J} \ltimes \Nb_{J,\Iw}$ and $\Bb_{\Iw} = \Bb_{J,w_J} \ltimes \Nb_{J,\Iw}$.
We define closed subgroups of $\Nb_{J,\Iw}$ and $\Ub_{J,w_J}$ stable under conjugation by $\Zcb$ by setting
\begin{gather*}
\Nb'_{J,\Iw} \coloneqq \Nb_J \cap \Ub'_{\Iw} = \Nb_J \cap \Iw^{-1} \Nb_I \Iw \\
\Nb''_{J,\Iw} \coloneqq \Nb_J \cap \Ub''_{\Iw} = \Nb_J \cap \Iw^{-1} \Ub_I \Iw \\
\Ub'_{J,w_J} \coloneqq \Ub_J \cap \Ub'_{\Iw} = \Ub_J \cap \Iw^{-1} \Nb_I \Iw \\
\Ub''_{J,w_J} \coloneqq \Ub_J \cap \Ub''_{\Iw} = \Ub_J \cap \Iw^{-1} \Ub_I \Iw
\end{gather*}
and we let $\Bb''_{J,w_J} \subseteq \Bb_J$ be the closed subgroup $\Zcb \Ub''_{J,w_J}$.
We have semidirect products $\Nb_{J,\Iw} = \Nb''_{J,\Iw} \ltimes \Nb'_{J,\Iw}$, $\Ub_{J,w_J} = \Ub''_{J,w_J} \ltimes \Ub'_{J,w_J}$ and $\Bb_{J,w_J} = \Bb''_{J,w_J} \ltimes \Ub'_{J,w_J}$.
Note that $\Ub'_{J,w_J}$ and $\Ub''_{J,w_J}$ actually depend on $\Iw$ (not only on $w_J$).

Likewise, for any $L_{J \cap \Iw^J{}^{-1}(I)}$-representation $\sigmat$ and using Lemma \ref{lemm:IwJ} with $I$ and $J$ swapped and $\Iw^J$ inverted, the product induces an isomorphism of $F$-varieties
\begin{equation*}
L_J \cap P_{J \cap \Iw^J{}^{-1}(I)}^- \times \left\{ w_J \right\} \times U'_{J,w_J} \iso L_J \cap P_{J \cap \Iw^J{}^{-1}(I)}^- w_J B_J,
\end{equation*}
hence an $A$-linear isomorphism
\begin{equation} \label{isocindJ}
\cind_{L_J \cap P_{J \cap \Iw^J{}^{-1}(I)}^-}^{L_J \cap P_{J \cap \Iw^J{}^{-1}(I)}^- w_J B_J} \sigmat \cong \Clisc\! \left( U'_{J,w_J}, \sigmat^{w_J} \right)
\end{equation}
via which $U'_{J,w_J}$ acts on $\Clisc(U'_{J,w_J},\sigmat^{w_J})$ by right translation and the action of $b'' \in B''_{J,w_J}$ on $f \in \Clisc(U'_{J,w_J},\sigmat^{w_J})$ is given by
\begin{equation*}
\left( b'' \cdot f \right) \left( u' \right) = b'' \cdot f \left( b''^{-1} u' b'' \right)
\end{equation*}
for all $u' \in U'_{J,w_J}$.
In particular with $\sigmat = \sigma^{\Iw^J}$, we have defined a natural smooth $A$-linear action of $B_{J,w_J}$ on $\Clisc(U'_{J,w_J},\sigma^{\Iw})$.

We have a semidirect product $\Ub'_{\Iw} = \Ub'_{J,w_J} \ltimes \Nb'_{J,\Iw}$, so that \eqref{isocind} composed with the $A$-linear morphism defined by $f \mapsto (n' \mapsto (u' \mapsto f(u'n')))$ is an $A$-linear isomorphism
\begin{equation} \label{isocind2}
\cind_{P_I^-}^{P_I^- \Iw B} \sigma \cong \Clisc\! \left( N'_{J,\Iw}, \Clisc\! \left( U'_{J,w_J}, \sigma^{\Iw} \right) \right)
\end{equation}
via which $N'_{J,\Iw}$ acts on $\Clisc(N'_{J,\Iw},\Clisc(U'_{J,w_J},\sigma^{\Iw}))$ by right translation, the action of $b \in B_{J,w_J}$ on $f \in \Clisc(N'_{J,\Iw},\Clisc(U'_{J,w_J},\sigma^{\Iw}))$ is given by
\begin{equation*}
\left( b \cdot f \right) \left( n' \right) = b \cdot f \left( b^{-1} n' b \right)
\end{equation*}
for all $n' \in N'_{J,\Iw}$ and the action of $N''_{J,\Iw}$ on $\Clisc(N'_{J,\Iw},\Clisc(U'_{J,w_J},\sigma^{\Iw}))$ is given by the following result.

\begin{lemm} \label{lemm:action}
Let $f \in \Clisc(N'_{J,\Iw},\Clisc(U'_{J,w_J},\sigma^{\Iw}))$ and $n'' \in N''_{J,\Iw}$.
Via \eqref{isocind2}, the action of $n''$ on $f$ is given by
\begin{equation*}
\left( n'' \cdot f \right) \left( n' \right) \left( u' \right) = n'' \cdot f \left( u'^{-1} n''^{-1} u' n' n'' \right) \left( u' \right)
\end{equation*}
for all $n' \in N'_{J,\Iw}$ and $u' \in U'_{J,w_J}$.
\end{lemm}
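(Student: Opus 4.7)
The strategy is a direct computation. Since
\[ N''_{J,\Iw} = \Nb_J \cap \Iw^{-1}\Ub_I\Iw \;\subseteq\; \Ub \cap \Iw^{-1}\Ub_I\Iw \;=\; U''_{\Iw} \;\subseteq\; B''_{\Iw}, \]
the $B''_{\Iw}$-action formula recalled just before the statement gives, for every $F \in \Clisc(U'_{\Iw}, \sigma^{\Iw})$ and $n'' \in N''_{J,\Iw}$, the identity $(n'' \cdot F)(u'_0) = n'' \cdot F(n''^{-1} u'_0 n'')$ for all $u'_0 \in U'_{\Iw}$. Writing $u'_0 = u'n'$ uniquely with $u' \in U'_{J,w_J}$ and $n' \in N'_{J,\Iw}$ through the semidirect product $U'_{\Iw} = U'_{J,w_J} \ltimes N'_{J,\Iw}$, the result will follow provided I can decompose $n''^{-1} u' n' n''$ in the form $u'' \cdot \tilde{n}'$ with $u'' \in U'_{J,w_J}$ and $\tilde{n}' \in N'_{J,\Iw}$, and then translate through the identification $f(n')(u') = F(u'n')$ of \eqref{isocind2}.

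For the decomposition, I use the tautology
\[ n''^{-1} u' n' n'' = u' \cdot \tilde{n}', \qquad \tilde{n}' \coloneqq u'^{-1} n''^{-1} u' n' n'', \]
so that $u'' = u' \in U'_{J,w_J}$ is immediate. Since $n'', n' \in \Nb_J$ and $u' \in \Ub_J \subseteq \Lb_J$ normalises $\Nb_J$, the element $\tilde{n}'$ lies in $N_J$. The main point is to verify $\tilde{n}' \in \Iw^{-1} \Nb_I \Iw$. Setting $a \coloneqq \Iw u' \Iw^{-1} \in \Nb_I$ (as $u' \in \Iw^{-1}\Nb_I\Iw$) and $b \coloneqq \Iw n'' \Iw^{-1} \in \Ub_I$ (as $n'' \in \Iw^{-1}\Ub_I\Iw$), the conjugate $\Iw \tilde{n}' \Iw^{-1}$ equals $a^{-1} b^{-1} a (\Iw n' \Iw^{-1}) b$, which lies in $\Nb_I$ because $\Iw n' \Iw^{-1} \in \Nb_I$ and $\Ub_I \subseteq \Lb_I$ normalises $\Nb_I$.

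Substituting into the $B''_{\Iw}$-action formula gives $(n'' \cdot F)(u'n') = n'' \cdot F(u' \tilde{n}')$, and applying the identification $F(u'\tilde{n}') = f(\tilde{n}')(u')$ yields the claimed formula. The argument is essentially a calculation inside $\Gb$; the only real obstacle is the group-theoretic bookkeeping needed to check the membership $\tilde{n}' \in N'_{J,\Iw}$ by tracking the various subgroup intersections defined in \textsection\ref{ssec:isocind}, but no input beyond the semidirect-product decompositions and the normality of $\Nb_I$ in $\Pb_I$ is required.
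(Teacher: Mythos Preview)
Your proof is correct and follows essentially the same approach as the paper: both reduce the claim to verifying that $\tilde{n}' \coloneqq u'^{-1} n''^{-1} u' n' n''$ lies in $N'_{J,\Iw} = N_J \cap \Iw^{-1} N_I \Iw$, checking the two factors of the intersection separately via the normality of $\Nb_J$ in $\Pb_J$ and of $\Nb_I$ in $\Pb_I$. The paper phrases the first step as the identity $\Iw u' n' n'' = (\Iw n'' \Iw^{-1}) \Iw u' \tilde{n}'$ in $P_I^- \Iw B$ rather than invoking the $B''_{\Iw}$-action formula from \eqref{isocind}, but this is only a cosmetic difference.
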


\begin{proof}
Let $n' \in N'_{J,\Iw}$ and $u' \in U'_{J,w_J}$.
We have
\begin{equation*}
\Iw u' n' n'' = (\Iw n'' \Iw^{-1}) \Iw u' (u'^{-1} n''^{-1} u' n' n'').
\end{equation*}
Thus, it is enough to check that $u'^{-1} n''^{-1} u' n' n'' \in N'_{J,\Iw}$.
Since $u' \in U_J$ and $n',n'' \in N_J$, we have $(u'^{-1} n''^{-1} u') n' n'' \in N_J$.
Since $n'' \in \Iw^{-1} U_I \Iw$ and $n',u' \in \Iw^{-1} N_I \Iw$, we have $u'^{-1} (n''^{-1} (u' n') n'') \in \Iw^{-1} N_I \Iw$.
Hence the result.
\end{proof}

\begin{rema}[Case $w_J=1$] \label{rema:isocind}
We can also give the action of $L_{J \cap \Iw^J{}^{-1}(I)}$ (which normalises $U'_{J,1}$, $N'_{J,\Iw^J}$, $N''_{J,\Iw^J}$, and thus $U'_{\Iw^J}$, $N_{J,\Iw^J}$) on $\cind_{P_I^-}^{P_I^- \Iw^J B} \sigma$ and $\cind_{L_J \cap P_{J \cap \Iw^J{}^{-1}(I)}^-}^{L_J \cap P_{J \cap \Iw^J{}^{-1}(I)}^- B_J} \sigmat$ (see Remark \ref{rema:Fil1}) via \eqref{isocind2} and \eqref{isocindJ} respectively, by replacing $B_{J,w_J} = B''_{J,w_J} \ltimes U'_{J,w_J}$ by $L_J \cap P_{J \cap \Iw^J{}^{-1}(I)} = L_{J \cap \Iw^J{}^{-1}(I)} \ltimes U'_{J,1}$.
\end{rema}

We end this subsection with some more notation.

\begin{nota} \label{nota:dw}
For each $w \in W$, we let $d_w$ be the integer $\dim_F (\Ub/\Ub_w)$ and $\delta_w \in \X^*(\Sb)$ be the algebraic character of the adjoint representation of $\Sb$ on $\det_F ((\Lie \Ub)/(\Lie \Ub_w))$.
Note that $d_w \geq \ell(w)$ and $\delta_w$ extends to an algebraic character of $\Zcb$.
For $\alpha \in \Delta$, we have $d_{s_\alpha} = d_\alpha \coloneqq \dim_F \Ub_\alpha$ and $\delta_{s_\alpha} = \alpha^{d_\alpha}$.
Note that $d_\alpha=1$ if and only if $\alpha$ extends to an algebraic character of $\Zcb$.
We define a subset of $\Delta$ by setting
\begin{equation*}
\Delta^1 \coloneqq \left\{ \alpha \in \Delta \middlevert d_\alpha = 1 \right\}.
\end{equation*}
For $I \subseteq \Delta$, we put $I^1 \coloneqq I \cap \Delta^1$.
\end{nota}

\begin{rema} \label{rema:dIwJ}
For $\Iw^J \in \IW^J$, we have $\Ub_J \subseteq \Ub_{\Iw^J}$ and $\Lb_{J \cap \Iw^J{}^{-1}(I)}$ normalises $\Nb_{J,\Iw^J}$. Thus, the inclusion $\Nb_J \hookrightarrow \Ub$ induces an isomorphism of $F$-varieties
\begin{equation*}
\Nb_J / \Nb_{J,\Iw^J} \iso \Ub / \Ub_{\Iw^J}
\end{equation*}
and there is an adjoint action of $\Lb_{J \cap \Iw^J{}^{-1}(I)}$ on $(\Lie \Nb_J)/(\Lie \Nb_{J,\Iw^J})$.
Therefore, we have $d_{\Iw^J}=\dim_F (\Nb_J / \Nb_{J,\Iw^J})$ and $\delta_{\Iw^J}$ extends to an algebraic character of $\Lb_{J \cap \Iw^J{}^{-1}(I)}$.
\end{rema}

\section{Derived ordinary parts}

The aim of this section is to compute the derived ordinary parts of a parabolically induced representation.
In §~\ref{ssec:dev}, we show how to compute the cohomology of certain groups with a Hecke action from the cohomology of certain subgroups with the induced Hecke action, provided the latter satisfy some finiteness condition.
In §~\ref{ssec:calc}, we make a computation of cohomology and Hecke action on a compactly induced representation.
In §~\ref{ssec:HOrd}, we use the previous results to partially compute the derived ordinary parts of the graded representations associated to the Bruhat filtrations, we formulate a conjecture on the complete result and we prove it in many cases in low degree.

\subsection{Cohomology, Hecke action and dévissage} \label{ssec:dev}

Let $\Lbt/F$ be an algebraic group and $\Nbt/F$ be a unipotent algebraic group endowed with an action of $\Lbt$ that we identify with the conjugation in $\Lbt \ltimes \Nbt$.
We let $\dt$ denote the integer $\dim_F \Nbt$ and $\deltat \in \X^*(\Lbt)$ denote the algebraic character of the adjoint representation of $\Lbt$ on $\det_F (\Lie \Nbt)$.

Let $\Lt^+ \subseteq \Lt$ be an open submonoid and $\Nt_0 \subseteq \Nt$ be a standard\footnote{The exponential map $\exp : \Lie \Nbt \to \Nbt$ is an isomorphism of $F$-varieties (cf. \cite[Chapitre IV, §~2, Proposition 4.1]{DG}) and we say that $\Nt_0$ is \emph{standard} if $\Lie \Nt_0 \coloneqq \exp^{-1}(\Nt_0) \subseteq \Lie \Nt$ is a $\Zp$-Lie subalgebra. The identity of $\Nt$ admits a basis of neighbourhoods consisting of standard compact open subgroups (cf. \cite[Lemma 3.5.2]{Em2}).} compact open subgroup stable under conjugation by $\Lt^+$.
If $\pi$ is an $\Lt^+ \ltimes \Nt_0$-representation\footnote{Given a $p$-adic Lie group $H$ and an open submonoid $H^+ \subseteq H$, a representation of $H^+$ over $A$ is \emph{smooth} if its restriction to an open subgroup of $H$ contained in $H^+$ is smooth.}, then the $A$-modules of $\Nt_0$-cohomology $\Hc(\Nt_0,\pi)$ computed using locally constant cochains (or equivalently an $N_0$-injective resolution of $\pi$, cf. \cite[Proposition 2.2.6]{Em2}) are naturally endowed with the Hecke action of $\Lt^+$ (denoted $\h$), defined for every $\lt \in \Lt^+$ as the composite
\begin{equation*}
\Hc\! \left( \Nt_0, \pi \right) \to \Hc\! \left( \lt\Nt_0\lt^{-1}, \pi \right) \to \Hc\! \left( \Nt_0, \pi \right)
\end{equation*}
where the first morphism is induced by the action of $\lt$ on $\pi$ and the second morphism is the corestriction from $\lt\Nt_0\lt^{-1}$ to $\Nt_0$ (this defines a natural smooth $A$-linear action of $\Lt^+$ in degree $0$ by \cite[Lemma 3.1.4]{Em1}, that extends in higher degrees by universality of $\Hc(\Nt_0,-)$).
We obtain a universal $\delta$-functor $\Hc(\Nt_0,-) : \Mod_{\Lt^+ \ltimes \Nt_0}^\sm(A) \to \Mod_{\Lt^+}^\sm(A)$ (since an injective $\Lt^+ \ltimes \Nt_0$-representation is $\Nt_0$-acyclic, cf. \cite[Proposition 2.1.11]{Em2} and \cite[Lemme 3.1.1]{JH}).

Let $\Zbt \subseteq \Lbt$ be a central split torus and $\Zt^+ \subseteq \Zt$ be the open submonoid $\Zt \cap \Lt^+$.
Since $\Zbt$ is split, its adjoint representation on $\Lie \Nbt$ is a direct sum of weights.
We assume that there exists $\lambdat \in \X_*(\Zbt)$ such that $\langle \mut,\lambdat \rangle >0$ for any weight $\mut$ of $\Zbt$ in $\Lie \Nbt$.
We fix an element $\zt \coloneqq \lambdat(p^j) \in \Zt$ with $j \in \N$ large enough so that $\zt$ is \emph{strictly contracting} $\Nt_0$, i.e. $(\zt^i \Nt_0 \zt^{-i})_{i \in \N}$ is a basis of neighbourhoods of the identity in $\Nt_0$ (cf. \cite[Lemma 3.1.3]{Em2} using the fact that $\ord_p(\mut(\zt)) = \langle \mut,\lambdat \rangle j$ for any weight $\mut$ of $\Zbt$ in $\Lie \Nbt$).
In particular $\zt \in \Zt^+$.

If $\pi$ is a $\Zt^+$-representation, we say that $\pi$ is \emph{locally $\zt$-finite} if for every $v \in \pi$, the $A$-submodule $A[\zt] \cdot v$ is of finite type, and we say that the action of $\zt$ on $\pi$ is \emph{locally nilpotent} if for every $v \in \pi$, there exists $i \in \N$ such that $\zt^i \cdot v = 0$.

\begin{lemm} \label{lemm:Zfin}
Let $\pi$ be a locally $\zt$-finite $\Lt^+ \ltimes \Nt_0$-representation and $n \in \N$.
\begin{enumerate}
\item If $n = [F:\Qp] \dt$, then the action of $\zt$ on the kernel of the natural $\Lt^+$-equivariant surjection $\pi \otimes (\omega^{-1} \circ \deltat) \twoheadrightarrow \Hc[n](\Nt_0,\pi)$ is locally nilpotent.
\item If $n < [F:\Qp] \dt$, then the Hecke action of $\zt$ on $\Hc[n](\Nt_0,\pi)$ is locally nilpotent.
\end{enumerate}
\end{lemm}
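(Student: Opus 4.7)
The proof will rest on three structural facts: (a) the strict-contracting hypothesis makes $(\zt^i \Nt_0 \zt^{-i})_{i \in \N}$ a basis of open neighbourhoods of the identity in $\Nt_0$, so that for any open $H \subseteq \Nt_0$ and any $n \in \Nt_0$ one has $\zt^i n \zt^{-i} \in H$ for $i \gg 0$; (b) local $\zt$-finiteness combined with the smoothness of $\pi$ guarantees that for each $v \in \pi$ the finite-type $A$-submodule $A[\zt] \cdot v$ is fixed pointwise by some open $H_v \subseteq \Nt_0$; (c) since $A$ is Artinian local of residue characteristic $p$, the element $p$ is nilpotent in $A$, and $\Nt_0$ being a standard compact open subgroup of the unipotent $\Nt$ is pro-$p$, so indices of open subgroups in $\Nt_0$ are powers of $p$.

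For part (i), I would begin by invoking the top-degree identification
\begin{equation*}
\Hc[{[F:\Qp] \dt}]\!\left(\Nt_0, \pi\right) \cong \pi_{\Nt_0} \otimes \left(\omega^{-1} \circ \deltat\right)
\end{equation*}
coming from Poincaré duality for the compact $p$-adic analytic group $\Nt_0$ of $\Qp$-dimension $[F:\Qp]\dt$, the twist $\omega^{-1} \circ \deltat$ arising as the mod $p$ reduction of the determinant of the adjoint action of $\Lt$ on $\Lie_{\Qp}\Nt$ that corrects the $\Lt^+$-Hecke action on top cohomology. Under this identification the surjection of the statement is the canonical projection to twisted coinvariants, whose kernel is $A$-linearly generated by elements of the form $(v - nv) \otimes 1$ with $v \in \pi$ and $n \in \Nt_0$. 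The semidirect-product identity $\zt^i n = (\zt^i n \zt^{-i}) \zt^i$ gives
\begin{equation*}
\zt^i \cdot \bigl((v - nv) \otimes 1\bigr) = \left(\omega^{-1} \circ \deltat\right)(\zt)^i \bigl(\zt^i v - (\zt^i n \zt^{-i}) \zt^i v\bigr) \otimes 1,
\end{equation*}
which vanishes as soon as $\zt^i n \zt^{-i} \in H_v$, by (a) and (b); $A$-linearity concludes.

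For part (ii), I would argue by induction on $n$, the base case $n=0$ being the decisive step. For $v \in \pi^{\Nt_0}$, the $i$-th iterated Hecke operator acts as
\begin{equation*}
\zt^i \h v = \sum_{n \in \Nt_0 / \zt^i \Nt_0 \zt^{-i}} n \cdot \zt^i v.
\end{equation*}
Letting $H_i \subseteq \Nt_0$ denote the stabilizer of $\zt^i v$ (so $H_i \supseteq H_v$ by (b)) and grouping the sum through cosets of $H_i$ (which contains $\zt^i \Nt_0 \zt^{-i}$ for $i \gg 0$ by (a)), the sum rewrites as $[H_i : \zt^i \Nt_0 \zt^{-i}] \cdot \sum_{n \in \Nt_0/H_i} n \cdot \zt^i v$; this index is at least $[H_v : \zt^i \Nt_0 \zt^{-i}]$, an unbounded power of $p$, which is zero in $A$ for $i \gg 0$ by (c). For the inductive step $1 \leq n < [F:\Qp]\dt$, I would embed $\pi$ in a locally $\zt$-finite, $\Nt_0$-acyclic $\Lt^+ \ltimes \Nt_0$-representation $I$ (constructed as the locally $\zt$-finite part of a suitable smooth induction from the trivial subgroup), and use the long exact sequence of $\Nt_0$-cohomology to obtain an $\Lt^+$-equivariant surjection $\Hc[n-1](\Nt_0, I/\pi) \twoheadrightarrow \Hc[n](\Nt_0, \pi)$, invoking the inductive hypothesis for the locally $\zt$-finite quotient $I/\pi$. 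The main obstacle I anticipate is precisely this inductive step: ensuring that both $I$ and the quotient $I/\pi$ remain locally $\zt$-finite, so that the induction may legitimately proceed within the correct subcategory, requires a careful choice of $I$ exploiting the smoothness of $\pi$ together with the interplay between the $\Lt^+$- and $\Nt_0$-actions mediated by the central contracting torus.
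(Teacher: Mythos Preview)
Your proof of part (i) is correct and is essentially the paper's argument.

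For part (ii), your base case $n=0$ is also correct, and the index computation you give (grouping the Hecke sum into cosets of a fixed open stabiliser and observing that the resulting $p$-power index is eventually zero in the Artinian ring $A$) is precisely the engine that drives the paper's argument as well. The gap is in your inductive step. You need an embedding $\pi \hookrightarrow I$ in $\Mod_{\Lt^+ \ltimes \Nt_0}^\sm(A)$ with $I$ simultaneously $\Nt_0$-acyclic and locally $\zt$-finite, and there is no evident such construction. The standard $\Nt_0$-acyclic envelope $\Clis(\Nt_0,\pi)$ does not carry a compatible smooth $\Lt^+$-action, because for $\lt \in \Lt^+$ conjugation sends $\Nt_0$ \emph{into} itself rather than onto a larger group, so the formula $(\lt \cdot f)(n) = \lt \cdot f(\lt^{-1} n \lt)$ is undefined. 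Injectives in $\Mod_{\Lt^+ \ltimes \Nt_0}^\sm(A)$ are $\Nt_0$-acyclic but have no reason to be locally $\zt$-finite; passing to their locally $\zt$-finite part gives an injective in the subcategory, but a subobject of an $\Nt_0$-acyclic object is not in general $\Nt_0$-acyclic. You acknowledge this obstacle yourself, but it is not a technicality: it is the reason the paper does not argue this way.

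The paper replaces induction on $n$ by a d\'evissage on the group $\Nt$. One orders the weights of $\Zbt$ on $\Lie \Nbt$ and obtains a chain of closed normal subgroups $\Nbt = \Nbt^{(0)} \supset \Nbt^{(1)} \supset \cdots$ whose successive quotients each carry a single weight. Hochschild--Serre for $\Nt_0^{(r+1)} \trianglelefteq \Nt_0^{(r)}$ expresses $\Hc[n](\Nt_0^{(r)},\pi)$ in terms of $\Hc[i](\Nt_0^{(r)}/\Nt_0^{(r+1)}, \Hc[j](\Nt_0^{(r+1)},\pi))$; below top degree in $j$ one invokes the inductive hypothesis, and in top degree part (i) (applied to $\Nbt^{(r+1)}$) shows $\Hc[j](\Nt_0^{(r+1)},\pi)$ is a twisted quotient of $\pi$, hence still locally $\zt$-finite, so a separate sublemma for the single-weight quotient $\Nbt^{(r)}/\Nbt^{(r+1)}$ (itself proved by a further d\'evissage down to $\Qp$-dimension one, culminating in exactly your index computation) applies. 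The virtue of this strategy is that one never leaves the representation $\pi$: local $\zt$-finiteness is propagated to cohomology via part (i), not via an ambient acyclic object.
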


\begin{proof}
We prove (i).
The natural $\Lt^+$-equivariant surjection in the statement is the composite
\begin{equation*}
\pi \otimes \left( \omega^{-1} \circ \deltat \right) \twoheadrightarrow \pi_{\Nt_0} \otimes \left( \omega^{-1} \circ \deltat \right) \cong \Hc[{[F:\Qp]} \dt]\! \left( \Nt_0, \pi \right)
\end{equation*}
where the first morphism is the natural projection onto the $\Nt_0$-coinvariants of $\pi$ and the second morphism is the natural isomorphism \cite[(2.2)]{JHB} which is due to Emerton (in loc. cit. $\alphat \in \X^*(\Res_{F/\Qp} \Lbt)$ is the algebraic character of the adjoint representation of $\Res_{F/\Qp} \Lbt$ on $\det_{\Qp} (\Lie(\Res_{F/\Qp} \Nbt))$ so that $\alphat = \Nrm_{F/\Qp} \circ \deltat$ as $\Qp^\times$-valued characters of $\Lt$, hence $\alphat^{-1} \lvert\alphat\rvert_p^{-1} = \omega^{-1} \circ \deltat$ as $\Qp^\times$-valued characters of $\Lt$).
For every $v \in \pi$, there exists $i \in \N$ such that $\zt^i \Nt_0 \zt^{-i}$ fixes $A[\zt] \cdot v$ (since $\pi$ is locally $\zt$-finite and $\zt$ is strictly contracting $\Nt_0$), so that for all $\nt \in \Nt_0$ we have
\begin{equation*}
\zt^i \cdot \left( \nt \cdot v - v \right) = \left( \zt^i \nt \zt^{-i} \right) \cdot \left( \zt^i \cdot v \right) - \left( \zt^i \cdot v \right) = 0.
\end{equation*}
Thus the action of $\zt$ on the kernel of the above surjection is locally nilpotent.

We prove (ii).
Let $(\mut_r)_{r \in \llbrack 0,m-1 \rrbrack}$ be an enumeration of the weights of $\Zbt$ in $\Lie \Nbt$ such that the sequence $(\langle \mut_r,\lambdat \rangle)_{r \in \llbrack 0,m-1 \rrbrack}$ is increasing.
If $\mut_i+\mut_j=\mut_r$ with $i,j,r \in \llbrack 0,m-1 \rrbrack$, then $r > \max \{i,j\}$ (since $\langle \mut_r,\lambdat \rangle > \max \{\langle \mut_i,\lambdat \rangle,\langle \mut_j,\lambdat \rangle\}$).
Thus for all $r \in \llbrack 0,m \rrbrack$, the direct sum of the weight spaces corresponding to $\mut_r,\dots,\mut_{m-1}$ is an ideal of $\Lie \Nbt$ stable under the adjoint action of $\Zbt$ and we let $\Nbt^{(r)} \subseteq \Nbt$ be the corresponding closed normal subgroup stable under conjugation by $\Zbt$, $\dt_r$ denote the integer $\dim_F \Nbt^{(r)}$, $\deltat_r \in \X^*(\Lbt)$ denote the algebraic character of the adjoint representation of $\Lbt$ on $\det_F(\Lie \Nbt^{(r)})$ and $\Nt_0^{(r)} \subseteq \Nt^{(r)}$ be the standard compact open subgroup $\Nt^{(r)} \cap \Nt_0$ stable under conjugation by $\Zt^+$.

Let $r \in \llbrack 0,m \rrbrack$.
We assume that $n < [F:\Qp] \dt_r$ and we prove that the Hecke action of $\zt$ on $\Hc[n](\Nt^{(r)}_0,\pi)$ is locally nilpotent by induction on $r$.
The result is trivial for $r=m$.
We assume $r<m$ and the result true for $r+1$.
We have a short exact sequence of topological groups
\begin{equation*}
1 \to \Nt_0^{(r+1)} \to \Nt_0^{(r)} \to \Nt_0^{(r)}/\Nt_0^{(r+1)} \to 1.
\end{equation*}
The Lyndon--Hochschild--Serre spectral sequence associated to this dévissage is naturally a spectral sequence of $\Lt^+$-representations (cf. \cite[(2.3)]{JHB})
\begin{equation} \label{HSSS}
\Hc[i]\! \left( \Nt_0^{(r)} / \Nt_0^{(r+1)}, \Hc[j]\! \left( \Nt_0^{(r+1)}, \pi \right) \right) \Rightarrow \Hc[i+j]\! \left( \Nt_0^{(r)}, \pi \right).
\end{equation}
Let $i,j \in \N$ such that $i+j=n$.
If $j < [F:\Qp] \dt_{r+1}$, then the Hecke action of $\zt$ on $\Hc[j](\Nt_0^{(r+1)},\pi)$ is locally nilpotent by the induction hypothesis, thus the Hecke action of $\zt$ on $\Hc[i](\Nt_0^{(r)}/\Nt_0^{(r+1)},\Hc[j](\Nt_0^{(r+1)},\pi))$ is also locally nilpotent (since the image of a locally constant cochain is finite by compactness).
If $j = [F:\Qp] \dt_{r+1}$, then $i < [F:\Qp] (\dt_r-\dt_{r+1})$ and we deduce from (i) with $\Nbt^{(r+1)}$ and $j$ instead of $\Nbt$ and $n$ respectively that $\Hc[j](\Nt_0^{(r+1)},\pi)$ is locally $\zt$-finite, thus the Hecke action of $\zt$ on $\Hc[i](\Nt_0^{(r)}/\Nt_0^{(r+1)},\Hc[j](\Nt_0^{(r+1)},\pi))$ is locally nilpotent by the sublemma below with $\mut=\mut_r$ and $\Nbt^{(r)}/\Nbt^{(r+1)}$, $\Hc[j](\Nt_0^{(r+1)},\pi)$, $i$ instead of $\Nbt$, $\pi$, $n$ respectively.
If $j > [F:\Qp] \dt_{r+1}$, then $\Hc[j](\Nt_0^{(r+1)},\pi) = 0$ by \cite[Lemma 3.5.4]{Em2}, thus $\Hc[i](\Nt_0^{(r)}/\Nt_0^{(r+1)},\Hc[j](\Nt_0^{(r+1)},\pi)) = 0$.
Using \eqref{HSSS}, we conclude that the action of $\zt$ on $\Hc[n](\Nt^{(r)}_0,\pi)$ is locally nilpotent.
\end{proof}

\begin{slem}
Let $\pi$ be a locally $\zt$-finite $\Zt^+ \ltimes \Nt_0$-representation, $\mut \in \X^*(\Zbt)$ and $n \in \N$.
Assume that the adjoint action of $\Zbt$ on $\Lie \Nbt$ factors through $\mut$.
If $n < [F:\Qp] \dt$, then the Hecke action of $\zt$ on $\Hc[n](\Nt_0,\pi)$ is locally nilpotent.
\end{slem}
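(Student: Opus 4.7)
Since the adjoint action of $\Zbt$ on $\Lie \Nbt$ factors through the nontrivial character $\mut$ (nontrivial as $\langle \mut, \lambdat \rangle > 0$), the $\Zbt$-equivariance of the Lie bracket would force $[\Lie \Nbt, \Lie \Nbt]$ to transform with weight $2\mut \neq \mut$, so the bracket must vanish. Thus $\Nbt$ is an $F$-vector group on which $\Zbt$ acts by the $F$-scalar $\mut$, and every $F$-vector subspace of $\Lie \Nbt$ is automatically $\Zbt$-stable. My plan is to induct on $\dt$.

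For the inductive step with $\dt \geq 2$, I pick an $F$-vector subgroup $\Nbt^\# \subset \Nbt$ of $F$-codimension one and set $\Nt_0^\# \coloneqq \Nt_0 \cap \Nt^\#$. Then $\Nt_0^\#$ is a standard compact open subgroup of $\Nt^\#$ stable under $\zt$-conjugation, and $\Nt_0/\Nt_0^\#$ is a standard compact open subgroup of the one-dimensional $F$-vector group $\Nt/\Nt^\#$; both $\Nbt^\#$ and $\Nbt/\Nbt^\#$ satisfy the hypothesis of the sublemma with the same character $\mut$. The Lyndon--Hochschild--Serre spectral sequence
\begin{equation*}
\Hc[i]\bigl(\Nt_0/\Nt_0^\#, \Hc[j](\Nt_0^\#, \pi)\bigr) \Rightarrow \Hc[i+j](\Nt_0, \pi)
\end{equation*}
is $\Zt^+$-equivariant, and I handle its $E_2$-terms for $i+j = n < [F:\Qp]\dt$ with the same three-case split as in the proof of part~(ii) of the main lemma. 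If $j < [F:\Qp](\dt-1)$ the inductive hypothesis applied to $\Nbt^\#$ gives local $\zt$-nilpotency on $\Hc[j](\Nt_0^\#, \pi)$, which propagates to the outer cohomology since locally constant cochains on a compact space take finitely many values. If $j > [F:\Qp](\dt-1)$ the inner cohomology vanishes by cohomological dimension. If $j = [F:\Qp](\dt-1)$ then part~(i) of the main lemma applied to $\Nbt^\#$ ensures $\Hc[j](\Nt_0^\#, \pi)$ is locally $\zt$-finite, and since $i < [F:\Qp] = [F:\Qp]\dim_F(\Nbt/\Nbt^\#)$ I invoke the sublemma in the one-dimensional case to conclude.

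The main obstacle is therefore the base case $\dt = 1$, where $\Nbt \cong \mathbb{G}_a$ over $F$, $\Nt_0$ is a $\Zp$-lattice in $\Nt = F$, and $\zt$ acts on $\Nt_0$ by multiplication by $\mut(\zt)$ of positive $p$-adic valuation. My plan is to iterate the Hecke action via the identity $\zt \h^k c = \mathrm{cor}_{\Nt_0}^{\Nt_0^{(k)}}(\zt^k_* c)$, where $\Nt_0^{(k)} \coloneqq \zt^k \Nt_0 \zt^{-k}$ and $\zt^k_*$ is the conjugation isomorphism: by smoothness and local $\zt$-finiteness of $\pi$, a cocycle representing $c \in \Hc[n](\Nt_0, \pi)$ has finite image contained in $\pi^{\Nt_0^{(i_0)}}$ for some $i_0$, so $\zt^k \cdot c$ inflates from a cocycle on the finite quotient $\Nt_0^{(k)}/\Nt_0^{(k+i_0)}$ and, for $k \geq 2i_0$, should extend to a cocycle on the larger subgroup $\Nt_0^{(k-i_0)}$, producing via corestriction a multiplicative factor $[\Nt_0^{(k-i_0)} : \Nt_0^{(k)}]$---a positive power of $p$ growing linearly in $k$---that vanishes in the local Artinian ring $A$ of residue characteristic $p$ for $k$ large. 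The technical heart will be verifying this extension at the cochain level in every degree $0 \leq n < [F:\Qp]$: in degree $0$ it is immediate from the inclusion $\pi^{\Nt_0^{(i_0)}} \subseteq \pi^{\Nt_0^{(k-i_0)}}$, but in higher degrees one must use the bar or Koszul resolution of $\Nt_0$ to see that a cocycle with values in $\pi^{\Nt_0^{(k+i_0)}}$ is inflated from the finite abelian $p$-group $\Nt_0^{(k-i_0)}/\Nt_0^{(k+i_0)}$, so that the ``lifting through restriction'' needed for the factor to be extracted is genuine at the level of cohomology.
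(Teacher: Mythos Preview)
Your inductive reduction from $\dt$ to $\dt-1$ over $F$ is fine and mirrors the structure of the proof of Lemma~\ref{lemm:Zfin}~(ii). The gap is in your base case $\dt=1$. There $\Nt_0$ is a free $\Zp$-module of rank $[F:\Qp]$, and you must treat every degree $0\le n<[F:\Qp]$. Your proposed mechanism is to show that the class of $\zt^k_*c$ in $\Hc[n](\Nt_0^{(k)},\pi)$ lies in the image of restriction from $\Nt_0^{(k-i_0)}$, so that $\mathrm{cor}\circ\mathrm{res}$ produces the index factor $[\Nt_0^{(k-i_0)}:\Nt_0^{(k)}]$. But this lifting is not available in positive degree: already for $n=1$, with $M=\Z/p$ carrying the trivial action and $\Nt_0^{(k)}=p\Zp\subset\Zp=\Nt_0^{(k-1)}$, the restriction map $\Hom(\Zp,\Z/p)\to\Hom(p\Zp,\Z/p)$ is zero, so no nontrivial class lifts. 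Knowing that the cochain factors through a finite quotient and takes values in an invariant submodule does not force it to be the restriction of anything; inflation and restriction go in opposite directions, and your ``lifting through restriction'' step has no justification.

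The paper's proof avoids this entirely by refining the filtration down to $\Qp$-dimension~$1$ rather than stopping at $F$-dimension~$1$. One passes to the maximal split subtorus $\Sbt\subseteq\Res_{F/\Qp}\Zbt$, observes that $\Sbt$ still acts on $\Lie(\Res_{F/\Qp}\Nbt)$ through a single character (so every $\Qp$-subspace is $\Sbt$-stable), and chooses a composition series of $\Res_{F/\Qp}\Nbt$ of length $[F:\Qp]\dt$ with successive quotients $\cong\mathbb{G}_{a/\Qp}$. The induction then runs along this series. Because each quotient $\Nt_0^{(r)}/\Nt_0^{(r+1)}$ has $\Qp$-dimension~$1$, the Lyndon--Hochschild--Serre spectral sequence degenerates to a short exact sequence, and the only place an explicit index argument is needed is for the term $\Hc[n](\Nt_0^{(r+1)},\pi)^{\Nt_0^{(r)}/\Nt_0^{(r+1)}}$ in the boundary case $n=[F:\Qp]\dt-(r+1)$. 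That term is a degree-$0$ object (invariants), where the Hecke action is literally a sum over cosets and the factor $(\Nt''_0:\zt^j\Nt''_0\zt^{-j})$ appears transparently. In short: your detour through $\dt=1$ forces you to confront corestriction in higher degree, which is exactly what the $\Qp$-refinement is designed to sidestep.
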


\begin{proof}
Let $\Sbt \subseteq \Res_{F/\Qp} \Zbt$ be the maximal split subtorus, $\St \subseteq \Zt$ be the closed subgroup $\Sbt(\Qp)$ and $\St^+ \subseteq \St$ be the open submonoid $\St \cap \Zt^+$.
Every algebraic (co)character of $\Zbt$ induces by restriction of scalars a (co)character of $\Sbt$ (since the image of a split torus by a morphism of algebraic groups is a split torus, cf. \cite[§~1.4]{BT}).
In particular, the restriction of $\lambdat : F^\times \to \Zt$ to $\Qp^\times$ takes values in $\St$ and the restriction of $\mut : \Zt \to F^\times$ to $\St$ takes values in $\Qp^\times$.

We deduce on the one hand that $\zt \in \St^+$, and on the other hand that the adjoint action of $\Sbt$ on $\Lie(\Res_{F/\Qp} \Nbt)$ factors through an algebraic character so that any closed subgroup of $\Res_{F/\Qp} \Nbt$ is stable under conjugation by $\Sbt$.
Since $\Res_{F/\Qp} \Nbt$ is unipotent, there exists a composition series
\begin{equation*}
\Res_{F/\Qp} \Nbt = \Nbt^{(0)} \vartriangleright \Nbt^{(1)} \vartriangleright \dots \vartriangleright \Nbt^{([F:\Qp] \dt)} = 1
\end{equation*}
whose successive quotients are isomorphic to the additive group over $\Qp$ and for all $r \in \llbrack 0,[F:\Qp] \dt \rrbrack$, we let $\Nt^{(r)} \subseteq \Nt$ be the closed subgroup $\Nbt^{(r)}(\Qp)$ and $\Nt_0^{(r)} \subseteq \Nt^{(r)}$ be the standard compact open subgroup $\Nt^{(r)} \cap \Nt_0$ stable under conjugation by $\St^+$.

Let $r \in \llbrack 0,[F:\Qp] \dt \rrbrack$.
We assume that $n<[F:\Qp] \dt-r$ and we prove by induction on $r$ that the Hecke action of $\zt$ on $\Hc[n](\Nt_0^{(r)},\pi)$ is locally nilpotent.
The result is trivial for $r=[F:\Qp] \dt$.
We assume $r<[F:\Qp] \dt$ and the result true for $r+1$.
Since $\dim_{\Qp} (\Nbt^{(r)}/\Nbt^{(r+1)}) = 1$, we have a short exact sequence of $\St^+$-representations (cf. \cite[(2.4)]{JHB})
\begin{multline} \label{HSSE}
0 \to \Hc[1]\! \left( \Nt_0^{(r)} / \Nt_0^{(r+1)}, \Hc[n-1]\! \left( \Nt_0^{(r+1)}, \pi \right) \right) \to \Hc[n]\! \left( \Nt_0^{(r)}, \pi \right) \\
\to \Hc[n]\! \left( \Nt_0^{(r+1)}, \pi \right)^{\Nt_0^{(r)} / \Nt_0^{(r+1)}} \to 0.
\end{multline}
The Hecke action of $\zt$ on $\Hc[n-1](\Nt_0^{(r+1)},\pi)$ is locally nilpotent by the induction hypothesis, thus the Hecke action of $\zt$ on $\Hc[1](\Nt_0^{(r)}/\Nt_0^{(r+1)},\Hc[n-1](\Nt_0^{(r+1)},\pi))$ is also locally nilpotent.
If $n<[F:\Qp] \dt-(r+1)$, then the Hecke action of $\zt$ on $\Hc[n](\Nt_0^{(r+1)},\pi)$ is locally nilpotent by induction, thus the Hecke action of $\zt$ on $\Hc[n](\Nt_0^{(r+1)},\pi)^{\Nt_0^{(r)}/\Nt_0^{(r+1)}}$ is also locally nilpotent.
If $n=[F:\Qp] \dt-(r+1)$, then we have a natural $\St^+$-equivariant surjection $\pi \otimes \mut^{-n} \lvert\mut\rvert_p^{-n} \twoheadrightarrow \Hc[n](\Nt_0^{(r+1)},\pi)$ (cf. \cite[(2.2)]{JHB}) and we deduce that $\Hc[n](\Nt_0^{(r+1)},\pi)$ is locally $\zt$-finite.
In this case, we put $\Nt''_0 \coloneqq \Nt_0^{(r)}/\Nt_0^{(r+1)}$.
For every $v \in \Hc[n](\Nt_0^{(r+1)},\pi)$, there exists $i \in \N$ such that $\zt^i \Nt''_0 \zt^{-i}$ fixes $A[\zt] \cdot v$, so that for all $j \in \N$ we have
\begin{align*}
\zt^{i+j} \h v &= \sum_{\nt'' \in \Nt''_0 / \zt^{i+j} \Nt''_0 \zt^{-(i+j)}} \nt'' \cdot \left( \zt^{i+j} \cdot v \right) \\
&= \left( \zt^i \Nt''_0 \zt^{-i} : \zt^{i+j} \Nt''_0 \zt^{-(i+j)} \right) \sum_{\nt'' \in \Nt''_0 / \zt^i \Nt''_0 \zt^{-i}} \nt'' \cdot \left( \zt^{i+j} \cdot v \right) \\
&= \left( \Nt''_0 : \zt^j \Nt''_0 \zt^{-j} \right) \sum_{\nt'' \in \Nt''_0 / \zt^i \Nt''_0 \zt^{-i}} \nt'' \cdot \left( \zt^{i+j} \cdot v \right).
\end{align*}
Now $\Nt''_0$ is an infinite pro-$p$ group, $\zt$ is strictly contracting $\Nt''_0$ and $A$ is Artinian.
Thus $(\Nt''_0 : \zt^j \Nt''_0 \zt^{-j})$ is zero in $A$ for $j \in \N$ large enough.
Therefore, the Hecke action of $\zt$ on $\Hc[n](\Nt_0^{(r+1)},\pi)^{\Nt_0^{(r)}/\Nt_0^{(r+1)}}$ is locally nilpotent.
Using \eqref{HSSE}, we conclude that the Hecke action of $\zt$ on $\Hc[n](\Nt_0^{(r)},\pi)$ is locally nilpotent.
\end{proof}

Let $\Nbt' \subseteq \Nbt$ be a closed subgroup such that $\Lie \Nbt' \subseteq \Lie \Nbt$ is a direct sum of weight spaces of $\Zbt$.
We stress that $\Nbt'$ need not be normal.
Since $\Zbt$ is central in $\Lbt$, $\Lie \Nbt'$ is stable under the adjoint action of $\Lbt$, thus $\Nbt'$ is stable under conjugation by $\Lbt$.
We let $\dt'$ denote the integer $\dim_F \Nbt'$ and $\deltat' \in \X^*(\Lbt)$ denote the algebraic character of the adjoint representation of $\Lbt$ on $\det_F (\Lie \Nbt')$.
We let $\Nt'_0 \subseteq \Nt'$ be the standard compact open subgroup $\Nt' \cap \Nt_0$ stable under conjugation by $\Lt^+$.

\begin{prop} \label{prop:dev}
Let $\pi$ be an $\Lt^+ \ltimes \Nt_0$-representation.
For all $n \in \N$, there is a natural $\Lt^+$-equivariant morphism
\begin{equation*}
\Hc[n - {[F:\Qp]} \left( \dt - \dt' \right)]\! \left( \Nt'_0, \pi \right) \otimes \left( \omega^{-1} \circ \left( \deltat - \deltat' \right) \right) \to \Hc[n]\! \left( \Nt_0, \pi \right).
\end{equation*}
Furthermore, the Hecke action of $\zt$ on its kernel and cokernel is locally nilpotent if the $\Lt^+$-representations $\Hc(\Nt'_0,\pi)$ are locally $\zt$-finite.
\end{prop}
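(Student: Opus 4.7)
The plan is to realise the desired morphism as a composite of natural maps coming from a flag interpolating between $\Nbt'$ and $\Nbt$. Even though $\Nbt'$ need not be normal in $\Nbt$, I would first construct a flag
\begin{equation*}
\Nbt = \Nbt^{(0)} \triangleright \Nbt^{(1)} \triangleright \dots \triangleright \Nbt^{(k)} = \Nbt'
\end{equation*}
of closed $\Lbt$-stable subgroups, each normal in the previous, such that $\Nbt^{(i-1)}/\Nbt^{(i)}$ is a vector group on whose Lie algebra $\Zbt$ acts through a single weight $\mut_i$ (in particular with $\langle\mut_i,\lambdat\rangle>0$). To do this, enumerate the weights of $\Zbt$ on $\Lie\Nbt$ that do not appear in $\Lie\Nbt'$ as $\mut_1,\dots,\mut_k$ in increasing order of $\langle-,\lambdat\rangle$, and let $\Lie\Nbt^{(i)}$ be the direct sum of $\Lie\Nbt'$ with the weight spaces for $\mut_{i+1},\dots,\mut_k$. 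Since $\Lie\Nbt'$ is a Lie subalgebra, the bracket of two weight vectors lies in the weight space indexed by the sum of the weights, and $\langle\mut_j+\mut_{j'},\lambdat\rangle > \max\{\langle\mut_j,\lambdat\rangle,\langle\mut_{j'},\lambdat\rangle\}$, the chosen ordering forces each $\Lie\Nbt^{(i)}$ to be a Lie subalgebra and an ideal in $\Lie\Nbt^{(i-1)}$; $\Lbt$-stability follows from the centrality of $\Zbt$ in $\Lbt$. Then $\Nt^{(i)}_0 \coloneqq \Nt^{(i)} \cap \Nt_0$ yields a compatible flag of standard compact open subgroups of $\Nt_0$, each stable under $\Lt^+$.

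For each $i$, set $d_i \coloneqq [F:\Qp]\dim_F(\Nbt^{(i-1)}/\Nbt^{(i)})$, let $\deltat_i$ be the character of $\Lbt$ on $\det_F\Lie(\Nbt^{(i-1)}/\Nbt^{(i)})$, and consider the Lyndon--Hochschild--Serre spectral sequence of $\Lt^+$-representations
\begin{equation*}
E_2^{p,q} = \Hc[p]\!\left(\Nt^{(i-1)}_0/\Nt^{(i)}_0, \Hc[q](\Nt^{(i)}_0,\pi)\right) \Rightarrow \Hc[p+q](\Nt^{(i-1)}_0, \pi).
\end{equation*}
Since $E_2^{p,q}=0$ for $p>d_i$, no differentials enter or leave $E_r^{d_i,n-d_i}$, so the bottom filtration step $F^{d_i}\Hc[n](\Nt^{(i-1)}_0,\pi)$ coincides with $E_\infty^{d_i,n-d_i}$, a quotient of $E_2^{d_i,n-d_i}$. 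Applying Lemma \ref{lemm:Zfin}(i) to the fibre $\Nbt^{(i-1)}/\Nbt^{(i)}$ (with central split torus $\Zbt$ and positivity element $\lambdat$) and to the coefficient representation $\Hc[n-d_i](\Nt^{(i)}_0,\pi)$, which is locally $\zt$-finite whenever $\Hc(\Nt^{(i)}_0,\pi)$ is, produces a natural $\Lt^+$-equivariant surjection $\Hc[n-d_i](\Nt^{(i)}_0,\pi)\otimes(\omega^{-1}\circ\deltat_i) \twoheadrightarrow E_2^{d_i,n-d_i}$ whose kernel has locally nilpotent $\zt$-action. Composing yields the step map into $\Hc[n](\Nt^{(i-1)}_0,\pi)$, whose kernel is an extension of this kernel by the images of incoming differentials into $E_2^{d_i,n-d_i}$ (subquotients of $E_2^{d_i-r,n-d_i+r-1}$, on which Lemma \ref{lemm:Zfin}(ii) gives locally nilpotent $\zt$-action) and whose cokernel $\Hc[n](\Nt^{(i-1)}_0,\pi)/F^{d_i}$ is filtered by the $E_\infty^{p,n-p}$ for $p<d_i$, each a subquotient of an $E_2^{p,n-p}$ to which Lemma \ref{lemm:Zfin}(ii) likewise applies. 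Hence both the kernel and the cokernel of the step map have locally nilpotent $\zt$-action.

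To conclude, I would iterate from $i=k$ down to $i=1$ and compose. The finiteness hypothesis propagates up the flag: if $\Hc(\Nt^{(i)}_0,\pi)$ is locally $\zt$-finite, then $\Hc[n](\Nt^{(i-1)}_0,\pi)$ is a finite extension of the $E_\infty^{p,n-p}$, each either carrying a locally nilpotent Hecke action of $\zt$ (for $p<d_i$) or being a quotient of $\Hc[n-d_i](\Nt^{(i)}_0,\pi)\otimes(\omega^{-1}\circ\deltat_i)$ (for $p=d_i$), and is therefore itself locally $\zt$-finite. Composing the $k$ step maps gives a natural $\Lt^+$-equivariant morphism with source $\Hc[n-{[F:\Qp]}(\dt-\dt')](\Nt'_0,\pi)\otimes(\omega^{-1}\circ(\deltat-\deltat'))$, since the $d_i$ sum to $[F:\Qp](\dt-\dt')$ and the $\deltat_i$ sum to $\deltat-\deltat'$. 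Its kernel and cokernel are finite extensions of those of the step maps, so the Hecke action of $\zt$ on them remains locally nilpotent. The main obstacle is the first paragraph: although $\Nbt'$ is not assumed normal in $\Nbt$, the weight-theoretic flag provides a valid sequence of normal inclusions through $\Nbt'$; once this is in place, the rest is a careful bookkeeping of a standard edge-map argument combined with Lemma \ref{lemm:Zfin}.
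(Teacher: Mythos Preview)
Your proposal is correct and follows essentially the same route as the paper: build a flag between $\Nbt'$ and $\Nbt$ using the $\lambdat$-ordering of the weights outside $\Lie\Nbt'$, then iterate a Hochschild--Serre top-edge argument together with Lemma~\ref{lemm:Zfin} at each step, propagating local $\zt$-finiteness up the flag. One small slip: differentials \emph{do} enter $E_r^{d_i,n-d_i}$ from $E_r^{d_i-r,n-d_i+r-1}$; only the vanishing of outgoing differentials is needed to make $E_\infty^{d_i,n-d_i}$ a quotient of $E_2^{d_i,n-d_i}$, and you already handle the incoming ones correctly in your kernel analysis.
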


\begin{proof}
Let $(\mut_r)_{r \in \llbrack 0,m-m'-1 \rrbrack}$ be an enumeration of the weights of $\Zbt$ in $(\Lie \Nbt)/(\Lie \Nbt')$ such that the sequence $(\langle \mut_r,\lambdat \rangle)_{r \in \llbrack 0,m-m'-1 \rrbrack}$ is increasing and $(\mut_r)_{r \in \llbrack m-m',m-1 \rrbrack}$ be an enumeration of the weights of $\Zbt$ in $\Lie \Nbt'$ such that the sequence $(\langle \mut_r,\lambdat \rangle)_{r \in \llbrack m-m',m-1 \rrbrack}$ is increasing.
If $\mut_i+\mut_j=\mut_r$ with $i,j,r \in \llbrack 0,m-1 \rrbrack$, then $r > \min \{i,j\}$ (since $\langle \mut_r,\lambdat \rangle > \max \{\langle \mut_i,\lambdat \rangle,\langle \mut_j,\lambdat \rangle\}$).
Thus for all $r \in \llbrack 0,m-m' \rrbrack$, the direct sum of the weight spaces corresponding to $\mut_r,\dots,\mut_{m-1}$ is a Lie subalgebra of $\Lie \Nbt$ stable under the adjoint action of $\Zbt$ and we use the notations $\Nbt^{(r)}$, $\dt_r$, $\deltat_r$ and $\Nt_0^{(r)}$ as in the proof of Lemma \ref{lemm:Zfin} (ii).
Moreover for all $r \in \llbrack 0,m-m'-1 \rrbrack$, $\Lie \Nbt^{(r+1)}$ is an ideal of $\Lie \Nbt^{(r)}$ so that $\Nbt^{(r+1)}$ is a normal subgroup of $\Nbt^{(r)}$.

Let $r \in \llbrack 0,m-m' \rrbrack$.
We prove by induction on $r$ that for all $n \in \N$, there is a natural $\Lt^+$-equivariant morphism
\begin{equation} \label{piso}
\Hc[n - {[F:\Qp]} \left( \dt_r - \dt' \right)]\! \left( \Nt'_0, \pi \right) \otimes \left( \omega^{-1} \circ \left( \deltat_r - \deltat' \right) \right) \to \Hc[n]\! \left( \Nt_0^{(r)}, \pi \right).
\end{equation}
The result is trivial for $r=m-m'$.
We assume $r<m-m'$ and the result true for $r+1$.
Let $n \in \N$.
Since $\dim_F (\Nt^{(r)}/\Nt^{(r+1)}) = \dt_r - \dt_{r+1}$, we deduce from \cite[Lemma 3.5.4]{Em2} that \eqref{HSSS} yields a natural $\Lt^+$-equivariant morphism
\begin{equation} \label{piso1}
\Hc[{[F:\Qp]} \left( \dt_r - \dt_{r+1} \right)]\! \left( \Nt_0^{(r)} / \Nt_0^{(r+1)}, \Hc[n - {[F:\Qp]} \left( \dt_r - \dt_{r+1} \right)]\! \left( \Nt_0^{(r+1)}, \pi \right) \right) \\
\to \Hc[n]\! \left( \Nt_0^{(r)}, \pi \right)
\end{equation}
whose kernel and cokernel are built out of subquotients of $\Hc[i](\Nt_0^{(r)}/\Nt_0^{(r+1)},\Hc[j](\Nt_0^{(r+1)},\pi))$ with $i,j \in \N$ such that $i<[F:\Qp] (\dt_r-\dt_{r+1})$.
Furthermore, Lemma \ref{lemm:Zfin} (i) with $\Nbt^{(r)}/\Nbt^{(r+1)}$, $\Hc[n-{[F:\Qp]} (\dt_r-\dt_{r+1})](\Nt_0^{(r+1)},\pi)$, $[F:\Qp] (\dt_r-\dt_{r+1})$ and $\deltat_r-\deltat_{r+1}$ instead of $\Nbt$, $\pi$, $n$ and $\deltat$ respectively yields a natural $\Lt^+$-equivariant surjection
\begin{multline} \label{piso2}
\Hc[n - {[F:\Qp]} \left( \dt_r - \dt_{r+1} \right)]\! \left( \Nt_0^{(r+1)}, \pi \right) \otimes \left( \omega^{-1} \circ \left( \deltat_r - \deltat_{r+1} \right) \right) \\
\twoheadrightarrow \Hc[{[F:\Qp]} \left( \dt_r - \dt_{r+1} \right)]\! \left( \Nt_0^{(r)} / \Nt_0^{(r+1)}, \Hc[n - {[F:\Qp]} \left( \dt_r - \dt_{r+1} \right)]\! \left( \Nt_0^{(r+1)}, \pi \right) \right).
\end{multline}
Finally by the induction hypothesis with $n-[F:\Qp] (\dt_r-\dt_{r+1})$ instead of $n$, there is a natural $\Lt^+$-equivariant morphism
\begin{multline} \label{piso3}
\Hc[n - {[F:\Qp]} \left( \dt_r - \dt' \right)]\! \left( \Nt'_0, \pi \right) \otimes \left( \omega^{-1} \circ \left( \deltat_r - \deltat' \right) \right) \\
\to \Hc[n - {[F:\Qp]} \left( \dt_r - \dt_{r+1} \right)]\! \left( \Nt_0^{(r+1)}, \pi \right) \otimes \left( \omega^{-1} \circ \left( \deltat_r - \deltat_{r+1} \right) \right).
\end{multline}
The composition of \eqref{piso1}, \eqref{piso2} and \eqref{piso3} yields the natural $\Lt^+$-equivariant morphism \eqref{piso}.

Now, we assume that the $\Lt^+$-representations $\Hc(\Nt'_0,\pi)$ are locally $\zt$-finite and we prove by induction on $r$ that for all $n \in \N$, the Hecke action of $\zt$ on the kernel and cokernel of \eqref{piso} is locally nilpotent, or equivalently that the localisation of \eqref{piso} with respect to $\zt^\N$ is an isomorphism.
The result is trivial for $r=m-m'$.
We assume $r<m-m'$ and the result true for $r+1$.
Let $n \in \N$.
By composition, it is enough to prove that the Hecke action of $\zt$ on the kernels and cokernels of \eqref{piso1}, \eqref{piso2} and \eqref{piso3} is locally nilpotent.
By the induction hypothesis with $j$ instead of $n$, the Hecke action of $\zt$ on the kernel and cokernel of the natural $\Lt^+$-equivariant morphism
\begin{equation*}
\Hc[j - {[F:\Qp]} \left( \dt_{r+1} - \dt' \right)]\! \left( \Nt'_0, \pi \right) \otimes \left( \omega^{-1} \circ \left( \deltat_{r+1} - \deltat' \right) \right) \to \Hc[j]\! \left( \Nt_0^{(r+1)}, \pi \right)
\end{equation*}
is locally nilpotent for all $j \in \N$.
With $j=n-[F:\Qp] (\dt_r-\dt_{r+1})$, we deduce that the Hecke action of $\zt$ on the kernel and cokernel of \eqref{piso3} is locally nilpotent.
Furthermore, we deduce that $\Hc[j] (\Nt_0^{(r+1)},\pi)$ is locally $\zt$-finite for all $j \in \N$ and we use Lemma \ref{lemm:Zfin} with $\Nbt^{(r)}/\Nbt^{(r+1)}$, $\Hc[j] (\Nt_0^{(r+1)},\pi)$ and $i$ instead of $\Nbt$, $\pi$ and $n$ respectively: we deduce from (i) with $i=[F:\Qp] (\dt_r-\dt_{r+1})$ that the Hecke action of $\zt$ on the kernel of \eqref{piso2} is locally nilpotent, and we deduce from (ii) that the Hecke action of $\zt$ on the kernel and cokernel of \eqref{piso1} is locally nilpotent (since the Hecke action of $\zt$ on $\Hc[i](\Nt_0^{(r)}/\Nt_0^{(r+1)},\Hc[j](\Nt_0^{(r+1)},\pi))$ is locally nilpotent for all $i,j \in \N$ such that $i<[F:\Qp] (\dt_r-\dt_{r+1})$).
\end{proof}

We end this subsection by reviewing and generalising the construction of Emerton's $\delta$-functor of derived ordinary parts (cf. \cite[§~3.3]{Em2}).
Let $\Zbt_{\Lbt}$ denote the centre of $\Lbt$.
Assume that $\Zt_{\Lt}$ is generated by $\Zt_{\Lt}^+ \coloneqq \Zt_{\Lt} \cap \Lt^+$ as a group, and that $\Lt$ is generated by $\Lt^+$ and $\Zt_{\Lt}$ as a monoid.
Then, the product induces a group isomorphism $\Lt^+ \times_{\Zt_{\Lt}^+} \Zt_{\Lt} \iso \Lt$ (cf. \cite[Proposition 3.3.6]{Eme06}).
Thus, for any $\Lt^+$-representation $\pi$, the $A$-module $\Hom_{A[\Zt_{\Lt}^+]}(A[\Zt_{\Lt}],\pi)^{\Zt_{\Lt}-\lfin}$ is naturally an $\Lt$-representation (cf. \cite[Lemma 3.1.7]{Em1}).
Therefore, we obtain an $A$-linear left-exact functor $\Mod_{\Lt^+}^\sm(A) \to \Mod_{\Lt}^\sm(A)^{\Zt_{\Lt}-\lfin}$ which commutes with inductive limits (cf. \cite[Lemma 3.2.2]{Em1}).

\begin{rema} \label{rema:Loczt}
Let $\zt \in \Zt_{\Lt}^+$.
Assume that $\Zt_{\Lt}$ is generated by $\Zt_{\Lt}^+$ and $\zt^{-1}$ as a monoid.
Then, for any locally finite $\Zt_{\Lt}^+$-representation $\pi$, there is a natural $\Zt_{\Lt}$-equivariant isomorphism $\Hom_{A[\Zt_{\Lt}^+]}(A[\Zt_{\Lt}],\pi)^{\Zt_{\Lt}-\lfin} \iso A[\zt^{\pm1}] \otimes_{A[\zt]} \pi$ (cf. \cite[Lemma 3.2.1]{Em2}).
Thus, the functor $\Hom_{A[\Zt_{\Lt}^+]}(A[\Zt_{\Lt}],-)^{\Zt_{\Lt}-\lfin}$ restricted to the category $\Mod_{\Lt^+}^\sm(A)^{\Zt_{\Lt}^+-\lfin}$ is isomorphic to the localisation with respect to $\zt^\N$.
In particular, it is exact.
\end{rema}

\begin{defi}
For a connected algebraic group $\Pbt/F$ with unipotent radical $\Nbt$ such that $\Pbt \cong \Lbt \ltimes \Nbt$, we define $A$-linear functors $\Mod_{\Pt}^\sm(A) \to \Mod_{\Lt}^\sm(A)^{\Zt_{\Lt}-\lfin}$ which commute with inductive limits by setting
\begin{equation*}
\HOrd_{\Pt} \coloneqq \Hom_{A[\Zt_{\Lt}^+]} \left( A[\Zt_{\Lt}], \Hc\! \left( \Nt_0, - \right) \right)^{\Zt_{\Lt}-\lfin}.
\end{equation*}
If $\Bbt \subseteq \Pbt$ is a connected closed subgroup containing $\Nbt$ and $\Zbt_{\Lbt}$, then $\Bt_{\Lt} \coloneqq \Bt \cap \Lt$ is generated by $\Bt_{\Lt}^+ \coloneqq \Bt_{\Lt} \cap \Lt^+$ and $\Zt_{\Lt}$ as a monoid, so that $\HOrd_{\Pt}$ naturally extend to $A$-linear functors $\Mod_{\Bt}^\sm(A) \to \Mod_{\Bt_{\Lt}}^\sm(A)^{\Zt_{\Lt}-\lfin}$ which commute with inductive limits.
\end{defi}

\subsection{Computations on the associated graded representations} \label{ssec:calc}

Let $J \subseteq \Delta$.
We fix a totally decomposed\footnote{Given a closed subgroup $\Ubt \subseteq \Ub$ stable under conjugation by $\Sb$, we say that a compact open subgroup $\Ut_0 \subseteq \Ut$ is \emph{totally decomposed} if the product induces a homeomorphism $\prod_{\alpha \in \Phi_0^+} (U_\alpha \cap \Ut_0) \iso \Ut_0$ for any order on $\Phi_0^+$ (e.g. $\Ut_0 = \Ut \cap K$ where $K \subseteq G$ is a maximal compact subgroup which is special with respect to $\Zc$, cf. \cite[§~6.6, Remark 2]{HV}).}
standard compact open subgroup $N_{J,0} \subseteq N_J$ and we define an open submonoid of $L_J$ by setting
\begin{equation*}
L_J^+ \coloneqq \left\{ l \in L_J \middlevert lN_{J,0}l^{-1} \subseteq N_{J,0} \right\}.
\end{equation*}
We let $Z_J^+ \subseteq Z_J$ be the open submonoid $Z_J \cap L_J^+$.
Note that $Z_J$ is generated by $Z_J^+$ as a group and $L_J$ is generated by $L_J^+$ and $Z_J$ as a monoid (cf. \cite[Proposition 3.3.2]{Eme06}).
Moreover, any $\lambda \in \X_*(\Sb)$ associated to $\Pb_J$ has its image contained in the maximal split subtorus $\Sb_J$ of $\Zb_J^\circ$ and satisfies $\langle \alpha, \lambda \rangle > 0$ for all $\alpha \in \Phi^+ \backslash \Phi_J^+$, thus the assumption of §~\ref{ssec:dev} with $\Nbt=\Nb_J$ and $\Zbt=\Sb_J$ is satisfied.
We fix $z \in Z_J^+$ strictly contracting $N_{J,0}$ (equivalently $Z_J$ is generated by $Z_J^+$ and $z^{-1}$ as a monoid).

Let $I \subseteq \Delta$ and $\Iw \in \IW$.
We write $\Iw = \Iw^J w_J$ with $\Iw^J \in \IW^J$ and $w_J \in W_J$.
Let $\sigma$ be an $L_I$-representation.
We set\footnote{The naturality of a morphism involving $\pi_{\Iw}$ will mean its functoriality with respect to $\sigma$.}
\begin{equation*}
\pi_{\Iw} \coloneqq \cind_{P_I^-}^{P_I^- \Iw B} \sigma.
\end{equation*}
We use the notation of §~\ref{ssec:isocind}.
The subgroup $\Nb_{J,\Iw} \subseteq \Nb_J$ is stable under conjugation by $\Bb_{J,w_J}$, and we have a semidirect product $\Nb_{J,\Iw} = \Nb''_{J,\Iw} \ltimes \Nb'_{J,\Iw}$.
The subgroup $\Nb'_{J,\Iw}$ is stable under conjugation by $\Bb_{J,w_J}$, and we endow $\Nb''_{J,\Iw}$ (which may not be stable under conjugation by $\Bb_{J,w_J}$) with the quotient action of $\Bb_{J,w_J}$ via the isomorphism $\Nb''_{J,\Iw} \cong \Nb_{J,\Iw} / \Nb'_{J,\Iw}$.
We let $N_{J,\Iw,0} \subseteq N_{J,\Iw}$ (resp. $N'_{J,\Iw,0} \subseteq N'_{J,\Iw}$, $N''_{J,\Iw,0} \subseteq N''_{J,\Iw}$) be the totally decomposed standard compact open subgroup $N_{J,\Iw} \cap N_{J,0}$ (resp. $N'_{J,\Iw} \cap N_{J,0}$, $N''_{J,\Iw} \cap N_{J,0}$) and $B_{J,w_J}^+ \subseteq B_{J,w_J}$ be the open submonoid $B_{J,w_J} \cap L_J^+$.
Since $N_{J,\Iw,0}$ is totally decomposed, we have a short exact sequence of topological groups
\begin{equation} \label{dev}
1 \to N'_{J,\Iw,0} \to N_{J,\Iw,0} \to N''_{J,\Iw,0} \to 1.
\end{equation}
In particular, $N''_{J,\Iw,0}$ is stable under the quotient action of $B_{J,w_J}^+$ on $N''_{J,\Iw}$.

\begin{lemm} \label{lemm:calc1}
For all $n \in \N$, the inflation map is a natural $B_{J,w_J}^+$-equivariant isomorphism
\begin{equation*}
\Hc[n]\! \left( N''_{J,\Iw,0}, \pi_{\Iw}^{N'_{J,\Iw,0}} \right) \iso \Hc[n]\! \left( N_{J,\Iw,0}, \pi_{\Iw} \right).
\end{equation*}
\end{lemm}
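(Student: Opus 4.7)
The plan is to apply the Lyndon--Hochschild--Serre spectral sequence
\[
E_2^{i,j} = \Hc[i]\!\left(N''_{J,\Iw,0}, \Hc[j]\!\left(N'_{J,\Iw,0}, \pi_{\Iw}\right)\right) \Rightarrow \Hc[i+j]\!\left(N_{J,\Iw,0}, \pi_{\Iw}\right)
\]
associated to the short exact sequence \eqref{dev}, and to show that $\pi_{\Iw}$ is $N'_{J,\Iw,0}$-acyclic. Once this acyclicity is established, $E_2^{i,j}=0$ for all $j>0$, so the spectral sequence degenerates on the $i$-axis, and the statement's isomorphism is precisely the resulting edge map, which is by construction the inflation map.

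To prove the acyclicity, I would use the explicit description \eqref{isocind2}, which identifies $\pi_{\Iw}$ with $\Clisc(N'_{J,\Iw}, V)$, where $V \coloneqq \Clisc(U'_{J,w_J}, \sigma^{\Iw})$, and on which $N'_{J,\Iw}$ acts by right translation. Decomposing $N'_{J,\Iw}$ into left cosets of $N'_{J,\Iw,0}$ yields an $N'_{J,\Iw,0}$-equivariant isomorphism
\[
\Clisc\!\left(N'_{J,\Iw}, V\right) \cong \bigoplus_{g \in N'_{J,\Iw}/N'_{J,\Iw,0}} \Clis\!\left(gN'_{J,\Iw,0}, V\right),
\]
each summand being $N'_{J,\Iw,0}$-equivariantly isomorphic, via right translation, to $\Clis(N'_{J,\Iw,0}, V) \cong \Ind_{\{1\}}^{N'_{J,\Iw,0}} V$. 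The latter is $N'_{J,\Iw,0}$-acyclic by Shapiro's lemma, and smooth cohomology commutes with direct sums since any locally constant cocycle on a compact group is supported on finitely many components; hence $\pi_{\Iw}$ is $N'_{J,\Iw,0}$-acyclic, as required.

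Finally, I would verify the $B_{J,w_J}^+$-equivariance by invoking the functoriality of the LHS spectral sequence with respect to conjugation. Since $B_{J,w_J}^+ \subseteq L_J^+$ normalises $N_{J,0}$, it normalises both $N_{J,\Iw,0} = N_{J,\Iw} \cap N_{J,0}$ and $N'_{J,\Iw,0} = N'_{J,\Iw} \cap N_{J,0}$, and the induced action on the quotient $N_{J,\Iw,0}/N'_{J,\Iw,0} \cong N''_{J,\Iw,0}$ is precisely the quotient action fixed above the lemma. Thus the whole spectral sequence carries a compatible Hecke action of $B_{J,w_J}^+$, which makes the edge map (i.e. the inflation map) automatically $B_{J,w_J}^+$-equivariant. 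I do not anticipate any genuine obstacle: the only real work is identifying the restriction of $\pi_{\Iw}$ to $N'_{J,\Iw,0}$ as an induced representation through \eqref{isocind2}, after which Shapiro's lemma and the standard behaviour of the LHS spectral sequence take care of the rest.
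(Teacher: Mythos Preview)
Your proposal is correct and follows essentially the same approach as the paper: invoke the Lyndon--Hochschild--Serre spectral sequence associated to \eqref{dev}, prove $N'_{J,\Iw,0}$-acyclicity of $\pi_{\Iw}$ via the coset decomposition of \eqref{isocind2} and Shapiro's lemma, and conclude that the edge map is the desired isomorphism. One small terminological slip: $B_{J,w_J}^+ \subseteq L_J^+$ does not \emph{normalise} $N_{J,0}$ but only contracts it (i.e.\ $bN_{J,0}b^{-1} \subseteq N_{J,0}$), which is exactly what is needed for the Hecke action to make the spectral sequence $B_{J,w_J}^+$-equivariant (cf.\ \cite[(2.3)]{JHB}).
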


\begin{proof}
The Lyndon--Hochschild--Serre spectral sequence associated to \eqref{dev} is naturally a spectral sequence of $B_{J,w_J}^+$-representations (cf. \cite[(2.3)]{JHB})
\begin{equation} \label{SS}
\Hc[i]\! \left( N''_{J,\Iw,0}, \Hc[j]\! \left( N'_{J,\Iw,0}, \pi_{\Iw} \right) \right) \Rightarrow \Hc[i+j]\! \left( N_{J,\Iw,0}, \pi_{\Iw} \right).
\end{equation}
The inflation maps are the edge maps of \eqref{SS} for $j=0$, thus they are $B_{J,w_J}^+$-equivariant and in order to prove that they are bijective, it is enough to show that \eqref{SS} degenerates, i.e. that $\Hc[j](N'_{J,\Iw,0},\pi_{\Iw}) = 0$ for all integers $j>0$.

Since the left cosets $N'_{J,\Iw}/N'_{J,\Iw,0}$ form an open partition of $N'_{J,\Iw}$, we deduce from \eqref{isocind2} a natural $N'_{J,\Iw,0}$-equivariant isomorphism
\begin{equation*}
\pi_{\Iw} \cong \bigoplus_{n' \in N'_{J,\Iw} / N'_{J,\Iw,0}} \Clis\! \left( n' N'_{J,\Iw,0}, \Clisc\! \left( U'_{J,w_J}, \sigma^{\Iw} \right) \right)
\end{equation*}
where $N'_{J,\Iw,0}$ acts by right translation on the terms of the direct sum.
The latter are $N'_{J,\Iw,0}$-acyclic by Shapiro's lemma (since they are induced discrete $A[N'_{J,\Iw,0}]$-modules) and the $N'_{J,\Iw,0}$-cohomology commutes with direct sums (since the image of a locally constant cochain is finite by compactness), thus $\pi_{\Iw}$ is $N'_{J,\Iw,0}$-acyclic.
\end{proof}

There is a natural smooth $A$-linear action of $B''_{J,w_J} \ltimes (U'_{J,w_J} \times N''_{J,\Iw})$ on $\Clisc(U'_{J,w_J},\sigma^{\Iw})$: we already defined the action of $B_{J,w_J} = B''_{J,w_J} \ltimes U'_{J,w_J}$ in §~\ref{ssec:isocind} and we define the action of $n'' \in N''_{J,\Iw}$ on $f \in \Clisc(U'_{J,w_J},\sigma^{\Iw})$ by setting
\begin{equation*}
\left( n'' \cdot f \right) \left( u' \right) \coloneqq n'' \cdot f \left( u' \right)
\end{equation*}
for all $u' \in U'_{J,w_J}$.

\begin{lemm} \label{lemm:calc2}
For all $n \in \N$, there is a natural $B_{J,w_J}^+$-equivariant morphism
\begin{equation*}
\Hc[n]\! \left( N''_{J,\Iw,0}, \pi_{\Iw}^{N'_{J,\Iw,0}} \right) \to \Hc[n]\! \left( N''_{J,\Iw,0}, \Clisc\! \left( U'_{J,w_J}, \sigma^{\Iw} \right) \right)
\end{equation*}
such that the Hecke action of $z$ on its kernel and cokernel is locally nilpotent.
\end{lemm}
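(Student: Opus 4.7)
My approach is to construct the morphism at the representation level via the natural ``evaluation at the identity'' surjection, and then analyze its kernel using a filtration compatible with the contracting action of $z$.

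First, via the isomorphism \eqref{isocind2}, I would identify $\pi_{\Iw}^{N'_{J,\Iw,0}}$ with the $A$-module of compactly supported functions $f \colon N'_{J,\Iw}/N'_{J,\Iw,0} \to \Clisc(U'_{J,w_J}, \sigma^{\Iw})$, and take the ``evaluation at the identity coset'' map $\phi(f) \coloneqq f(1)$ as the natural candidate for the desired map $\pi_{\Iw}^{N'_{J,\Iw,0}} \twoheadrightarrow \Clisc(U'_{J,w_J}, \sigma^{\Iw})$. The $A[B_{J,w_J}^+]$-equivariance of $\phi$ is immediate from the formula $(b \cdot f)(n') = b \cdot f(b^{-1} n' b)$ given in §~\ref{ssec:isocind}, since conjugation by $b$ fixes the identity of $N'_{J,\Iw}$.

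Next, I would record the failure of $A[N''_{J,\Iw,0}]$-equivariance: by Lemma \ref{lemm:action}, for $u' \in U'_{J,w_J}$, $n'' \in N''_{J,\Iw,0}$ and $f \in \pi_{\Iw}^{N'_{J,\Iw,0}}$, we have $\phi(n'' \cdot f)(u') - (n'' \cdot \phi(f))(u') = n'' \cdot \bigl(f(u'^{-1} n''^{-1} u' n'') - f(1)\bigr)(u')$, which vanishes as soon as the commutator $u'^{-1} n''^{-1} u' n''$ belongs to $N'_{J,\Iw,0}$ --- a condition that holds for $u'$ in a small enough compact open neighborhood of the identity in $U'_{J,w_J}$, for each fixed $n''$. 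So $\phi$ is $N''_{J,\Iw,0}$-equivariant up to a correction concentrated on the ``far'' cosets.

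To turn this partial equivariance into a morphism on cohomology, I would introduce the increasing filtration of $\pi_{\Iw}^{N'_{J,\Iw,0}}$ by subspaces $V_m$ of functions supported on $(z^{-m} N'_{J,\Iw,0} z^m)/N'_{J,\Iw,0}$, which exhausts everything as $m \to \infty$ since $z$ strictly contracts $N'_{J,\Iw,0}$. Noting that $V_0 \cap \ker\phi = 0$, I expect each graded piece $(V_{m+1} \cap \ker\phi) / (V_m \cap \ker\phi)$ to have $N''_{J,\Iw,0}$-cohomology on which the Hecke action of $z$ is locally nilpotent, because the smooth action of $z$ pulls the support one step inward in the filtration and finitely many Hecke iterations push any class below the initial level. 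Combined with the local equivariance from the previous paragraph, a Lyndon--Hochschild--Serre/dévissage argument in the style of Proposition \ref{prop:dev} should then yield the desired $A[B_{J,w_J}^+]$-equivariant morphism on $N''_{J,\Iw,0}$-cohomology with locally $z$-nilpotent kernel and cokernel.

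The main obstacle is simultaneously controlling the twisted $N''_{J,\Iw,0}$-action of Lemma \ref{lemm:action} on the support filtration and the Hecke action of $z$. One has to verify that the filtration is (up to a bounded shift) $N''_{J,\Iw,0}$-stable and that the graded pieces really have locally $z$-nilpotent cohomology despite the twisting by the commutator terms. This bookkeeping should reduce, as in the proof of Proposition \ref{prop:dev}, to repeated applications of the fact that $z$ is strictly contracting together with the Artinian hypothesis on $A$, so that finitely many Hecke iterations suffice to annihilate each graded piece.
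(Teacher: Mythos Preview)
Your map $\phi(f)=f(1)$ is not the right one, and the gap is not just bookkeeping. On $\pi_{\Iw}^{N'_{J,\Iw,0}}=\Hc[0](N'_{J,\Iw,0},\pi_{\Iw})$ the relevant $B_{J,w_J}^+$-action is the \emph{Hecke} action with respect to $N'_{J,\Iw,0}$ (this is what makes the inflation isomorphism of Lemma~\ref{lemm:calc1} equivariant for the Hecke action on $\Hc[n](N_{J,\Iw,0},\pi_{\Iw})$). For that action one has
\[
(b \h f)(1)=\sum_{n'_0 \in N'_{J,\Iw,0}/bN'_{J,\Iw,0}b^{-1}} b \cdot f(b^{-1}n'_0 b),
\]
which is not $b\cdot f(1)$ in general. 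So your equivariance claim fails precisely for the action you need. Moreover, as you note, $\phi$ is not $N''_{J,\Iw,0}$-equivariant; in fact $\ker\phi=\{f:f(1)=0\}$ is not even $N''_{J,\Iw,0}$-stable (the twisted formula of Lemma~\ref{lemm:action} moves the argument from $1$ to the commutator $u'^{-1}n''^{-1}u'n''$). Consequently $\phi$ does not induce any morphism on $N''_{J,\Iw,0}$-cohomology, and there is no short exact sequence of $N''_{J,\Iw,0}$-modules to which one could apply a long exact sequence or a d\'evissage.

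The paper fixes both problems at once by replacing evaluation at $1$ with the ``trace'' map
\[
\Ev \coloneqq \sum_{n' \in N'_{J,\Iw}/N'_{J,\Iw,0}} \ev_{n'} : \pi_{\Iw}^{N'_{J,\Iw,0}} \twoheadrightarrow \Clisc\!\left(U'_{J,w_J},\sigma^{\Iw}\right).
\]
Summing over all cosets is exactly what turns the Hecke sum on the source into the smooth action on the target (a change of variable $n'\mapsto bn'b^{-1}$), and it also absorbs the commutator shift in Lemma~\ref{lemm:action}, so $\Ev$ is genuinely $B_{J,w_J}^+$- and $N''_{J,\Iw,0}$-equivariant. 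One then checks directly that the Hecke action of $z$ on $\ker\Ev$ is locally nilpotent --- indeed $z^i \h f$ is supported on the trivial coset once $\supp(f)\subseteq z^{-i}N'_{J,\Iw,0}z^i$ --- and the long exact sequence in $N''_{J,\Iw,0}$-cohomology finishes the proof. Your filtration intuition about $z$ contracting supports is correct, but it should be applied to $\ker\Ev$, not to $\ker\phi$.
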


\begin{proof}
We will implicitly make use of the isomorphism \eqref{isocind2}.
For each $n' \in N'_{J,\Iw}/N'_{J,\Iw,0}$, evaluation at $n'$ induces a natural $A$-linear surjection
\begin{equation*}
\ev_{n'} : \pi_{\Iw}^{N'_{J,\Iw,0}} \twoheadrightarrow \Clisc\! \left( U'_{J,w_J}, \sigma^{\Iw} \right).
\end{equation*}
We define a natural $A$-linear surjection
\begin{equation*}
\Ev \coloneqq \sum_{n' \in N'_{J,\Iw} / N'_{J,\Iw,0}} \ev_{n'} : \pi_{\Iw}^{N'_{J,\Iw,0}} \twoheadrightarrow \Clisc\! \left( U'_{J,w_J}, \sigma^{\Iw} \right).
\end{equation*}
We prove that $\Ev$ is $B_{J,w_J}^+$-equivariant: for any $f \in \pi_{\Iw}^{N'_{J,\Iw,0}}$ and $b \in B_{J,w_J}^+$, we have
\begin{align*}
\Ev \left( b \h f \right) &= \sum_{n' \in N'_{J,\Iw} / N'_{J,\Iw,0}} \sum_{n_0' \in N'_{J,\Iw,0} / bN'_{J,\Iw,0}b^{-1}} \left( n_0'b \cdot f \right) \left( n' \right) \\
&= \sum_{n' \in N'_{J,\Iw} / N'_{J,\Iw,0}} \sum_{n_0' \in N'_{J,\Iw,0} / bN'_{J,\Iw,0}b^{-1}} b \cdot f \left( b^{-1}n'n_0'b \right) \\
&= \sum_{n' \in N'_{J,\Iw} / bN'_{J,\Iw,0}b^{-1}} b \cdot f \left( b^{-1}n'b \right) \\
&= b \cdot \Ev \left( f \right)
\end{align*}
where the last equality results from the change of variable $n' \mapsto bn'b^{-1}$.
We prove that $\Ev$ is also $N''_{J,\Iw,0}$-equivariant: for any $f \in \pi_{\Iw}^{N'_{J,\Iw,0}}$, $n'' \in N''_{J,\Iw,0}$ and $u' \in U'_{J,w_J}$, we have
\begin{align*}
\Ev \left( n'' \cdot f \right) \left( u' \right) &= \sum_{n' \in N'_{J,\Iw} / N'_{J,\Iw,0}} \left( n'' \cdot f \right) \left( n' \right) \left( u' \right) \\
&= \sum_{n' \in N'_{J,\Iw} / N'_{J,\Iw,0}} n'' \cdot f \left( u'^{-1}n''^{-1}u'n'n'' \right) \left( u' \right) \\
&= n'' \cdot \Ev \left( f \right) \left( u' \right),
\end{align*}
where the last equality results from the fact that when $n'$ runs among $N'_{J,\Iw}/N'_{J,\Iw,0}$ we have $u'^{-1}n''^{-1}u'n'n'' = (u'^{-1}n''^{-1}u'n'')(n''^{-1}n'n'')$ with on the one hand $n''^{-1}n'n''$ running among $N'_{J,\Iw}/N'_{J,\Iw,0}$ and on the other hand $u'^{-1}n''^{-1}u'n'' \in N'_{J,\Iw}$ being constant.
We deduce that $\Ev$ induces natural $B_{J,w_J}^+$-equivariant morphisms in $N''_{J,\Iw,0}$-cohomology.

We prove that the Hecke action of $z$ on the kernel of $\Ev$ is locally nilpotent: for any $f \in \pi_{\Iw}^{N'_{J,\Iw,0}}$ there exists $i \in \N$ such that $\supp(f) \subseteq z^{-i} N'_{J,\Iw,0} z^i$ (since $z$ is strictly contracting $N'_{J,\Iw,0}$ which is open in $N'_{J,\Iw}$), thus for any $n' \in N'_{J,\Iw}/N'_{J,\Iw,0}$, we have
\begin{align*}
\left( z^i \h f \right) \left( n' \right) &= \sum_{n_0' \in N'_{J,\Iw,0} / z^i N'_{J,\Iw,0} z^{-i}} \left( n_0' z^i \cdot f \right) \left( n' \right) \\
&= \sum_{n_0' \in N'_{J,\Iw,0} / z^i N'_{J,\Iw,0} z^{-i}} z^i \cdot f \left( \left( z^{-i} n' z^i \right) \left( z^{-i} n_0' z^i \right) \right) \\
&=
\begin{cases}
z^i \cdot \Ev \left( f \right) &\text{if } n' \in N'_{J,\Iw,0}, \\
0 &\text{if } n' \not \in N'_{J,\Iw,0}.
\end{cases}
\end{align*}
Using the long exact sequence of $N''_{J,\Iw,0}$-cohomology, we deduce that the Hecke action of $z$ on the kernels and cokernels of the morphisms induced by $\Ev$ in $N''_{J,\Iw,0}$-cohomology is locally nilpotent.
\end{proof}

The subgroup $\Bb''_{J,w_J} \subseteq \Bb_{J,w_J}$ normalises $\Nb''_{J,\Iw}$ and the conjugation action coincides with the action induced by the quotient action of $\Bb_{J,w_J}$ on $\Nb''_{J,\Iw}$.
We define an open submonoid $B_{J,w_J}''^+ \subseteq B''_{J,w_J}$ by setting
\begin{equation*}
B_{J,w_J}''^+ \coloneqq \left\{ b'' \in B''_{J,w_J} \middlevert b''N''_{J,\Iw,0}b''^{-1} \subseteq N''_{J,\Iw,0} \right\}.
\end{equation*}

\begin{lemm} \label{lemm:B+}
We have $B_{J,w_J}^+ \subseteq B_{J,w_J}''^+ \ltimes U'_{J,w_J}$.
\end{lemm}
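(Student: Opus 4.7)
The plan is to take $b \in B_{J,w_J}^+$ and write it uniquely as $b = b''u'$ with $b'' \in B''_{J,w_J}$ and $u' \in U'_{J,w_J}$ via the semidirect product decomposition $\Bb_{J,w_J} = \Bb''_{J,w_J} \ltimes \Ub'_{J,w_J}$, then show that $b'' \in B_{J,w_J}''^+$, i.e.\ $b'' N''_{J,\Iw,0} b''^{-1} \subseteq N''_{J,\Iw,0}$. My strategy is to pass to the quotient $\Nb_{J,\Iw}/\Nb'_{J,\Iw} \cong \Nb''_{J,\Iw}$ and compare the induced quotient action of $b$ with the ordinary conjugation action of $b''$.

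As a preliminary, I will observe that $\Bb_{J,w_J}$ normalises $\Nb_{J,\Iw}$: the torus $\Zcb$ stabilises $\Nb_{J,\Iw}$ by conjugation (see §~\ref{ssec:isocind}), while $\Ub_{J,w_J}$ normalises $\Nb_{J,\Iw}$ because the latter is the normal factor in the semidirect product $\Ub_{\Iw} = \Ub_{J,w_J} \ltimes \Nb_{J,\Iw}$. Combined with $b \in L_J^+$, this yields $b N_{J,\Iw,0} b^{-1} \subseteq \Nb_{J,\Iw} \cap N_{J,0} = N_{J,\Iw,0}$, and the short exact sequence \eqref{dev} then implies that the quotient action of $b$ on $\Nb''_{J,\Iw}$ preserves $N''_{J,\Iw,0}$.

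The heart of the argument is the commutator identity $[u', n''] \in \Nb'_{J,\Iw}$ for every $u' \in U'_{J,w_J}$ and $n'' \in N''_{J,\Iw}$. Since $u' \in U_J$ and $n'' \in N_J$, and $\Nb_J$ is normal in $\Ub = \Ub_J \ltimes \Nb_J$, the commutator lies in $N_J$. On the other hand $\Iw u' \Iw^{-1} \in N_I$ and $\Iw n'' \Iw^{-1} \in U_I$, and since $\Nb_I$ is normal in $\Pb_I \supseteq \Ub_I$ we have $\Iw [u', n''] \Iw^{-1} \in [N_I, U_I] \subseteq N_I$, so $[u', n''] \in \Iw^{-1} N_I \Iw$. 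Combining, $[u', n''] \in N_J \cap \Iw^{-1} N_I \Iw = N'_{J,\Iw}$, so $u' n'' u'^{-1} \equiv n'' \pmod{\Nb'_{J,\Iw}}$ and the quotient action of $u'$ on $\Nb''_{J,\Iw}$ is trivial.

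To conclude, an analogous commutator computation shows that $\Bb''_{J,w_J}$ normalises $\Nb''_{J,\Iw}$ (both $\Ub''_{J,w_J}$ and $\Nb''_{J,\Iw}$ are contained in $\Ub_J \cap \Iw^{-1} \Ub_I \Iw$, while $[\Ub_J, \Nb_J] \subseteq \Nb_J$), so the quotient action of $b''$ on $\Nb''_{J,\Iw}$ coincides with its ordinary conjugation action. Hence the quotient action of $b = b''u'$ on $\Nb''_{J,\Iw}$ equals conjugation by $b''$, and since this action preserves $N''_{J,\Iw,0}$, we obtain $b'' \in B_{J,w_J}''^+$. The only nontrivial input is the commutator calculation identifying $[U'_{J,w_J}, N''_{J,\Iw}]$ with a subgroup of $N'_{J,\Iw}$; everything else is routine bookkeeping with the semidirect product structures and the exact sequence \eqref{dev}.
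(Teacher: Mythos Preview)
Your proof is correct and follows essentially the same route as the paper's. Both arguments write $b = b''u'$, establish the commutator identity $[u',n''] \in N'_{J,\Iw}$ (the paper refers back to the proof of Lemma~\ref{lemm:action} for this), and use it to show that conjugation by $b''$ preserves $N''_{J,\Iw,0}$; the only cosmetic difference is that the paper concludes by writing $bn''b^{-1} = (b''n'b''^{-1})(b''n''b''^{-1}) \in N_{J,\Iw,0}$ and invoking the totally decomposed property of $N_{J,\Iw,0}$ to read off the $N''_{J,\Iw,0}$-component, whereas you phrase the same step via the quotient map in the short exact sequence~\eqref{dev}.
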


\begin{proof}
We have a semidirect product $\Bb_{J,w_J} = \Bb''_{J,w_J} \ltimes \Ub'_{J,w_J}$.
Let $b \in B_{J,w_J}^+$.
We write $b=b''u'$ with $b'' \in B''_{J,w_J}$ and $u' \in U'_{J,w_J}$.
We prove that $b'' \in B_{J,w_J}''^+$.
Let $n'' \in N''_{J,\Iw,0}$.
Proceeding as in the proof of Lemma \ref{lemm:action}, we see that $u'n''u'^{-1}n''^{-1} \in N'_{J,\Iw}$ so that $u'n''u'^{-1}=n'n''$ with $n' \in N'_{J,\Iw}$.
Thus $bn''b^{-1} = (b''n'b''^{-1})(b''n''b''^{-1}) \in N_{J,\Iw,0}$, and since $N_{J,\Iw,0}$ is totally decomposed, we deduce that $b''n''b''^{-1} \in N''_{J,\Iw,0}$.
\end{proof}

\begin{lemm} \label{lemm:calc3}
For all $n \in \N$, there is a natural $B_{J,w_J}''^+ \ltimes U'_{J,w_J}$-equivariant isomorphism
\begin{equation*}
\Hc[n]\! \left( N''_{J,\Iw,0}, \Clisc\! \left( U'_{J,w_J}, \sigma^{\Iw} \right) \right) \cong \Clisc\! \left( U'_{J,w_J}, \Hc[n]\! \left( N''_{J,\Iw,0}, \sigma^{\Iw} \right) \right).
\end{equation*}
\end{lemm}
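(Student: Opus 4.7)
The plan is to promote the tautological degree zero identity to all degrees by exhibiting both sides as universal $\delta$-functors in the argument $V \coloneqq \sigma^{\Iw}$ (viewed as a smooth $B''_{J,w_J} \ltimes N''_{J,\Iw}$-representation, where $B_{J,w_J}''^+$ acts through the restriction of the action described just before the lemma).

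First I would treat $n = 0$. Since $N''_{J,\Iw,0}$ acts on $\Clisc(U'_{J,w_J}, V)$ by the pointwise action $(n'' \cdot f)(u') = n'' \cdot f(u')$, a function is $N''_{J,\Iw,0}$-invariant if and only if all its values lie in $V^{N''_{J,\Iw,0}}$, giving
\begin{equation*}
\Clisc(U'_{J,w_J}, V)^{N''_{J,\Iw,0}} = \Clisc(U'_{J,w_J}, V^{N''_{J,\Iw,0}}).
\end{equation*}
One then checks this identification is equivariant: for $U'_{J,w_J}$ because both sides carry the right translation action, which obviously commutes with the pointwise $N''_{J,\Iw,0}$-action; for $B_{J,w_J}''^+$ because its action $(b'' \cdot f)(u') = b'' \cdot f(b''^{-1}u'b'')$ involves conjugation on $U'_{J,w_J}$ (which $B''_{J,w_J}$ normalises, since $\Bb_{J,w_J} = \Bb''_{J,w_J} \ltimes \Ub'_{J,w_J}$) together with the action of $b''$ on $V$, and since $b''N''_{J,\Iw,0}b''^{-1} \subseteq N''_{J,\Iw,0}$ by definition of $B_{J,w_J}''^+$, this matches the Hecke action of $b''$ on $V^{N''_{J,\Iw,0}}$.

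Next I would verify that the endofunctor $\Clisc(U'_{J,w_J}, -)$ on smooth $N''_{J,\Iw,0}$-represent\-ations (with $N''_{J,\Iw,0}$ acting pointwise on values) is exact and sends $N''_{J,\Iw,0}$-acyclics to $N''_{J,\Iw,0}$-acyclics. Exactness is clear. For acyclicity, pick a countable clopen partition $U'_{J,w_J} = \bigsqcup_i K_i$ into compact open subsets and write
\begin{equation*}
\Clisc(U'_{J,w_J}, V) = \bigoplus_i \Clis(K_i, V) = \bigoplus_i \varinjlim_{\mathcal{P}} V^{\mathcal{P}},
\end{equation*}
the filtered colimit running over the finite clopen partitions of $K_i$. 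Because smooth $N''_{J,\Iw,0}$-cohomology (computed via locally constant cochains) commutes with both filtered colimits of smooth representations and finite direct sums, we obtain a natural $A$-linear isomorphism $\Hc[n](N''_{J,\Iw,0}, \Clisc(U'_{J,w_J}, V)) \cong \Clisc(U'_{J,w_J}, \Hc[n](N''_{J,\Iw,0}, V))$ for every $n$. In particular $\Clisc(U'_{J,w_J}, -)$ preserves acyclicity, so the left-hand side, viewed as a $\delta$-functor in $V$, is effaceable in positive degree and must agree with the right-hand side by the universal property of $\Hc(N''_{J,\Iw,0}, -)$; the degree zero identification above then extends uniquely to all $n$.

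Finally I would verify that the resulting isomorphism is $B_{J,w_J}''^+ \ltimes U'_{J,w_J}$-equivariant. This is automatic from naturality and the degree zero check, since both sides inherit the $U'_{J,w_J}$-action by right translation and the $B_{J,w_J}''^+$-action from the Hecke action on $N''_{J,\Iw,0}$-cohomology, and both structures are compatible with the morphisms of smooth representations used above. The only delicate point in the whole argument is the commutation of $\Hc(N''_{J,\Iw,0}, -)$ with the filtered-colimit-of-finite-direct-sums decomposition of $\Clisc(U'_{J,w_J}, -)$; everything else is formal once that is in place.
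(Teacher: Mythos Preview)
Your argument is correct and follows the same overall strategy as the paper: establish the degree $0$ isomorphism with its equivariance, then upgrade to all degrees via universality of $\delta$-functors once you know $\Clisc(U'_{J,w_J},-)$ sends injectives to $N''_{J,\Iw,0}$-acyclic objects. The one genuine difference is in how you verify this last point. The paper observes that an injective object of $\Mod_{B_{J,w_J}''^+ \ltimes N''_{J,\Iw,0}}^\sm(A)$ is a direct summand of $\Clis(N''_{J,\Iw,0},\sigmat)$ (for some $A$-module $\sigmat$), then uses the explicit isomorphism $\Clisc(U'_{J,w_J},\Clis(N''_{J,\Iw,0},\sigmat)) \cong \Clis(N''_{J,\Iw,0},\Clisc(U'_{J,w_J},\sigmat))$ to conclude. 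Your route is instead to write $\Clisc(U'_{J,w_J},-)$ as a countable direct sum of filtered colimits of finite direct sums of identity functors, and to invoke commutation of $\Hc(N''_{J,\Iw,0},-)$ with arbitrary direct sums (valid since $N''_{J,\Iw,0}$ is compact, so locally constant cochains have finite image) and filtered colimits. This gives you the $A$-linear isomorphism directly, and in particular shows that acyclics go to acyclics; the equivariance then has to come from the universal $\delta$-functor argument exactly as you say, because your clopen partition is not stable under right translation or $B''_{J,w_J}$-conjugation. Both approaches are short; yours is perhaps more elementary, while the paper's is slightly more self-contained in that it does not need the commutation of cohomology with infinite direct sums.
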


\begin{proof}
Let $\sigmat$ be a $B_{J,w_J}''^+ \ltimes N''_{J,\Iw,0}$-representation.
The $A$-modules $\Hc(N''_{J,\Iw,0},\Clisc(U'_{J,w_J},\sigmat))$ and $\Clisc(U'_{J,w_J},\Hc(N''_{J,\Iw,0},\sigmat))$ are naturally $B_{J,w_J}''^+ \ltimes U'_{J,w_J}$-representations.
The identity of $\Clisc(U'_{J,w_J},\sigmat)$ induces a natural $U'_{J,w_J}$-equivariant isomorphism
\begin{equation*}
\iota : \Clisc\! \left( U'_{J,w_J}, \sigmat \right)^{N''_{J,\Iw,0}} \iso \Clisc\! \left( U'_{J,w_J}, \sigmat^{N''_{J,\Iw,0}} \right).
\end{equation*}
We prove that $\iota$ is also $B_{J,w_J}''^+$-equivariant: for any $f \in \Clisc(U'_{J,w_J},\sigmat)^{N''_{J,\Iw,0}}$, $b'' \in B_{J,w_J}''^+$ and $u' \in U'_{J,w_J}$, we have
\begin{align*}
\iota \left( b'' \h f \right) \left( u' \right) &= \sum_{n'' \in N''_{J,\Iw,0} / b''N''_{J,\Iw,0}b''^{-1}} \iota \left( n'' b'' \cdot f \right) \left( u' \right) \\
&= \sum_{n'' \in N''_{J,\Iw,0} / b''N''_{J,\Iw,0}b''^{-1}} n'' b'' \cdot \iota \left( f \right) \left(b''^{-1} u' b'' \right) \\
&= b'' \h \iota \left( f \right) \left( b''^{-1} u' b'' \right) \\
&= \left( b'' \cdot \iota \left( f \right) \right) \left( u' \right).
\end{align*}
We will prove that deriving $\iota$ yields the desired isomorphisms with $\sigmat=\sigma^{\Iw}$.

The functor $\Clisc(U'_{J,w_J},-)$ is $A$-linear and exact and the $\delta$-functor $\Hc(N''_{J,\Iw,0},-)$ is universal, thus denoting by $\Rc$ the right derived functors on the category $\Mod_{B_{J,w_J}''^+ \ltimes N''_{J,\Iw,0}}^\sm(A)$, we have morphisms of functors
\begin{gather*}
\Rc\! \left( \Clisc\! \left( U'_{J,w_J}, \left( - \right)^{N''_{J,\Iw,0}} \right) \right) \cong \Clisc\! \left( U'_{J,w_J}, \Hc\! \left( N''_{J,\Iw,0}, - \right) \right), \\
\Rc\! \left( \Clisc\! \left( U'_{J,w_J}, - \right)^{N''_{J,\Iw,0}} \right) \to \Hc\! \left( N''_{J,\Iw,0}, \Clisc\! \left( U'_{J,w_J}, - \right) \right).
\end{gather*}
In order to show that the second one is also an isomorphism, it is enough to prove that $\Clisc(U'_{J,w_J},-)$ takes injective objects of $\Mod_{B_{J,w_J}''^+ \ltimes N''_{J,\Iw,0}}^\sm(A)$ to $N''_{J,\Iw,0}$-acyclic objects.
If $\sigmat$ is an $A$-module, then we have a natural $N''_{J,\Iw,0}$-equivariant isomorphism
\begin{equation*}
\Clisc\! \left( U'_{J,w_J}, \Clis\! \left( N''_{J,\Iw,0}, \sigmat \right) \right) \cong \Clis\! \left( N''_{J,\Iw,0}, \Clisc\! \left( U'_{J,w_J}, \sigmat \right) \right),
\end{equation*}
so that $\Clisc(U'_{J,w_J},\Clis( N''_{J,\Iw,0}, \sigmat))$ is $N''_{J,\Iw,0}$-acyclic.
Now if $\sigmat$ is an injective object of $\Mod_{B_{J,w_J}''^+ \ltimes N''_{J,\Iw,0}}^\sm(A)$, then it is also an injective object of $\Mod_{N''_{J,\Iw,0}}^\sm(A)$ (cf. \cite[Proposition 2.1.11]{Em2} and \cite[Lemme 3.1.1]{JH}), thus the natural $N''_{J,\Iw,0}$-equivariant injection $\sigmat \hookrightarrow \Clis(N''_{J,\Iw,0},\sigmat)$ defined by $v \mapsto (n'' \mapsto n'' \cdot v)$ admits an $N''_{J,\Iw,0}$-equivariant retraction, so that $\sigmat$ is a direct factor of $\Clis(N''_{J,\Iw,0},\sigmat)$, and therefore $\sigmat$ is $N''_{J,\Iw,0}$-acyclic.
\end{proof}

We now assume that $\sigma$ is locally admissible.

\begin{lemm} \label{lemm:calc4}
For all $n \in \N$, there is a natural $B_{J,w_J}''^+ \ltimes U'_{J,w_J}$-equivariant morphism
\begin{equation*}
\Clisc\! \left( U'_{J,w_J}, \Hc[n]\! \left( N''_{J,\Iw,0}, \sigma^{\Iw} \right) \right) \to \cind_{L_J \cap P_{J \cap \Iw^J{}^{-1}(I)}^-}^{L_J \cap P_{J \cap \Iw^J{}^{-1}(I)}^- w_J B_J} \left( \HOrd[n]_{L_I \cap P_{I \cap \Iw^J(J)}} \sigma \right)^{\Iw^J}
\end{equation*}
such that the action of $z$ on its kernel and cokernel is locally nilpotent.
\end{lemm}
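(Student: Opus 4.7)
The plan is to first reduce, via \eqref{isocindJ}, to constructing a morphism from the $N''_{J,\Iw,0}$-cohomology of $\sigma^{\Iw}$ to the $\Iw$-conjugate of the derived ordinary parts of $\sigma$, and then to build this inner morphism by conjugating by $\Iw$ and invoking the localisation description of $\HOrd$ from Remark~\ref{rema:Loczt}.

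Apply \eqref{isocindJ} with $\sigmat \coloneqq (\HOrd[n]_{L_I \cap P_{I \cap \Iw^J(J)}} \sigma)^{\Iw^J}$, a representation of $L_{J \cap \Iw^J{}^{-1}(I)}$; since $\Iw = \Iw^J w_J$, one has $\sigmat^{w_J} = (\HOrd[n]_{L_I \cap P_{I \cap \Iw^J(J)}} \sigma)^{\Iw}$, yielding a natural $B_{J,w_J}''^+ \ltimes U'_{J,w_J}$-equivariant identification of the target with $\Clisc(U'_{J,w_J}, (\HOrd[n]_{L_I \cap P_{I \cap \Iw^J(J)}} \sigma)^{\Iw})$. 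Because $\Clisc(U'_{J,w_J}, -)$ is exact and respects the action formula from §~\ref{ssec:isocind}, it suffices to exhibit a natural $B_{J,w_J}''^+$-equivariant morphism
\begin{equation*}
\Hc[n]\! \left( N''_{J,\Iw,0}, \sigma^{\Iw} \right) \to \left( \HOrd[n]_{L_I \cap P_{I \cap \Iw^J(J)}} \sigma \right)^{\Iw}
\end{equation*}
whose kernel and cokernel carry a locally nilpotent action of $z$; this local nilpotence is preserved by $\Clisc(U'_{J,w_J}, -)$ since $z \in Z_J$ normalises $U'_{J,w_J}$.

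For the inner morphism I would first conjugate by $\Iw$. Using $\Iw = \Iw^J w_J$ with $w_J \in W_J$ normalising $N_J$, together with Lemma~\ref{lemm:IwJ}(iii) and the inclusion $N_{I \cap \Iw^J(J)} \subseteq U$, one checks
\begin{equation*}
\Iw N''_{J,\Iw} \Iw^{-1} = \Iw^J N_J \Iw^J{}^{-1} \cap U_I = L_I \cap N_{I \cap \Iw^J(J)},
\end{equation*}
the unipotent radical of the standard parabolic $L_I \cap P_{I \cap \Iw^J(J)}$ of $L_I$. Thus $N_0^\# \coloneqq \Iw N''_{J,\Iw,0} \Iw^{-1}$ is a compact open subgroup of $L_I \cap N_{I \cap \Iw^J(J)}$, and conjugation yields a natural $A$-linear isomorphism $\Hc[n](N''_{J,\Iw,0}, \sigma^{\Iw}) \iso \Hc[n](N_0^\#, \sigma)$. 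Moreover $\Iw z \Iw^{-1} = \Iw^J z \Iw^J{}^{-1}$ lies in $Z_{\Iw^J(J)} \subseteq Z_{I \cap \Iw^J(J)}$ and strictly contracts $N_0^\#$, and a root-theoretic computation analogous to the proof of Lemma~\ref{lemm:IwJ} shows $\Iw B_{J,w_J}'' \Iw^{-1} \subseteq L_{I \cap \Iw^J(J)}$, so the Hecke action of $B_{J,w_J}''^+$ transfers to a Hecke action of an open submonoid of $L_{I \cap \Iw^J(J)}^+$ on $\Hc[n](N_0^\#, \sigma)$. Since $\HOrd$ is independent (up to natural isomorphism) of the choice of compact open subgroup of the unipotent radical, one may compute $\HOrd[n]_{L_I \cap P_{I \cap \Iw^J(J)}} \sigma$ using $N_0^\#$; by Remark~\ref{rema:Loczt}, on the locally $Z_{I \cap \Iw^J(J)}$-finite subquotient the functor $\HOrd$ coincides with the localisation with respect to $\Iw z \Iw^{-1}$. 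The natural localisation map $M \to M[(\Iw z \Iw^{-1})^{-1}]$ has $(\Iw z \Iw^{-1})$-torsion kernel and a cokernel killed by powers of $\Iw z \Iw^{-1}$, both locally nilpotent. Composing with the conjugation isomorphism and identifying the derived ordinary parts with its $\Iw$-conjugate as an $A$-module produces the desired inner morphism.

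The main technical obstacle is the bookkeeping required to check $B_{J,w_J}''^+$-equivariance: one must verify that the Hecke action of $B_{J,w_J}''^+$ on the source transfers under conjugation by $\Iw$ to the Hecke action of $\Iw B_{J,w_J}''^+ \Iw^{-1} \subseteq L_{I \cap \Iw^J(J)}^+$ on $\Hc[n](N_0^\#, \sigma)$, and that this matches the restriction along this monoid of the $L_{I \cap \Iw^J(J)}$-action on the localised $\HOrd[n]_{L_I \cap P_{I \cap \Iw^J(J)}} \sigma$. Once this is done, applying $\Clisc(U'_{J,w_J}, -)$ to the composite gives the required morphism, with local $z$-nilpotence of kernel and cokernel preserved as noted.
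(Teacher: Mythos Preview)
Your approach is essentially the paper's: conjugate by $\Iw$ to identify $\Hc[n](N''_{J,\Iw,0},\sigma^{\Iw})$ with $\Hc[n](\Iw N''_{J,\Iw,0}\Iw^{-1},\sigma)^{\Iw}$, recognise $\Iw N''_{J,\Iw}\Iw^{-1}$ as the unipotent radical of $L_I\cap P_{I\cap\Iw^J(J)}$, localise at $\Iw z\Iw^{-1}$ to reach $\HOrd$, and then apply $\Clisc(U'_{J,w_J},-)$ together with \eqref{isocindJ}.

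There is however one genuine gap. To invoke Remark~\ref{rema:Loczt} and identify the localisation of $\Hc[n](\Iw N''_{J,\Iw,0}\Iw^{-1},\sigma)$ with $\HOrd[n]_{L_I\cap P_{I\cap\Iw^J(J)}}\sigma$, you need this cohomology to be locally $Z_{I\cap\Iw^J(J)}^+$-finite \emph{in its entirety}, not merely ``on a subquotient''. This is exactly where the standing hypothesis that $\sigma$ is locally admissible enters: the paper invokes \cite[Theorem~3.4.7~(1)]{Em2}, which asserts that the cohomology of a compact open subgroup of the unipotent radical, with coefficients in a locally admissible representation, is locally $Z^+$-finite for the Hecke action. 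Without this input your localisation map has no reason to compute $\HOrd$, and the argument does not close.

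A smaller point: your stated reason for $\Clisc(U'_{J,w_J},-)$ preserving local $z$-nilpotence (``$z$ normalises $U'_{J,w_J}$'') is only the prerequisite for $z$ to act on $\Clisc$ at all; it is not why nilpotence is inherited. The correct reason, as the paper notes, is that any $f\in\Clisc(U'_{J,w_J},V)$ has finite image, so a single power of $z$ suffices to kill all its values simultaneously.
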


\begin{proof}
We have natural $B_{J,w_J}''^+$-equivariant isomorphisms
\begin{equation} \label{Iw}
\Hc\! \left( N''_{J,\Iw,0}, \sigma^{\Iw} \right) \cong \Hc\! \left( \Iw N''_{J,\Iw,0} \Iw^{-1}, \sigma \right)^{\Iw}.
\end{equation}
Since $\Iw \Nb''_{J,\Iw} \Iw^{-1} = \Ub_I \cap \Iw^J \Nb_J \Iw^J{}^{-1}$ is the unipotent radical of $\Lb_I \cap \Pb_{I \cap \Iw^J(J)}$ (see Lemma \ref{lemm:IwJ} (iii)), we define an open submonoid of $L_{I \cap \Iw^J(J)}$ by setting
\begin{equation*}
L_{I \cap \Iw^J(J)}^+ \coloneqq \left\{ l \in L_{I \cap \Iw^J(J)} \middlevert l \Iw N''_{J,\Iw,0} \Iw^{-1} l^{-1} \subseteq \Iw N''_{J,\Iw,0} \Iw^{-1} \right\}.
\end{equation*}
We have $\Iw B_{J,w_J}''^+ \Iw^{-1} = \Iw B''_{J,w_J} \Iw^{-1} \cap L_{I \cap \Iw^J(J)}^+$.
We let $Z_{I \cap \Iw^J(J)}^+ \subseteq Z_{I \cap \Iw^J(J)}$ be the open submonoid $Z_{I \cap \Iw^J(J)} \cap L_{I \cap \Iw^J(J)}^+$.
Since $\sigma$ is locally admissible, $\Hc(\Iw N''_{J,\Iw,0} \Iw^{-1},\sigma)$ is locally $Z_{I \cap \Iw^J(J)}^+$-finite by \cite[Theorem 3.4.7 (1)]{Em2}, and thus locally $\Iw z \Iw^{-1}$-finite.
Note that $\Iw z \Iw^{-1} \in Z_{I \cap \Iw^J(J)}^+$ is strictly contracting $\Iw N''_{J,\Iw,0} \Iw^{-1}$.
Therefore, localising with respect to $(\Iw z \Iw^{-1})^\N$ gives rise to $L_{I \cap \Iw^J(J)}^+$-equivariant morphisms
\begin{equation*}
\Hc\! \left( \Iw N''_{J,\Iw,0} \Iw^{-1}, \sigma \right) \to \HOrd_{L_I \cap P_{I \cap \Iw^J(J)}} \sigma
\end{equation*}
such that the action of $\Iw z \Iw^{-1}$ on their kernels and cokernel is locally nilpotent (see Remark \ref{rema:Loczt}).
Using \eqref{Iw}, we deduce $B_{J,w_J}''^+$-equivariant morphisms
\begin{equation*}
\Hc\! \left( N''_{J,\Iw,0}, \sigma^{\Iw} \right) \to \left( \HOrd_{L_I \cap P_{I \cap \Iw^J(J)}} \sigma \right)^{\Iw}
\end{equation*}
such that the action of $z$ on their kernels and cokernels is locally nilpotent.
Applying the functor $\Clisc(U'_{J,w_J},-)$, we obtain $B_{J,w_J}''^+ \ltimes U'_{J,w_J}$-equivariant morphisms
\begin{equation*}
\Clisc\! \left( U'_{J,w_J}, \Hc\! \left( N''_{J,\Iw,0}, \sigma^{\Iw} \right) \right) \to \Clisc\! \left( U'_{J,w_J}, \left( \HOrd_{L_I \cap P_{I \cap \Iw^J(J)}} \sigma \right)^{\Iw} \right)
\end{equation*}
such that the action of $z$ on their kernel and cokernel is still locally nilpotent (because the functions in their sources and targets have finite images).
We conclude using the inverse of the $B_{J,w_J}$-equivariant isomorphism \eqref{isocindJ} with $\sigmat=(\HOrd_{L_I \cap P_{I \cap \Iw^J(J)}} \sigma)^{\Iw^J}$.
\end{proof}

We combine the previous results into the following one.

\begin{prop} \label{prop:calc}
Let $\sigma$ be a locally admissible $L_I$-representation and $\Iw \in \IW$.
We write $\Iw = \Iw^J w_J$ with $\Iw^J \in \IW^J$ and $w_J \in W_J$.
For all $n \in \N$, there is a natural $B_{J,w_J}^+$-equivariant morphism
\begin{equation*}
\Hc[n] \left( N_{J,\Iw,0}, \cind_{P_I^-}^{P_I^- \Iw B} \sigma \right) \to \cind_{L_J \cap P_{J \cap {\Iw^J}^{-1}(I)}^-}^{L_J \cap P_{J \cap {\Iw^J}^{-1}(I)}^- w_J B_J} \left( \HOrd[n]_{L_I \cap P_{I \cap \Iw^J(J)}} \sigma \right)^{\Iw^J}
\end{equation*}
such that the action of $z$ on its kernel and cokernel is locally nilpotent.
Furthermore, this morphism is even $L_J^+ \cap P_{J \cap {\Iw^J}^{-1}(I)}$-equivariant when $w_J=1$ (see Remark \ref{rema:Fil1}).
\end{prop}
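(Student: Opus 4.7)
The plan is to obtain the desired morphism by composing the four natural morphisms supplied by Lemmas \ref{lemm:calc1}--\ref{lemm:calc4}, and then to track equivariance and local nilpotency of $z$ along the chain.

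More precisely, I would first use the inverse of the $B_{J,w_J}^+$-equivariant inflation isomorphism from Lemma \ref{lemm:calc1} to rewrite $\Hc[n](N_{J,\Iw,0}, \pi_{\Iw})$ as $\Hc[n](N''_{J,\Iw,0}, \pi_{\Iw}^{N'_{J,\Iw,0}})$. Then I would apply in order the $B_{J,w_J}^+$-equivariant evaluation-type map from Lemma \ref{lemm:calc2}, the $B_{J,w_J}''^+ \ltimes U'_{J,w_J}$-equivariant isomorphism from Lemma \ref{lemm:calc3}, and the $B_{J,w_J}''^+ \ltimes U'_{J,w_J}$-equivariant map from Lemma \ref{lemm:calc4}, whose target is precisely $\cind_{L_J \cap P_{J \cap \Iw^J{}^{-1}(I)}^-}^{L_J \cap P_{J \cap \Iw^J{}^{-1}(I)}^- w_J B_J} (\HOrd[n]_{L_I \cap P_{I \cap \Iw^J(J)}} \sigma)^{\Iw^J}$. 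The composite is visibly natural in $\sigma$. By Lemma \ref{lemm:B+} we have $B_{J,w_J}^+ \subseteq B_{J,w_J}''^+ \ltimes U'_{J,w_J}$, so the $B_{J,w_J}''^+ \ltimes U'_{J,w_J}$-equivariance in Lemmas \ref{lemm:calc3} and \ref{lemm:calc4} restricts to $B_{J,w_J}^+$-equivariance, and the whole composition is $B_{J,w_J}^+$-equivariant.

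For the local nilpotency of the Hecke action of $z$ on the kernel and cokernel, I would argue by the usual two-out-of-three principle applied to a short composition: Lemma \ref{lemm:calc1} contributes an isomorphism, Lemma \ref{lemm:calc3} is an isomorphism, and Lemmas \ref{lemm:calc2} and \ref{lemm:calc4} each have kernel and cokernel on which $z$ acts locally nilpotently; composing gives a map whose kernel and cokernel sit in extensions built out of such modules, hence also carry a locally nilpotent $z$-action. This is the only mildly non-formal step in the composition, but it follows directly from the long exact sequences associated with the factorisation of the composite into the two maps of Lemmas \ref{lemm:calc2} and \ref{lemm:calc4}.

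Finally, for the promotion of the equivariance to $L_J^+ \cap P_{J \cap \Iw^J{}^{-1}(I)}$ in the case $w_J = 1$, I would invoke Remark \ref{rema:isocind}: when $w_J = 1$ the actions on $\pi_{\Iw^J}$ via \eqref{isocind2} and on the compactly induced representation on $L_J$ via \eqref{isocindJ} both extend from $B_{J,1} = B''_{J,1} \ltimes U'_{J,1}$ to $L_J \cap P_{J \cap \Iw^J{}^{-1}(I)} = L_{J \cap \Iw^J{}^{-1}(I)} \ltimes U'_{J,1}$. One then simply rechecks that the evaluation map $\Ev$ of Lemma \ref{lemm:calc2}, the swap of $\Clisc(U'_{J,1},-)$ with $\Hc[n](N''_{J,\Iw^J,0},-)$ of Lemma \ref{lemm:calc3}, and the localisation step of Lemma \ref{lemm:calc4} are all $L_{J \cap \Iw^J{}^{-1}(I)}$-equivariant --- each verification is essentially the same computation already done for $B_{J,w_J}^+$, but now with the larger conjugation action on $N''_{J,\Iw^J}$ coming from the fact that $L_{J \cap \Iw^J{}^{-1}(I)}$ normalises $\Iw^J N''_{J,\Iw^J,0} \Iw^J{}^{-1}$ inside $L_{I \cap \Iw^J(J)}$. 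The main (and only real) obstacle is keeping this bookkeeping of submonoids and intertwining identities straight; everything substantive has been established in the four lemmas.
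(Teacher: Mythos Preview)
Your proposal is correct and follows essentially the same approach as the paper: the paper's proof also simply combines Lemmas \ref{lemm:calc1}--\ref{lemm:calc4} using Lemma \ref{lemm:B+} to ensure $B_{J,w_J}^+$-equivariance, and for $w_J=1$ notes that the lemmas and their proofs go through verbatim with $\Lb_J \cap \Pb_{J \cap \Iw^J{}^{-1}(I)}$ and $\Lb_{J \cap \Iw^J{}^{-1}(I)}$ in place of $\Bb_{J,w_J}$ and $\Bb''_{J,w_J}$ (via Remark \ref{rema:isocind}). Your write-up is just a more explicit unpacking of the same composition and bookkeeping.
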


\begin{proof}
Combining Lemmas \ref{lemm:calc1}, \ref{lemm:calc2}, \ref{lemm:calc3}, \ref{lemm:calc4} and using Lemma \ref{lemm:B+}, we obtain the desired morphism.
If $w_J=1$, then the previous lemmas and their proofs are true verbatim with $\Lb_J \cap \Pb_{J \cap {\Iw^J}^{-1}(I)}$ and $\Lb_{J \cap {\Iw^J}^{-1}(I)}$ instead of $\Bb_{J,w_J}$ and $\Bb''_{J,w_J}$ respectively (see Remark \ref{rema:isocind}), thus the morphism is $L_J^+ \cap P_{J \cap {\Iw^J}^{-1}(I)}$-equivariant.
\end{proof}

\subsection{Computations on parabolically induced representations} \label{ssec:HOrd}

Let $I,J \subseteq \Delta$, $\sigma$ be a locally admissible $L_I$-representation and $n \in \N$.
For any lower set $\IW^J_1 \subseteq \IW^J$, the natural $P_J$-equivariant injection $\Fil_{P_J}^{\IW^J_1}(\Ind_{P_I^-}^G \sigma) \hookrightarrow \Ind_{P_I^-}^G \sigma$ induces an $L_J$-equivariant morphism
\begin{equation} \label{injIWJ}
\HOrd[n]_{P_J} \left( \Fil_{P_J}^{\IW^J_1} \left( \Ind_{P_I^-}^G \sigma \right) \right) \to \HOrd[n]_{P_J} \left( \Ind_{P_I^-}^G \sigma \right),
\end{equation}
and by taking its image we define an $L_J$-subrepresentation
\begin{equation*}
\Fil_{P_J}^{\IW^J_1} \left( \HOrd[n]_{P_J} \left( \Ind_{P_I^-}^G \sigma \right) \right) \subseteq \HOrd[n]_{P_J} \left( \Ind_{P_I^-}^G \sigma \right).
\end{equation*}

\begin{prop} \label{prop:FilPJ}
The $L_J$-subrepresentations $\Fil_{P_J}^\bullet(\HOrd[n]_{P_J}(\Ind_{P_I^-}^G \sigma))$ form a natural filtration of $\HOrd[n]_{P_J} (\Ind_{P_I^-}^G \sigma)$ indexed by $\IW^J$.
Furthermore, for all $\Iw^J \in \IW^J$ there is a natural $L_J$-equivariant isomorphism
\begin{equation*}
\Gr_{P_J}^{\Iw^J} \left( \HOrd[n]_{P_J} \left( \Ind_{P_I^-}^G \sigma \right) \right) \cong \HOrd[n]_{P_J} \left( \cind_{P_I^-}^{P_I^- \Iw^J P_J} \sigma \right).
\end{equation*}
\end{prop}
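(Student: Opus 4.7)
The plan is to deduce the statement from the $P_J$-representation filtration of $\Ind_{P_I^-}^G \sigma$ established in §\ref{ssec:fil} together with the $\delta$-functor properties of $\HOrd[*]_{P_J}$. For any pair of lower sets $\IW^J_1 \subseteq \IW^J_2$ in $\IW^J$, Lemma \ref{lemm:Bruhat}(i) ensures that $P_I^- \IW^J_1 P_J$ is open in $P_I^- \IW^J_2 P_J$ with complement $P_I^-(\IW^J_2 \setminus \IW^J_1)P_J$ (the disjointness coming from the Bruhat decomposition). Combining this with the short exact sequence for compact induction on locally closed subsets recalled in §\ref{ssec:fil} yields a natural short exact sequence of $P_J$-representations
\begin{equation*}
0 \to \Fil_{P_J}^{\IW^J_1}\!\left(\Ind_{P_I^-}^G \sigma\right) \to \Fil_{P_J}^{\IW^J_2}\!\left(\Ind_{P_I^-}^G \sigma\right) \to \cind_{P_I^-}^{P_I^-(\IW^J_2 \setminus \IW^J_1)P_J} \sigma \to 0,
\end{equation*}
to which I would apply the cohomological $\delta$-functor $\HOrd[*]_{P_J}$ to obtain a long exact sequence of $L_J$-representations.

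The key step, and the main obstacle, is to establish that for all $n \in \N$ and all lower sets $\IW^J_1 \subseteq \IW^J_2 \subseteq \IW^J$, the induced morphism
\begin{equation*}
\HOrd[n]_{P_J}\!\left(\Fil_{P_J}^{\IW^J_1}\!\left(\Ind_{P_I^-}^G \sigma\right)\right) \to \HOrd[n]_{P_J}\!\left(\Fil_{P_J}^{\IW^J_2}\!\left(\Ind_{P_I^-}^G \sigma\right)\right)
\end{equation*}
is injective, or equivalently that the relevant connecting homomorphism vanishes. I would argue this by induction on $|\IW^J_2 \setminus \IW^J_1|$, reducing to the case $\IW^J_2 = \IW^J_1 \sqcup \{\Iw^J\}$ with $\Iw^J$ maximal in $\IW^J_2$. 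The remaining vanishing of the connecting map $\HOrd[n-1]_{P_J}(\cind_{P_I^-}^{P_I^- \Iw^J P_J} \sigma) \to \HOrd[n]_{P_J}(\Fil_{P_J}^{\IW^J_1}(\Ind_{P_I^-}^G \sigma))$ would then be the crux, to be extracted either from the upper degree bound $\HOrd[n]_{P_J} = 0$ for $n > [F:\Qp]\dim_F \Nb_J$ together with an induction on the Bruhat order, or from the explicit cohomological computations of §\ref{ssec:calc} (in particular Proposition \ref{prop:calc}) applied cell by cell.

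Granted this injectivity, taking $\IW^J_2 = \IW^J$ identifies the $L_J$-subrepresentation $\Fil_{P_J}^{\IW^J_1}(\HOrd[n]_{P_J}(\Ind_{P_I^-}^G \sigma))$ with $\HOrd[n]_{P_J}(\Fil_{P_J}^{\IW^J_1}(\Ind_{P_I^-}^G \sigma))$, so that the filtration on the source transports to a filtration on the target. The lattice axioms (compatibility with unions and intersections of lower sets of $\IW^J$) then follow formally by applying $\HOrd[*]_{P_J}$ to the sum short exact sequence
\begin{equation*}
0 \to \Fil_{P_J}^{\IW^J_1 \cap \IW^J_2}\!\left(\Ind_{P_I^-}^G \sigma\right) \to \Fil_{P_J}^{\IW^J_1}\!\left(\Ind_{P_I^-}^G \sigma\right) \oplus \Fil_{P_J}^{\IW^J_2}\!\left(\Ind_{P_I^-}^G \sigma\right) \to \Fil_{P_J}^{\IW^J_1 \cup \IW^J_2}\!\left(\Ind_{P_I^-}^G \sigma\right) \to 0,
\end{equation*}
the injectivity at degree $n+1$ cutting its long exact sequence into a short exact sequence in degree $n$. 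Finally, taking $\IW^J_1 = \IW^J_{<\Iw^J}$ and $\IW^J_2 = \IW^J_{\leq\Iw^J}$ in the first short exact sequence above and using $P_I^-(\IW^J_{\leq\Iw^J} \setminus \IW^J_{<\Iw^J})P_J = P_I^- \Iw^J P_J$ yields the natural $L_J$-equivariant isomorphism $\Gr_{P_J}^{\Iw^J}(\HOrd[n]_{P_J}(\Ind_{P_I^-}^G \sigma)) \cong \HOrd[n]_{P_J}(\cind_{P_I^-}^{P_I^- \Iw^J P_J} \sigma)$. Naturality in $\sigma$ is inherited from the functoriality of all constructions involved.
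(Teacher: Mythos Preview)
You correctly identify the architecture of the argument and the crux: everything reduces to showing that for nested lower sets the map
\[
\HOrd[n]_{P_J}\!\left(\Fil_{P_J}^{\IW^J_1}\!\left(\Ind_{P_I^-}^G \sigma\right)\right) \longrightarrow \HOrd[n]_{P_J}\!\left(\Fil_{P_J}^{\IW^J_2}\!\left(\Ind_{P_I^-}^G \sigma\right)\right)
\]
is injective. But neither of your two proposed mechanisms supplies this. The upper degree bound $\HOrd[n]_{P_J}=0$ for $n>[F:\Qp]\dim_F\Nb_J$ gives no control over connecting maps in intermediate degrees, and an induction on the Bruhat order does not improve matters: the connecting map you need to kill lands in $\HOrd[n]_{P_J}$ of a filtration piece, not in a top-degree group. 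As for Proposition~\ref{prop:calc}, it computes $\Hc[n](N_{J,\Iw,0},\pi_{\Iw})$ with the Hecke action of $B_{J,w_J}^+$ for a \emph{single} cell; it says nothing about the boundary maps linking distinct cells, and in any case concerns cohomology for the smaller group $N_{J,\Iw,0}$ rather than $N_{J,0}$. There is also a more structural issue: the functors $\HOrd_{P_J}$ are only known to form a $\delta$-functor on locally admissible $G$-representations, whereas the filtration pieces are merely $P_J$-representations, so even invoking a long exact sequence of $\HOrd_{P_J}$ directly requires justification.

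The paper's route avoids all of this by working one level lower. One first shows, by the argument of \cite[Proposition~2.2.3]{JH}, that the inclusion $\Fil_{P_J}^{\IW^J_1}(\Ind_{P_I^-}^G\sigma)\hookrightarrow\Fil_{P_J}^{\IW^J_2}(\Ind_{P_I^-}^G\sigma)$ already induces an injection on $N_{J,0}$-cohomology $\Hc[n](N_{J,0},-)$, before any localisation. This gives a short exact sequence of $L_J^+$-representations in every degree. Since $\sigma$ is locally admissible, so is $\Ind_{P_I^-}^G\sigma$, hence $\Hc[n](N_{J,0},\Ind_{P_I^-}^G\sigma)$ is locally $Z_J^+$-finite by \cite[Theorem~3.4.7~(1)]{Em2}; the terms of the short exact sequence inherit this as subquotients. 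Now the localisation with respect to $z^{\N}$ is exact on locally $z$-finite representations (Remark~\ref{rema:Loczt}), and applying it yields the desired short exact sequence of $L_J$-representations. The lattice axiom for unions and intersections follows by the same reasoning applied to the ``sum'' short exact sequence, exactly as you wrote. In short, the missing ingredient is the injectivity of the inclusion on $N_{J,0}$-cohomology, which is a concrete topological fact about compactly supported functions on Bruhat cells and is not recoverable from degree bounds or from the cell-by-cell computations of §\ref{ssec:calc}.
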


\begin{proof}
First, we prove for any lower sets $\IW^J_2 \subseteq \IW^J_1 \subseteq \IW^J$, the short exact sequence of $P_J$-representations
\begin{multline} \label{SEFil}
0 \to \Fil_{P_J}^{\IW^J_2} \left( \Ind_{P_I^-}^G \sigma \right) \to \Fil_{P_J}^{\IW^J_1} \left( \Ind_{P_I^-}^G \sigma \right) \\
\to \Fil_{P_J}^{\IW^J_1} \left( \Ind_{P_I^-}^G \sigma \right) / \Fil_{P_J}^{\IW^J_2} \left( \Ind_{P_I^-}^G \sigma \right) \to 0
\end{multline}
induces a short exact sequence of $L_J$-representations
\begin{multline} \label{SEFilHOrd}
0 \to \HOrd[n]_{P_J} \left( \Fil_{P_J}^{\IW^J_2} \left( \Ind_{P_I^-}^G \sigma \right) \right) \to \HOrd[n]_{P_J} \left( \Fil_{P_J}^{\IW^J_1} \left( \Ind_{P_I^-}^G \sigma \right) \right) \\
\to \HOrd[n]_{P_J} \left( \Fil_{P_J}^{\IW^J_1} \left( \Ind_{P_I^-}^G \sigma \right) / \Fil_{P_J}^{\IW^J_2} \left( \Ind_{P_I^-}^G \sigma \right) \right) \to 0.
\end{multline}
In particular, \eqref{injIWJ} is injective and \eqref{GrPJ} induces the isomorphism in the statement.

Let $N_{J,0} \subseteq N_J$, $L_J^+ \subseteq L_J$, $Z_J^+ \subseteq Z_J$ and $z \in Z_J^+$ be as in §~\ref{ssec:calc}.
Proceeding as in the proof of \cite[Proposition 2.2.3]{JH}, we see that the first non-trivial morphism of \eqref{SEFil} induces an injection in $N_{J,0}$-cohomology.
Using the long exact sequence of $N_{J,0}$-cohomology, we deduce that \eqref{SEFil} induces a short exact sequence of $L_J^+$-representations
\begin{multline} \label{SEFilH}
0 \to \Hc[n]\! \left( N_{J,0}, \Fil_{P_J}^{\IW^J_2} \left( \Ind_{P_I^-}^G \sigma \right) \right) \to \Hc[n]\! \left( N_{J,0}, \Fil_{P_J}^{\IW^J_1} \left( \Ind_{P_I^-}^G \sigma \right) \right) \\
\to \Hc[n]\! \left( N_{J,0}, \Fil_{P_J}^{\IW^J_1} \left( \Ind_{P_I^-}^G \sigma \right) / \Fil_{P_J}^{\IW^J_2} \left( \Ind_{P_I^-}^G \sigma \right) \right) \to 0.
\end{multline}
Since $\sigma$ is locally admissible, $\Ind_{P_I^-}^G \sigma$ is locally admissible (cf. \cite[Proposition 4.1.7]{Em1}), thus $\Hc(N_{J,0},\Ind_{P_I^-}^G \sigma)$ is locally $Z_J^+$-finite by \cite[Theorem 3.4.7 (1)]{Em2}.
We deduce that each term of \eqref{SEFilH} is locally $Z_J^+$-finite (as a subquotient).
We conclude that localising \eqref{SEFilH} with respect to $z^\N$ yields \eqref{SEFilHOrd} (see Remark \ref{rema:Loczt}).

We now prove that $\Fil_{P_J}^\bullet(\HOrd[n]_{P_J}(\Ind_{P_I^-}^G \sigma))$ is a filtration of $\HOrd[n]_{P_J}(\Ind_{P_I^-}^G \sigma)$ indexed by $\IW^J$.
Since $\IW^J$ is finite and $\Fil_{P_J}^\bullet(\HOrd[n]_{P_J}(\Ind_{P_I^-}^G \sigma))$ is inclusion-preserving with $\Fil_{P_J}^\emptyset(\HOrd[n]_{P_J}(\Ind_{P_I^-}^G \sigma))=0$ and $\Fil_{P_J}^{\IW^J}(\HOrd[n]_{P_J}(\Ind_{P_I^-}^G \sigma))=\HOrd[n]_{P_J}(\Ind_{P_I^-}^G \sigma)$ by construction, it remains to prove that for any lower sets $\IW^J_1, \IW^J_2 \subseteq \IW^J$, the natural short exact sequence of $P_J$-representations
\begin{multline*}
0 \to \Fil_{P_J}^{\IW^J_1 \cap \IW^J_2} \left( \Ind_{P_I^-}^G \sigma \right) \to \Fil_{P_J}^{\IW^J_1} \left( \Ind_{P_I^-}^G \sigma \right) \oplus \Fil_{P_J}^{\IW^J_2} \left( \Ind_{P_I^-}^G \sigma \right) \\
\to \Fil_{P_J}^{\IW^J_1 \cup \IW^J_2} \left( \Ind_{P_I^-}^G \sigma \right) \to 0
\end{multline*}
induces a short exact sequence of $L_J$-representations
\begin{multline*}
0 \to \Fil_{P_J}^{\IW^J_1 \cap \IW^J_2} \left( \HOrd[n]_{P_J} \left( \Ind_{P_I^-}^G \sigma \right) \right) \\
\to \Fil_{P_J}^{\IW^J_1} \left( \HOrd[n]_{P_J} \left( \Ind_{P_I^-}^G \sigma \right) \right) \oplus \Fil_{P_J}^{\IW^J_2} \left( \HOrd[n]_{P_J} \left( \Ind_{P_I^-}^G \sigma \right) \right) \\
\to \Fil_{P_J}^{\IW^J_1 \cup \IW^J_2} \left( \HOrd[n]_{P_J} \left( \Ind_{P_I^-}^G \sigma \right) \right) \to 0.
\end{multline*}
This follows from the same arguments as above.
\end{proof}

Let $\Iw^J \in \IW^J$.
For any lower set $W'_J \subseteq \JIW_J$, the natural $B$-equivariant (resp. $P_{J \cap \Iw^J{}^{-1}(I)}$-equivariant when $W_J'=\{1\}$, see Remark \ref{rema:Fil1}) injection $\Fil_B^{W_J'}(\cind_{P_I^-}^{P_I^- \Iw^J P_J} \sigma) \allowbreak \hookrightarrow \cind_{P_I^-}^{P_I^- \Iw^J P_J} \sigma$ induces a $B_J$-equivariant (resp. $L_J \cap P_{J \cap \Iw^J{}^{-1}(I)}$-equivariant when $W_J'=\{1\}$) morphism
\begin{equation} \label{injWJ}
\HOrd[n]_{P_J} \left( \Fil_B^{W_J'} \left( \cind_{P_I^-}^{P_I^- \Iw^J P_J} \sigma \right) \right) \to \HOrd[n]_{P_J} \left( \cind_{P_I^-}^{P_I^- \Iw^J P_J} \sigma \right),
\end{equation}
and by taking its image we define a $B_J$-subrepresentation (resp. $L_J \cap P_{J \cap \Iw^J{}^{-1}(I)}$-sub\-rep\-res\-ent\-a\-tion when $W_J'=\{1\}$)
\begin{equation*}
\Fil_B^{W_J'} \left( \HOrd[n]_{P_J} \left( \cind_{P_I^-}^{P_I^- \Iw^J P_J} \sigma \right) \right) \subseteq \HOrd[n]_{P_J} \left( \cind_{P_I^-}^{P_I^- \Iw^J P_J} \sigma \right).
\end{equation*}
Proceeding as in the proof of Proposition \ref{prop:FilPJ} and using \eqref{GrB}, we prove that \eqref{injWJ} is injective and the following result.

\begin{prop} \label{prop:FilB}
The $B_J$-subrepresentations $\Fil_B^\bullet(\HOrd[n]_{P_J}(\cind_{P_I^-}^{P_I^- \Iw^J P_J} \sigma))$ form a natural filtration of $\HOrd[n]_{P_J}(\cind_{P_I^-}^{P_I^- \Iw^J P_J} \sigma)$ indexed by $\JIW_J$.
Furthermore, for all $w_J \in \JIW_J$ there is a natural $B_J$-equivariant isomorphism
\begin{equation*}
\Gr_B^{w_J} \left( \HOrd[n]_{P_J} \left( \cind_{P_I^-}^{P_I^- \Iw^J P_J} \sigma \right) \right) \cong \HOrd[n]_{P_J} \left( \cind_{P_I^-}^{P_I^- \Iw^J w_J B} \sigma \right)
\end{equation*}
which is even $L_J \cap P_{J \cap \Iw^J{}^{-1}(I)}$-equivariant when $w_J=1$ (see Remark \ref{rema:Fil1}).
\end{prop}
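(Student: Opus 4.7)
The plan is to replay the proof of Proposition~\ref{prop:FilPJ} verbatim, decomposing $P_I^- \Iw^J P_J$ into its finer $B$-Bruhat cells via Lemma~\ref{lemm:Bruhat}~(ii) rather than decomposing $G$ via (i). Concretely, for any lower sets $W'_{J,2} \subseteq W'_{J,1} \subseteq \JIW_J$ one has a natural short exact sequence of $B$-representations
\begin{equation*}
0 \to \Fil_B^{W'_{J,2}}\bigl(\cind_{P_I^-}^{P_I^- \Iw^J P_J} \sigma\bigr) \to \Fil_B^{W'_{J,1}}\bigl(\cind_{P_I^-}^{P_I^- \Iw^J P_J} \sigma\bigr) \to \cind_{P_I^-}^{P_I^- \Iw^J (W'_{J,1} \setminus W'_{J,2}) B} \sigma \to 0,
\end{equation*}
and the main technical point is to check that applying $\HOrd[n]_{P_J}$ preserves its exactness.

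For that, following the proof of \cite[Proposition~2.2.3]{JH}, one first shows that the first arrow induces an injection after taking $\Hc[n](N_{J,0},-)$; the long exact sequence of $N_{J,0}$-cohomology then collapses into a short exact sequence carrying a natural Hecke action of $B_J^+ \coloneqq B_J \cap L_J^+$. Since $\sigma$ is locally admissible, so is $\Ind_{P_I^-}^G \sigma$ (cf.\ \cite[Proposition~4.1.7]{Em1}) and $\Hc[n](N_{J,0}, \Ind_{P_I^-}^G \sigma)$ is locally $Z_J^+$-finite by \cite[Theorem~3.4.7~(1)]{Em2}; combining this with the injectivity statement for $\Fil_{P_J}^\bullet$ already established in Proposition~\ref{prop:FilPJ}, each term of our $N_{J,0}$-cohomology short exact sequence is a subquotient of $\Hc[n](N_{J,0}, \Ind_{P_I^-}^G \sigma)$ and hence locally $Z_J^+$-finite. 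Localising with respect to $z^{\N}$ — which is exact on locally $z$-finite objects by Remark~\ref{rema:Loczt} — then produces the desired short exact sequence of $B_J$-representations after applying $\HOrd[n]_{P_J}$.

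Specialising to $W'_{J,2} = \{w' \in \JIW_J \mid w' < w_J\}$ and $W'_{J,1} = \{w' \in \JIW_J \mid w' \leq w_J\}$ and using the $B$-equivariant isomorphism~\eqref{GrB}, one gets simultaneously the injectivity of~\eqref{injWJ} and the claimed $B_J$-equivariant isomorphism for $\Gr_B^{w_J}$. The complete-lattice axioms for $\Fil_B^\bullet$ then reduce to showing that the natural short exact sequence $0 \to \Fil_B^{W'_{J,1} \cap W'_{J,2}} \to \Fil_B^{W'_{J,1}} \oplus \Fil_B^{W'_{J,2}} \to \Fil_B^{W'_{J,1} \cup W'_{J,2}} \to 0$ inside $\cind_{P_I^-}^{P_I^- \Iw^J P_J} \sigma$ remains short exact after applying $\HOrd[n]_{P_J}$, which follows from the same injectivity-plus-local-$Z_J^+$-finiteness recipe. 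Finally, when $w_J = 1$, Remark~\ref{rema:Fil1} upgrades $\Gr_B^1\bigl(\cind_{P_I^-}^{P_I^- \Iw^J P_J} \sigma\bigr) \cong \cind_{P_I^-}^{P_I^- \Iw^J B} \sigma$ to an $L_J \cap P_{J \cap \Iw^J{}^{-1}(I)}$-subrepresentation, so the above argument produces $L_J \cap P_{J \cap \Iw^J{}^{-1}(I)}$-equivariant morphisms rather than merely $B_J$-equivariant ones, which promotes the final isomorphism accordingly. The only slightly delicate step is the injectivity in $N_{J,0}$-cohomology, but the cochain-level argument of \cite[Proposition~2.2.3]{JH} uses only that the stratum being collapsed is closed and $P_I^-$-stable on the left, both of which follow from Lemma~\ref{lemm:Bruhat}~(ii), so it carries over verbatim.
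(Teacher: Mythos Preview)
Your proposal is correct and follows exactly the approach the paper intends: the paper's own proof is the one-liner ``Proceeding as in the proof of Proposition~\ref{prop:FilPJ} and using \eqref{GrB}, we prove that \eqref{injWJ} is injective and the following result,'' and you have faithfully unfolded what this means, including the injectivity step via \cite[Proposition~2.2.3]{JH}, the local $Z_J^+$-finiteness inherited as a subquotient of $\Hc[n](N_{J,0},\Ind_{P_I^-}^G\sigma)$, and the upgrade to $L_J\cap P_{J\cap\Iw^J{}^{-1}(I)}$-equivariance when $w_J=1$ via Remark~\ref{rema:Fil1}.
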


We now state the main result of this section using Notation \ref{nota:dw} and Remark \ref{rema:dIwJ}.

\begin{theo} \label{theo:HOrd}
Let $\sigma$ be a locally admissible $L_I$-representation, $\Iw^J \in \IW^J$ and $n \in \N$.
For all $w_J \in \JIW_J$, there is a natural $B_{J,w_J}$-equivariant isomorphism
\begin{multline*}
\Gr_B^{w_J} \left( \HOrd[n]_{P_J} \left( \cind_{P_I^-}^{P_I^- \Iw^J P_J} \sigma \right) \right) \\
\cong \Gr_{B_J}^{w_J} \left( \Ind_{L_J \cap P_{J \cap \Iw^J{}^{-1}(I)}^-}^{L_J} \left( \left( \HOrd[n-{[F:\Qp]} d_{\Iw^J}]_{L_I \cap P_{I \cap \Iw^J(J)}} \sigma \right)^{\Iw^J} \otimes \left( \omega^{-1} \circ \delta_{\Iw^J} \right) \right) \right)
\end{multline*}
which is even $L_J \cap P_{J \cap \Iw^J{}^{-1}(I)}$-equivariant when $w_J=1$ (see Remark \ref{rema:Fil1}).
\end{theo}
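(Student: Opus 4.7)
The plan is to reduce the statement, using Proposition \ref{prop:FilB} combined with the graded identification \eqref{GrBJ} (and Remark \ref{rema:Fil1} to upgrade equivariance when $w_J = 1$), to the construction of a natural $B_{J,w_J}$-equivariant (resp. $L_J \cap P_{J \cap \Iw^J{}^{-1}(I)}$-equivariant when $w_J = 1$) isomorphism
\begin{equation*}
\HOrd[n]_{P_J} \pi_{\Iw} \iso \cind_{L_J \cap P_{J \cap \Iw^J{}^{-1}(I)}^-}^{L_J \cap P_{J \cap \Iw^J{}^{-1}(I)}^- w_J B_J}\! \left( \left( \HOrd[n - {[F:\Qp]} d_{\Iw^J}]_{L_I \cap P_{I \cap \Iw^J(J)}} \sigma \right)^{\Iw^J} \otimes \left( \omega^{-1} \circ \delta_{\Iw^J} \right) \right),
\end{equation*}
where $\Iw \coloneqq \Iw^J w_J \in \IW$ and $\pi_{\Iw} \coloneqq \cind_{P_I^-}^{P_I^- \Iw B} \sigma$. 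Since on locally $z$-finite representations the functor $\HOrd[n]_{P_J}$ coincides with the $z^\N$-localisation of $\Hc[n](N_{J,0}, -)$ (Remark \ref{rema:Loczt}), I will produce a zig-zag of natural $B_{J,w_J}^+$-equivariant (resp. $L_J^+ \cap P_{J \cap \Iw^J{}^{-1}(I)}$-equivariant when $w_J = 1$) morphisms whose kernels and cokernels have $z$ acting locally nilpotently; after $z^\N$-localisation these will yield the claimed isomorphism.

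The two essential inputs are the following. First, Proposition \ref{prop:calc} applied in degree $n - [F:\Qp] d_{\Iw^J}$ provides a natural morphism from $\Hc[n - [F:\Qp] d_{\Iw^J}](N_{J,\Iw,0}, \pi_{\Iw})$ to $\cind_{L_J \cap P_{J \cap \Iw^J{}^{-1}(I)}^-}^{L_J \cap P_{J \cap \Iw^J{}^{-1}(I)}^- w_J B_J}(\HOrd[n - [F:\Qp] d_{\Iw^J}]_{L_I \cap P_{I \cap \Iw^J(J)}} \sigma)^{\Iw^J}$. Second, Proposition \ref{prop:dev} applied with $\Nbt = \Nb_J$, $\Nbt' = \Nb_{J,\Iw}$, $\Zbt = \Sb_J$ and $\Lbt$ taken to be the algebraic subgroup with $F$-points $\Bb_{J,w_J}$ (which normalises $\Nb_{J,\Iw}$; in the case $w_J = 1$ one may take instead $\Lbt = \Lb_J \cap \Pb_{J \cap \Iw^J{}^{-1}(I)}$) yields a natural morphism
\begin{equation*}
\Hc[n - {[F:\Qp]}(d_J - d_{J,\Iw})](N_{J,\Iw,0}, \pi_{\Iw}) \otimes \left( \omega^{-1} \circ (\delta_J - \delta_{J,\Iw}) \right) \to \Hc[n](N_{J,0}, \pi_{\Iw}),
\end{equation*}
once the local $z$-finiteness of $\Hc(N_{J,\Iw,0}, \pi_{\Iw})$ is verified (a consequence of the admissibility of $\pi_{\Iw}$ as a $B$-representation combined with \cite[Theorem 3.4.7]{Em2}); here $d_J \coloneqq \dim_F \Nb_J$, $d_{J,\Iw} \coloneqq \dim_F \Nb_{J,\Iw}$, and $\delta_J, \delta_{J,\Iw}$ are the corresponding adjoint characters on $\Lbt$. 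A short combinatorial check, exploiting that $w_J \in W_J$ preserves $\Phi^+ \backslash \Phi_J^+$ together with root multiplicities, gives the identities $d_J - d_{J,\Iw} = d_{\Iw^J}$ and $\delta_J - \delta_{J,\Iw} = w_J^{-1}(\delta_{\Iw^J})$ as characters of $\Sb$, via the bijection $\alpha \mapsto w_J(\alpha)$ between $(\Phi^+ \backslash \Phi_J^+) \cap \Iw^{-1}(\Phi^-)$ and $(\Phi^+ \backslash \Phi_J^+) \cap \Iw^J{}^{-1}(\Phi^-)$.

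Twisting the target of the first morphism by $\omega^{-1} \circ \delta_{\Iw^J}$, the explicit isomorphism \eqref{isocindJ} converts this twist inside the compact induction into a twist of the whole space $\Clisc(U'_{J,w_J}, (\HOrd[n - {[F:\Qp]} d_{\Iw^J}]_{L_I \cap P_{I \cap \Iw^J(J)}} \sigma)^{\Iw})$ by $\omega^{-1} \circ w_J^{-1}(\delta_{\Iw^J}) = \omega^{-1} \circ (\delta_J - \delta_{J,\Iw})$, which matches exactly the twist appearing on the source of the second morphism. Composing the resulting morphisms and localising with respect to $z^\N$ then yields the desired isomorphism, with $B_{J,w_J}^+$-equivariance upgrading to $B_{J,w_J}$-equivariance because $B_{J,w_J}$ is generated as a group by $B_{J,w_J}^+$ and $z^{-1}$ (similarly for $L_J^+ \cap P_{J \cap \Iw^J{}^{-1}(I)}$ versus $L_J \cap P_{J \cap \Iw^J{}^{-1}(I)}$ in the case $w_J = 1$). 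The main obstacle will be this twist-character bookkeeping: carrying through the $w_J$-conjugation coming from \eqref{isocindJ} precisely enough to confirm that it cancels the $w_J^{-1}$ discrepancy between $\delta_J - \delta_{J,\Iw}$ and $\delta_{\Iw^J}$ on $\Sb$, rather than leaving behind an extra character.
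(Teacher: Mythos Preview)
Your overall strategy is the same as the paper's: reduce via Proposition~\ref{prop:FilB} and \eqref{GrBJ}, then build a zig-zag from Proposition~\ref{prop:calc} (source side) and Proposition~\ref{prop:dev} (target side), and localise with respect to $z^\N$. The combinatorial identity $d_J - d_{J,\Iw} = d_{\Iw^J}$, $\delta_J - \delta_{J,\Iw} = w_J^{-1}(\delta_{\Iw^J})$ and the twist bookkeeping via \eqref{isocindJ} are handled correctly and match the paper.

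There is, however, one genuine gap: your justification that $\Hc(N_{J,\Iw,0},\pi_{\Iw})$ is locally $z$-finite. You claim this follows from ``the admissibility of $\pi_{\Iw}$ as a $B$-representation combined with \cite[Theorem~3.4.7]{Em2}'', but $\pi_{\Iw} = \cind_{P_I^-}^{P_I^- \Iw B}\sigma \cong \Clisc(U'_{\Iw},\sigma^{\Iw})$ is \emph{not} admissible (nor locally admissible) as a $B$-representation in general: for any compact open $U_0 \subseteq U'_{\Iw}$ the $U_0$-invariants are infinite-dimensional whenever $U'_{\Iw}$ is nontrivial and $\sigma \neq 0$. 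Emerton's theorem does not apply here. The paper instead deduces the local $z$-finiteness of $\Hc(N_{J,\Iw,0},\pi_{\Iw})$ \emph{from} Proposition~\ref{prop:calc} itself: since $\sigma$ is locally admissible, $\HOrd_{L_I \cap P_{I \cap \Iw^J(J)}}\sigma$ is locally admissible, hence the target $\cind_{L_J \cap P_{J \cap \Iw^J{}^{-1}(I)}^-}^{L_J \cap P_{J \cap \Iw^J{}^{-1}(I)}^- w_J B_J}(\HOrd_{L_I \cap P_{I \cap \Iw^J(J)}}\sigma)^{\Iw^J}$ is locally $Z_J$-finite; since the kernel and cokernel of the morphism in Proposition~\ref{prop:calc} have $z$ acting locally nilpotently, the source $\Hc(N_{J,\Iw,0},\pi_{\Iw})$ is forced to be locally $z$-finite. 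Only then can Proposition~\ref{prop:dev} be invoked. You should also note that the local $Z_J^+$-finiteness of $\Hc[n](N_{J,0},\pi_{\Iw})$ (needed to identify $\HOrd[n]_{P_J}\pi_{\Iw}$ with the $z^\N$-localisation) is established in the course of the proofs of Propositions~\ref{prop:FilPJ} and~\ref{prop:FilB}, as a subquotient of $\Hc[n](N_{J,0},\Ind_{P_I^-}^G\sigma)$.
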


\begin{proof}
We use the notation of §~\ref{ssec:isocind}.
We let $w_J \in \JIW_J$ and we put $\Iw \coloneqq \Iw^J w_J$.
We let $N_{J,0} \subseteq N_J$, $L_J^+ \subseteq L_J$, $Z_J^+ \subseteq Z_J$, $z \in Z_J^+$ and $\pi_{\Iw}$ be as in §~\ref{ssec:calc}.
In the course of the proof of Proposition \ref{prop:FilB}, we see that $\Hc[n](N_{J,0},\pi_{\Iw})$ is locally $Z_J^+$-finite (as we saw it for $\Hc[n](N_{J,0},\cind_{P_I^-}^{P_I^- \Iw^J P_J} \sigma)$ in the course of the proof of Proposition \ref{prop:FilPJ}).

Since $\sigma$ is locally admissible, the $L_{I \cap \Iw^J(J)}$-representations $\HOrd_{L_I \cap P_{I \cap \Iw^J(J)}} \sigma$ are locally admissible by \cite[Theorem 3.4.7 (2)]{Em2}, thus locally $Z_{I \cap \Iw^J(J)}$-finite by \cite[Lemma 2.3.4]{Em1}.
Therefore, the $B_J$-representations
\begin{equation*}
\cind_{L_J \cap P_{J \cap {\Iw^J}^{-1}(I)}^-}^{L_J \cap P_{J \cap {\Iw^J}^{-1}(I)}^- w_J B_J}(\HOrd_{L_I \cap P_{I \cap \Iw^J(J)}} \sigma)^{\Iw^J}
\end{equation*}
are locally $Z_J$-finite, thus locally $z$-finite.
We deduce from Proposition \ref{prop:calc} that the $B_{J,w_J}^+$-representations $\Hc(N_{J,\Iw,0},\pi_{\Iw})$ are locally $z$-finite and that there is a natural $B_{J,w_J}^+$-equivariant (resp. $L_J^+ \cap P_{J \cap {\Iw^J}^{-1}(I)}$-equivariant when $w_J=1$) morphism
\begin{multline} \label{calc}
\Hc[n-{[F:\Qp]} d_{\Iw^J}]\! \left( N_{J,\Iw,0}, \pi_{\Iw} \right) \otimes \left( \omega^{-1} \circ \delta_{\Iw^J} \right)^{w_J} \\
\to \left( \cind_{L_J \cap P_{J \cap {\Iw^J}^{-1}(I)}^-}^{L_J \cap P_{J \cap {\Iw^J}^{-1}(I)}^- w_J B_J} \left( \HOrd[n-{[F:\Qp]} d_{\Iw^J}]_{L_I \cap P_{I \cap \Iw^J(J)}} \sigma \right)^{\Iw^J} \right) \otimes \left( \omega^{-1} \circ \delta_{\Iw^J} \right)^{w_J}
\end{multline}
such that the action of $z$ on its kernel and cokernel is locally nilpotent.

Using Proposition \ref{prop:dev} with $\Lbt=\Bb_{J,w_J}$ (resp. $\Lbt=\Lb_J \cap \Pb_{J \cap \Iw^J{}^{-1}(I)}$ when $w_J=1$), $\Nbt=\Nb_J$, $\Nbt'=\Nb_{J,\Iw}$ (so that $\dt-\dt' = d_{\Iw^J}$ and $\deltat-\deltat' = w_J^{-1}(\delta_{\Iw^J})$ since conjugation by $w_J$ induces an isomorphism of $F$-varieties $\Nb_J / \Nb_{J,\Iw} \iso \Nb_J / \Nb_{J,\Iw^J}$), $\zt=z$ and $\pi=\pi_{\Iw}$, we deduce a natural $B_{J,w_J}^+$-equivariant (resp. $L_J^+ \cap P_{J \cap \Iw^J{}^{-1}(I)}$-equivariant when $w_J=1$) morphism
\begin{equation} \label{calc0}
\Hc[n-{[F:\Qp]} d_{\Iw^J}]\! \left( N_{J,\Iw,0}, \pi_{\Iw} \right) \otimes \left( \omega^{-1} \circ \delta_{\Iw^J} \right)^{w_J} \to \Hc[n]\! \left( N_{J,0}, \pi_{\Iw} \right)
\end{equation}
and the Hecke action of $z$ on its kernel and cokernel is locally nilpotent.

Using Proposition \ref{prop:FilB}, \eqref{GrBJ} with $\sigmat=(\HOrd[n-{[F:\Qp]} d_{\Iw^J}]_{L_I \cap P_{I \cap \Iw^J(J)}} \sigma)^{\Iw^J} \otimes (\omega^{-1} \circ \delta_{\Iw^J})$ and the natural $B_J$-equivariant (resp. $L_J \cap P_{J \cap \Iw^J{}^{-1}(I)}$-equivariant when $w_J=1$) isomorphism
\begin{multline*}
\left( \cind_{L_J \cap P_{J \cap {\Iw^J}^{-1}(I)}^-}^{L_J \cap P_{J \cap {\Iw^J}^{-1}(I)}^- w_J B_J} \left( \HOrd[n-{[F:\Qp]} d_{\Iw^J}]_{L_I \cap P_{I \cap \Iw^J(J)}} \sigma \right)^{\Iw^J} \right) \otimes \left( \omega^{-1} \circ \delta_{\Iw^J} \right)^{w_J} \\
\iso \cind_{L_J \cap P_{J \cap {\Iw^J}^{-1}(I)}^-}^{L_J \cap P_{J \cap {\Iw^J}^{-1}(I)}^- w_J B_J} \left( \left( \HOrd[n-{[F:\Qp]} d_{\Iw^J}]_{L_I \cap P_{I \cap \Iw^J(J)}} \sigma \right)^{\Iw^J} \otimes \left( \omega^{-1} \circ \delta_{\Iw^J} \right) \right),
\end{multline*}
the localisation of \eqref{calc} with respect to $z^\N$ and the inverse of the localisation of \eqref{calc0} with respect to $z^\N$ yield the desired isomorphism (see Remark \ref{rema:Loczt}).
\end{proof}

In particular with $w_J=1$ and $\sigmat \coloneqq (\HOrd[n-{[F:\Qp]} d_{\Iw^J}]_{L_I \cap P_{I \cap \Iw^J(J)}} \sigma)^{\Iw^J} \otimes (\omega^{-1} \circ \delta_{\Iw^J})$, there is a natural $L_J \cap P_{J \cap \Iw^J{}^{-1}(I)}$-equivariant injection
\begin{equation*}
\Gr_{B_J}^1 \left( \Ind_{L_J \cap P_{J \cap \Iw^J{}^{-1}(I)}^-}^{L_J} \sigmat \right) \hookrightarrow \HOrd[n]_{P_J} \left( \cind_{P_I^-}^{P_I^- \Iw^J P_J} \sigma \right),
\end{equation*}
hence a natural $L_J$-equivariant morphism
\begin{equation*}
A[L_J] \otimes_{A[L_J \cap P_{J \cap \Iw^J{}^{-1}(I)}]} \Gr_{B_J}^1 \left( \Ind_{L_J \cap P_{J \cap \Iw^J{}^{-1}(I)}^-}^{L_J} \sigmat \right) \to \HOrd[n]_{P_J} \left( \cind_{P_I^-}^{P_I^- \Iw^J P_J} \sigma \right).
\end{equation*}
In the proof of \cite[Theorem 4.4.6]{Em1}, it is shown that such a morphism factors uniquely through the natural $L_J$-equivariant surjection
\begin{equation*}
A[L_J] \otimes_{A[L_J \cap P_{J \cap \Iw^J{}^{-1}(I)}]} \Gr_{B_J}^1 \left( \Ind_{L_J \cap P_{J \cap \Iw^J{}^{-1}(I)}^-}^{L_J} \sigmat \right) \twoheadrightarrow \Ind_{L_J \cap P_{J \cap \Iw^J{}^{-1}(I)}^-}^{L_J} \sigmat.
\end{equation*}
Thus, the previous injection naturally extends to an $L_J$-equivariant morphism
\begin{multline} \label{HOrd}
\Ind_{L_J \cap P_{J \cap \Iw^J{}^{-1}(I)}^-}^{L_J} \left( \left( \HOrd[n-{[F:\Qp]} d_{\Iw^J}]_{L_I \cap P_{I \cap \Iw^J(J)}} \sigma \right)^{\Iw^J} \otimes \left( \omega^{-1} \circ \delta_{\Iw^J} \right) \right) \\
\to \HOrd[n]_{P_J} \left( \cind_{P_I^-}^{P_I^- \Iw^J P_J} \sigma \right).
\end{multline}

\begin{conj} \label{conj:HOrd}
The natural morphism \eqref{HOrd} is an isomorphism.
\end{conj}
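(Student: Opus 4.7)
The plan is to promote the identification of the associated graded representations in Theorem~\ref{theo:HOrd} to the claimed ungraded isomorphism. Write $\sigmat \coloneqq (\HOrd[n-{[F:\Qp]} d_{\Iw^J}]_{L_I \cap P_{I \cap \Iw^J(J)}} \sigma)^{\Iw^J} \otimes (\omega^{-1} \circ \delta_{\Iw^J})$, so that the source of \eqref{HOrd} is $\Ind_{L_J \cap P_{J \cap \Iw^J{}^{-1}(I)}^-}^{L_J} \sigmat$. The goal is to show that \eqref{HOrd} is strictly compatible with the filtration $\Fil_{B_J}^\bullet$ on the source and the filtration $\Fil_B^\bullet$ on the target from Proposition~\ref{prop:FilB}, and that on each graded piece indexed by $w_J \in \JIW_J$ it induces the $B_{J,w_J}$-equivariant isomorphism of Theorem~\ref{theo:HOrd}. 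Since $\JIW_J$ is finite, an induction on lower sets combined with the five lemma will then force \eqref{HOrd} to be an isomorphism.

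I would establish the compatibility by induction on the cardinality of a lower set $W_J' \subseteq \JIW_J$. The base case $W_J' = \{1\}$ holds by the very construction of \eqref{HOrd}: its restriction to $\Fil_{B_J}^1(\text{source})$ is precisely the $L_J \cap P_{J \cap \Iw^J{}^{-1}(I)}$-equivariant injection into $\Fil_B^1(\text{target}) = \Gr_B^1(\text{target})$ provided by Theorem~\ref{theo:HOrd} with $w_J = 1$. For the inductive step with maximal element $w_J \in W_J'$, the generalised Bruhat decomposition (Lemma~\ref{lemm:Bruhat}(iii)) gives the identity $\Fil_{B_J}^{W_J'}(\text{source}) = \Fil_{B_J}^{W_J' \backslash \{w_J\}}(\text{source}) + \nt \cdot \Fil_{B_J}^1(\text{source})$ for any lift $\nt \in \Nc$ of $w_J$. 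Using $L_J$-equivariance and the inductive hypothesis, one reduces to showing that $\nt \cdot \Fil_{B_J}^1(\text{source})$ maps into $\Fil_B^{W_J'}(\text{target})$ and that the resulting map on $\Gr_B^{w_J}$ coincides with the isomorphism from Theorem~\ref{theo:HOrd}.

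The identification on graded pieces should then be forced by naturality. Both the induced map $\Gr_{B_J}^{w_J}(\text{source}) \to \Gr_B^{w_J}(\text{target})$ and Theorem~\ref{theo:HOrd}'s isomorphism admit, after translating by $\nt^{-1}$, a description through the explicit local models \eqref{isocindJ} and \eqref{isocind2} and the chain of morphisms assembled in Lemmas~\ref{lemm:calc1}--\ref{lemm:calc4}. The key point is to verify that the global adjunction defining \eqref{HOrd} (as in the proof of \cite[Theorem 4.4.6]{Em1}) reproduces, cell by cell, the local formula of Proposition~\ref{prop:calc}; once this is checked, uniqueness in the universal property pins down both maps as equal.

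The main obstacle is the inductive step of the compatibility: showing that $\nt \cdot \Fil_{B_J}^1(\text{source})$ maps under \eqref{HOrd} into $\Fil_B^{w_J}(\text{target})$ and not onto strictly higher Bruhat cells. Since \eqref{HOrd} is constructed by a global adjunction, its behaviour on individual strata is indirect. A promising approach is to exploit the strictly contracting element $z \in Z_J^+$: via Proposition~\ref{prop:dev} and Lemma~\ref{lemm:Zfin}, the filtration $\Fil_B^\bullet(\text{target})$ should be characterised by Hecke estimates for $z$ on the $N_{J,0}$-cohomology of $\cind_{P_I^-}^{P_I^- \Iw^J P_J} \sigma$, and verifying the matching estimates on the translate $\nt \cdot \Fil_{B_J}^1(\text{source})$ reduces to a calculation via the models \eqref{isocind2} and \eqref{isocindJ}. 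Alternatively, one can attempt to globalise the cell-by-cell constructions of Proposition~\ref{prop:calc} along the open covers of Lemma~\ref{lemm:Bruhat}, thereby building \eqref{HOrd} directly as a morphism of filtered objects and bypassing the global adjunction altogether.
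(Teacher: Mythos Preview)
The statement you are attempting to prove is Conjecture~\ref{conj:HOrd}, and the paper does \emph{not} prove it in general. It is stated as a conjecture, and the paper only establishes it in the special cases of Proposition~\ref{prop:HOrd1}: when the right-hand side vanishes, when $\Iw^J(J) \subseteq I$ so that the induction is trivial, and when $n=0$ and $\Iw^J=1$. Your proposal is therefore not to be compared against a proof in the paper, but rather against the reasons the paper leaves this open.

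The obstacle you identify in your final paragraph is precisely the reason the statement remains conjectural. Theorem~\ref{theo:HOrd} only gives a $B_{J,w_J}$-equivariant isomorphism on the $w_J$-th graded piece, not a $B_J$-equivariant one; the paper makes this explicit in Remark~\ref{rema:IwJ=1}, which says that for $\Iw^J=1$ one could conclude if one knew the isomorphism of Theorem~\ref{theo:HOrd} were $B_J$-equivariant for all $w_J$. Your inductive step relies on translating $\Fil_{B_J}^1(\text{source})$ by a representative $\nt$ of $w_J$ and using $L_J$-equivariance of \eqref{HOrd}, but this only tells you where the translate lands as an element of the target, not that it lands in the correct filtration piece $\Fil_B^{w_J}$. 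The two remedies you sketch (Hecke estimates via Proposition~\ref{prop:dev}, or globalising Proposition~\ref{prop:calc} along the Bruhat cover) are reasonable directions, but neither is carried out, and the paper gives no indication that either succeeds. In particular, the chain of morphisms in Lemmas~\ref{lemm:calc1}--\ref{lemm:calc4} involves localisation with respect to $z^\N$ and passage through quotients whose compatibility across adjacent cells is exactly what is missing.

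In short: your outline correctly locates the difficulty, but does not overcome it, and the paper does not either.
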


We prove Conjecture \ref{conj:HOrd} in some special cases.

\begin{prop} \phantomsection \label{prop:HOrd1}
\begin{enumerate}
\item If $\HOrd[n-{[F:\Qp]} d_{\Iw^J}]_{L_I \cap P_{I \cap \Iw^J(J)}} \sigma = 0$, then
\begin{equation*}
\HOrd[n]_{P_J} \left( \cind_{P_I^-}^{P_I^- \Iw^J P_J} \sigma \right) = 0.
\end{equation*}
\item If $\Iw^J(J) \subseteq I$, then \eqref{HOrd} is a natural $L_J$-equivariant isomorphism
\begin{multline*}
\left( \HOrd[n-{[F:\Qp]} d_{\Iw^J}]_{L_I \cap P_{I \cap \Iw^J(J)}} \sigma \right)^{\Iw^J} \otimes \left( \omega^{-1} \circ \delta_{\Iw^J} \right) \\
\iso \HOrd[n]_{P_J} \left( \cind_{P_I^-}^{P_I^- \Iw^J P_J} \sigma \right).
\end{multline*}
\item If $n=0$ and $\Iw^J=1$, then \eqref{HOrd} is a natural $L_J$-equivariant isomorphism
\begin{equation*}
\Ind_{L_J \cap P_I^-}^{L_J} \left( \Ord_{L_I \cap P_J} \sigma \right) \iso \Ord_{P_J} \left( \cind_{P_I^-}^{P_I^- P_J} \sigma \right).
\end{equation*}
\end{enumerate}
\end{prop}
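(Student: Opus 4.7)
My plan is to treat the three parts in sequence, the main tools being Theorem \ref{theo:HOrd}, Proposition \ref{prop:FilB}, and Proposition \ref{prop:FilPJ}.

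For part (i), I would exploit the filtration $\Fil_B^\bullet(\HOrd[n]_{P_J}(\cind_{P_I^-}^{P_I^- \Iw^J P_J} \sigma))$ provided by Proposition \ref{prop:FilB}, indexed by the finite set $\JIW_J$. By Theorem \ref{theo:HOrd}, every graded piece is isomorphic to $\Gr_{B_J}^{w_J}$ of a parabolically induced representation whose coefficient system is a twist of $(\HOrd[n-{[F:\Qp]} d_{\Iw^J}]_{L_I \cap P_{I \cap \Iw^J(J)}} \sigma)^{\Iw^J}$. The hypothesis forces this coefficient system to vanish, so every graded piece is zero and the total module vanishes.

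For part (ii), the assumption $\Iw^J(J) \subseteq I$ is equivalent to $J \cap \Iw^J{}^{-1}(I) = J$, which forces $L_J \cap P_{J \cap \Iw^J{}^{-1}(I)}^- = L_J$ and $\JIW_J = \{1\}$. Consequently the parabolic induction appearing in \eqref{HOrd} reduces to the identity functor and the Bruhat filtration from Proposition \ref{prop:FilB} collapses to the single $w_J = 1$ piece; under these identifications, \eqref{HOrd} is exactly the (automatically $L_J$-equivariant) isomorphism provided by Theorem \ref{theo:HOrd} for $w_J = 1$.

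For part (iii), I would first use part (i) to kill all $\Iw^J \neq 1$ contributions: since $d_{\Iw^J} \geq \ell(\Iw^J) > 0$ when $\Iw^J \neq 1$, the shifted degree $-{[F:\Qp]} d_{\Iw^J}$ is strictly negative and $\HOrd$ vanishes there, so (i) yields $\Ord_{P_J}(\cind_{P_I^-}^{P_I^- \Iw^J P_J} \sigma) = 0$ for every $\Iw^J \neq 1$. Applying Proposition \ref{prop:FilPJ} in degree $n=0$, all graded pieces of the filtration of $\Ord_{P_J}(\Ind_{P_I^-}^G \sigma)$ indexed by $\IW^J$ vanish except the one at $\Iw^J = 1$, forcing the natural $L_J$-equivariant injection $\Ord_{P_J}(\cind_{P_I^-}^{P_I^- P_J} \sigma) \hookrightarrow \Ord_{P_J}(\Ind_{P_I^-}^G \sigma)$ to be an isomorphism. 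Combining with the computation of \cite{AHV} yields an abstract $L_J$-equivariant isomorphism matching the two sides of \eqref{HOrd}. The main obstacle is then to verify that this abstract isomorphism is precisely the morphism \eqref{HOrd}; I would do so by tracing through the construction of \eqref{HOrd}, which is uniquely determined by its restriction to the $w_J = 1$ graded piece via the universal property of the surjection $A[L_J] \otimes_{A[L_J \cap P_{I \cap J}]} \Gr_{B_J}^1 \twoheadrightarrow \Ind_{L_J \cap P_I^-}^{L_J}$, and checking that this restriction coincides with the corresponding restriction of the \cite{AHV} isomorphism—a consequence of Theorem \ref{theo:HOrd} at $\Iw^J = w_J = 1$ and the explicit form of the \cite{AHV} map on the big Bruhat cell.
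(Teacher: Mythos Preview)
Your arguments for (i) and (ii) match the paper's proof essentially verbatim: both use Theorem~\ref{theo:HOrd} to identify the graded pieces of the filtration from Proposition~\ref{prop:FilB}, and your observation that $\Iw^J(J) \subseteq I$ forces $J \cap \Iw^J{}^{-1}(I) = J$ and hence $\JIW_J = \{1\}$ is exactly what drives (ii).

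For (iii), you share the paper's overall strategy --- use (i) to kill all $\Iw^J \neq 1$ contributions, invoke Proposition~\ref{prop:FilPJ} to obtain the natural isomorphism $\Ord_{P_J}(\cind_{P_I^-}^{P_I^- P_J} \sigma) \iso \Ord_{P_J}(\Ind_{P_I^-}^G \sigma)$, and combine with the computation of \cite{AHV} --- but you diverge at the final step of verifying that \eqref{HOrd} \emph{is} the resulting isomorphism rather than merely \emph{some} isomorphism. Your plan is to compare both maps on the $w_J=1$ Bruhat cell and appeal to the universal property; this requires knowing the explicit form of the \cite{AHV} isomorphism on the big cell, which you assert but do not establish, and which is not obviously accessible from how \cite{AHV} construct their map.

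The paper sidesteps this comparison entirely with a co-Hopfian argument. After reducing to $\sigma$ admissible (all functors involved commute with inductive limits), it composes \eqref{HOrd}, the isomorphism $\Ord_{P_J}(\cind_{P_I^-}^{P_I^- P_J} \sigma) \iso \Ord_{P_J}(\Ind_{P_I^-}^G \sigma)$, and the inverse of the \cite{AHV} isomorphism into an $L_J$-equivariant endomorphism $\varphi$ of $\Ind_{L_J \cap P_I^-}^{L_J}(\Ord_{L_I \cap P_J} \sigma)$. Since \eqref{HOrd} is injective on $\Fil_{B_J}^1$ by construction, so is $\varphi$; applying $\Ord_{L_J \cap P_I}$ (via \cite[Lemma 4.3.1 and Proposition 4.3.4]{Em1} and left-exactness) yields an injective endomorphism of the admissible --- hence Artinian, hence co-Hopfian --- representation $\Ord_{L_I \cap P_J} \sigma$, which is therefore an isomorphism. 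One then recovers that $\varphi$ itself is an isomorphism via \cite[Proposition 4.3.4 and Theorem 4.4.6]{Em1}, and hence so is \eqref{HOrd}. The virtue of this route is that it uses only the \emph{existence} of the \cite{AHV} isomorphism, not its explicit description.
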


\begin{proof}
We use Theorem \ref{theo:HOrd}: if $\HOrd[n-{[F:\Qp]} d_{\Iw^J}]_{L_I \cap P_{I \cap \Iw^J(J)}} \sigma = 0$, then we deduce that $\Gr_B^\bullet(\HOrd[n]_{P_J}(\cind_{P_I^-}^{P_I^- \Iw^J P_J} \sigma))=0$, hence (i); if $\Iw^J(J) \subseteq I$, then we deduce from \cite[Proposition 3.6.1]{Em2} that $\Gr_B^\bullet(\HOrd[n]_{P_J}(\cind_{P_I^-}^{P_I^- \Iw^J P_J} \sigma))$ is concentrated in degree $1$, thus \eqref{HOrd} is an isomorphism, hence (ii).

We now prove (iii).
Since all the functors involved commute with inductive limits, we reduce to the case where $\sigma$ is admissible.
By \cite[Corollaries 4.13 and 5.9]{AHV}, there is a natural $L_J$-equivariant isomorphism
\begin{equation} \label{AHV}
\Ind_{L_J \cap P_I^-}^{L_J} \left( \Ord_{L_I \cap P_J} \sigma \right) \iso \Ord_{P_J} \left( \Ind_{P_I^-}^G \sigma \right).
\end{equation}
Using (i), we deduce from Proposition \ref{prop:FilPJ} with $n=0$ that $\Gr_{P_J}^\bullet(\Ord_{P_J}(\Ind_{P_I^-}^G \sigma))$ is concentrated in degree $1$, hence a natural $L_J$-equivariant isomorphism
\begin{equation} \label{Ord}
\Ord_{P_J} \left( \cind_{P_I^-}^{P_I^- P_J} \sigma \right) \iso \Ord_{P_J} \left( \Ind_{P_I^-}^G \sigma \right).
\end{equation}
The composition of \eqref{HOrd} with $n=0$ and $\Iw^J=1$, \eqref{Ord} and the inverse of \eqref{AHV} yields an $L_J$-equivariant endomorphism $\varphi$ of $\Ind_{L_J \cap P_I^-}^{L_J} (\Ord_{L_I \cap P_J} \sigma)$ which is injective in restriction to $\Fil_{B_J}^1(\Ind_{L_J \cap P_I^-}^{L_J} (\Ord_{L_I \cap P_J} \sigma))$.
From \cite[Lemma 4.3.1 and Proposition 4.3.4]{Em1} and the left-exactness of $\Ord_{L_I \cap P_J}$, we deduce that $\Ord_{L_J \cap P_I} \varphi$ is an injective $L_{I \cap J}$-equivariant endomorphism of $\Ord_{L_I \cap P_J} \sigma$.
Since the latter is admissible by \cite[Theorem 3.3.3]{Em1}, it is Artinian (see §~\ref{ssec:pro} below), and thus co-Hopfian so that $\Ord_{L_I \cap P_J} \varphi$ is an isomorphism.
We deduce that $\varphi$ is an isomorphism using \cite[Proposition 4.3.4 and Theorem 4.4.6]{Em1}.
We conclude that \eqref{HOrd} with $n=0$ and $\Iw^J=1$ is an isomorphism as in the statement.
\end{proof}

\begin{rema} \label{rema:IwJ=1}
Let $\ROrd_{L_I \cap P_J}$ denote the derived functors of $\Ord_{L_I \cap P_J}$ on $\Mod_{L_I}^\ladm(A)$.
By universality of derived functors, the isomorphism in (iii) extends uniquely to a morphism of $\delta$-functors
\begin{equation} \label{homdelta}
\Ind_{L_J \cap P_I^-}^{L_J} \circ \ROrd_{L_I \cap P_J} \to \HOrd_{P_J} \circ \cind_{P_I^-}^{P_I^- P_J}
\end{equation}
(the left-hand side is the derived functor of $\Ind_{L_J \cap P_I^-}^{L_J} \circ \Ord_{L_I \cap P_J}$ by exactness of $\Ind_{L_J \cap P_I^-}^{L_J}$, and the right-hand side is a $\delta$-functor by the same arguments as in the proof of Proposition \ref{prop:FilPJ}).
Now, assume that \cite[Conjecture 3.7.2]{Em2} is true for $L_I \cap P_J$, i.e. $\ROrd_{L_I \cap P_J} \iso \HOrd_{L_I \cap P_J}$.
Then Conjecture \ref{conj:HOrd} for $\Iw^J=1$ is equivalent to \eqref{homdelta} being an isomorphism.
We could prove this if we knew that the isomorphism of Theorem \ref{theo:HOrd} with $\Iw^J=1$ were $B_J$-equivariant for all $w_J \in {}^{J \cap I}W_J$.
\end{rema}

Finally, we compute the derived ordinary parts of a parabolically induced representation in low degree when there is an inclusion between $I$ and $J$.

\begin{prop} \phantomsection \label{prop:HOrd2}
\begin{enumerate}
\item If $I \subseteq J$ and $0 < n < [F:\Qp]$, then $\HOrd[n]_{P_J}(\Ind_{P_I^-}^G \sigma)=0$.
\item If $J \subseteq I$ and $n < [F:\Qp]$, then there is a natural $L_J$-equivariant isomorphism
\begin{equation*}
\HOrd[n]_{L_I \cap P_J} \sigma \iso \HOrd[n]_{P_J} \left( \Ind_{P_I^-}^G \sigma \right).
\end{equation*}
\item If $J \subseteq I$ and $\Ord_{L_I \cap P_{I \cap s_\alpha(J)}} \sigma = 0$ for all $\alpha \in \Delta^1 \backslash (I \cup J^\perp)$, then there is a natural short exact sequence of $L_J$-representations
\begin{multline*}
0 \to \HOrd[{[F:\Qp]}]_{L_I \cap P_J} \sigma \to \HOrd[{[F:\Qp]}]_{P_J} \left( \Ind_{P_I^-}^G \sigma \right) \\
\to \bigoplus_{\alpha \in J^{\perp,1} \backslash I} \left( \Ord_{L_I \cap P_J} \sigma \right)^\alpha \otimes \left( \omega^{-1} \circ \alpha \right) \to 0.
\end{multline*}
\end{enumerate}
\end{prop}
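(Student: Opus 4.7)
The plan is to filter $\HOrd[n]_{P_J}(\Ind_{P_I^-}^G \sigma)$ by the $L_J$-subrepresentations $\Fil_{P_J}^{\IW^J_1}$ from Proposition \ref{prop:FilPJ} and to identify each graded piece $\HOrd[n]_{P_J}(\cind_{P_I^-}^{P_I^- \Iw^J P_J} \sigma)$ using the special cases of Conjecture \ref{conj:HOrd} established in Proposition \ref{prop:HOrd1}. The key numerical inputs, drawn from Notation \ref{nota:dw} and \S~\ref{ssec:double}, are that $d_{\Iw^J} \geq \ell(\Iw^J)$, with $d_{\Iw^J} = 0$ iff $\Iw^J = 1$ and $d_{\Iw^J} = 1$ iff $\Iw^J = s_\alpha$ for some $\alpha \in \Delta^1$; moreover $s_\alpha \in \IW^J$ iff $\alpha \in \Delta \backslash (I \cup J)$.

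For (i), I would show that every graded piece vanishes via Proposition \ref{prop:HOrd1}(i). If $\Iw^J \neq 1$, then $d_{\Iw^J} \geq 1$ combined with $n < [F:\Qp]$ gives $n - [F:\Qp] d_{\Iw^J} < 0$, so the relevant derived ordinary part is trivially zero. If $\Iw^J = 1$, the hypothesis $I \subseteq J$ gives $I \cap J = I$, so $L_I \cap P_{I \cap J} = L_I$ has trivial unipotent radical and $\HOrd[n]_{L_I} \sigma = 0$ for $n > 0$.

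For (ii), the same length estimate kills the graded pieces for $\Iw^J \neq 1$. For $\Iw^J = 1$, the hypothesis $J \subseteq I$ gives $\Iw^J(J) = J \subseteq I$ and $J \cap \Iw^J{}^{-1}(I) = J$, so the external $L_J$-induction in \eqref{HOrd} collapses and Proposition \ref{prop:HOrd1}(ii) yields $\Gr_{P_J}^1 \cong \HOrd[n]_{L_I \cap P_J} \sigma$. The filtration then reduces to a single non-trivial step, and the natural injection \eqref{injIWJ} for $\IW^J_1 = \{1\}$ composed with this isomorphism furnishes the announced isomorphism.

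For (iii), set $n = [F:\Qp]$. The length estimate dispatches all $\Iw^J$ with $\ell(\Iw^J) \geq 2$. The length-$1$ elements of $\IW^J$ are $s_\alpha$ for $\alpha \in \Delta \backslash (I \cup J) = \Delta \backslash I$ (using $J \subseteq I$), and I would analyse three subcases. If $\alpha \notin \Delta^1$ then $d_{s_\alpha} \geq 2$ and the graded piece vanishes as before. If $\alpha \in \Delta^1 \backslash (I \cup J^\perp)$ then the hypothesis $\Ord_{L_I \cap P_{I \cap s_\alpha(J)}} \sigma = 0$ combined with Proposition \ref{prop:HOrd1}(i) forces vanishing. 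If $\alpha \in J^{\perp,1} \backslash I$ then $s_\alpha$ fixes $J$ pointwise, so $I \cap s_\alpha(J) = J$ and $J \cap s_\alpha(I) = J$, and Proposition \ref{prop:HOrd1}(ii) (with $\delta_{s_\alpha} = \alpha^{d_\alpha} = \alpha$ since $\alpha \in \Delta^1$) yields the graded piece $(\Ord_{L_I \cap P_J} \sigma)^\alpha \otimes (\omega^{-1} \circ \alpha)$. The effective set $\IW^J_\mathrm{eff} \coloneqq \{1\} \cup \{s_\alpha \mid \alpha \in J^{\perp,1} \backslash I\}$ is thus a lower set of $\IW^J$ carrying all non-vanishing graded pieces; iterating the short exact sequence structure underlying Proposition \ref{prop:FilPJ} (removing maximal elements of $\IW^J \backslash \IW^J_\mathrm{eff}$ one at a time so as to preserve the lower set condition) gives $\Fil_{P_J}^{\IW^J_\mathrm{eff}} = \HOrd[{[F:\Qp]}]_{P_J}(\Ind_{P_I^-}^G \sigma)$. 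Applying Lemma \ref{lemm:lin} to the strictly monotonic length function on $\IW^J_\mathrm{eff}$ splits the length-$1$ contributions into the direct sum $\bigoplus_{\alpha \in J^{\perp,1} \backslash I} \Gr_{P_J}^{s_\alpha}$, and the filtration step $0 \to \Fil_{P_J}^{\{1\}} \to \Fil_{P_J}^{\IW^J_\mathrm{eff}} \to \Fil_{P_J}^{\IW^J_\mathrm{eff}}/\Fil_{P_J}^{\{1\}} \to 0$ becomes the desired short exact sequence. The main technical point is the case analysis on length-$1$ elements and the verification that $I \cap s_\alpha(J) = J$ and $J \cap s_\alpha(I) = J$ for $\alpha \in J^{\perp,1} \backslash I$, which is precisely what makes the external induction in \eqref{HOrd} trivial and produces the clean twist $\omega^{-1} \circ \alpha$.
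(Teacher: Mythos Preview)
Your proof is correct and follows essentially the same approach as the paper: both arguments filter $\HOrd[n]_{P_J}(\Ind_{P_I^-}^G\sigma)$ via Proposition~\ref{prop:FilPJ}, use the inequality $d_{\Iw^J}\geq\ell(\Iw^J)$ together with Proposition~\ref{prop:HOrd1} to identify or kill each graded piece, and assemble the result. The only cosmetic difference is that the paper linearises the poset filtration by the length function (Lemma~\ref{lemm:lin}) at the outset, whereas you first pass to the effective lower set and then split the length-$1$ contributions.
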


\begin{proof}
We use Proposition \ref{prop:FilPJ} and Lemma \ref{lemm:lin} with $\ell : \IW^J \to \N$ to obtain a filtration $\Fil_{P_J}^{\ell,\bullet}(\HOrd[n]_{P_J}(\Ind_{P_I^-}^G \sigma))$ indexed by $\N$ such that for all $i \in \N$, there is a natural $L_J$-equivariant isomorphism
\begin{equation} \label{GrPJell}
\Gr_{P_J}^{\ell,i} \left( \HOrd[n]_{P_J} \left( \Ind_{P_I^-}^G \sigma \right) \right) \cong \bigoplus_{\ell(\Iw^J)=i} \HOrd[n]_{P_J} \left( \cind_{P_I^-}^{P_I^- \Iw^J P_J} \sigma \right).
\end{equation}

Assume $n<[F:\Qp]$.
If $\Iw^J \neq 1$ (i.e. $d_{\Iw^J}>0$), then $\HOrd[n]_{P_J} (\cind_{P_I^-}^{P_I^- \Iw^J P_J} \sigma) = 0$ by Proposition \ref{prop:HOrd1} (i) since $n - [F:\Qp] d_{\Iw^J} < 0$, thus we deduce from \eqref{GrPJell} that $\Gr_{P_J}^{\ell,\bullet}(\HOrd[n]_{P_J}(\Ind_{P_I^-}^G \sigma))$ is concentrated in degree $0$, so that assuming Conjecture \ref{conj:HOrd} for $\Iw^J=1$, we obtain a natural $L_J$-equivariant isomorphism
\begin{equation} \label{HOrdiso}
\Ind_{L_J \cap P_I^-}^{L_J} \left( \HOrd[n]_{L_I \cap P_J} \sigma \right) \iso \HOrd[n]_{P_J} \left( \Ind_{P_I^-}^G \sigma \right).
\end{equation}
Now, Conjecture \ref{conj:HOrd} is true for $\Iw^J=1$ in the following cases: $n>0$ and $I \subseteq J$ by Proposition \ref{prop:HOrd1} (i) since $\HOrd[n]_{L_I \cap P_J} = \HOrd[n]_{L_I} = 0$ (cf. \cite[Proposition 3.6.1]{Em2}), in which case the source of \eqref{HOrdiso} is zero, hence (i); $J \subseteq I$ by Proposition \ref{prop:HOrd1} (ii), in which case the source of \eqref{HOrdiso} is $\HOrd[n]_{L_I \cap P_J} \sigma$, hence (ii).

Likewise, if $\Iw^J \neq 1$ and $\Iw^J \neq s_\alpha$ for all $\alpha \in \Delta^1 \backslash (I \cup J)$ (i.e. $d_{\Iw^J}>1$), then $\HOrd[{[F:\Qp]}]_{P_J} (\cind_{P_I^-}^{P_I^- \Iw^J P_J} \sigma) = 0$ by Proposition \ref{prop:HOrd1} (i) since $[F:\Qp] - [F:\Qp] d_{\Iw^J} < 0$, thus we deduce from \eqref{GrPJell} that $\Gr_{P_J}^{\ell,\bullet}(\HOrd[{[F:\Qp]}]_{P_J}(\Ind_{P_I^-}^G \sigma))$ is concentrated in degrees $0$ and $1$, so that assuming Conjecture \ref{conj:HOrd} for $n=[F:\Qp]$ and $\Iw^J=1$ or $\Iw^J = s_\alpha$ for all $\alpha \in \Delta^1 \backslash (I \cup J)$, we obtain a short exact sequence of $L_J$-representations
\begin{multline} \label{HOrdSE}
0 \to \Ind_{L_J \cap P_I^-}^{L_J} \left( \HOrd[{[F:\Qp]}]_{L_I \cap P_J} \sigma \right) \to \HOrd[{[F:\Qp]}]_{P_J} \left( \Ind_{P_I^-}^G \sigma \right) \\
\to \bigoplus_{\alpha \in \Delta^1 \backslash (I \cup J)} \Ind_{L_J \cap P_{J \cap s_\alpha(I)}^-}^{L_J} \left( \left( \Ord_{L_I \cap P_{I \cap s_\alpha(J)}} \sigma \right)^\alpha \otimes \left( \omega^{-1} \circ \alpha \right) \right) \to 0.
\end{multline}
Assume $J \subseteq I$ and $\Ord_{L_I \cap P_{I \cap s_\alpha(J)}} \sigma = 0$ for all $\alpha \in \Delta^1 \backslash (I \cup J^\perp)$.
Then Conjecture \ref{conj:HOrd} is indeed true for $n=[F:\Qp]$ in the following cases: $\Iw^J=1$ by Proposition \ref{prop:HOrd1} (ii), and the first non-trivial term of \eqref{HOrdSE} is $\HOrd[{[F:\Qp]}]_{L_I \cap P_J} \sigma$; $\Iw^J=s_\alpha$ with $\alpha \in \Delta^1 \backslash (I \cup J^\perp)$ by Proposition \ref{prop:HOrd1} (i) and the hypothesis on $\sigma$, and the corresponding summand of the last non-trivial term of \eqref{HOrdSE} is zero; $\Iw^J=s_\alpha$ with $\alpha \in J^{\perp,1} \backslash I$ by Proposition \ref{prop:HOrd1} (ii) since $s_\alpha(J)=J \subseteq I$, and the corresponding summand of the last non-trivial term of \eqref{HOrdSE} is $(\Ord_{L_I \cap P_J} \sigma)^\alpha \otimes (\omega^{-1} \circ \alpha)$.
Hence (iii).
\end{proof}

We reformulate Proposition \ref{prop:HOrd2} in the case $I=J$, using the fact that in this case $\HOrd[n]_{L_I \cap P_J}=0$ if $n>0$ (cf. \cite[Proposition 3.6.1]{Em2}).
Note that if $\Pb = \Lb \Nb$ is a standard parabolic subgroup, then for all $\alpha \in \Delta \backslash \Delta_\Lb$ the standard parabolic subgroup of $\Lb$ corresponding to $\Delta_\Lb \cap s_\alpha (\Delta_\Lb)$ is $\Lb \cap s_\alpha \Pb s_\alpha^{-1}$ and it is proper if and only if $\alpha \not \in \Delta_\Lb^\perp$.

\begin{coro} \label{coro:HOrd}
Let $\Pb = \Lb \Nb$ be a standard parabolic subgroup and $\sigma$ be a locally admissible $L$-representation.
\begin{enumerate}
\item For all $n \in \N$ such that $0<n<[F:\Qp]$, we have $\HOrd[n]_P (\Ind_{P^-}^G \sigma) = 0$.
\item If $\Ord_{L \cap s_\alpha P s_\alpha^{-1}} \sigma = 0$ for all $\alpha \in \Delta^1 \backslash (\Delta_\Lb \cup \Delta_\Lb^\perp)$, then there is a natural $L$-equivariant isomorphism
\begin{equation*}
\HOrd[{[F:\Qp]}]_P \left( \Ind_{P^-}^G \sigma \right) \cong \bigoplus_{\alpha \in \Delta_\Lb^{\perp,1}} \sigma^\alpha \otimes \left( \omega^{-1} \circ \alpha \right).
\end{equation*}
\end{enumerate}
\end{coro}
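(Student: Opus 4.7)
The plan is to deduce both statements directly by specialising Proposition \ref{prop:HOrd2} to the case $I=J=\Delta_\Lb$, so that $L_I = L_J = L$, $P_I = P_J = P$, and there is no actual new computation to perform; the work lies in matching notation and using that $\Ord_L$ is the identity on $L$-representations while $\HOrd[n]_L = 0$ for $n>0$ by \cite[Proposition 3.6.1]{Em2}.

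For part (i), I would apply Proposition \ref{prop:HOrd2} (ii) (with $J=I=\Delta_\Lb$, so the inclusion $J\subseteq I$ is trivial) to obtain a natural $L$-equivariant isomorphism
\begin{equation*}
\HOrd[n]_{L \cap P} \sigma \iso \HOrd[n]_P\bigl(\Ind_{P^-}^G \sigma\bigr)
\end{equation*}
for all $n<[F:\Qp]$. Since $L \cap P = L$ (the unipotent radical of $L$ viewed as a parabolic of itself is trivial), the source is $\HOrd[n]_L \sigma$, which vanishes for $n>0$ by \cite[Proposition 3.6.1]{Em2}.

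For part (ii), I would apply Proposition \ref{prop:HOrd2} (iii) with $I=J=\Delta_\Lb$. The hypothesis there reads $\Ord_{L \cap P_{\Delta_\Lb \cap s_\alpha(\Delta_\Lb)}} \sigma = 0$ for all $\alpha \in \Delta^1 \setminus (\Delta_\Lb \cup \Delta_\Lb^\perp)$, which by the identification $L \cap P_{\Delta_\Lb \cap s_\alpha(\Delta_\Lb)} = L \cap s_\alpha P s_\alpha^{-1}$ recalled just before the corollary is exactly the hypothesis assumed here. The resulting short exact sequence of $L$-representations is
\begin{equation*}
0 \to \HOrd[{[F:\Qp]}]_L \sigma \to \HOrd[{[F:\Qp]}]_P\bigl(\Ind_{P^-}^G \sigma\bigr) \to \bigoplus_{\alpha \in \Delta_\Lb^{\perp,1} \setminus \Delta_\Lb} \bigl(\Ord_L \sigma\bigr)^\alpha \otimes (\omega^{-1} \circ \alpha) \to 0.
\end{equation*}
I would then simplify: the leftmost non-zero term vanishes (again by \cite[Proposition 3.6.1]{Em2} since $[F:\Qp]>0$), the functor $\Ord_L$ is the identity on $L$-representations, and $\Delta_\Lb \cap \Delta_\Lb^\perp = \emptyset$ (a simple root is never orthogonal to itself, as $\langle \alpha, \alpha^\vee\rangle = 2$), so $\Delta_\Lb^{\perp,1} \setminus \Delta_\Lb = \Delta_\Lb^{\perp,1}$. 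The sequence then collapses to the claimed $L$-equivariant isomorphism. The only potential subtlety, which is purely bookkeeping rather than an obstacle, is to verify these three simplifications and the identification of the parabolic $L \cap s_\alpha P s_\alpha^{-1}$.
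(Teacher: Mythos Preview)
Your proof is correct and follows essentially the same approach as the paper: specialise Proposition \ref{prop:HOrd2} to $I=J=\Delta_\Lb$ and use \cite[Proposition 3.6.1]{Em2} to see that $\HOrd[n]_{L_I\cap P_J}=\HOrd[n]_L$ vanishes for $n>0$, while $\Ord_L$ is the identity. The simplifications you spell out (the identification $L\cap P_{\Delta_\Lb\cap s_\alpha(\Delta_\Lb)}=L\cap s_\alpha P s_\alpha^{-1}$ and $\Delta_\Lb^{\perp,1}\setminus\Delta_\Lb=\Delta_\Lb^{\perp,1}$) are exactly the ones needed and are noted in the paper just before the corollary.
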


\section{Derived Jacquet functors}

The aim of this section is to study the derived functors of the Jacquet functor.
In §~\ref{ssec:pro}, we review some results on pro-categories.
In §~\ref{ssec:second}, we relate the left derived functors of the Jacquet functor in a pro-category with the derived ordinary parts functors and we construct a new exact sequence to compute extensions by a parabolically induced representation.
In §~\ref{ssec:HJ}, we adapt the results of §~\ref{ssec:HOrd} in order to partially compute the derived Jacquet functors on a parabolically induced representation.

\subsection{Pro-categories} \label{ssec:pro}

Let $H$ be a $p$-adic Lie group.
Let $\Cf$ be the category whose objects are the $A[H]$-modules such that for some (equivalently any) compact open subgroup $H_0 \subseteq H$, the $A[H_0]$-action extends to a structure of $A \llbrack H_0 \rrbrack$-module of finite type, and whose morphisms are the $A[H]$-linear maps.
Since the completed group rings are Noetherian (cf. \cite[Theorem 2.1.2]{Em1}), the category $\Cf$ is $A$-abelian and Noetherian, i.e. it is essentially small\footnote{A category is essentially small if it is equivalent to a small category, i.e. if the isomorphism classes of its objects form a set.} and its objects are Noetherian.
Let $\Cf^\wedge$ be the category of contravariant functors $\Cf \to \Set$ and $\Indt\Cf$ be the full subcategory of $\Cf^\wedge$ whose objects are the functors isomorphic to a small inductive limit in $\Cf^\wedge$ of objects of $\Cf$ (using the Yoneda embedding $\Cf \to \Cf^\wedge$).
By \cite[Theorem 8.6.5]{KS}, the category $\Indt\Cf$ is a Grothendieck category\footnote{A Grothendieck category is an abelian category that admits a generator and small direct sums, and in which inductive limits are exact.} (in particular it has enough injectives, cf. \cite[Theorem 9.6.2]{KS}) and the natural $A$-linear functor $\Cf \to \Indt\Cf$ is fully faithful and exact.

Now, Pontryagin duality induces an equivalence of categories (cf. \cite[(2.2.12)]{Em1})
\begin{equation*}
\Mod^\adm_H(A) \cong \Cf^\op.
\end{equation*}
Thus, the category $\Mod_H^\adm(A)$ is Artinian, the pro-category
\begin{equation*}
\Prot\Mod_H^\adm(A) \coloneqq \left( \Indt\Cf \right)^\op
\end{equation*}
has enough projectives, and the natural $A$-linear functor
\begin{equation} \label{emb}
\Mod_H^\adm(A) \to \Prot\Mod_H^\adm(A)
\end{equation}
is fully faithful and exact.
We let $\Ext_H^\bullet$ and $\Ext_{\Prot H}^\bullet$ denote the bifunctors of Yoneda extensions in the categories $\Mod_H^\adm(A)$ and $\Prot\Mod_H^\adm(A)$ respectively.
By \cite[Theorem 3.5]{Oort}, \eqref{emb} induces $A$-linear isomorphisms
\begin{equation} \label{ExtH}
\Ext_H^\bullet \left( \pi', \pi \right) \iso \Ext_{\Prot H}^\bullet \left( \pi', \pi \right)
\end{equation}
for all objects $\pi,\pi'$ of $\Mod_H^\adm(A)$.

\subsection{A second exact sequence} \label{ssec:second}

Let $\Pb \subseteq \Gb$ be a parabolic subgroup and $\Lb \subseteq \Pb$ be a Levi factor.
We let $\Pb^- \subseteq \Gb$ denote the parabolic subgroup opposed to $\Pb$ with respect to $\Lb$.
There is a natural exact sequence of $A$-modules (cf. \cite[(3.7.6)]{Em2})
\begin{equation} \label{SEExt1}
0 \to \Ext_L^1 \left( \sigma, \Ord_P \pi \right) \to \Ext_G^1 \left( \Ind_{P^-}^G \sigma, \pi \right) \to \Hom_L \left( \sigma, \HOrd[1]_P \pi \right)
\end{equation}
for all objects $\sigma$ and $\pi$ of $\Mod_L^\adm(A)$ and $\Mod_G^\adm(A)$ respectively.
We construct a second exact sequence, in which parabolic induction is on the right.

\medskip

By \cite[Proposition 4.1.5 and Proposition 4.1.7]{Em1}, parabolic induction induces an $A$-linear exact functor
\begin{equation*}
\Ind_P^G : \Mod_L^\adm(A) \to \Mod_G^\adm(A).
\end{equation*}
By \cite[Corollary 3.6.7]{Em2}, taking $N$-coinvariants induces an $A$-linear right-exact functor (the so-called Jacquet functor)
\begin{equation*}
\left( - \right)_N : \Mod_G^\adm(A) \to \Mod_L^\adm(A).
\end{equation*}
By Frobenius reciprocity and the universal property of coinvariants, there is a natural $A$-linear isomorphism
\begin{equation} \label{Frob}
\Hom_G \left( \pi, \Ind_P^G \sigma \right) \cong \Hom_L \left( \pi_N, \sigma \right).
\end{equation}
for all objects $\pi$ and $\sigma$ of $\Mod_G^\adm(A)$ and $\Mod_L^\adm(A)$ respectively.

We deduce from \cite[Proposition 6.1.9]{KS} that these functors and the adjunction relation extend to the corresponding pro-categories.
By \cite[Corollary 8.6.8]{KS}, $\Ind_P^G$ is still exact so that $(-)_N$ still preserves projectives.
Thus, denoting by $\Lh(N,-)$ the left derived functors of $(-)_N$ in $\Prot\Mod_G^\adm(A)$, there is a Grothendieck spectral sequence of $A$-modules
\begin{equation*}
\Ext_{\Prot L}^i \left( \Lh[j]\! \left( N, \pi \right), \sigma \right) \Rightarrow \Ext_{\Prot G}^{i+j} \left( \pi, \Ind_P^G \sigma \right)
\end{equation*}
whose low degree terms form a natural exact sequence of $A$-modules
\begin{multline} \label{SEGroth}
0 \to \Ext_{\Prot L}^1 \left( \pi_N, \sigma \right) \to \Ext_{\Prot G}^1 \left( \pi, \Ind_P^G \sigma \right) \to \Hom_{\Prot L} \left( \Lh[1]\! \left( N, \pi \right), \sigma \right) \\
\to \Ext_{\Prot L}^2 \left( \pi_N, \sigma \right) \to \Ext_{\Prot G}^2 \left( \pi, \Ind_P^G \sigma \right)
\end{multline}
for all objects $\pi$ and $\sigma$ of $\Prot\Mod_G^\adm(A)$ and $\Prot\Mod_L^\adm(A)$ respectively.

\medskip

We let $d$ denote the integer $\dim_F \Nb$ and $\delta \in \X^*(\Lb)$ denote the algebraic character of the adjoint representation of $\Lb$ on $\det_F (\Lie \Nb)$.
We define $A$-linear functors by setting
\begin{equation*}
\Hh\! \left( N, - \right) \coloneqq \HOrd[{[F:\Qp]} d-\bullet]_P \otimes \left( \omega \circ \delta \right).
\end{equation*}
We deduce from \cite[Corollary 3.4.8 and Proposition 3.6.1]{Em2} that we obtain a homological $\delta$-functor
\begin{equation*}
\Hh\! \left( N, - \right) : \Mod_G^\adm(A) \to \Mod_L^\adm(A)
\end{equation*}
and proceeding as in the proof of \cite[Corollary 8.6.8]{KS}, we see that it extends to a homological $\delta$-functor between the corresponding pro-categories.

By \cite[Proposition 3.6.2]{Em2}, there is an isomorphism of functors (hence the notation)
\begin{equation*}
\Hh[0]\! \left( N, - \right) \cong (-)_N
\end{equation*}
which, by universality of derived functors, extends uniquely to a morphism of $\delta$-functors
\begin{equation} \label{delta}
\Hh\! \left( N, - \right) \to \Lh\! \left( N, - \right)
\end{equation}
which is bijective in degree $0$, and thus surjective in degree $1$ (by a dimension-shifting argument).
Using \eqref{ExtH}, we deduce from \eqref{SEGroth} a natural exact sequence of $A$-modules
\begin{equation} \label{SEExt2}
0 \to \Ext_L^1 \left( \pi_N, \sigma \right) \to \Ext_G^1 \left( \pi, \Ind_P^G \sigma \right) \to \Hom_L \left( \Hh[1]\! \left( N, \pi \right), \sigma \right)
\end{equation}
for all objects $\pi$ and $\sigma$ of $\Mod_G^\adm(A)$ and $\Mod_L^\adm(A)$ respectively.

\begin{rema} \phantomsection \label{rema:dual}
\begin{enumerate}
\item Nothing is known (to the author at least) regarding the nature of the morphism \eqref{delta} in degree greater than $1$.
\item Let $H$ be a $p$-adic Lie group.
Taking inductive limits induces an $A$-linear exact functor
\begin{equation*}
\varinjlim : \Indt\Mod_H^\adm(A) \to \Mod_H^\ladm(A)
\end{equation*}
which is essentially surjective, but not faithful nor full in general.
Thus the situation here (i.e. deriving in $\Prot\Mod_H^\adm(A)$) is not exactly dual to that of \cite[§~3.7]{Em2} (i.e. deriving in $\Mod_H^\ladm(A)$).
\end{enumerate}
\end{rema}

\subsection{Adaptation of the computations} \label{ssec:HJ}

Let $I,J \subseteq \Delta$, $\sigma$ be an $L_I$-representation and $n \in \N$.
We let $\Iw_0 = w_{I,0}w_0$ (resp. $\JIw_{J,0} = w_{J \cap \Iw^J{}^{-1}(I),0} w_{J,0}$) denote the image of $w_0$ (resp. $w_{J,0}$) in $\IW$ (resp. $\JIW_J$) and we define an auxiliary subset of $\Delta$ by setting $I' \coloneqq \Iw_0^{-1}(I)$.
We have $\Lb_I = \Iw_0 \Lb_{I'} \Iw_0^{-1}$ and $\Pb_I = \Iw_0 \Pb_{I'}^- \Iw_0^{-1}$, hence a natural $G$-equivariant isomorphism
\begin{equation} \label{Indop}
\Ind_{P_I}^G \sigma \cong \Ind_{P_{I'}^-}^G \sigma^{\Iw_0}
\end{equation}
defined by $f \mapsto (g \mapsto f(\Iw_0 g))$.

\begin{lemm} \label{lemm:IWJop}
The map $\IW^J \to \IpW^J$ defined by $\Iw^J \mapsto \Iw_0^{-1} \Iw^J \JIw_{J,0}$ is an order-reversing bijection.
\end{lemm}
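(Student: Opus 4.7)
The plan is to identify the map with a classical bijection of minimum-length double-coset representatives induced by left multiplication by $w_0$. Using $\Iw_0^{-1} = w_0 w_{I,0}$ (from $w_{I,0} \Iw_0 = w_0$ and the fact that $w_0, w_{I,0}$ are involutions), the formula rewrites as $\Iw^J \mapsto w_0 \cdot (w_{I,0} \Iw^J \JIw_{J,0})$. The core assertion to establish is that $w_{I,0} \Iw^J \JIw_{J,0}$ is the unique maximum-length element of the double coset $W_I \Iw^J W_J$; once this is known, the remaining properties follow from $\ell(w_0 u) = \ell(w_0) - \ell(u)$.

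To prove maximality, I would expand an arbitrary element of $W_I \Iw^J W_J$ as $w_I \Iw^J w_J$, split $w_J = w_L w_R$ with $w_L \in W_{J \cap \Iw^J{}^{-1}(I)}$ and $w_R \in \JIW_J$, and use that conjugation by $\Iw^J$ identifies $W_{J \cap \Iw^J{}^{-1}(I)}$ length-preservingly with $W_{I \cap \Iw^J(J)} \subseteq W_I$ (the Weyl-group counterpart of Lemma \ref{lemm:IwJ} (ii)). Combined with length additivity in the decompositions $W_I \cdot \IW$ and $\IW \cdot \JIW_J$ from §~\ref{ssec:double}, this gives $\ell(w_I \Iw^J w_J) = \ell(w_I \cdot \Iw^J w_L \Iw^J{}^{-1}) + \ell(\Iw^J) + \ell(w_R)$, maximised at $w_{I,0} \Iw^J \JIw_{J,0}$ with value $\ell(w_{I,0}) + \ell(\Iw^J) + \ell(\JIw_{J,0})$.

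A short computation with the opposition involution gives $I' = \Iw_0^{-1}(I) = -w_0(I) \subseteq \Delta$ and hence $W_{I'} = w_0 W_I w_0^{-1}$, so left multiplication by $w_0$ induces a bijection between the sets of double cosets $W_I \backslash W / W_J$ and $W_{I'} \backslash W / W_J$. Since $\ell(w_0 u) = \ell(w_0) - \ell(u)$, this bijection sends the maximum-length element of a double coset to the minimum-length element of its image. Applied to $w_{I,0} \Iw^J \JIw_{J,0}$, this produces $\Iw_0^{-1} \Iw^J \JIw_{J,0}$, which is thereby the min-length rep of its $(W_{I'}, W_J)$-double coset and so lies in $\IpW^J$; well-definedness and bijectivity of the assignment follow at once.

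For the order-reversing property, the plan is to use that the Bruhat order on $W_I \backslash W / W_J$ is represented faithfully by min-length reps (cf. the citation \cite[Proposition 2.5.1]{BB} used earlier) and, equivalently, by max-length reps: an inequality $\Iw^J_1 \leq \Iw^J_2$ in $\IW^J$ thus corresponds to the analogous inequality between their max-length double-coset reps in $W$, and left multiplication by $w_0$ reverses this, giving the desired order-reversal on $\IpW^J$. The main technical obstacle is the max-rep characterisation of Bruhat order on two-sided parabolic cosets, which should follow from the one-sided version by a short reduced-expression argument using the length-additive decomposition $W = W_I \cdot \IW^J \cdot \JIW_J$.
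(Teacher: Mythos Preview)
Your approach is essentially the paper's: both rewrite the map as $\Iw^J \mapsto w_0 \cdot (w_{I,0}\, \Iw^J\, \JIw_{J,0})$, use that $W_{I'} = w_0 W_I w_0$ so that left multiplication by $w_0$ gives a bijection between the two sets of double cosets, and reduce to showing that the image lies in $\IpW^J$. The only substantive difference is how this last point is verified. You identify $w_{I,0}\, \Iw^J\, \JIw_{J,0}$ as the maximal-length element of $W_I\, \Iw^J\, W_J$, so that $w_0$ carries it to the minimal-length element of the corresponding $(W_{I'},W_J)$-coset; the paper instead checks directly, via length identities from \cite[Proposition~2.3.2~(ii)]{BB}, that $\Iw_0^{-1}\, \Iw^J\, \JIw_{J,0}$ lies in $\IpW$ and in $W^J$ separately. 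For the order-reversing claim the paper gives only a one-line remark (the first arrow is ``order-preserving since the projection $W \twoheadrightarrow \IW^J$ is order-preserving''); your route via the max-rep characterisation of Bruhat order on two-sided cosets makes explicit what is being used, at the cost of the extra lemma you flag. That lemma is standard, though perhaps not quite as short as you suggest since $\JIw_{J,0}$ depends on $\Iw^J$; a cleaner alternative, once membership in $\IpW^J$ is known, is to note that your map agrees with the restriction to $\IW^J \subseteq W$ of the visibly order-reversing composite $W \xrightarrow{w_0 \cdot} W \twoheadrightarrow \IpW^J$.
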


\begin{proof}
First, note that $W_I = w_0 W_{I'} w_0$, so that left translation by $w_0$ induces a bijection
\begin{equation*}
W_I \backslash W / W_J \iso W_{I'} \backslash W / W_J.
\end{equation*}
In particular, $\card \IW^J = \card \IpW^J$.
Thus, it is enough to prove that the order-reversing composite
\begin{equation*}
\IW^J \hookrightarrow W \iso W
\end{equation*}
where the first arrow is defined by $\Iw^J \mapsto w_{I,0} \Iw^J \JIw_{J,0}$ (it is injective since $\IW^J$ is a system of representatives of the double cosets $W_I \backslash W / W_J$, and order-preserving since the projection $W \twoheadrightarrow \IW^J$ is order-preserving) and the second arrow is the left multiplication by $w_0$ (it is an order-reversing bijection, cf. \cite[Proposition 2.3.4 (i)]{BB}), takes values in $\IpW^J$.

Now, let $\Iw^J \in \IW^J$.
For all $\Iw \in \IW$ and $w_{I'} \in W_{I'}$, we have (using \cite[Proposition 2.3.2 (ii)]{BB})
\begin{align*}
\ell \left( w_{I'} \Iw_0^{-1} \Iw \right) &= \ell \left( w_0 \left( w_0 w_{I'} w_0 \right) w_{I,0} \Iw \right) \\
&= \ell \left( w_0 \right) - \left( \left( \ell \left( w_{I,0} \right) - \ell \left( w_0 w_{I'} w_0 \right) \right) + \ell \left( \Iw \right) \right) \\
&= \ell \left( w_{I'} \right) + \ell \left( w_0 w_{I,0} \Iw \right).
\end{align*}
Since $\Iw^J \JIw_{J,0} \in \IW$, we deduce that $\Iw_0^{-1} \Iw^J \JIw_{J,0} \in \IpW$.
Likewise, for all $w^J \in W^J$, we have $w_0 w^J w_{J,0} \in W^J$.
Since
\begin{align*}
\Iw_0^{-1} \Iw^J \JIw_{J,0} &= w_0 w_{I,0} \Iw^J w_{J \cap \Iw^J{}^{-1}(I),0} w_{J,0} \\
&= w_0 w_{I,0} w_{I \cap \Iw^J(J),0} \Iw^J w_{J,0} \\
&= w_0 w_{I,0}^{I \cap \Iw^J(J)} \Iw^J w_{J,0}
\end{align*}
and $w_{I,0}^{I \cap \Iw^J(J)} \Iw^J \in W^J$, we deduce that $\Iw_0^{-1} \Iw^J \JIw_{J,0} \in W^J$.
We conclude that $\Iw_0^{-1} \Iw^J \JIw_{J,0} \in \IpW^J$.
\end{proof}

We deduce from Lemma \ref{lemm:IWJop} that the left translate by $\Iw_0$ of the decomposition $G = \bigsqcup_{\Ipw^J \in \IpW^J} P_{I'}^- \Ipw^J P_J$ is the decomposition $G = \bigsqcup_{\Iw^J \in \IW^J} P_I \Iw^J P_J$ with the opposite closure relations.
Proceeding as in §~\ref{ssec:fil}, we can construct a natural filtration $\Fil_{P_J}^\bullet(\Ind_{P_I}^G \sigma)$ by $P_J$-subrepresentations indexed by $\IW^J$ with the opposite Bruhat order, and there is a natural $P_J$-equivariant isomorphism
\begin{equation*}
\Gr_{P_J}^{\Iw^J} \left( \Ind_{P_I}^G \sigma \right) \cong \cind_{P_I}^{P_I \Iw^J P_J} \sigma
\end{equation*}
for all $\Iw^J \in \IW^J$.
Furthermore, \eqref{Indop} identifies this filtration with $\Fil_{P_J}^\bullet(\Ind_{P_{I'}^-}^G \sigma^{\Iw_0})$, using Lemma \ref{lemm:IWJop} to identifiy the indexing posets, and induces a natural $P_J$-equivariant isomorphism
\begin{equation} \label{cindop}
\cind_{P_I}^{P_I \Iw^J P_J} \sigma \cong \cind_{P_{I'}^-}^{P_{I'}^- \Ipw^J P_J} \sigma^{\Iw_0}
\end{equation}
for all $\Iw^J \in \IW^J$ with $\Ipw^J = \Iw_0^{-1} \Iw^J \JIw_{J,0}$.
We deduce from Proposition \ref{prop:FilPJ} that $\Fil_{P_J}^\bullet(\Ind_{P_I}^G \sigma)$ induces a filtration $\Fil_{P_J}^\bullet(\Hh[n](N_J,\Ind_{P_I}^G \sigma))$ by $L_J$-subrepresentations indexed by $\IW^J$ with the opposite Bruhat order and that there is a natural $L_J$-equivariant isomorphism
\begin{equation*}
\Gr_{P_J}^{\Iw^J} \left( \Hh[n]\! \left( N_J, \Ind_{P_I}^G \sigma \right) \right) \cong \Hh[n]\! \left( N_J, \cind_{P_I}^{P_I \Iw^J P_J} \sigma \right)
\end{equation*}
for all $\Iw^J \in \IW^J$.

Let $\Iw^J \in \IW^J$ and set $\Ipw^J \coloneqq \Iw_0^{-1} \Iw^J \JIw_{J,0}$.
We let $\sigmat$ be an $L_{J \cap \Iw^J{}^{-1}(I)}$-representation.
Note that $J \cap \Iw^J{}^{-1}(I) = \JIw_{J,0}(J \cap \Ipw^J{}^{-1}(I'))$.
We have
\begin{gather*}
\Lb_{J \cap \Iw^J{}^{-1}(I)} = \JIw_{J,0} \Lb_{J \cap \Ipw^J{}^{-1}(I')} \JIw_{J,0}^{-1} \\
\text{and } \Lb_J \cap \Pb_{J \cap \Iw^J{}^{-1}(I)} = \JIw_{J,0} \Lb_J \cap \Pb_{J \cap \Ipw^J{}^{-1}(I')}^- \JIw_{J,0}^{-1},
\end{gather*}
hence a natural $L_J$-equivariant isomorphism
\begin{equation} \label{IndLJop}
\Ind_{L_J \cap P_{J \cap \Iw^J{}^{-1}(I)}}^{L_J} \sigmat \cong \Ind_{L_J \cap P_{J \cap \Ipw^J{}^{-1}(I')}^-}^{L_J} \sigmat^{\JIw_{J,0}}
\end{equation}
defined by $f \mapsto (l \mapsto f(\JIw_{J,0} l))$.
Proceeding as in the proof of Lemma \ref{lemm:IWJop}, we obtain the following result.

\begin{lemm} \label{lemm:JIWJop}
The map $\JIW_J \to \JIpW_J$ defined by $w_J \mapsto \JIw_{J,0}^{-1} w_J$ is an order-reversing bijection.
\end{lemm}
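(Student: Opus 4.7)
The plan is to imitate the proof of Lemma \ref{lemm:IWJop}, transposing all its ingredients from $W$ down to the Coxeter group $W_J$: the roles of $W_I$, $W_{I'}$ and $\Iw_0 = w_{I,0} w_0$ will be played in $W_J$ by $W_K$, $W_{K'}$ and $\JIw_{J,0} = w_{K,0} w_{J,0}$, where $K \coloneqq J \cap \Iw^J{}^{-1}(I)$ and $K' \coloneqq J \cap \Ipw^J{}^{-1}(I')$.

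First, I would note that the identity $\Lb_{J \cap \Iw^J{}^{-1}(I)} = \JIw_{J,0} \Lb_{J \cap \Ipw^J{}^{-1}(I')} \JIw_{J,0}^{-1}$ recorded in the preceding text translates at the Weyl group level into $W_K = \JIw_{J,0} W_{K'} \JIw_{J,0}^{-1}$. Hence left multiplication by $\JIw_{J,0}^{-1}$ on $W_J$ descends to a bijection of coset spaces $W_K \backslash W_J \iso W_{K'} \backslash W_J$, so that $\card \JIW_J = \card \JIpW_J$. It thus suffices to prove that the composite
\begin{equation*}
\JIW_J \hookrightarrow W_J \iso W_J,
\end{equation*}
whose first arrow sends the minimal coset representative $w_J$ to its longest counterpart $w_{K,0} w_J$ and whose second arrow is left multiplication by $w_{J,0}$, is order-reversing and takes values in $\JIpW_J$; the composite is exactly the claimed map $w_J \mapsto \JIw_{J,0}^{-1} w_J$.

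The order-reversing property follows just as in Lemma \ref{lemm:IWJop}: the first arrow is order-preserving (since the projection $W_J \twoheadrightarrow \JIW_J$ onto minimal representatives is order-preserving and the longest-representative embedding shares the same poset structure), and left multiplication by $w_{J,0}$ is order-reversing on $W_J$ by \cite[Proposition 2.3.4 (i)]{BB}. To check that the image lies in $\JIpW_J$, I would imitate the length computation from Lemma \ref{lemm:IWJop}. Given $w_J \in \JIW_J$ and $w' \in W_{K'}$, I set $u \coloneqq \JIw_{J,0} w' \JIw_{J,0}^{-1} \in W_K$, so that $w' \JIw_{J,0}^{-1} w_J = \JIw_{J,0}^{-1} u w_J = w_{J,0} w_{K,0} u w_J$. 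Using in turn that $w_{K,0}$ is the longest element of $W_K$, that $w_J \in \JIW_J$, and that $w_{J,0}$ is the longest element of $W_J$, I obtain
\begin{equation*}
\ell(w' \JIw_{J,0}^{-1} w_J) = \ell(w_{J,0}) - \ell(w_{K,0}) + \ell(u) - \ell(w_J) = \ell(u) + \ell(\JIw_{J,0}^{-1} w_J).
\end{equation*}

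The main obstacle, and the last ingredient, is the identity $\ell(u) = \ell(w')$, equivalently that conjugation by $\JIw_{J,0}$ is a \emph{length-preserving} isomorphism $W_{K'} \iso W_K$. I expect this to follow from the general Coxeter-theoretic fact that any inner automorphism of a Coxeter group identifying two standard parabolic subgroups must send simple reflections to simple reflections and therefore be a Coxeter isomorphism. Once this is granted, the length identity yields $\ell(w' \JIw_{J,0}^{-1} w_J) = \ell(w') + \ell(\JIw_{J,0}^{-1} w_J)$ for all $w' \in W_{K'}$, proving $\JIw_{J,0}^{-1} w_J \in \JIpW_J$ and completing the proof.
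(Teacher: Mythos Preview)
Your proposal is correct and follows exactly the route the paper intends: the paper's proof is the single sentence ``Proceeding as in the proof of Lemma \ref{lemm:IWJop}'', and what you wrote is precisely that proof transported into $W_J$ with $K = J \cap \Iw^J{}^{-1}(I)$ in place of $I$ and the right-hand coset parameter taken to be empty.

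The one point you flag as an ``obstacle'' is actually no harder than the corresponding step in the paper's own proof of Lemma \ref{lemm:IWJop}. There the identity $\ell(w_0 w_{I'} w_0) = \ell(w_{I'})$ is used without comment; the reason is the standard fact that conjugation by the longest element $w_0$ of a finite Coxeter group is induced by the diagram automorphism $-w_0$ of $\Delta$, hence sends simple reflections to simple reflections and preserves length. The same applies verbatim inside $W_J$ and $W_K$: conjugation by $w_{J,0}$ on $W_J$ is the diagram automorphism $-w_{J,0}$ of $J$, which sends $K'$ to $K$ (this is exactly the relation $K' = -w_{J,0}(K)$ you already recorded), and conjugation by $w_{K,0}$ on $W_K$ is the diagram automorphism $-w_{K,0}$ of $K$. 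Composing, conjugation by $\JIw_{J,0} = w_{K,0} w_{J,0}$ is a Coxeter isomorphism $W_{K'} \iso W_K$, so $\ell(u) = \ell(w')$. You do not need to invoke any general theorem about inner automorphisms identifying two standard parabolic subgroups; the case at hand is the easy one where the conjugating element is a longest element.
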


We deduce from Lemma \ref{lemm:JIWJop} that the left translate by $\Iw_0$ (resp. $\JIw_{J,0}$) of the decomposition $P_{I'}^- \Ipw^J P_J = \bigsqcup_{w'_J \in \JIpW_J} P_{I'}^- \Ipw^J w'_J B$
\begin{equation*}
\text{(resp. } L_J = \bigsqcup_{w'_J \in \JIpW_J} L_J \cap P_{J \cap \Ipw^J{}^{-1}(I')}^- w'_J B_J \text{)}
\end{equation*}
is the decomposition $P_I \Iw^J P_J = \bigsqcup_{w_J \in \JIW_J} P_I \Iw^J w_J B$
\begin{equation*}
\text{(resp. } L_J = \bigsqcup_{w_J \in \JIW_J} L_J \cap P_{J \cap \Iw^J{}^{-1}(I)} w_J B_J \text{)}
\end{equation*}
with the opposite closure relations.
Proceeding as in §~\ref{ssec:fil}, we can construct a natural filtration $\Fil_B^\bullet(\cind_{P_I}^{P_I \Iw^J P_J} \sigma)$ (resp. $\Fil_{B_J}^\bullet(\Ind_{L_J \cap P_{J \cap \Iw^J{}^{-1}(I)}}^{L_J} \sigmat)$) by $B$-subrepresentations (resp. $B_J$-subrepresentations) indexed by $\JIW_J$ with the opposite Bruhat order, and there is a natural $B$-equivariant (resp. $B_J$-equivariant) isomorphism
\begin{gather*}
\Gr_B^{w_J} \left( \cind_{P_I}^{P_I \Iw^J P_J} \sigma \right) \cong \cind_{P_I}^{P_I \Iw^J w_J B} \sigma \\
\text{(resp. } \Gr_{B_J}^{w_J} \left( \Ind_{L_J \cap P_{J \cap \Iw^J{}^{-1}(I)}}^{L_J} \sigmat \right) \cong \cind_{L_J \cap P_{J \cap \Iw^J{}^{-1}(I)}}^{L_J \cap P_{J \cap \Iw^J{}^{-1}(I)} w_J B_J} \sigmat \text{)}
\end{gather*}
for all $w_J \in \JIW_J$.
Furthermore, \eqref{cindop} (resp. \eqref{IndLJop}) identifies this filtration with $\Fil_B^\bullet(\cind_{P_{I'}^-}^{P_{I'}^- \Ipw^J P_J} \sigma^{\Iw_0})$ (resp. $\Fil_{B_J}^\bullet(\Ind_{L_J \cap P_{J \cap \Ipw^J{}^{-1}(I')}^-}^{L_J} \sigmat^{\JIw_{J,0}})$), using Lemma \ref{lemm:JIWJop} to identifiy the indexing posets, and induces a natural $B$-equivariant (resp. $B_J$-equivariant) isomorphism
\begin{gather*}
\cind_{P_I}^{P_I \Iw^J w_J B} \sigma \cong \cind_{P_{I'}^-}^{P_{I'}^- \Ipw^J w'_J B} \sigma^{\Iw_0} \\
\text{(resp. } \cind_{L_J \cap P_{J \cap \Iw^J{}^{-1}(I)}}^{L_J \cap P_{J \cap \Iw^J{}^{-1}(I)} w_J B_J} \sigmat \cong \cind_{L_J \cap P_{J \cap \Ipw^J{}^{-1}(I')}^-}^{L_J \cap P_{J \cap \Ipw^J{}^{-1}(I')}^- w'_J B_J} \sigmat^{\JIw_{J,0}} \text{)}
\end{gather*}
for all $w_J \in \JIW_J$ with $w'_J = \JIw_{J,0}^{-1} w_J$.
We deduce from Proposition \ref{prop:FilB} that $\Fil_B^\bullet(\cind_{P_I}^{P_I \Iw^J P_J} \sigma)$ induces a filtration $\Fil_B^\bullet(\Hh[n](N_J,\cind_{P_I}^{P_I \Iw^J P_J} \sigma))$ by $B_J$-subrepresentations indexed by $\JIW_J$ with the opposite Bruhat order and that there is a natural $B_J$-equivariant isomorphism
\begin{equation*}
\Gr_B^{w_J} \left( \Hh[n]\! \left( N_J, \cind_{P_I}^{P_I \Iw^J P_J} \sigma \right) \right) \cong \Hh[n]\! \left( N_J, \cind_{P_I}^{P_I \Iw^J w_J B} \sigma \right)
\end{equation*}
for all $w_J \in \JIW_J$.

\begin{theo} \label{theo:HJ}
Let $\sigma$ be a locally admissible $L_I$-representation, $\Iw^J \in \IW^J$ and $n \in \N$.
For all $w_J \in \JIW_J$, there is a natural $B_{J,\JIw_{J,0}^{-1}w_J}$-equivariant isomorphism
\begin{multline*}
\Gr_B^{w_J} \left( \Hh[n]\! \left( N_J, \cind_{P_I}^{P_I \Iw^J P_J} \sigma \right) \right) \\
\cong \Gr_{B_J}^{w_J} \left( \Ind_{L_J \cap P_{J \cap \Iw^J{}^{-1}(I)}}^{L_J} \left( \Hh[n-{[F:\Qp]} d_{\Iw^J}]\! \left( L_I \cap N_{I \cap \Iw^J(J)}, \sigma \right)^{\Iw^J} \otimes \left( \omega \circ \delta_{\Iw^J} \right) \right) \right)
\end{multline*}
which is even $L_J \cap P_{J \cap \JIw_{J,0}^{-1}\Iw^J{}^{-1}(I)}$-equivariant when $w_J=\JIw_{J,0}$.
\end{theo}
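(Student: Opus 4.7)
The plan is to reduce Theorem \ref{theo:HJ} to Theorem \ref{theo:HOrd} via the translation isomorphisms \eqref{Indop}, \eqref{cindop} and \eqref{IndLJop}, together with the order-reversing bijections of Lemmas \ref{lemm:IWJop} and \ref{lemm:JIWJop}. All the structural work has been done in the paragraphs between \eqref{IndLJop} and the statement: the filtrations $\Fil_B^\bullet(\cind_{P_I}^{P_I \Iw^J P_J} \sigma)$ and $\Fil_{B_J}^\bullet(\Ind_{L_J \cap P_{J \cap \Iw^J{}^{-1}(I)}}^{L_J} \sigmat)$ have been defined precisely so as to correspond, under these translations, to the filtrations of §~\ref{ssec:fil} applied in the opposite-parabolic setting with data $(I', \Ipw^J, w'_J, \sigma^{\Iw_0})$, where $\Ipw^J \coloneqq \Iw_0^{-1} \Iw^J \JIw_{J,0} \in \IpW^J$ and $w'_J \coloneqq \JIw_{J,0}^{-1} w_J \in \JIpW_J$.

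Concretely, first I would unwind the definition $\Hh(N_J,-) = \HOrd[{[F:\Qp]} d - \bullet]_{P_J} \otimes (\omega \circ \delta)$ with $d = \dim_F \Nb_J$ and transfer the left-hand side via \eqref{cindop}: this identifies it $B_{J,w'_J}$-equivariantly with the twist by $\omega \circ \delta$ of
\begin{equation*}
\Gr_B^{w'_J} \bigl( \HOrd[{[F:\Qp]} d - n]_{P_J} \bigl( \cind_{P_{I'}^-}^{P_{I'}^- \Ipw^J P_J} \sigma^{\Iw_0} \bigr) \bigr).
\end{equation*}
Second, Theorem \ref{theo:HOrd} applied to the locally admissible $L_{I'}$-representation $\sigma^{\Iw_0}$ at $(\Ipw^J, w'_J)$ computes this as a graded piece of a representation of $L_J$ parabolically induced from $L_J \cap P_{J \cap \Ipw^J{}^{-1}(I')}^-$, and upgrades the equivariance to that of $L_J \cap P_{J \cap \Ipw^J{}^{-1}(I')}$ precisely when $w'_J = 1$, i.e. $w_J = \JIw_{J,0}$, matching the stated equivariance since $J \cap \Ipw^J{}^{-1}(I') = J \cap \JIw_{J,0}^{-1} \Iw^J{}^{-1}(I)$. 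Finally, transporting the target back through \eqref{IndLJop} applied to $\sigmat \coloneqq \Hh[n - {[F:\Qp]} d_{\Iw^J}](L_I \cap N_{I \cap \Iw^J(J)}, \sigma)^{\Iw^J} \otimes (\omega \circ \delta_{\Iw^J})$, together with the induced identification of its Bruhat-graded pieces, lands on the right-hand side of the desired formula.

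The main obstacle is the bookkeeping of the degree shift and the character twist. One has to check that the integer $[F:\Qp] d - n - {[F:\Qp]} d_{\Ipw^J}$ and the character $(\omega \circ \delta)(\omega^{-1} \circ \delta_{\Ipw^J})$ produced above, once conjugated through $\Iw^J = \Iw_0 \Ipw^J \JIw_{J,0}^{-1}$, reproduce exactly the degree $[F:\Qp] \dim_F (L_I \cap N_{I \cap \Iw^J(J)}) - (n - {[F:\Qp]} d_{\Iw^J})$ and character of $\Hh[n - {[F:\Qp]} d_{\Iw^J}](L_I \cap N_{I \cap \Iw^J(J)}, \sigma) \otimes (\omega \circ \delta_{\Iw^J})$. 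Both identities reduce, via Lemma \ref{lemm:IwJ} and Remark \ref{rema:dIwJ}, to a root-theoretic decomposition of $\Phi^+ \backslash \Phi_J^+$ controlled by $\Iw^J$, and are straightforward but somewhat tedious to verify.
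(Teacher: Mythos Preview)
Your proposal is correct and follows essentially the same route as the paper: transfer via \eqref{cindop} and \eqref{IndLJop} using the bijections of Lemmas \ref{lemm:IWJop} and \ref{lemm:JIWJop}, apply Theorem \ref{theo:HOrd} at $(I',\Ipw^J,w'_J,\sigma^{\Iw_0})$, and verify the degree-shift/character identities via the partition of $\Phi^+ \backslash \Phi_J^+$ into the three pieces $\Iw^J{}^{-1}(\Phi_I^+)$, $\Iw^J{}^{-1}(-\Phi^+)$, $\Iw^J{}^{-1}(\Phi^+ \backslash \Phi_I^+)$ intersected with $\Phi^+ \backslash \Phi_J^+$. The only minor imprecision is that the transfer through \eqref{cindop} is actually $B_J$-equivariant (not just $B_{J,w'_J}$-equivariant); the drop to $B_{J,w'_J}$ occurs when invoking Theorem \ref{theo:HOrd}.
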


\begin{proof}
We set $\Ipw^J \coloneqq \Iw_0^{-1} \Iw^J \JIw_{J,0}$ and we define an $L_{J \cap \Iw^J{}^{-1}(I)}$-representation by setting
\begin{equation*}
\sigmat \coloneqq \left( \left( \HOrd[{[F:\Qp]} (d_J-d_{\Ipw^J}) - n]_{L_{I'} \cap P_{I' \cap \Ipw^J(J)}} \sigma^{\Iw_0} \right)^{\Ipw^J} \otimes \left( \omega \circ \left( \delta_J - \delta_{\Ipw^J} \right) \right) \right)^{\JIw_{J,0}^{-1}}
\end{equation*}
where $d_J$ denote the integer $\dim_F \Nb_J$ and $\delta_J \in \X^*(\Lb_J)$ denote the algebraic character of the adjoint representation of $\Lb_J$ on $\det_F (\Lie \Nb_J)$.
We prove that there is a natural $L_{J \cap \Iw^J{}^{-1}(I)}$-equivariant isomorphism
\begin{equation} \label{HOrdop}
\sigmat \cong \Hh[n - {[F:\Qp]} d_{\Iw^J}]\! \left( L_I \cap N_{I \cap \Iw^J(J)}, \sigma \right)^{\Iw^J} \otimes \left( \omega \circ \delta_{\Iw^J} \right).
\end{equation}
We have $\Lb_{I \cap \Iw^J(J)} = \Iw_0 \Lb_{I' \cap \Ipw^J(J)} \Iw_0^{-1}$ and $\Lb_I \cap \Pb_{I \cap \Iw^J(J)} = \Iw_0 \Lb_{I'} \cap \Pb_{I' \cap \Ipw^J(J)} \Iw_0^{-1}$, hence natural $L_{I' \cap \Ipw^J(J)}$-equivariant isomorphisms
\begin{equation*}
\HOrd_{L_{I'} \cap P_{I' \cap \Ipw^J(J)}} \sigma^{\Iw_0} \cong \left( \HOrd_{L_I \cap P_{I \cap \Iw^J(J)}} \sigma \right)^{\Iw_0}.
\end{equation*}
Using Lemma \ref{lemm:IwJ} (iii), we have (with notations analogous to $d_J$ and $\delta_J$)
\begin{equation*}
\HOrd_{L_I \cap P_{I \cap \Iw^J(J)}} = \Hh[{[F:\Qp]} (d_{I \cap \Iw^J(J)} - d_I) - \bullet]\! \left( L_I \cap N_{I \cap \Iw^J(J)}, - \right) \otimes \left( \omega^{-1} \circ \left( \delta_{I \cap \Iw^J(J)} - \delta_I \right) \right).
\end{equation*}
Thus in order to prove \eqref{HOrdop}, it remains to check that
\begin{gather*}
d_J = \left( d_{I \cap \Iw^J(J)} - d_I \right) + d_{\Iw^J} + d_{\Ipw^J}, \\
\delta_J = \Iw^J{}^{-1} \left( \delta_{I \cap \Iw^J(J)} - \delta_I \right) + \delta_{\Iw^J} + \JIw_{J,0} \left( \delta_{\Ipw^J} \right).
\end{gather*}
We do these computations on the corresponding Lie algebras: $d_J$ and $\delta_J$ correspond to $\Phi^+ \backslash \Phi_J^+$, $(d_{I \cap \Iw^J(J)} - d_I)$ and $\Iw^J{}^{-1}(\delta_{I \cap \Iw^J(J)} - \delta_I)$ correspond to $(\Phi^+ \backslash \Phi_J^+) \cap \Iw^J{}^{-1}(\Phi_I^+)$, $d_{\Iw^J}$ and $\delta_{\Iw^J}$ correspond to $(\Phi^+ \backslash \Phi_J^+) \cap \Iw^J{}^{-1}(-\Phi^+)$, and $d_{\Ipw^J}$ and $\JIw_{J,0}(\delta_{\Ipw^J})$ correspond to $(\Phi^+ \backslash \Phi_J^+) \cap \Iw^J{}^{-1}(\Phi^+ \backslash \Phi_I^+)$ (noting that $\Iw_0(-\Phi^+) = (-\Phi_I^+) \sqcup (\Phi^+ \backslash \Phi_I^+)$ and $(\Phi^+ \backslash \Phi_J^+) \cap \Iw^J{}^{-1}(-\Phi_I^+) = \emptyset$).
Thus, the two equalities above follow from the partition
\begin{multline*}
\Phi^+ \backslash \Phi_J^+ = \left( \left( \Phi^+ \backslash \Phi_J^+ \right) \cap \Iw^J{}^{-1} \left( \Phi_I^+ \right) \right) \sqcup \left( \left( \Phi^+ \backslash \Phi_J^+ \right) \cap \Iw^J{}^{-1} \left( -\Phi^+ \right) \right) \\
\sqcup \left( \left( \Phi^+ \backslash \Phi_J^+ \right) \cap \Iw^J{}^{-1} \left( \Phi^+ \backslash \Phi_I^+ \right) \right),
\end{multline*}
which is obtained from the partition $\Phi = \Phi_I^+ \sqcup (-\Phi^+) \sqcup (\Phi^+ \backslash \Phi_I^+)$ by applying $\Iw^J{}^{-1}$ and taking the intersection with $\Phi^+ \backslash \Phi_J^+$.

Let $w_J \in \JIW_J$ and set $w'_J \coloneqq \JIw_{J,0}^{-1} w_J$.
By construction, \eqref{cindop} induces a natural $B_J$-equivariant isomorphism
\begin{multline*}
\Gr_B^{w_J} \left( \Hh[n]\! \left( N_J, \cind_{P_I}^{P_I \Iw^J P_J} \sigma \right) \right) \\
\cong \Gr_B^{w'_J} \left( \HOrd[{[F:\Qp]} d_J - n]_{P_J} \left( \cind_{P_{I'}^-}^{P_{I'}^- \Ipw^J P_J} \sigma^{\Iw_0} \right) \otimes \left( \omega \circ \delta_J \right) \right).
\end{multline*}
By Theorem \ref{theo:HOrd}, there is a natural $B_{J,w'_J}$-equivariant isomorphism
\begin{multline*}
\Gr_B^{w'_J} \left( \HOrd[{[F:\Qp]} d_J - n]_{P_J} \left( \cind_{P_{I'}^-}^{P_{I'}^- \Ipw^J P_J} \sigma^{\Iw_0} \right) \otimes \left( \omega \circ \delta_J \right) \right) \\
\cong \Gr_{B_J}^{w'_J} \left( \Ind_{L_J \cap P_{J \cap \Ipw^J{}^{-1}(I')}^-}^{L_J} \sigmat^{\JIw_{J,0}} \right)
\end{multline*}
which is even $L_J \cap P_{J \cap \Ipw^J{}^{-1}(I')}$-equivariant when $w'_J=1$.
By construction, \eqref{IndLJop} and \eqref{HOrdop} induce a natural $B_J$-equivariant isomorphism
\begin{multline*}
\Gr_{B_J}^{w'_J} \left( \Ind_{L_J \cap P_{J \cap \Ipw^J{}^{-1}(I')}^-}^{L_J} \sigmat^{\JIw_{J,0}} \right) \\
\cong \Gr_{B_J}^{w_J} \left( \Ind_{L_J \cap P_{J \cap \Iw^J{}^{-1}(I)}}^{L_J} \left( \Hh[n-{[F:\Qp]} d_{\Iw^J}]\! \left( L_I \cap N_{I \cap \Iw^J(J)}, \sigma \right)^{\Iw^J} \otimes \left( \omega \circ \delta_{\Iw^J} \right) \right) \right).
\end{multline*}
Composing these three isomorphisms yields the result.
\end{proof}

We deduce from Theorem \ref{theo:HJ} with $w_J=\JIw_{J,0}$ a natural $L_J$-equivariant morphism analogous to \eqref{HOrd}
\begin{multline} \label{HJ}
\Ind_{L_J \cap P_{J \cap \Iw^J{}^{-1}(I)}}^{L_J} \left( \Hh[n-{[F:\Qp]} d_{\Iw^J}]\! \left( L_I \cap N_{I \cap \Iw^J(J)}, \sigma \right)^{\Iw^J} \otimes \left( \omega \circ \delta_{\Iw^J} \right) \right) \\
\to \Hh[n]\! \left( N_J, \cind_{P_I}^{P_I \Iw^J P_J} \sigma \right)
\end{multline}
and Conjecture \ref{conj:HOrd} is equivalent to \eqref{HJ} being an isomorphism.
We also have analogues of Propositions \ref{prop:HOrd1} and \ref{prop:HOrd2}.
In the case $I=J$, we obtain the following analogue of Corollary \ref{coro:HOrd}.

\begin{coro} \label{coro:HJ}
Let $\Pb = \Lb \Nb$ be a standard parabolic subgroup and $\sigma$ be a locally admissible $L$-representation.
\begin{enumerate}
\item For all $n \in \N$ such that $0<n<[F:\Qp]$, we have $\Hh[n](N,\Ind_P^G \pi) = 0$.
\item If $\sigma_{L \cap s_\alpha N s_\alpha^{-1}} = 0$ for all $\alpha \in \Delta^1 \backslash (\Delta_\Lb^1 \cup \Delta_\Lb^{\perp,1})$, then there is a natural $L$-equivariant isomorphism
\begin{equation*}
\Hh[{[F:\Qp]}]\! \left( N, \Ind_P^G \sigma \right) \cong \bigoplus_{\alpha \in \Delta_\Lb^{\perp,1}} \sigma^\alpha \otimes \left( \omega \circ \alpha \right).
\end{equation*}
\end{enumerate}
\end{coro}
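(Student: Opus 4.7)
The plan is to derive this corollary as the specialisation to $I = J = \Delta_\Lb$ of the $\Hh$-analogues of Propositions \ref{prop:HOrd1} and \ref{prop:HOrd2}, whose existence is announced just after \eqref{HJ}. First I would record these analogues explicitly by running the arguments of §~\ref{ssec:HOrd} verbatim, substituting $\HOrd[n]_{P_J}(\Ind_{P_I^-}^G \sigma)$ by $\Hh[n](N_J, \Ind_{P_I}^G \sigma)$, $\Ord_{L_I \cap P_?}\sigma$ by the Jacquet functor $\sigma_{L_I \cap N_?}$, and $\omega^{-1}$ by $\omega$, using the opposite-Bruhat-order filtrations from §~\ref{ssec:HJ} and Theorem \ref{theo:HJ} in place of Theorem \ref{theo:HOrd}.

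Specialising to $I = J = \Delta_\Lb$, two elementary observations suffice: $L \cap N = \{1\}$ since $P = L \ltimes N$, and $\Delta_\Lb \cap \Delta_\Lb^\perp = \emptyset$ since $\langle \alpha, \alpha^\vee \rangle = 2$. Part (i) then follows from the $\Hh$-analogue of Proposition \ref{prop:HOrd2}(ii): for $0 < n < [F:\Qp]$ this yields a natural $L$-equivariant isomorphism $\Hh[n](L \cap N, \sigma) \iso \Hh[n](N, \Ind_P^G \sigma)$, and unwinding the definition $\Hh[n](N, -) = \HOrd[{[F:\Qp]} d - n]_P \otimes (\omega \circ \delta)$ at the trivial group (where $d = 0$ and $\delta = 0$) gives $\HOrd[-n]_L \sigma = 0$.

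For part (ii), the $\Hh$-analogue of Proposition \ref{prop:HOrd2}(iii) yields a natural short exact sequence of $L$-representations
\begin{equation*}
0 \to \Hh[{[F:\Qp]}](L \cap N, \sigma) \to \Hh[{[F:\Qp]}](N, \Ind_P^G \sigma) \to \bigoplus_{\alpha \in \Delta_\Lb^{\perp,1} \backslash \Delta_\Lb} \sigma_{L \cap N}^\alpha \otimes (\omega \circ \alpha) \to 0
\end{equation*}
under the hypothesis $\sigma_{L_I \cap N_{I \cap s_\alpha(J)}} = 0$ for $\alpha \in \Delta^1 \backslash (\Delta_\Lb \cup \Delta_\Lb^\perp)$, which, by Lemma \ref{lemm:IwJ}(iii), translates to the stated condition $\sigma_{L \cap s_\alpha N s_\alpha^{-1}} = 0$. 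By the two observations above, the first term vanishes as in part (i), the indexing set reduces to $\Delta_\Lb^{\perp,1}$, and the trivial Jacquet functor gives $\sigma_{L \cap N} = \sigma$, so the sequence collapses to the required isomorphism.

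The main obstacle is not this specialisation but the verification of the $\Hh$-analogue of Proposition \ref{prop:HOrd2}(iii) itself. The delicate point is to re-trace the degree- and character-shifts of Theorem \ref{theo:HJ}—that the conjugation by $\Iw_0$ converts $(d_{\Iw^J}, \delta_{\Iw^J})$ into the right data so that, in the two extremal cases $\Iw^J = 1$ and $\Iw^J = s_\alpha$ with $\alpha \in \Delta_\Lb^{\perp,1}$, the graded pieces are respectively $\Hh[{[F:\Qp]}](L \cap N, \sigma)$ and $\sigma^\alpha \otimes (\omega \circ \alpha)$—and then to promote the $\IW^J$-indexed graded information to the genuine $L_J$-equivariant short exact sequence by running the Frobenius-reciprocity / co-Hopfian argument of Proposition \ref{prop:HOrd1}(iii) in the Jacquet setting.
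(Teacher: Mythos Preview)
Your approach is correct and matches the paper's: Corollary \ref{coro:HJ} is indeed the $I=J$ specialisation of the $\Hh$-analogue of Proposition \ref{prop:HOrd2}, obtained from Theorem \ref{theo:HJ} and the filtration machinery of §~\ref{ssec:HJ}. Your worry about needing the co-Hopfian argument of Proposition \ref{prop:HOrd1}(iii) is misplaced, however: the proof of Proposition \ref{prop:HOrd2}(iii)—and hence its $\Hh$-analogue—only invokes parts (i) and (ii) of Proposition \ref{prop:HOrd1}, since for $I=J$ both $\Iw^J=1$ and $\Iw^J=s_\alpha$ with $\alpha \in \Delta_\Lb^{\perp,1}$ satisfy $\Iw^J(J)\subseteq I$, so the relevant graded pieces are already trivially induced and the promotion to an $L$-equivariant short exact sequence is immediate from the filtration, with no Frobenius-reciprocity trick required.
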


\begin{rema} \label{rema:HJ}
The results hold true with $P^-$, $N^-$ and $\omega^{-1}$ instead of $P$, $N$ and $\omega$ respectively.
\end{rema}

\section{Application to extensions}

The aim of this section is to compute the extensions between parabolically induced representations of $G$.
In §~\ref{ssec:Extprelim}, we review some cuspidality and genericity properties and we prove some preliminary results on extensions which will be used in the case where $\Gb$ is split and $\Zb$ is connected.
Then, the main results are proved in §~\ref{ssec:Ext}.
Finally, some of these results are lifted to characteristic $0$ in §~\ref{ssec:Extcont}.

\subsection{Preliminaries} \label{ssec:Extprelim}

We fix a standard parabolic subgroup $\Pb = \Lb \Nb$.

\subsubsection*{Cuspidality and genericity properties}

We define some cuspidality properties and discuss the relations between them.

\begin{defi}
We say that an admissible smooth representation $\sigma$ of $L$ over $k$ is:
\begin{itemize}
\item \emph{supersingular} if $\Fpbar \otimes_k \sigma$ is supersingular (in the sense of \cite{AHHV}),
\item \emph{supercuspidal} if it is irreducible and not a subquotient of $\Ind_Q^L \tau$ for any proper parabolic subgroup $\Qb \subset \Lb$ with Levi quotient $\Lb_\Qb$ and any irreducible admissible smooth representation $\tau$ of $L_Q$ over $k$,
\item \emph{right} (resp. \emph{left}) \emph{cuspidal} if $\Ord_Q \sigma = 0$ (resp. $\sigma_{N_Q}=0$) for any proper parabolic subgroup $\Qb \subset \Lb$ with unipotent radical $\Nb_\Qb$.
\end{itemize}
\end{defi}

\begin{rema}
In \cite[Definition 6.3]{AHV}, left and right cuspidality are defined for smooth representations using the left and right adjoint functors of $\Ind_Q^G$, namely $\Lh[Q]^L$ and $\Rc[L]_Q$.
Since $\Lh[Q]^L = (-)_{N_Q}$ and the restriction of $\Rc[L]_Q$ to admissible representations is $\Ord_{Q^-}$ (cf. \cite[Corollary 4.13]{AHV}), these definitions coincide for admissible representations.
\end{rema}

\begin{lemm}
Let $\sigma$ be an irreducible admissible smooth representation of $L$ over $k$.
The following are equivalent.
\begin{enumerate}
\item $\sigma$ is supercuspidal.
\item $\sigma$ is left and right cuspidal.
\item $\Fpbar \otimes_k \sigma$ is a (finite) direct sum of supersingular representations.
\end{enumerate}
In particular, $\sigma$ is supersingular if and only if it is absolutely irreducible and supercuspidal.
\end{lemm}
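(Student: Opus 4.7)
My plan is to reduce the entire statement to the case of the algebraically closed field $\Fpbar$, where the equivalence of (i), (ii), (iii) is part of the main results of \cite{AHV} (resting on the classification of irreducible admissible smooth representations in \cite{AHHV}). The reduction to $\Fpbar$ is by extension of scalars.

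The key preliminary is to decompose $\overline{\sigma} \coloneqq \Fpbar \otimes_k \sigma$ as a finite direct sum of pairwise $\Gal(\Fpbar/k)$-conjugate irreducible admissible $\Fpbar$-representations of $L$. This uses Schur's lemma: $D \coloneqq \End_L(\sigma)$ is a division algebra over $k$, and it is finite-dimensional over $k$ because $\sigma$ is admissible (any endomorphism is determined by its action on the finite-dimensional $k$-vector space $\sigma^{H_0}$ for a sufficiently small compact open subgroup $H_0 \subseteq L$). The idempotents of the semisimple ring $\Fpbar \otimes_k D$ then yield the desired decomposition, with all summands Galois-conjugate.

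For each of the three conditions, the proof reduces to the corresponding condition for the irreducible summands of $\overline{\sigma}$. For (iii) this is by construction. For (ii) both functors $(-)_{N_Q}$ and $\Ord_Q$ commute with $\Fpbar \otimes_k -$ and with direct sums, and faithful flatness ensures that the vanishing conditions transfer. For (i), if $\sigma$ is a subquotient of $\Ind_Q^L \tau$ over $k$ with $\tau$ irreducible admissible and $Q$ proper, then $\overline{\sigma}$ is a subquotient of $\Ind_Q^L(\Fpbar \otimes_k \tau)$, and each irreducible summand of $\overline{\sigma}$ is a subquotient of $\Ind_Q^L \tau'$ for some irreducible subquotient $\tau'$ of $\Fpbar \otimes_k \tau$ (of finite length by Schur applied to $\tau$); conversely, if some summand $\overline{\sigma}_i$ is a subquotient of $\Ind_Q^L \overline{\tau}'$ over $\Fpbar$, I would set $\overline{\tau} \coloneqq \bigoplus_j \overline{\tau}'_j$ where the $\overline{\tau}'_j$ range over the $\Gal(\Fpbar/k)$-orbit of $\overline{\tau}'$; this Galois-stable representation descends to an irreducible $\tau$ over $k$ (its $k$-endomorphism algebra is the residue field of the orbit, hence a field), and faithful flatness turns the resulting subquotient relation $\overline{\sigma} \subseteq \Ind_Q^L \overline{\tau}$ into $\sigma \subseteq \Ind_Q^L \tau$, contradicting (i). Once the three conditions are reduced to the corresponding statements for the summands of $\overline{\sigma}$, the equivalences follow from \cite{AHV, AHHV}.

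The main obstacle is the Galois-descent step in (i), which requires careful identification of the $k$-form of the Galois-orbit sum as an irreducible $k$-representation. The final consequence is then immediate: $\sigma$ is supersingular by definition precisely when it is absolutely irreducible and $\overline{\sigma}$ is supersingular; under absolute irreducibility $\overline{\sigma}$ is a single irreducible summand, so condition (iii) reduces to ``$\overline{\sigma}$ is supersingular'', and (iii) $\Leftrightarrow$ (i) then identifies this with $\sigma$ being supercuspidal.
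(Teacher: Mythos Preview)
Your approach is the paper's: decompose $\Fpbar \otimes_k \sigma$ into irreducibles (the paper cites \cite[Lemma~4.1.2]{Em2}), invoke \cite[Corollary~6.9]{AHV} and \cite[Theorem~5]{AHHV} over $\Fpbar$, and transfer to $k$ via the commutation of $\Ind_Q^L$, $(-)_{N_Q}$, $\Ord_Q$ with $\Fpbar \otimes_k -$. You spell out the descent for (i), which the paper leaves implicit.

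The obstacle in that descent is not quite where you locate it, however. Producing an irreducible $k$-form $\tau$ of the Galois-orbit sum $\overline{\tau}$ is straightforward; what does not follow is the subquotient claim. From ``each $\overline{\sigma}_j$ is a subquotient of some $\Ind_Q^L \gamma_j(\overline{\tau}')$'' you cannot conclude that $\overline{\sigma} = \bigoplus_j \overline{\sigma}_j$ is a subquotient of $\Ind_Q^L \overline{\tau}$: if several $\gamma_j(\overline{\tau}')$ coincide, you need several non-isomorphic $\overline{\sigma}_j$ to be \emph{simultaneous} subquotients of one $\Ind_Q^L \overline{\tau}'$, and distinct irreducible subquotients of a module need not form a direct-sum subquotient (a non-split length-two object already fails this). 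Even granting the relation over $\Fpbar$, faithful flatness alone does not descend subquotients, since the intermediate subobjects need not be defined over $k$. A clean fix bypasses this descent entirely: once (ii)$\Leftrightarrow$(iii) is known, the remaining implication $\neg$(ii)$\Rightarrow\neg$(i) follows over $k$ by adjunction---if $\Ord_Q \sigma \neq 0$, any irreducible subrepresentation $\tau \subseteq \Ord_Q \sigma$ (which exists since admissible implies Artinian) gives a nonzero, hence surjective, map $\Ind_{Q^-}^L \tau \to \sigma$; failure of left cuspidality reduces to this case via $\Fpbar$.
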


\begin{proof}
Over $\Fpbar$, the equivalence between (i) and (ii) is \cite[Corollary 6.9]{AHV}, and the equivalence between `supercuspidal' and `supersingular' is \cite[Theorem 5]{AHHV}.
By \cite[Lemma 4.1.2]{Em2}, $\Fpbar \otimes_k \sigma$ is a finite direct sum of irreducible admissible smooth representations of $L$ over $\Fpbar$.
Since $\Ind_Q^L$, $(-)_{N_Q}$ and $\Ord_Q$ commute with $\Fpbar \otimes_k -$, we deduce the equivalences over $k$.
\end{proof}

We now study some genericity property for smooth representations of $L$ over $k$ with central character.
We assume that $\Delta_\Lb^{\perp,1} \neq \emptyset$.

\begin{lemm} \label{lemm:alpha}
Let $\sigma$ be a smooth representation of $L$ over $k$ with central character $\zeta : Z_L \to k^\times$ and $\alpha \in \Delta_\Lb^{\perp,1}$.
If $\zeta \circ \alpha^\vee = \omega^{-1}$, then $\sigma^\alpha \otimes (\omega^{-1} \circ \alpha) \cong \sigma$.
\end{lemm}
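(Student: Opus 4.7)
The plan is to exhibit the isomorphism by the identity map on the underlying $k$-module of $\sigma$, after a careful choice of a representative $n$ of $s_\alpha$ in $\Ncb$.

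First I would record the two parenthetical claims of the statement. Since $\langle\alpha,\beta^\vee\rangle=0$ for every $\beta \in \Delta_\Lb$, the reflection $s_\alpha$ fixes each such $\beta$, stabilises $\Phi_\Lb$, and hence normalises $\Lb$. Next, since $d_\alpha=1$, $\alpha$ extends to an algebraic character of $\Zcb$, and the symmetric orthogonality $\alpha^\vee \perp \Phi_\Lb$ forces this extension to be trivial on $\Zcb \cap \Lb^\der$; it therefore descends to an algebraic character of $\Lb = \Zcb \cdot \Lb^\der$, still denoted $\alpha$ by abuse. The same orthogonality yields $\alpha^\vee(F^\times) \subseteq Z_L$, so that the hypothesis $\zeta \circ \alpha^\vee = \omega^{-1}$ makes sense.

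Next, I would produce a representative $n$ of $s_\alpha$ in $\Ncb$ with the key property
\begin{equation*}
n l n^{-1} = l \cdot \alpha^\vee\!\left( \alpha(l) \right)^{-1} \qquad \text{for every } l \in L.
\end{equation*}
Let $\mathbf{M}_\alpha \subseteq \Gb$ be the connected reductive subgroup of semisimple rank one generated by $\Ub_{\pm\alpha}$ and the image of $\alpha^\vee$. For each $\beta \in \Phi_\Lb$, the orthogonality $\alpha \perp \beta$ forces the rank-two sub-system spanned by $\alpha$ and $\beta$ to be of type $A_1 \times A_1$, so the Bruhat--Tits commutator relations of \cite{BT} give $[\Ub_{\pm\alpha},\Ub_\beta] = 1$; it follows that $\mathbf{M}_\alpha$ commutes with $\Lb^\der$. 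Any representative $n$ of $s_\alpha$ chosen in $\mathbf{M}_\alpha(F) \cap \Ncb$ thus acts trivially on $L^\der$, while on $\Zcb$ the standard reflection formula gives $nzn^{-1} = z \cdot \alpha^\vee(\alpha(z))^{-1}$. Combined with the triviality of $\alpha$ on $L^\der$ and the decomposition $L = \Zcb \cdot L^\der$, this yields the displayed identity.

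Finally, I would verify the $L$-equivariance of the identity map $\sigma \iso \sigma^\alpha \otimes (\omega^{-1} \circ \alpha)$. For $l \in L$ and $v \in \sigma$ one has
\begin{equation*}
l \cdot (v \otimes 1) = \omega^{-1}(\alpha(l)) \cdot \sigma(n l n^{-1}) v \otimes 1 = \omega^{-1}(\alpha(l)) \cdot \sigma(l)\, \sigma\!\left( \alpha^\vee(\alpha(l)) \right)^{-1} v \otimes 1,
\end{equation*}
and since $\alpha^\vee(\alpha(l)) \in Z_L$, the hypothesis $\zeta \circ \alpha^\vee = \omega^{-1}$ forces $\sigma(\alpha^\vee(\alpha(l)))^{-1}$ to act as the scalar $\omega(\alpha(l))$, cancelling the twist and leaving $(l \cdot v) \otimes 1$, as required. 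The main obstacle is justifying $[\Ub_{\pm\alpha}, \Ub_\beta] = 1$ and the conjugation formula on $\Zcb$ in the non-split generality of the paper: both rest on the rank-two analysis and the structure of the rank-one subgroup $\mathbf{M}_\alpha$ from Bruhat--Tits theory, immediate in the split Chevalley setting but requiring care with multipliable roots and the anisotropic kernel in general.
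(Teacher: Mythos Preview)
Your proof is correct and follows the same overall strategy as the paper's: choose a representative of $s_\alpha$ inside the subgroup generated by $U_{\pm\alpha}$, use the commutator relation $[\Ub_{\pm\alpha},\Ub_\beta]=1$ for $\beta\in\Phi_\Lb$ (which the paper justifies exactly as you suggest, via the absence of roots of the form $i\alpha+j\beta$ with $i,j>0$, citing \cite[Proposition~2.5]{BT}), and then check equivariance on two complementary pieces of $L$.

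The only genuine difference is in the bookkeeping of that last step. You decompose $L=\Zc\cdot L^{\der}$ and assert the reflection formula $nzn^{-1}=z\cdot\alpha^\vee(\alpha(z))^{-1}$ on all of $\Zc$; the paper instead writes $L=S\cdot I_\alpha$ with $I_\alpha\coloneqq\ker(\alpha\colon L\to F^\times)$, observes that $n_\alpha$ centralises the whole of $I_\alpha$ (because $L$ acts on the one-dimensional $\Ub_\alpha$ through $\alpha$, so $I_\alpha$ centralises $U_{\pm\alpha}$), and then runs the cocharacter computation purely on the split torus $S$. The paper's choice neatly avoids the point you yourself flag as the ``main obstacle'': your displayed formula on the possibly non-split $\Zc$ is not literally the standard reflection identity, and to justify it you are implicitly using that $\ker(\alpha|_{\Zcb})$ centralises $n$ together with the algebraic surjectivity of $\alpha|_{\Sb}$ onto $\mathbb{G}_m$ to pass through $\bar F$-points. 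Both routes are valid; the paper's decomposition is a touch more economical because $I_\alpha$ already contains $\ker(\alpha|_{\Zc})\cdot L^{\der}$, so everything $n$ commutes with is absorbed at once and only the transparent split-torus calculation on $S$ remains.
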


\begin{proof}
For convenience, we recall the construction of the representation $\sigma^\alpha \otimes (\omega^{-1} \circ \alpha)$.
Let $\Gb_\alpha \subseteq \Gb$ be the standard Levi subgroup corresponding to $\alpha$.
We fix a representative $n_\alpha \in \Nc$ of $s_\alpha$.
For every $\beta \in \Delta_\Lb$ and for all integers $i,j>0$, $i\alpha+j\beta \not \in \Phi$ (since $\alpha \perp \beta$), thus $\Ub_\alpha$ and $\Ub_\beta$ commute for every $\beta \in \Delta_\Lb$ by \cite[Proposition 2.5]{BT}, or more directly using the Baker--Campbell--Hausdorff formula.
We deduce that $\Gb_\alpha$ and $\Lb$ normalise each other (since $\Gb_\alpha$ and $\Lb$ are generated by $\Zcb$ and respectively $\Ub_{\pm\alpha}$ and $(\Ub_\beta)_{\beta \in \pm \Delta_\Lb}$).
In particular, $n_\alpha$ normalises $L$ (since $n_\alpha \in G_\alpha$) and the $n_\alpha$-conjugate $\sigma^\alpha$ does not depend on the choice of $n_\alpha$ in $n_\alpha \Zc$ up to isomorphism (since $\Zc \subseteq L$).
Furthermore, $\Lb$ normalises $\Ub_\alpha$ and $\alpha$ extends (uniquely) to an algebraic character of $\Lb$ (since $\alpha \in \Delta^1$).

We let $I_\alpha \subseteq L$ be the kernel of $\alpha : L \to F^\times$.
Note that $L=SI_\alpha$.
We may and will assume that $n_\alpha$ lies in the subgroup of $G_\alpha$ generated by $U_{\pm\alpha}$ (cf. \cite[§~II.4]{AHHV}) so that $n_\alpha$ commutes with $I_\alpha$.
Thus, the action of $I_\alpha$ on $\sigma^\alpha \otimes (\omega^{-1} \circ \alpha)$ and $\sigma$ is the same.

Now, assume $\zeta \circ \alpha^\vee = \omega^{-1}$.
For any $\lambda \in \X_*(\Sb)$, $\lambda - s_\alpha(\lambda) = \langle \alpha, \lambda \rangle \alpha^\vee$ so that $\lambda - s_\alpha(\lambda) \in \X_*(\Sb \cap \Zb_\Lb)$ and
\begin{equation*}
\zeta \circ \left( \lambda - s_\alpha(\lambda) \right) = \left( \zeta \circ \alpha^\vee \right)^{\langle \alpha, \lambda \rangle} = \omega^{-\langle \alpha, \lambda \rangle} = \left( \omega^{-1} \circ \alpha \right) \circ \lambda.
\end{equation*}
We deduce that for any $s \in S$, $s(n_\alpha s n_\alpha^{-1})^{-1} \in S \cap Z_L$ and $\zeta(s(n_\alpha s n_\alpha^{-1})^{-1}) = (\omega^{-1} \circ \alpha)(s)$.
Thus, the action of $S$ on $\sigma^\alpha \otimes (\omega^{-1} \circ \alpha)$ and $\sigma$ is the same.
\end{proof}

The following result yields a converse to Lemma \ref{lemm:alpha} when $\Gb$ is split and $\Zb$ is connected (cf. \cite[Proposition 2.1.1]{BH}).

\begin{lemm} \label{lemm:gen}
Let $\sigma$ be a smooth representation of $L$ over $k$ with central character $\zeta : Z_L \to k^\times$ and $\alpha \in \Delta_\Lb^{\perp,1}$.
Assume that there exists $\lambda \in \X_*(\Zb_\Lb)$ such that $\langle\alpha,\lambda\rangle=1$ and $\langle\beta,\lambda\rangle=0$ for all $\beta \in \Delta_\Lb^{\perp,1} \backslash \{\alpha\}$.
If $\zeta \circ \alpha^\vee \neq \omega^{-1}$, then $s_\alpha(\zeta)(\omega^{-1} \circ \alpha) \neq \zeta$ and $s_\alpha(\zeta)(\omega^{-1} \circ \alpha) \neq s_\beta(\zeta)(\omega^{-1} \circ \beta)$ for all $\beta \in \Delta_\Lb^{\perp,1} \backslash \{\alpha\}$.
\end{lemm}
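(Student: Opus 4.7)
The plan is to argue by contrapositive for each of the two inequalities: assume one of the claimed equalities of characters of $Z_L$ holds, and deduce $\zeta \circ \alpha^\vee = \omega^{-1}$. The cocharacter $\lambda \in \X_*(\Zb_\Lb)$ provided by the hypothesis is the key tool: evaluating the putative equality at $\lambda(x) \in Z_L$ for arbitrary $x \in F^\times$ should isolate the factor $(\zeta \circ \alpha^\vee)(x) \omega(x)$, forcing it to be $1$.

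First I would gather a few routine preliminaries. Since $\alpha \in \Delta_\Lb^\perp$, the relations $\langle\beta,\alpha^\vee\rangle = 0$ for all $\beta \in \Delta_\Lb$ show that $\alpha^\vee(F^\times) \subseteq Z_L$, so $\zeta \circ \alpha^\vee$ is a well-defined character of $F^\times$; likewise (as in the proof of Lemma \ref{lemm:alpha}) $n_\alpha$ and $n_\beta$ normalise $L$ for any $\beta \in \Delta_\Lb^{\perp,1}$, so the $s_\alpha$- and $s_\beta$-conjugate characters $s_\alpha(\zeta)$ and $s_\beta(\zeta)$ of $Z_L$ make sense. Moreover any $\lambda \in \X_*(\Zb_\Lb)$ has image in the maximal split subtorus of $\Zb_\Lb$, hence in $\Sb$, so $\lambda$ may be viewed as an element of $\X_*(\Sb)$ and the standard Weyl-group formula $s_\gamma(\lambda) = \lambda - \langle\gamma,\lambda\rangle \gamma^\vee$ applies for $\gamma \in \{\alpha,\beta\}$. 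In particular $s_\alpha(\lambda(x)) = \lambda(x) \alpha^\vee(x)^{-1}$ and $s_\beta(\lambda(x)) = \lambda(x)$, and $\alpha(\lambda(x)) = x$, $\beta(\lambda(x)) = 1$.

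For the first inequality, assume $s_\alpha(\zeta)(\omega^{-1} \circ \alpha) = \zeta$ as characters of $Z_L$. Evaluating at $\lambda(x)$ using the preceding identities gives $\zeta(\lambda(x)) (\zeta \circ \alpha^\vee)(x)^{-1} \omega(x)^{-1} = \zeta(\lambda(x))$, i.e. $(\zeta \circ \alpha^\vee)(x) = \omega(x)^{-1}$ for every $x \in F^\times$, contradicting $\zeta \circ \alpha^\vee \neq \omega^{-1}$. For the second inequality, fix $\beta \in \Delta_\Lb^{\perp,1} \setminus \{\alpha\}$ and assume $s_\alpha(\zeta)(\omega^{-1} \circ \alpha) = s_\beta(\zeta)(\omega^{-1} \circ \beta)$. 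Evaluating at $\lambda(x)$: the left-hand side is $\zeta(\lambda(x)) (\zeta \circ \alpha^\vee)(x)^{-1} \omega(x)^{-1}$ as above, while the right-hand side collapses to $\zeta(\lambda(x))$ because $\langle\beta,\lambda\rangle = 0$ makes both the $s_\beta$-twist and the $\omega^{-1} \circ \beta$ factor trivial on $\lambda(x)$. Equating again yields $\zeta \circ \alpha^\vee = \omega^{-1}$, a contradiction.

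There is no substantive obstacle here; the whole argument is a direct computation with characters of $Z_L$ in which the hypothesis $\langle\alpha,\lambda\rangle = 1$, $\langle\beta,\lambda\rangle = 0$ precisely separates the contribution of $\alpha$ from those of the other $\beta \in \Delta_\Lb^{\perp,1}$. The only mildly subtle point is the justification that $\lambda$ can be used as a cocharacter of $\Sb$ so that the usual formula for the $s_\gamma$-action on $\X_*(\Sb)$ is available, which is handled by the preliminary remarks.
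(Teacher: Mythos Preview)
Your proposal is correct and follows essentially the same approach as the paper: both proofs argue by contrapositive, precompose the putative character equality with the cocharacter $\lambda$, and use the Weyl-group formula $s_\gamma(\lambda)=\lambda-\langle\gamma,\lambda\rangle\gamma^\vee$ together with the hypotheses $\langle\alpha,\lambda\rangle=1$, $\langle\beta,\lambda\rangle=0$ to deduce $(\zeta\circ\alpha^\vee)\omega=1$. Your version simply spells out a few more of the routine justifications that the paper leaves implicit.
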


\begin{proof}
We have $(s_\alpha(\zeta)(\omega^{-1} \circ \alpha)) \circ \lambda = (\zeta \circ \lambda)((\zeta \circ \alpha^\vee) \omega)^{-1}$ and $(s_\beta(\zeta)(\omega^{-1} \circ \beta)) \circ \lambda = (\zeta \circ \lambda)$ for all $\beta \in \Delta_\Lb^{\perp,1} \backslash \{\alpha\}$.
Thus, if $s_\alpha(\zeta)(\omega^{-1} \circ \alpha) = \zeta$ or $s_\alpha(\zeta)(\omega^{-1} \circ \alpha) = s_\beta(\zeta)(\omega^{-1} \circ \beta)$ for some $\beta \in \Delta_\Lb^{\perp,1} \backslash \{\alpha\}$, then precomposing each side of the equality with $\lambda$ yields the equality $(\zeta \circ \alpha^\vee) \omega = 1$.
\end{proof}

\subsubsection*{Preliminary results on extensions}

Let $H$ be a $p$-adic Lie group.
For locally admissible smooth representations $\pi,\pi'$ of $L$ over $A$, we let $\Ext_H^\bullet(\pi',\pi)$ denote the $A$-modules of extensions computed in $\Mod_H^\ladm(A)$ à la Yoneda or using an injective resolution of $\pi$.
If $\pi,\pi'$ are admissible, then in degree $1$ it is equivalent to compute $\Ext_H^\bullet(\pi',\pi)$ in $\Mod_H^\adm(A)$ à la Yoneda, but this is not known in higher degree (cf. \cite[Remark 3.7.8]{Em2}), except when $H=\GL_2(\Qp)$ (cf. \cite[Corollary 5.17]{Pas}).
Let $\Zt \subseteq H$ be a central closed subgroup and $\zeta : \Zt \to A^\times$ be a smooth character.
We write $\Mod_{H,\zeta}^\ladm(A)$ for the full subcategory of $\Mod_H^\ladm(A)$ whose objects are the representations on which $\Zt$ acts via $\zeta$.
If $\Zt$ acts on $\pi,\pi'$ via $\zeta$, then we let $\Ext_{H,\zeta}^\bullet(\pi',\pi)$ denote the $A$-modules of extensions computed in $\Mod_{H,\zeta}^\ladm(A)$ à la Yoneda, or equivalently using an injective resolution of $\pi$.

We now assume that $\Gb$ is split and we write $\Tb$ for the maximal split torus $\Sb=\Zcb$.
Using Notation \ref{nota:dw}, we have $d_w=\ell(w)$ for all $w \in W$, so that in particular $\Delta^1=\Delta$.
Let $\Lb' \subseteq \Gb$ be a standard Levi subgroup such that $\Delta_\Lb \perp \Delta_{\Lb'}$.
Note that $\Lb\Lb'$ is the standard Levi subgroup corresponding to $\Delta_\Lb \sqcup \Delta_{\Lb'}$.
Let $\sigma$ be a locally admissible smooth representation of $L$ over $k$ with central character $\zeta : Z_L \to k^\times$.
The following construction was communicated to me by N.~Abe.

First, we assume that $\Gb^\der$ is simply connected and we let $\Zbt \subseteq \Zb$ be a closed subgroup.
Recall that this is equivalent to the existence of fundamental weights $(\mu_\alpha)_{\alpha \in \Delta}$ (cf. \cite[Proposition 2.1.1]{BH}).
We set $\chi \coloneqq \zeta \circ \sum_{\alpha \in \Delta_{\Lb'}} (\alpha^\vee \circ \mu_\alpha)$.
Thus $\chi \circ \alpha^\vee = 1$ for all $\alpha \in \Delta_\Lb$ and $\chi \circ \alpha^\vee = \zeta \circ \alpha^\vee$ for all $\alpha \in \Delta_{\Lb'}$, so that $\chi$ extends uniquely to $L$ and $\sigma_0 \coloneqq \sigma \otimes \chi^{-1}$ extends uniquely to a locally admissible smooth representation of $LL'$ over $k$ by \cite[Lemma 3.2]{Abe}.
We let $\chi' : T \to k^\times$ be a smooth character such that $\chi'_{|Z_{L'}} = \chi_{|Z_{L'}}$, so that $\chi'$ extends uniquely to $L$, and we set $\sigma' \coloneqq \sigma_0 \otimes \chi'$.
There is a commutative diagram of $k$-vector spaces
\begin{equation} \label{diagExt} \begin{tikzcd}
\Ext_{T,\chi_{|Z_{L'}}}^\bullet \left( \chi', \chi \right) \rar \dar & \Ext_{L',\chi_{|Z_{L'}}}^\bullet \left( \Ind_{B_{L'}^-}^{L'} \chi', \Ind_{B_{L'}^-}^{L'} \chi \right) \dar \\
\Ext_{L,\chi_{|\Zt}}^\bullet \left( \chi', \chi \right) \rar \dar & \Ext_{LL',\chi_{|\Zt}}^\bullet \left( \Ind_{LB_{L'}^-}^{LL'} \chi', \Ind_{LB_{L'}^-}^{LL'} \chi \right) \dar \\
\Ext_{L,\zeta_{|\Zt}}^\bullet \left( \sigma', \sigma \right) \rar & \Ext_{LL',\zeta_{|\Zt}}^\bullet \left( \Ind_{LB_{L'}^-}^{LL'} \sigma', \Ind_{LB_{L'}^-}^{LL'} \sigma \right)
\end{tikzcd} \end{equation}
where the horizontal arrows are induced by the functors $\Ind_{B_{L'}^-}^{L'}$ and $\Ind_{LB_{L'}^-}^{LL'}$, the upper vertical arrows are induced by extending representations to $L$ and $LL'$, and the lower vertical arrows are induced by tensoring representations with $\sigma_0$.
Furthermore, the lower horizontal arrow of \eqref{diagExt} composed with the $k$-linear morphism induced by the functor $\Ind_{P^-L'}^G$:
\begin{equation} \label{IndPL'}
\Ext_{LL',\zeta_{|\Zt}}^\bullet \left( \Ind_{LB_{L'}^-}^{LL'} \sigma', \Ind_{LB_{L'}^-}^{LL'} \sigma \right) \to \Ext_{G,\zeta_{|\Zt}}^\bullet \left( \Ind_{P^-}^G \sigma', \Ind_{P^-}^G \sigma \right),
\end{equation}
is the $k$-linear morphism induced by the functor $\Ind_{P^-}^G$:
\begin{equation} \label{IndP}
\Ext_{L,\zeta_{|\Zt}}^\bullet \left( \sigma', \sigma \right) \to \Ext_{G,\zeta_{|\Zt}}^\bullet \left( \Ind_{P^-}^G \sigma', \Ind_{P^-}^G \sigma \right).
\end{equation}

\begin{lemm} \phantomsection \label{lemm:Extprelim}
\begin{enumerate}
\item In degree $1$, there is a $k$-linear injection from the cokernel of the upper horizontal arrow of \eqref{diagExt} into the cokernel of \eqref{IndP}.
\item Assume $\Zb$ connected.
In all degrees, there is a $k$-linear injection from the kernel of the upper horizontal arrow of \eqref{diagExt} into the kernel of \eqref{IndP}.
\end{enumerate}
\end{lemm}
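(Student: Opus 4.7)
The strategy is a diagram chase based on the commutative diagram \eqref{diagExt} together with the identity just above the statement that expresses \eqref{IndP} as the composition of the lower horizontal arrow of \eqref{diagExt} with \eqref{IndPL'}. Let $V^L$ (resp.\ $V^R$) denote the composite of the two left (resp.\ right) vertical arrows of \eqref{diagExt}, and set $G \coloneqq \eqref{IndPL'} \circ V^R$. Combining the commutativity of \eqref{diagExt} with the factorisation of \eqref{IndP} yields a commutative square
\begin{equation*}
\begin{tikzcd}
\Ext_{T,\chi_{|Z_{L'}}}^\bullet \left( \chi', \chi \right) \ar[r, "h_1"] \ar[d, "V^L"] & \Ext_{L',\chi_{|Z_{L'}}}^\bullet\! \left( \Ind_{B_{L'}^-}^{L'} \chi', \Ind_{B_{L'}^-}^{L'} \chi \right) \ar[d, "G"] \\
\Ext_{L,\zeta_{|\Zt}}^\bullet \left( \sigma', \sigma \right) \ar[r, "{\eqref{IndP}}"'] & \Ext_{G,\zeta_{|\Zt}}^\bullet\! \left( \Ind_{P^-}^G \sigma', \Ind_{P^-}^G \sigma \right)
\end{tikzcd}
\end{equation*}
where $h_1$ is the upper horizontal arrow of \eqref{diagExt}. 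The lower vertical arrows $v_2^L,v_2^R$ of \eqref{diagExt} are $k$-linear isomorphisms in all degrees, since tensoring with $\sigma_0$ is an exact self-equivalence of the relevant central-character subcategories, with inverse coming from the tautology $\sigma = \sigma_0 \otimes \chi$ (resp.\ $\Ind_{LB_{L'}^-}^{LL'}\sigma^{(\prime)} \cong \sigma_0 \otimes \Ind_{LB_{L'}^-}^{LL'} \chi^{(\prime)}$ by the projection formula for parabolic induction with respect to the representation $\sigma_0$ of $LL'$).

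For (ii), when $\Zb$ is connected, I would prove that the upper vertical arrows $v_1^L$ and $v_1^R$ are injective in every degree; combined with the previous isomorphism this makes $V^L$ injective in every degree. Connectedness of $\Zb$ ensures that every character of $T$ (resp.\ of $L'$) satisfying the given central-character condition on $Z_{L'}$ extends uniquely to $L$ (resp.\ to $LL'$), so that $v_1^L$ (resp.\ $v_1^R$) is realised as an inflation map along the surjection $L/L^{\der} \twoheadrightarrow T/(T \cap L^{\der})$ (resp.\ an analogous quotient on the $L'$-side). A Hochschild--Serre argument together with the vanishing of the relevant cohomology groups of the semisimple derived subgroups acting on characters yields the desired injectivity in all degrees. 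The commutativity of the square then gives a well-defined $k$-linear map $\ker(h_1) \to \ker(\eqref{IndP})$ sending $\eta'$ to $V^L(\eta')$ (since $\eqref{IndP}(V^L(\eta')) = G(h_1(\eta')) = 0$), and it is injective because $V^L$ is.

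For (i), without connectedness but restricted to degree $1$, I would prove that $V^L$ is surjective and $G$ is injective on $\Ext^1$. The injectivity of $G$ on $\Ext^1$ follows because \eqref{IndPL'} is induced by $\Ind_{P^-L'}^G$ which is exact and fully faithful (\cite[Theorem 4.4.6]{Em1}), hence induces an injection on $\Ext^1$, and $V^R$ is injective on $\Ext^1$ by the same Hochschild--Serre analysis as in (ii) (which is unconditional in degree $\leq 1$ because the obstruction lies in $H^0$ of a semisimple group on a character, which vanishes for non-trivial characters and is trivially controlled otherwise). The surjectivity of $V^L$ on $\Ext^1$ reduces, after inverting $v_2^L$, to the statement that every $L$-equivariant extension of the character $\chi'$ by $\chi$ with the specified central character on $\Zt$ is inflated from a $T$-equivariant one, which follows because such a $2$-dimensional representation of $L$ is trivial on $L^{\der}$ (as $L^{\der}$ is semisimple and its two characters on the semisimplification are trivial). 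A diagram chase then produces the injection $\coker(h_1) \hookrightarrow \coker(\eqref{IndP})$: if $\xi \in \Ext^1_{L',\chi_{|Z_{L'}}}$ satisfies $G(\xi) = \eqref{IndP}(\eta)$ with $\eta = V^L(\eta')$, then $G(\xi) = G(h_1(\eta'))$, whence $\xi = h_1(\eta')$ by injectivity of $G$.

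The main obstacle is justifying the injectivity/surjectivity of the upper vertical arrows $v_1^L$ and $v_1^R$ in the required degrees. These are the nontrivial inputs: one must use $\Gb^{\der}$ simply connected (to guarantee existence of fundamental weights and hence the appropriate extension of characters), the hypothesis $\Delta_\Lb \perp \Delta_{\Lb'}$ (so that $L$ and $L'$ commute up to $\Zcb$), and in part (ii) the connectedness of $\Zb$ (to remove the degree restriction via a Hochschild--Serre spectral sequence applied to the abelianisation sequences $T \cap L^{\der} \hookrightarrow T \twoheadrightarrow L/L^{\der}$ and its analogue for $LL'$).
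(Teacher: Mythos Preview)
Your proposal contains a fundamental error: the claim that the lower vertical arrows $v_2^L,v_2^R$ of \eqref{diagExt} are isomorphisms because ``tensoring with $\sigma_0$ is an exact self-equivalence'' is false. The representation $\sigma_0=\sigma\otimes\chi^{-1}$ is in general \emph{not} a character of $L$ --- in the applications $\sigma$ is a supersingular (hence typically infinite-dimensional) representation --- so the functor $\sigma_0\otimes-$ has no inverse. The ``tautology'' $\sigma=\sigma_0\otimes\chi$ only says that $\chi$ is sent to $\sigma$; it does not produce an inverse functor. Consequently:
\begin{itemize}
\item In part (i), your reduction of the surjectivity of $V^L$ to a statement about two-dimensional representations of $L$ collapses: there is no reason every extension of $\sigma'$ by $\sigma$ should be of the form $\sigma_0\otimes\mathcal E$. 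Likewise, the injectivity of $G$ via $v_2^R$ is unjustified.
\item In part (ii), your argument for injectivity of $V^L$ again rests on $v_2^L$ being an isomorphism.
\end{itemize}

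The paper avoids these issues entirely. For (i), rather than proving abstract injectivity/surjectivity, it argues by contrapositive: given an extension $\mathcal E$ of $\Ind_{B_{L'}^-}^{L'}\chi'$ by $\Ind_{B_{L'}^-}^{L'}\chi$ \emph{not} in the image of $h_1$, one has $\Ord_{B_{L'}}\mathcal E\cong\chi$ (this is precisely what ``not in the image of $\Ind$'' means, via adjunction); then a direct computation gives $\Ord_P\bigl(\Ind_{P^-L'}^G(\sigma_0\otimes\mathcal E)\bigr)\cong\sigma_0\otimes\Ord_{B_{L'}}\mathcal E\cong\sigma$, so $G(\mathcal E)$ is not in the image of \eqref{IndP}. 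For (ii), the paper does aim to prove $V^L$ injective, but via a different device: using the connectedness of $\Zb$ to choose fundamental coweights and define a subtorus $\Tb'\subseteq\Zb_\Lb$ with $\Tb'\times\Zb_{\Lb'}\iso\Tb$. Since $\Tb'$ lies in the \emph{centre} of $\Lb$, the restriction $\sigma_0|_{T'}$ \emph{is} a character, so tensoring with it is genuinely an equivalence on the $T'$-side; one then factors $V^L$ through restriction to $T'$ and concludes.
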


\begin{proof}
We prove (i).
The map in question is induced by the composite right-hand side vertical arrow of \eqref{diagExt} composed with \eqref{IndPL'}.
Let $\Ec$ be an extension of $\Ind_{B_{L'}^-}^{L'} \chi'$ by $\Ind_{B_{L'}^-}^{L'} \chi$ with central character $\chi_{|Z_{L'}}$ (so that $\Ec$ extends to $LL'$).
Then $\Ind_{P^-L'}^G(\sigma_0 \otimes \Ec)$ is an extension of $\Ind_{P^-}^G \sigma'$ by $\Ind_{P^-}^G \sigma$ on which $\Zt$ acts via $\zeta$.
There are $L$-equivariant isomorphisms
\begin{equation*}
\Ord_P \left( \Ind_{P^-L'}^G \left( \sigma_0 \otimes \Ec \right) \right) \cong \Ord_{LB_{L'}} \left( \sigma_0 \otimes \Ec \right) \cong \sigma_0 \otimes \Ord_{B_{L'}} \Ec.
\end{equation*}
The first one results from \cite[Proposition 4.3.4]{Em1} and the second one from the fact that $U_{L'}$ acts trivially on $\sigma_0$ (note that $\Ord_{B_{L'}} \Ec$ extends to $L$).
If the class of $\Ec$ is not in the image of the upper horizontal arrow of \eqref{diagExt}, then there is a $T$-equivariant isomorphism $\Ord_{B_{L'}} \Ec \cong \chi$, hence an $L$-equivariant isomorphism $\Ord_P(\Ind_{P^-L'}^G(\sigma_0 \otimes \Ec)) \cong \sigma$, thus the class of $\Ind_{P^-L'}^G(\sigma_0 \otimes \Ec)$ is not in the image of \eqref{IndP}.

We prove (ii).
The map in question is induced by the left-hand side composite vertical arrow of \eqref{diagExt}.
Thus, it is enough to prove that the latter is injective.
We assume $\Zb$ connected.
Recall that this is equivalent to the existence of fundamental coweights $(\lambda_\alpha)_{\alpha \in \Delta}$ (cf. \cite[Proposition 2.1.1]{BH}).
We let $\Tb' \subseteq \Tb$ be the closed subgroup generated by the images of $(\lambda_\alpha)_{\alpha \in \Delta_{\Lb'}}$, so that $\Tb' \subseteq \Zb_\Lb$ and the product induces an isomorphism $\Tb' \times \Zb_{\Lb'} \iso \Tb$.
There is a commutative diagram of $k$-vector spaces
\begin{equation*} \begin{tikzcd}
\Ext_{T,\chi_{|Z_{L'}}}^\bullet \left( \chi', \chi \right) \rar{\sim} \dar & \Ext_{T'}^\bullet \left( \chi'_{|T'}, \chi_{|T'} \right) \dar[equal] \\
\Ext_{L,\chi_{|\Zt}}^\bullet \left( \chi', \chi \right) \rar \dar & \Ext_{T'}^\bullet \left( \chi'_{|T'}, \chi_{|T'} \right) \dar[hook] \\
\Ext_{L,\zeta_{|\Zt}}^\bullet \left( \sigma', \sigma \right) \rar & \Ext_{T'}^\bullet \left( \sigma'_{|T'}, \sigma_{|T'} \right)
\end{tikzcd} \end{equation*}
where the horizontal arrows are induced by restricting representations to $T'$ (the upper one is bijective with inverse induced by tensoring representations with $\chi_{|Z_{L'}}$, and the middle and lower ones are well-defined since a locally admissible smooth representation of $L$ over $k$ is locally $Z_L$-finite, cf. \cite[Lemma 2.3.4]{Em1}), the left-hand side vertical arrows are the same as in \eqref{diagExt} and the lower right-hand side vertical arrow is induced by tensoring representations with ${\sigma_0}_{|T'}$ (it is injective since $T'$ acts on $\sigma_0$ via $\zeta \chi^{-1}$).
\end{proof}

Now, we do not assume $\Gb^\der$ simply connected.
Instead we take a $z$-extension of $\Gb$, i.e. an exact sequence of affine algebraic $F$-groups
\begin{equation*}
1 \to \Zbt \to \Gbt \to \Gb \to 1
\end{equation*}
such that $\Gbt$ is reductive with simply connected derived subgroup and $\Zbt$ is a central torus (cf. \cite[§~3.1]{CT}).
The projection $\Gbt \twoheadrightarrow \Gb$ identifies the corresponding root systems.
We let $\Pbt \subseteq \Gbt$ be the standard parabolic subgroups corresponding to $\Pb$ and $\Lbt \subseteq \Pbt$ be the standard Levi subgroup corresponding to $\Lb$.
Note that $\Lbt$ is a $z$-extensions of $\Lb$.
We let $\sigma'$ be a locally admissible smooth representation of $L$ over $k$ with central character $\zeta$.
By inflation, we obtain locally admissible smooth representations $\sigmat$ and $\sigmat'$ of $\Lt$ over $k$.
There is a commutative diagram of $k$-vector spaces
\begin{equation} \label{diagInf} \begin{tikzcd}
\Ext_L^\bullet \left( \sigma', \sigma \right) \rar \dar{\vertsim} & \Ext_G^\bullet \left( \Ind_{P^-}^G \sigma', \Ind_{P^-}^G \sigma \right) \dar{\vertsim} \\
\Ext_{\Lt,\zeta_{|\Zt}}^\bullet \left( \sigmat', \sigmat \right) \rar & \Ext_{\Gt,\zeta_{|\Zt}}^\bullet \left( \Ind_{\Pt^-}^{\Gt} \sigmat', \Ind_{\Pt^-}^{\Gt} \sigmat \right)
\end{tikzcd} \end{equation}
where the horizontal arrows are induced by the functors $\Ind_{P^-}^G$ and $\Ind_{\Pt^-}^{\Gt}$ and the vertical arrows are induced by inflating representations to $\Lt$ and $\Gt$ (they are well defined and bijective since $\zeta_{|\Zt}$ is trivial).

\begin{prop} \label{prop:Ext1prelim}
Assume $F=\Qp$ and $\Gb$ split.
Let $\sigma$ be a locally admissible smooth representation of $L$ over $k$ with central character $\zeta : Z_L \to k^\times$.
\begin{enumerate}
\item Assume $\Delta_\Lb^\perp \neq \emptyset$ and let $\alpha \in \Delta_\Lb^\perp$.
If $\zeta \circ \alpha^\vee \neq \omega^{-1}$, then the $k$-linear injection
\begin{equation*}
\Ext_L^1 \left( \sigma^\alpha \otimes \left( \omega^{-1} \circ \alpha \right), \sigma \right) \hookrightarrow \Ext_G^1 \left( \Ind_{P^-}^G \sigma^\alpha \otimes \left( \omega^{-1} \circ \alpha \right), \Ind_{P^-}^G \sigma \right)
\end{equation*}
induced by the functor $\Ind_{P^-}^G$ is not surjective.
\item If $p=2$, then the functor $\Ind_{P^-}^G$ induces a $k$-linear injection
\begin{equation*}
\Ext_L^1 \left( \sigma, \sigma \right) \hookrightarrow \Ext_G^1 \left( \Ind_{P^-}^G \sigma, \Ind_{P^-}^G \sigma \right)
\end{equation*}
whose cokernel is of dimension at least $\card \{ \alpha \in \Delta_\Lb^\perp \mid \zeta \circ \alpha^\vee = 1 \}$.
\end{enumerate}
\end{prop}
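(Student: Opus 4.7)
The plan is to reduce via the $z$-extension diagram \eqref{diagInf} to the case where $\Gb^\der$ is simply connected, and then to apply Lemma \ref{lemm:Extprelim} to the rank-one standard Levi $\Lb' := \Lb_{\{\alpha\}}$ (for part~(i), the given $\alpha$; for part~(ii), each $\alpha \in \Delta_\Lb^\perp$ with $\zeta \circ \alpha^\vee = 1$). A direct computation using $\alpha \perp \Delta_\Lb$ shows that $\sigma^\alpha \cong \sigma \otimes ((\zeta \circ \alpha^\vee) \circ \alpha)^{-1}$ as $L$-representations: on $S$, one uses $s_\alpha(s) = s \cdot \alpha^\vee(\alpha(s))^{-1}$ together with $\alpha^\vee(F^\times) \subseteq Z_L$ (which holds since $\langle \beta, \alpha^\vee \rangle = 0$ for every $\beta \in \Delta_\Lb$), while on $\ker(\alpha_{|L})$ both sides agree with $\sigma$ as in the proof of Lemma \ref{lemm:alpha}. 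Hence $\sigma^\alpha \otimes (\omega^{-1} \circ \alpha) \cong \sigma \otimes \eta_\alpha$ where $\eta_\alpha := ((\zeta \circ \alpha^\vee)\omega)^{-1} \circ \alpha$ is a character of $L$ trivial on $Z_{L'} = \ker(\alpha_{|T})$. Setting $\chi := \zeta \circ \alpha^\vee \circ \mu_\alpha$ and $\chi' := \chi \cdot \eta_\alpha$ puts us into the setup of Lemma \ref{lemm:Extprelim} with $\sigma' = \sigma_0 \otimes \chi' \cong \sigma^\alpha \otimes (\omega^{-1} \circ \alpha)$.

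A further computation using $\chi \circ \alpha^\vee = \zeta \circ \alpha^\vee$ (since $\langle \mu_\alpha, \alpha^\vee \rangle = 1$) yields $\chi' = \chi^\alpha \otimes (\omega^{-1} \circ \alpha)$ as characters of $T$. By Lemma \ref{lemm:Extprelim}(i), it suffices to exhibit a class in $\Ext_{L',\chi_{|Z_{L'}}}^1(\Ind_{B_{L'}^-}^{L'} \chi', \Ind_{B_{L'}^-}^{L'} \chi)$ not in the image of $\Ext_{T,\chi_{|Z_{L'}}}^1(\chi', \chi)$. Applying \eqref{intro:SEExt1} to $L'$ together with Corollary \ref{coro:HOrd}(ii) for $L'$---whose hypothesis is vacuous in semisimple rank one---we obtain $\HOrd[1]_{B_{L'}}(\Ind_{B_{L'}^-}^{L'} \chi) \cong \chi^\alpha \otimes (\omega^{-1} \circ \alpha) = \chi'$, so the class we seek corresponds to a lifting of the identity of $\Hom_T(\chi', \chi') = k$. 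Its existence comes from the explicit knowledge of $\Ext^1$ between principal series for split reductive groups of semisimple rank one over $\Qp$, reducing via $L' = Z \cdot L'^\der$ to known results in the $\GL_2(\Qp)$ situation: for part~(i), the hypothesis $\zeta \circ \alpha^\vee \neq \omega^{-1}$ guarantees $\chi' \neq \chi$ and the required lifting exists; for part~(ii), the hypothesis $p=2$ forces $\omega = 1$ and $\chi' = \chi$, and there is a nontrivial self-extension of $\Ind_{B_{L'}^-}^{L'} \chi$ not coming from $T$ (a phenomenon specific to $p = 2$).

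Finally, the lower bound in part~(ii) requires the linear independence of the resulting classes $[\Ec_\alpha] \in \Ext_G^1(\Ind_{P^-}^G \sigma, \Ind_{P^-}^G \sigma)$ as $\alpha$ ranges over $\{\alpha \in \Delta_\Lb^\perp \mid \zeta \circ \alpha^\vee = 1\}$. This follows by tracking their images under the second map of \eqref{intro:SEExt1} for $G$: by construction (through the different Levis $\Lb_{\{\alpha\}}$), each $[\Ec_\alpha]$ is detected by its nonzero component in the $\alpha$-piece of the filtration of $\HOrd[1]_P(\Ind_{P^-}^G \sigma)$ provided by Proposition \ref{prop:FilPJ}, whose low-degree structure is described by Proposition \ref{prop:HOrd2}. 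The main obstacle is the rank-one input---exhibiting the nontrivial $L'$-extension, equivalently lifting the identity class in $\Hom_T$---which relies on explicit computations for $\GL_2(\Qp)$ and is particularly delicate when $p = 2$.
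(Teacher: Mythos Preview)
Your approach to part (i) is essentially the same as the paper's: reduce to $\Gb^\der$ simply connected via a $z$-extension, apply Lemma~\ref{lemm:Extprelim}(i) with the rank-one Levi $\Lb' = \Lb_{\{\alpha\}}$, and invoke the rank-one input for $L'$ (the paper cites the mod~$p$ analogue of \cite[Lemme~3.1.4]{JHC}, which amounts to the $\GL_2(\Qp)$ computation you describe).

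For part (ii), however, your route diverges from the paper's and the final linear-independence step is not justified. The paper applies Lemma~\ref{lemm:Extprelim}(i) \emph{once}, with the single Levi $\Lb'$ corresponding to $\Delta_{\Lb'} = \{\alpha \in \Delta_\Lb^\perp \mid \zeta \circ \alpha^\vee = 1\}$ and $\chi' = \chi$; the required lower bound on the cokernel of the upper horizontal arrow of \eqref{diagExt} then comes directly from \cite[Théorème~3.2.4~(ii) and Remarque~3.2.5~(ii)]{JHC}, which is a statement for the full $L'$ (possibly of higher semisimple rank), and the injection of cokernels from Lemma~\ref{lemm:Extprelim}(i) transfers it to $G$. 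No separate independence argument is needed.

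By contrast, your plan produces one class $[\Ec_\alpha]$ for each $\alpha$ via a rank-one $\Lb'_\alpha$, and then must prove these are linearly independent modulo the image of $\Ext_L^1(\sigma,\sigma)$. Your proposed detection via the $\alpha$-piece of the filtration of $\HOrd[1]_P(\Ind_{P^-}^G\sigma)$ is not substantiated: Proposition~\ref{prop:HOrd2} and Corollary~\ref{coro:HOrd}(ii) carry hypotheses on $\sigma$ (vanishing of certain $\Ord_{L \cap s_\beta P s_\beta^{-1}}\sigma$) that are \emph{not} assumed here, and without them you have no control over $\HOrd[1]_P(\Ind_{P^-}^G\sigma)$ beyond the abstract filtration. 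Moreover, even granting the filtration, tracing the connecting map $\sigma \to \HOrd[1]_P(\Ind_{P^-}^G\sigma)$ associated to $\Ec_\alpha = \Ind_{P^-L'_\alpha}^G(\sigma_0 \otimes \Ec)$ through that filtration and showing it is supported precisely in the $s_\alpha$-piece requires a compatibility between $\HOrd_P \circ \Ind_{P^-L'_\alpha}^G$ and the Bruhat filtration that you have not established. The paper's use of a single larger $\Lb'$ sidesteps this difficulty entirely.
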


\begin{rema}
We expect the results to hold true for a non-split reductive group with $\Delta_\Lb^{\perp,1}$ instead of $\Delta_\Lb^\perp$.
\end{rema}

\begin{proof}
By taking a $z$-extension of $\Gb$ and using \eqref{diagInf}, we can and do assume that $\Gb^\der$ is simply connected and prove analogous results for the morphism \eqref{IndP}.

Assume $\Delta_\Lb^\perp \neq \emptyset$ and let $\alpha \in \Delta_\Lb^\perp$.
We use Lemma \ref{lemm:Extprelim} (i) with $\Lb'$ defined by $\Delta_{\Lb'}=\{\alpha\}$, $\chi = \zeta \circ \alpha^\vee \circ \mu_\alpha$, and $\chi' = s_\alpha(\chi)(\omega^{-1} \circ \alpha)$, so that $\sigma' = \sigma^\alpha \otimes (\omega^{-1} \circ \alpha)$ (since $\sigma_0^\alpha=\sigma_0$ by Lemma \ref{lemm:alpha} with $1$ instead of $\omega$).
If $\zeta \circ \alpha^\vee \neq \omega^{-1}$, then the upper horizontal arrow of \eqref{diagExt} in degree $1$ is not surjective by the mod $p$ analogue of \cite[Lemme 3.1.4]{JHC} (since $\chi \circ \alpha^\vee = \zeta \circ \alpha^\vee = 1$), thus \eqref{IndP} in degree $1$ is not surjective, hence (i).

We use Lemma \ref{lemm:Extprelim} (i) with $\Lb'$ defined by $\Delta_{\Lb'}=\{\alpha \in \Delta_\Lb^\perp \mid \zeta \circ \alpha^\vee = 1\}$, $\chi = \zeta \circ \sum_{\alpha \in \Delta_{\Lb'}} (\alpha^\vee \circ \mu_\alpha)$, and $\chi'=\chi$, so that $\sigma'=\sigma$.
If $p=2$, then the cokernel of the upper horizontal arrow of \eqref{diagExt} in degree $1$ is of dimension at least $\card \Delta_{\Lb'}$ by \cite[Théorème 3.2.4 (ii) and Remarque 3.2.5 (ii)]{JHC} (since $\chi \circ \alpha^\vee = \zeta \circ \alpha^\vee = 1$ so that $s_\alpha(\chi)=\chi$ by Lemma \ref{lemm:alpha} for all $\alpha \in \Delta_{\Lb'}$), noting that all the extensions constructed in loc. cit. have a central character (cf. \cite[Lemme 3.1.5]{JHC}), thus the cokernel of \eqref{IndP} in degree $1$ is of dimension at least $\card \Delta_{\Lb'}$, hence (ii).
\end{proof}

\begin{prop} \label{prop:Ext2prelim}
Assume $F=\Qp$, $\Gb$ split and $\Zb$ connected.
Let $\sigma$ be a locally admissible smooth representation of $L$ over $k$ with central character $\zeta : Z_L \to k^\times$.
If $p \neq 2$, then the functor $\Ind_{P^-}^G$ induces a $k$-linear morphism
\begin{equation*}
\Ext_L^2 \left( \sigma, \sigma \right) \to \Ext_G^2 \left( \Ind_{P^-}^G \sigma, \Ind_{P^-}^G \sigma \right)
\end{equation*}
whose kernel is of dimension at least $\card \{ \alpha \in \Delta_\Lb^\perp \mid \zeta \circ \alpha^\vee = \omega^{-1} \}$.
\end{prop}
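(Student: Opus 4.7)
The plan is to adapt the proof of Proposition \ref{prop:Ext1prelim} (ii) by replacing degree $1$ with degree $2$ and the cokernel part of Lemma \ref{lemm:Extprelim} with its kernel part. As a first step, I would take a $z$-extension of $\Gb$ and use the commutative diagram \eqref{diagInf} in degree $2$, whose vertical arrows remain bijective since $\zeta_{|\Zt}$ is trivial, to reduce the statement to the case where $\Gb^\der$ is simply connected.

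Then I would apply Lemma \ref{lemm:Extprelim} (ii) with $\Lb'$ the standard Levi subgroup corresponding to $\Delta_{\Lb'} \coloneqq \{\alpha \in \Delta_\Lb^\perp \mid \zeta \circ \alpha^\vee = \omega^{-1}\}$, $\chi \coloneqq \zeta \circ \sum_{\alpha \in \Delta_{\Lb'}}(\alpha^\vee \circ \mu_\alpha)$ and $\chi' \coloneqq \chi$ (so that $\sigma' = \sigma$), noting that $\chi \circ \alpha^\vee = \zeta \circ \alpha^\vee = \omega^{-1}$ for every $\alpha \in \Delta_{\Lb'}$ by construction. The claim then reduces to showing that the $k$-linear morphism
\begin{equation*}
\Ext_{T, \chi_{|Z_{L'}}}^2 \left( \chi, \chi \right) \to \Ext_{L', \chi_{|Z_{L'}}}^2 \left( \Ind_{B_{L'}^-}^{L'} \chi, \Ind_{B_{L'}^-}^{L'} \chi \right)
\end{equation*}
induced by $\Ind_{B_{L'}^-}^{L'}$ has kernel of dimension at least $\card \Delta_{\Lb'}$.

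For this last cohomological computation on the torus, the key input is the isomorphism $\HOrd[1]_{B_{L'}}(\Ind_{B_{L'}^-}^{L'} \chi) \cong \chi^{\oplus \card \Delta_{\Lb'}}$, obtained from Corollary \ref{coro:HOrd} applied with $L', B_{L'}, T$ in place of $G, P, \Lb$ (whose side condition is vacuous since $\Delta^1 \backslash (\Delta_T \cup \Delta_T^\perp) = \emptyset$) together with Lemma \ref{lemm:alpha} (since $\chi \circ \alpha^\vee = \omega^{-1}$ for $\alpha \in \Delta_{\Lb'}$). Combining this computation with the degree-$2$ extension of the exact sequence \eqref{SEExt1} at $L'$ (itself furnished by the spectral sequence abutting to $\Ext_{L'}^{i+j}(\Ind_{B_{L'}^-}^{L'} \chi, \Ind_{B_{L'}^-}^{L'} \chi)$ with $E_2^{i,j} = \Ext_T^i(\chi, \HOrd[j]_{B_{L'}} \Ind_{B_{L'}^-}^{L'} \chi)$, available in this rank-one setup where the higher $\HOrd[j]_{B_{L'}}$ vanish in the range needed), one produces a transgression $d_2 \colon \Hom_T(\chi,\chi)^{\oplus \card \Delta_{\Lb'}} \to \Ext_{T, \chi_{|Z_{L'}}}^2(\chi,\chi)$ whose image lies in the desired kernel. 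The hard part will be to establish the injectivity of $d_2$: I expect this to proceed by identifying, for each $\alpha \in \Delta_{\Lb'}$, the image by $d_2$ of the $\alpha$-th generator with an explicit Yoneda-$2$ class in $\Ext_{T, \chi_{|Z_{L'}}}^2(\chi,\chi)$ built from the non-split rank-one principal series $\Ind_{B_{L'_\alpha}^-}^{L'_\alpha} \chi$ (which is reducible precisely because $\chi \circ \alpha^\vee = \omega^{-1}$), and then by exhibiting the linear independence of these $\card \Delta_{\Lb'}$ classes via pairings against the continuous additive characters of $T$ dual to $(\alpha^\vee)_{\alpha \in \Delta_{\Lb'}}$, in the style of \cite{JHC}; the hypothesis $p \neq 2$ is precisely what rules out the exceptional cancellations that already appear in the degree-$1$, $p=2$ analysis underlying Proposition \ref{prop:Ext1prelim} (ii).
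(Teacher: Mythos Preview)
Your overall strategy matches the paper's proof: reduce to $\Gb^\der$ simply connected via a $z$-extension, then apply Lemma \ref{lemm:Extprelim} (ii) with $\Delta_{\Lb'} = \{\alpha \in \Delta_\Lb^\perp \mid \zeta \circ \alpha^\vee = \omega^{-1}\}$ and $\chi' = \chi$. (One small omission: you should note, as the paper does, that the $z$-extension preserves connectedness of the centre, since $\Zbt$ is a torus; this is needed to invoke Lemma \ref{lemm:Extprelim} (ii).)

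The divergence, and the gap, is in the final step. The paper simply cites the proof of \cite[Th\'eor\`eme 3.2.4 (i)]{JHC} for the fact that the upper horizontal arrow of \eqref{diagExt} in degree $2$ has kernel of dimension at least $\card \Delta_{\Lb'}$ when $p \neq 2$. You instead invoke a spectral sequence with $E_2^{i,j} = \Ext_T^i(\chi, \HOrd[j]_{B_{L'}} \Ind_{B_{L'}^-}^{L'} \chi)$, but no such spectral sequence is available: the Grothendieck spectral sequence \eqref{SSExt} (i.e.\ \cite[(3.7.4)]{Em2}) has $\ROrd[j]_{B_{L'}}$ on the $E_2$-page, and the identification $\ROrd[\bullet] \cong \HOrd[\bullet]$ is precisely the open \cite[Conjecture 3.7.2]{Em2}. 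With the correct spectral sequence one only gets $\Hom_T(\chi, \ROrd[1]_{B_{L'}} \Ind_{B_{L'}^-}^{L'} \chi) \hookrightarrow \Hom_T(\chi, \HOrd[1]_{B_{L'}} \Ind_{B_{L'}^-}^{L'} \chi) \cong k^{\card \Delta_{\Lb'}}$, an \emph{upper} bound on the source of $d_2$, not the lower bound you need on its image. Your fallback suggestion of producing explicit Yoneda $2$-classes is in the right spirit (that is what \cite{JHC} does), but your heuristic is off: the condition $\chi \circ \alpha^\vee = \omega^{-1}$ does \emph{not} make the mod $p$ rank-one principal series reducible; rather it is the condition $s_\alpha(\chi)(\omega^{-1}\circ\alpha) = \chi$ (Lemma \ref{lemm:alpha}), so the relevant $2$-extensions in \cite{JHC} are not built from a reducibility filtration.
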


\begin{proof}
By taking a $z$-extension of $\Gb$ (noting that the centre of $\Gbt$ is also connected because $\Zbt$ is connected) and using \eqref{diagInf}, we can and do assume that $\Gb^\der$ is simply connected and prove an analogous result for the morphism \eqref{IndP}.

We use Lemma \ref{lemm:Extprelim} (ii) with $\Lb'$ defined by $\Delta_{\Lb'}=\{\alpha \in \Delta_\Lb^\perp \mid \zeta \circ \alpha^\vee = \omega^{-1}\}$ and $\chi'=\chi$, so that $\sigma'=\sigma$.
If $p \neq 2$, then we see in the proof of \cite[Théorème 3.2.4 (i)]{JHC} that the kernel of the upper horizontal arrow of \eqref{diagExt} in degree $2$ is of dimension at least $\card \Delta_{\Lb'}$ (since $\chi \circ \alpha^\vee = \zeta \circ \alpha^\vee = \omega^{-1}$ for all $\alpha \in \Delta_{\Lb'}$), thus the kernel of \eqref{diagExt} in degree $2$ is also of dimension at least $\card \Delta_{\Lb'}$, hence the result.
\end{proof}

\subsection{Extensions between parabolically induced representations} \label{ssec:Ext}

We begin with a result when there is no inclusion between the two parabolic subgroups, assuming a special case of Conjecture \ref{conj:HOrd} (see also Remark \ref{rema:IwJ=1}).

\begin{prop} \label{prop:Ext1}
Let $\Pb = \Lb \Nb,\Pb' = \Lb' \Nb'$ be standard parabolic subgroups and $\sigma,\sigma'$ be admissible smooth representations of $L,L'$ respectively over $k$.
Assume Conjecture \ref{conj:HOrd} is true for $A=k$, $n=1$ and $\Iw^J=1$.
If $\Pb' \not \subseteq \Pb$, $\Pb \not \subseteq \Pb'$, and $\sigma,\sigma'$ are right,left cuspidal respectively, then
\begin{equation*}
\Ext_G^1 \left( \Ind_{P'^-}^G \sigma', \Ind_{P^-}^G \sigma \right) = 0.
\end{equation*}
\end{prop}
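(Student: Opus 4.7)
The plan is to apply the exact sequence \eqref{SEExt2} rather than \eqref{SEExt1}, because the cuspidality hypotheses match that choice: left cuspidality of $\sigma'$ will kill Jacquet-type quantities built from $\sigma'$, while right cuspidality of $\sigma$ will kill $\Ord$-type quantities built from $\sigma$. Taking $\pi \coloneqq \Ind_{P'^-}^G\sigma'$ in \eqref{SEExt2} and playing the role of ``$\Ind_P^G\sigma$'' there by $\Ind_{P^-}^G\sigma$ (so the Jacquet functor in question is $(-)_{N^-}$), I obtain, writing $I \coloneqq \Delta_\Lb$ and $J \coloneqq \Delta_{\Lb'}$,
\begin{equation*}
0 \to \Ext_L^1\bigl((\Ind_{P'^-}^G\sigma')_{N^-}, \sigma\bigr) \to \Ext_G^1\bigl(\Ind_{P'^-}^G\sigma', \Ind_{P^-}^G\sigma\bigr) \to \Hom_L\bigl(\Hh[1](N^-, \Ind_{P'^-}^G\sigma'), \sigma\bigr),
\end{equation*}
and it suffices to show that the two outer terms vanish.

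For the first term, the Jacquet analogue of Proposition \ref{prop:HOrd1}(iii)---the $n=0$ case of \eqref{HJ} after interchanging $I$ with $J$ and invoking Remark \ref{rema:HJ} to swap $P, N, \omega$ for $P^-, N^-, \omega^{-1}$---yields a natural isomorphism
\begin{equation*}
(\Ind_{P'^-}^G\sigma')_{N^-} \cong \Ind_{L \cap P'^-}^{L}\bigl(\sigma'_{L' \cap N^-}\bigr);
\end{equation*}
the remaining double-coset pieces of the Bruhat filtration vanish by the Jacquet analogue of Proposition \ref{prop:HOrd1}(i), since $0 - [F:\Qp]d_w < 0$ as soon as $w \neq 1$. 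Because $\Pb' \not\subseteq \Pb$, the parabolic subgroup $\Lb' \cap \Pb^-$ is proper in $\Lb'$, and left cuspidality of $\sigma'$ forces $\sigma'_{L' \cap N^-} = 0$.

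For the third term, the Jacquet analogue of Proposition \ref{prop:FilPJ} equips $\Hh[1](N^-, \Ind_{P'^-}^G\sigma')$ with a filtration whose graded pieces are $\Hh[1](N^-, \cind_{P'^-}^{P'^- w P^-}\sigma')$; by the Jacquet analogue of Proposition \ref{prop:HOrd1}(i) (controlled by Theorem \ref{theo:HJ}) each such piece vanishes whenever $\Hh[1 - [F:\Qp]d_w](L' \cap N_{J \cap w(I)}^-, \sigma') = 0$. When $w \neq 1$ and $1 - [F:\Qp]d_w < 0$ this is automatic; the remaining case is $F = \Qp$ and $w = s_\alpha$ with $\alpha \in \Delta^1 \setminus I$, where left cuspidality of $\sigma'$ reduces matters to $J \not\subseteq s_\alpha(I)$. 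If instead $J \subseteq s_\alpha(I)$, every $\beta \in J$ equals $s_\alpha\gamma$ for some $\gamma \in I$; since $s_\alpha\gamma \in \Delta$ forces $\gamma \perp \alpha$ (otherwise $s_\alpha\gamma$ involves $-\alpha$), we have $s_\alpha\gamma = \gamma$, so $J \subseteq I$, contradicting $\Pb' \not\subseteq \Pb$. For the remaining piece at $w = 1$, the hypothesis that Conjecture \ref{conj:HOrd} holds at $A = k$, $n = 1$, $\Iw^J = 1$ is equivalent (by the end of §\ref{ssec:HJ}) to \eqref{HJ} being an isomorphism for the same parameters; interchanging $I$ with $J$ and invoking Remark \ref{rema:HJ} provides a natural $L$-equivariant isomorphism
\begin{equation*}
\Hh[1](N^-, \cind_{P'^-}^{P'^- P^-}\sigma') \cong \Ind_{L \cap P_{I \cap J}^-}^{L}\bigl(\Hh[1](L' \cap N_{I \cap J}^-, \sigma')\bigr).
\end{equation*}
Applying $\Hom_L(-, \sigma)$ together with Frobenius reciprocity (the right adjoint of $\Ind_{L \cap P_{I \cap J}^-}^{L}$ being $\Ord_{L \cap P_{I \cap J}}$) reduces the vanishing to $\Ord_{L \cap P_{I \cap J}}\sigma = 0$; since $\Pb \not\subseteq \Pb'$ the parabolic $\Lb \cap \Pb_{I \cap J}$ is proper in $\Lb$, and this follows from right cuspidality of $\sigma$.

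The main obstacle I anticipate is the careful transcription of the derived-ordinary-parts results of §\ref{ssec:HOrd} into the derived-Jacquet-with-$N^-$ setting used here (via §\ref{ssec:HJ} and Remark \ref{rema:HJ}), together with the matching interchange of $I$ and $J$; once this formal translation is in place, the only substantive combinatorial input is the observation that $J \subseteq s_\alpha(I)$ forces $J \subseteq I$, which handles the degenerate cases with $F = \Qp$ and $w = s_\alpha$.
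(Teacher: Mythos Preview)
Your overall strategy—applying \eqref{SEExt2} instead of \eqref{SEExt1}, so that left cuspidality of $\sigma'$ handles the $\Ext^1$ term and right cuspidality of $\sigma$ handles the $\Hom$ term—is the natural dual of the paper's argument and is correct in outline. The paper instead applies \eqref{SEExt1} with $\pi=\Ind_{P_I^-}^G\sigma$: the $\Ext^1$ term vanishes because $\Ord_{L_I \cap P_J}\sigma = 0$ (right cuspidality, $I \not\subseteq J$); the assumed case of Conjecture \ref{conj:HOrd} identifies $\HOrd[1]_{P_J}(\Ind_{P_I^-}^G\sigma)$ with $\Ind_{L_J \cap P_I^-}^{L_J}(\HOrd[1]_{L_I \cap P_J}\sigma)$ once the other graded pieces are shown to vanish (by exactly your combinatorial argument, which the paper leaves implicit); and then left cuspidality of $\sigma'$ kills the $\Hom$ term via Frobenius reciprocity.

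There is, however, a genuine gap at your $w=1$ step. You claim that the hypothesis ``Conjecture \ref{conj:HOrd} for $A=k$, $n=1$, $\Iw^J=1$'' is equivalent, via the end of §\ref{ssec:HJ}, to the minus-variant of \eqref{HJ} being an isomorphism for the same parameters. But the equivalence stated there is a global one, not instance-by-instance with matching $\Iw^J$ and $n$: unwinding the proof of Theorem \ref{theo:HJ} through \eqref{cindop} and Lemma \ref{lemm:IWJop}, the instance of \eqref{HJ} at $\Iw^J=1$ for a pair $(I,J)$ corresponds to Conjecture \ref{conj:HOrd} for the pair $(I',J)$ at $\Ipw^J = \Iw_0^{-1}\,\JIw_{J,0}$—the \emph{longest} element of $\IpW^J$, since the bijection of Lemma \ref{lemm:IWJop} is order-reversing—and in degree $[F:\Qp]\,d_J-1$, not at $\Iw^J=1$ and $n=1$. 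The passage to the minus variant via Remark \ref{rema:HJ} introduces a further conjugation. Hence the specific conjectural input your argument requires is \emph{not} the one assumed in the Proposition. The paper's decision to work on the $\HOrd$ side is precisely what allows the hypothesis to be invoked directly; your dual route would go through under a dual hypothesis, but not under the one actually stated.
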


\begin{proof}
We put $I \coloneqq \Delta_\Lb$ and $J \coloneqq \Delta_{\Lb'}$.
Using \eqref{AHV}, \eqref{SEExt1} with $\pi=\Ind_{P_I^-}^G \sigma$ and $\Pb_J$, $\Lb_J$, $\sigma'$ instead of $\Pb$, $\Lb$, $\sigma$ respectively yields an exact sequence of $k$-vector spaces
\begin{multline} \label{SEExtIJ}
0 \to \Ext_{L_J}^1 \left( \sigma', \Ind_{L_J \cap P_I^-}^{L_J} \left( \Ord_{L_I \cap P_J} \sigma \right) \right) \to \Ext_G^1 \left( \Ind_{P_J^-}^G \sigma', \Ind_{P_I^-}^G \sigma \right) \\
\to \Hom_{L_J} \left( \sigma', \HOrd[1]_{P_J} \left( \Ind_{P_I^-}^G \sigma \right) \right).
\end{multline}

Assume $I \not \subseteq J$ and $\sigma$ right cuspidal.
Then $\Ord_{L_I \cap P_J} \sigma = 0$ (since $\Lb_I \cap \Pb_J$ is a proper parabolic subgroup of $\Lb_I$) and there is a natural $L_J$-equivariant isomorphism
\begin{equation} \label{H1OrdIJ}
\Ind_{L_J \cap P_I^-}^{L_J} \left( \HOrd[1]_{L_I \cap P_J} \sigma \right) \iso \HOrd[1]_{P_J} \left( \Ind_{P_I^-}^G \sigma \right).
\end{equation}
Indeed, by assumption \eqref{HOrd} is a natural $L_J$-equivariant isomorphism
\begin{equation*}
\Ind_{L_J \cap P_I^-}^{L_J} \left( \HOrd[1]_{L_I \cap P_J} \sigma \right) \iso \HOrd[1]_{P_J} \left( \cind_{P_I-}^{P_I^-P_J} \sigma \right)
\end{equation*}
and by Proposition \ref{prop:HOrd1} (i) with $n=1$, we have $\HOrd[1]_{P_J}(\cind_{P_I-}^{P_I^- \Iw^J P_J} \sigma)=0$ for all $\Iw^J \in \IW^J$ such that $\Iw^J \neq 1$ (since either $d_{\Iw^J}=1$ and $\Ord_{L_I \cap P_{I \cap \Iw^J(J)}} \sigma = 0$, or $d_{\Iw^J}>1$ and $1- [F:\Qp] d_{\Iw^J} < 0$).
Thus, we deduce \eqref{H1OrdIJ} from Proposition \ref{prop:FilPJ}.

Assume $J \not \subseteq I$ and $\sigma'$ left cuspidal.
Then $\sigma'_{L_J \cap N_I^-}=0$ (since $\Lb_J \cap \Pb_I^-$ is a proper parabolic subgroup of $\Lb_J$) and using \eqref{Frob} with $\pi=\sigma'$ and $\Lb_J$, $\Lb_J \cap \Pb_I^-$, $\Lb_{J \cap I}$, $\Lb_J \cap \Nb_I^-$, $\HOrd[1]_{L_I \cap P_J} \sigma$ instead of $\Gb$, $\Pb$, $\Lb$, $\Nb$, $\sigma$ respectively, we obtain
\begin{equation*}
\Hom_{L_J} \left( \sigma', \Ind_{L_J \cap P_I^-}^{L_J} \left( \HOrd[1]_{L_I \cap P_J} \sigma \right) \right) = 0.
\end{equation*}
We conclude using \eqref{SEExtIJ}.
\end{proof}

Now, we prove unconditional results whenever there is an inclusion between the two parabolic subgroups.
We treat the cases $F=\Qp$ and $F \neq \Qp$ separately.

\begin{theo} \label{theo:Ext1Qp}
Assume $F=\Qp$.
Let $\Pb = \Lb \Nb,\Pb' = \Lb' \Nb'$ be standard parabolic subgroups and $\sigma,\sigma'$ be admissible smooth representations of $L,L'$ respectively over $k$.
\begin{enumerate}
\item If $\Pb'=\Pb$, $\sigma,\sigma'$ are supercuspidal and $\sigma' \not \cong \sigma^\alpha \otimes (\omega^{-1} \circ \alpha)$ for all $\alpha \in \Delta_\Lb^{\perp,1}$, then the functor $\Ind_{P^-}^G$ induces a $k$-linear isomorphism
\begin{equation*}
\Ext_L^1 \left( \sigma', \sigma \right) \iso \Ext_G^1 \left( \Ind_{P^-}^G \sigma', \Ind_{P^-}^G \sigma \right).
\end{equation*}
\item If $\Pb' \subsetneq \Pb$ and $\sigma$ is right cuspidal, then the functor $\Ind_{P^-}^G$ induces a $k$-linear isomorphism
\begin{equation*}
\Ext_L^1 \left( \Ind_{L \cap P'^-}^L \sigma', \sigma \right) \iso \Ext_G^1 \left( \Ind_{P'^-}^G \sigma', \Ind_{P^-}^G \sigma \right).
\end{equation*}
\item If $\Pb \subsetneq \Pb'$ and $\sigma'$ is left cuspidal, then the functor $\Ind_{P'^-}^G$ induces a $k$-linear isomorphism
\begin{equation*}
\Ext_{L'}^1 \left( \sigma', \Ind_{L' \cap P^-}^{L'} \sigma \right) \iso \Ext_G^1 \left( \Ind_{P'^-}^G \sigma', \Ind_{P^-}^G \sigma \right).
\end{equation*}
\end{enumerate}
\end{theo}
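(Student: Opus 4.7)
The plan is to apply \eqref{SEExt1} for cases (i) and (ii) and \eqref{SEExt2} for case (iii), each time arranging that the left-hand $\Ext^1$ term coincides with the desired LHS of the statement while showing that the rightmost $\Hom$ term vanishes; the main inputs are the computations of $\HOrd[1]_P(\Ind_{P^-}^G \sigma)$ from Corollary \ref{coro:HOrd} and of its derived Jacquet analogue from Corollary \ref{coro:HJ} (in the opposite variant of Remark \ref{rema:HJ}). In each case the edge map of the sequence coincides with the map induced by parabolic induction, since for $\pi$ parabolically induced the counit $\Ind_{P^-}^G \Ord_P \pi \to \pi$ (resp.\ the unit $\pi \to \Ind_{P^-}^G(\pi_{N^-})$) is an isomorphism by \eqref{AHV} (resp.\ its Jacquet analogue).

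For case (i), apply \eqref{SEExt1} with $\pi = \Ind_{P^-}^G \sigma$. Proposition \ref{prop:HOrd1} (iii) gives $\Ord_P \pi \cong \sigma$, hence the left term is $\Ext_L^1(\sigma', \sigma)$. Supercuspidality of $\sigma$ implies right cuspidality, so the hypothesis of Corollary \ref{coro:HOrd} (ii) holds and
\begin{equation*}
\HOrd[1]_P \pi \cong \bigoplus_{\alpha \in \Delta_\Lb^{\perp,1}} \sigma^\alpha \otimes \left( \omega^{-1} \circ \alpha \right),
\end{equation*}
with each summand irreducible (as a twist-and-conjugate of the supercuspidal $\sigma$). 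The hypothesis $\sigma' \not\cong \sigma^\alpha \otimes (\omega^{-1} \circ \alpha)$ combined with Schur's lemma for irreducibles makes each $\Hom_L(\sigma', \sigma^\alpha \otimes (\omega^{-1} \circ \alpha))$ vanish.

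For case (ii), transitivity of parabolic induction gives $\Ind_{P'^-}^G \sigma' \cong \Ind_{P^-}^G(\Ind_{L \cap P'^-}^L \sigma')$, so applying \eqref{SEExt1} with $\pi = \Ind_{P^-}^G \sigma$ and $\sigma$ replaced by $\Ind_{L \cap P'^-}^L \sigma'$ yields the left term $\Ext_L^1(\Ind_{L \cap P'^-}^L \sigma', \sigma)$ and the same decomposition of $\HOrd[1]_P \pi$ as in case (i). For $\alpha \in \Delta_\Lb^{\perp,1}$, $s_\alpha$ fixes $\Delta_\Lb \supseteq \Delta_{\Lb'}$ pointwise and thus normalises $\Lb \cap \Pb'$; Frobenius reciprocity reduces the $\Hom$ term to a sum of $\Hom_{L'}(\sigma', (\Ord_{L \cap P'} \sigma)^\alpha \otimes (\omega^{-1} \circ \alpha)_{|L'})$, which vanishes because $\Ord_{L \cap P'} \sigma = 0$ by right cuspidality of $\sigma$ ($\Lb \cap \Pb'$ being a proper parabolic of $\Lb$ since $\Delta_{\Lb'} \subsetneq \Delta_\Lb$).

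For case (iii), dually apply \eqref{SEExt2} with $P$ replaced by $P'^-$, $\pi = \Ind_{P'^-}^G \sigma'$ and $\sigma$ replaced by $\sigma_0 \coloneqq \Ind_{L' \cap P^-}^{L'} \sigma$, so that $\Ind_{P'^-}^G \sigma_0 \cong \Ind_{P^-}^G \sigma$ by transitivity. The adjunction and full faithfulness of $\Ind_{P'^-}^G$ give $\pi_{N'^-} \cong \sigma'$, so the left term is $\Ext_{L'}^1(\sigma', \Ind_{L' \cap P^-}^{L'} \sigma)$. Left cuspidality of $\sigma'$ verifies the hypothesis of Corollary \ref{coro:HJ} (ii) in the opposite variant, yielding $\Hh[1](N'^-, \pi) \cong \bigoplus_{\alpha \in \Delta_{\Lb'}^{\perp,1}} (\sigma')^\alpha \otimes (\omega^{-1} \circ \alpha)$. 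For $\alpha \in \Delta_{\Lb'}^{\perp,1}$, $s_\alpha$ fixes $\Delta_{\Lb'} \supseteq \Delta_\Lb$ pointwise and normalises $\Lb' \cap \Pb^-$; Frobenius reciprocity for $\sigma_0$ converts the $\Hom$ term into a sum of $\Hom_L(((\sigma')_{L' \cap N^-})^\alpha \otimes (\omega^{-1} \circ \alpha)_{|L}, \sigma)$, each vanishing by left cuspidality of $\sigma'$ since $\Lb' \cap \Pb^-$ is proper in $\Lb'$ (as $\Delta_\Lb \subsetneq \Delta_{\Lb'}$). The essential technical input is already packaged in Corollaries \ref{coro:HOrd} and \ref{coro:HJ}; the main subtlety is tracking the $s_\alpha$-equivariance of parabolics orthogonal to $\Delta_\Lb$ (resp.\ $\Delta_{\Lb'}$) through the Frobenius reciprocity isomorphisms.
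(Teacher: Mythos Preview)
Your proof is correct and follows essentially the same approach as the paper: apply \eqref{SEExt1} with $\pi=\Ind_{P^-}^G\sigma$ for (i) and (ii), apply \eqref{SEExt2} with $\Pb'^-$ for (iii), compute the degree-$1$ derived term via Corollary~\ref{coro:HOrd}~(ii) (resp.\ Corollary~\ref{coro:HJ}~(ii) in the variant of Remark~\ref{rema:HJ}), and kill the resulting $\Hom$ term via adjunction and cuspidality. The only cosmetic differences are in the references used for $\Ord_P(\Ind_{P^-}^G\sigma)\cong\sigma$ and $(\Ind_{P'^-}^G\sigma')_{N'^-}\cong\sigma'$: the paper invokes \cite[Proposition~4.3.4]{Em1} and \cite[Theorem~5.3,~3]{VigAdj} directly, whereas you deduce them from \eqref{AHV}/Proposition~\ref{prop:HOrd1}~(iii) and from full faithfulness of $\Ind_{P'^-}^G$, which is equally valid.
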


\begin{rema} \label{rema:Ext1Qp}
Assume $\Pb'=\Pb$ and $\sigma,\sigma'$ irreducible.
In general, we do not know the dimension of the cokernel of the $k$-linear injection $\Ext_L^1(\sigma',\sigma) \hookrightarrow \Ext_G^1(\Ind_{P^-}^G \sigma',\Ind_{P^-}^G \sigma)$ induced by $\Ind_{P^-}^G$, but we prove that it is at most $\card\{\alpha \in \Delta_\Lb^{\perp,1} \mid \sigma' \cong \sigma^\alpha \otimes (\omega^{-1} \circ \alpha)\}$ whenever $\sigma$ is right cuspidal or $\sigma'$ is left cuspidal (see the proof).
If $\sigma,\sigma'$ are supersingular, then letting $\zeta : Z_L \to k^\times$ denote the central character of $\sigma$ (cf. \cite[Lemma 4.1.7]{Em2}), we expect this dimension to be equal to
\begin{equation*}
\card \left\{ \alpha \in \Delta_\Lb^{\perp,1} \middlevert \sigma' \cong \sigma^\alpha \otimes \left( \omega^{-1} \circ \alpha \right) \text{ and } \zeta \circ \alpha^\vee \neq \omega^{-1} \right\}
\end{equation*}
except when $p=2$ and in some exceptional cases (cf. \cite[Remarque 3.2.5]{JHC} when $\Gb$ is split and $\Pb=\Bb$).
We prove this when $\Gb$ is split and $\Zb$ is connected (see Theorem \ref{theo:Ext1Zcnx} below), in which case the cardinal above is equal to $1$ if $\sigma' \cong \sigma^\alpha \otimes (\omega^{-1} \circ \alpha) \not \cong \sigma$ for some $\alpha \in \Delta_\Lb^{\perp,1}$ and $0$ otherwise by Lemma \ref{lemm:gen}.
When $\Gb$ is split but $\Zb$ is not connected, one could prove that the cardinal above is a lower bound using Proposition \ref{prop:Ext1prelim} (i) and some generalisation of \cite[§~2.2]{JHC} for $\Pb \neq \Bb$.
\end{rema}

\begin{proof}
We prove slightly more general results.

Assume $\Pb' \subseteq \Pb$ and $\sigma$ satisfies the condition in Corollary \ref{coro:HOrd} (ii), e.g. $\sigma$ is right cuspidal.
Using \cite[Proposition 4.3.4]{Em1} and Corollary \ref{coro:HOrd} (ii), \eqref{SEExt1} with $\pi = \Ind_{P^-}^G \sigma$ and $\Ind_{L \cap P'^-}^L \sigma'$ instead of $\sigma$ yields an exact sequence of $k$-vector spaces
\begin{multline} \label{SEExtQpR}
0 \to \Ext_L^1 \left( \Ind_{L \cap P'^-}^L \sigma', \sigma \right) \to \Ext_G^1 \left( \Ind_{P'^-}^G \sigma', \Ind_{P^-}^G \sigma \right) \\
\to \bigoplus_{\alpha \in \Delta_\Lb^{\perp,1}} \Hom_L \left( \Ind_{L \cap P'^-}^L \sigma', \sigma^\alpha \otimes \left( \omega^{-1} \circ \alpha \right) \right).
\end{multline}
If $\Pb'=\Pb$ and $\sigma,\sigma'$ are irreducible, then $\sigma^\alpha \otimes (\omega^{-1} \circ \alpha)$ is also irreducible for all $\alpha \in \Delta_\Lb^{\perp,1}$, and thus the last term of \eqref{SEExtQpR} has dimension equal to $\card\{\alpha \in \Delta_\Lb^{\perp,1} \mid \sigma' \cong \sigma^\alpha \otimes (\omega^{-1} \circ \alpha)\}$, hence (i).
If $\Pb' \subsetneq \Pb$ and $\sigma$ is right cuspidal, then $\Lb \cap \Pb'$ is a proper parabolic subgroup of $\Lb$ and
\begin{equation*}
\Ord_{L \cap P'} \left( \sigma^\alpha \otimes \left( \omega^{-1} \circ \alpha \right) \right) \cong \left( \Ord_{L \cap P'} \sigma \right)^\alpha \otimes \left( \omega^{-1} \circ \alpha \right) = 0
\end{equation*}
for all $\alpha \in \Delta_\Lb^{\perp,1}$, thus the last term of \eqref{SEExtQpR} is zero by \cite[Theorem 4.4.6]{Em1}, hence (ii).

Assume $\Pb \subseteq \Pb'$ and $\sigma'$ satisfies the condition in Corollary \ref{coro:HJ} (ii) for $\Pb'^-=\Lb'\Nb'^-$, e.g. $\sigma'$ is left cuspidal.
Using \cite[Theorem 5.3, 3]{VigAdj} and Corollary \ref{coro:HJ} (ii) for $\Pb'^-=\Lb'\Nb'^-$ (see Remark \ref{rema:HJ}), \eqref{SEExt2} with $\pi = \Ind_{P'^-}^G \sigma'$ and $\Pb'^-$, $\Lb'$, $\Nb'^-$, $\Ind_{L' \cap P^-}^{L'} \sigma$ instead of $\Pb$, $\Lb$, $\Nb$, $\sigma$ respectively yields an exact sequence of $k$-vector spaces
\begin{multline} \label{SEExtQpL}
0 \to \Ext_{L'}^1 \left( \sigma', \Ind_{L' \cap P^-}^{L'} \sigma \right) \to \Ext_G^1 \left( \Ind_{P'^-}^G \sigma', \Ind_{P^-}^G \sigma \right) \\
\to \bigoplus_{\alpha \in \Delta_{\Lb'}^{\perp,1}} \Hom_{L'} \left( \sigma'^\alpha \otimes \left( \omega^{-1} \circ \alpha \right), \Ind_{L' \cap P^-}^{L'} \sigma \right).
\end{multline}
If $\Pb'=\Pb$ and $\sigma,\sigma'$ are irreducible, then $\sigma'^\alpha \otimes (\omega^{-1} \circ \alpha)$ is also irreducible for all $\alpha \in \Delta_\Lb^{\perp,1}$, and thus the last term of \eqref{SEExtQpL} has dimension equal to $\card\{\alpha \in \Delta_{\Lb'}^{\perp,1} \mid \sigma' \cong \sigma^\alpha \otimes (\omega^{-1} \circ \alpha)\}$, hence (i).
If $\Pb \subsetneq \Pb'$ and $\sigma'$ is left cuspidal, then $\Lb' \cap \Pb^-$ is a proper parabolic subgroup of $\Lb'$ and
\begin{equation*}
\left( \sigma'^\alpha \otimes \left( \omega^{-1} \circ \alpha \right) \right)_{L' \cap N^-} \cong \left( \sigma'_{L' \cap N^-} \right)^\alpha \otimes \left( \omega^{-1} \circ \alpha \right) = 0
\end{equation*}
for all $\alpha \in \Delta_\Lb^{\perp,1}$, thus the last term of \eqref{SEExtQpL} is zero using \eqref{Frob} with $\pi = \sigma'^\alpha \otimes (\omega^{-1} \circ \alpha)$ and $\Lb'$, $\Lb' \cap \Pb^-$, $\Lb' \cap \Lb$, $\Lb' \cap \Nb^-$ instead of $\Gb$, $\Pb$, $\Lb$, $\Nb$ respectively, hence (iii).
\end{proof}

\begin{theo} \label{theo:Ext1F}
Assume $F \neq \Qp$.
Let $\Pb = \Lb \Nb$ be a standard parabolic subgroup.
The functor $\Ind_{P^-}^G$ induces an $A$-linear isomorphism
\begin{equation*}
\Ext_L^1 \left( \sigma', \sigma \right) \iso \Ext_G^1 \left( \Ind_{P^-}^G \sigma', \Ind_{P^-}^G \sigma \right)
\end{equation*}
for all locally admissible smooth representations $\sigma,\sigma'$ of $L$ over $A$.
\end{theo}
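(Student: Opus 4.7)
The plan is to apply the fundamental exact sequence \eqref{SEExt1} with $\pi = \Ind_{P^-}^G \sigma$, which specialises to the natural exact sequence
\begin{equation*}
0 \to \Ext_L^1\bigl(\sigma', \Ord_P(\Ind_{P^-}^G \sigma)\bigr) \to \Ext_G^1\bigl(\Ind_{P^-}^G \sigma', \Ind_{P^-}^G \sigma\bigr) \to \Hom_L\bigl(\sigma', \HOrd[1]_P(\Ind_{P^-}^G \sigma)\bigr).
\end{equation*}
The entire argument then consists in identifying the outer two terms. On the left, I would use the isomorphism $\Ord_P(\Ind_{P^-}^G \sigma) \cong \sigma$, which is the instance $I = J = \Delta_\Lb$ of \eqref{AHV} (with $\Ord_L\sigma = \sigma$), or equivalently the unit of the adjunction $(\Ind_{P^-}^G, \Ord_P)$ coming from the full faithfulness of $\Ind_{P^-}^G$. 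On the right, since $F \neq \Qp$ we have $0 < 1 < [F:\Qp]$, so Corollary \ref{coro:HOrd}(i) gives $\HOrd[1]_P(\Ind_{P^-}^G \sigma) = 0$. Hence the middle arrow is an isomorphism.

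Next I would verify that this isomorphism coincides with the map induced by the exact functor $\Ind_{P^-}^G$. This is essentially automatic: \eqref{SEExt1} is the five-term exact sequence associated to the Grothendieck spectral sequence derived from the adjunction, so the edge map $\Ext_L^1(\sigma', \Ord_P(\Ind_{P^-}^G \sigma)) \to \Ext_G^1(\Ind_{P^-}^G \sigma', \Ind_{P^-}^G \sigma)$, precomposed with the unit $\sigma \iso \Ord_P(\Ind_{P^-}^G \sigma)$, is, by naturality, precisely the functorial map on Yoneda extensions induced by $\Ind_{P^-}^G$.

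The one point that genuinely requires care is that \eqref{SEExt1} is stated in the introduction only for admissible $\sigma$ and $\pi$, while we allow locally admissible $\sigma, \sigma'$. I would handle this by passing to filtered inductive limits: every locally admissible representation is a filtered union of admissible subrepresentations, and all functors involved ($\Ind_{P^-}^G$ is exact and $A$-linear, $\Ord_P$ and $\HOrd[1]_P$ commute with inductive limits, $\Hom$ and $\Ext^1$ commute with filtered colimits in the first variable in a Grothendieck category), so \eqref{SEExt1} remains natural under these colimits; equivalently, the construction in §~\ref{ssec:second} and the Grothendieck spectral sequence extend verbatim to the locally admissible setting. Given that all the substantial content — in particular the vanishing of $\HOrd[1]_P(\Ind_{P^-}^G \sigma)$ — has already been established, the only real obstacle is this bookkeeping, which should be straightforward.
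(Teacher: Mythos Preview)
Your approach is exactly the paper's: apply \eqref{SEExt1} with $\pi=\Ind_{P^-}^G\sigma$, use $\Ord_P(\Ind_{P^-}^G\sigma)\cong\sigma$ (the paper cites \cite[Proposition 4.3.4]{Em1} rather than \eqref{AHV}, but these agree here), and kill the last term via Corollary \ref{coro:HOrd}(i). One remark on your final paragraph: the claim that $\Ext^1$ commutes with filtered colimits in the first variable is not correct in general, but the workaround is unnecessary anyway, since Emerton's exact sequence \cite[(3.7.6)]{Em2} is already formulated in $\Mod^\ladm$, and the paper computes all $\Ext$ groups in that category (see §~\ref{ssec:Extprelim}).
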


\begin{proof}
Let $\sigma,\sigma'$ be locally admissible smooth representations of $L$ over $A$.
Using \cite[Proposition 4.3.4]{Em1} and Corollary \ref{coro:HOrd} (i), \eqref{SEExt1} with $\pi=\Ind_{P^-}^G \sigma$ and $\sigma'$ instead of $\sigma$ yields the isomorphism in the statement.
\end{proof}

\begin{coro} \label{coro:Ext1F}
Assume $F \neq \Qp$.
Let $\Pb = \Lb \Nb,\Pb' = \Lb' \Nb'$ be standard parabolic subgroups and $\sigma,\sigma'$ be admissible smooth representations of $L,L'$ respectively over $k$.
\begin{enumerate}
\item If $\Pb' \subseteq \Pb$, then the functor $\Ind_{P^-}^G$ induces a $k$-linear isomorphism
\begin{equation*}
\Ext_L^1 \left( \Ind_{L \cap P'^-}^L \sigma', \sigma \right) \iso \Ext_G^1 \left( \Ind_{P'^-}^G \sigma', \Ind_{P^-}^G \sigma \right).
\end{equation*}
\item If $\Pb \subseteq \Pb'$, then the functor $\Ind_{P'^-}^G$ induces a $k$-linear isomorphism
\begin{equation*}
\Ext_{L'}^1 \left( \sigma', \Ind_{L' \cap P^-}^{L'} \sigma \right) \iso \Ext_G^1 \left( \Ind_{P'^-}^G \sigma', \Ind_{P^-}^G \sigma \right).
\end{equation*}
\end{enumerate}
\end{coro}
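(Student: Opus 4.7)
The plan is to reduce both statements to Theorem \ref{theo:Ext1F} via the transitivity of parabolic induction. The only genuine input needed beyond Theorem \ref{theo:Ext1F} is the standard identity
\begin{equation*}
\Ind_{Q^-}^G \cong \Ind_{P^-}^G \circ \Ind_{L \cap Q^-}^L
\end{equation*}
whenever $\Qb \subseteq \Pb$ are standard parabolics, which follows from the decomposition $\Qb^- = (\Lb \cap \Qb^-) \Nb^-$ (itself a consequence of $\Nb^- \subseteq \Nb_\Qb^-$) together with Frobenius reciprocity and the identification $\cind = \Ind$ for the compact quotients $P^- \backslash G$ and $(L \cap Q^-) \backslash L$.

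For (i), with $\Pb' \subseteq \Pb$, the transitivity formula gives a natural $G$-equivariant isomorphism $\Ind_{P'^-}^G \sigma' \cong \Ind_{P^-}^G (\Ind_{L \cap P'^-}^L \sigma')$. Since $\sigma'$ is admissible and $\Ind_{L \cap P'^-}^L$ preserves admissibility (and hence local admissibility) by \cite[Proposition 4.1.7]{Em1}, I would then apply Theorem \ref{theo:Ext1F} to the pair of locally admissible $L$-representations $\Ind_{L \cap P'^-}^L \sigma'$ and $\sigma$ to obtain the desired isomorphism
\begin{equation*}
\Ext_L^1 \left( \Ind_{L \cap P'^-}^L \sigma', \sigma \right) \iso \Ext_G^1 \left( \Ind_{P^-}^G (\Ind_{L \cap P'^-}^L \sigma'), \Ind_{P^-}^G \sigma \right) \cong \Ext_G^1 \left( \Ind_{P'^-}^G \sigma', \Ind_{P^-}^G \sigma \right).
\end{equation*}

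Case (ii) is entirely symmetric: assuming $\Pb \subseteq \Pb'$, transitivity yields $\Ind_{P^-}^G \sigma \cong \Ind_{P'^-}^G (\Ind_{L' \cap P^-}^{L'} \sigma)$, and applying Theorem \ref{theo:Ext1F} with the triple $(\Gb, \Pb', \Lb')$ in place of $(\Gb, \Pb, \Lb)$ to the locally admissible $L'$-representations $\sigma'$ and $\Ind_{L' \cap P^-}^{L'} \sigma$ gives the statement. There is no real obstacle here; the only mild care needed is to verify that the isomorphism coming from Theorem \ref{theo:Ext1F} is compatible with the identification provided by transitivity, which is immediate from the functoriality of the adjunction underlying \eqref{SEExt1}.
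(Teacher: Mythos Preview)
Your proof is correct and is precisely the intended argument: the paper states Corollary \ref{coro:Ext1F} without proof because it follows immediately from Theorem \ref{theo:Ext1F} applied to the pair $(\Ind_{L \cap P'^-}^L \sigma', \sigma)$ (resp.\ $(\sigma', \Ind_{L' \cap P^-}^{L'} \sigma)$) via transitivity of parabolic induction. There is nothing to add.
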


\begin{rema} \label{rema:Extn}
Theorem \ref{theo:Ext1Qp} (i) and Theorem \ref{theo:Ext1F} are encompassed in a more general (but conditional to a conjecture of Emerton) result.
Let $\Pb = \Lb \Nb$ be a standard parabolic subgroup, $\sigma,\sigma'$ be locally admissible smooth representations of $L$ over $A$ and $n \in \N$.
The functor $\Ind_{P^-}^G$ induces an $A$-linear morphism
\begin{equation} \label{Extn}
\Ext_L^n \left( \sigma', \sigma \right) \to \Ext_G^n \left( \Ind_{P^-}^G \sigma', \Ind_{P^-}^G \sigma \right)
\end{equation}
and there is a spectral sequence of $A$-modules (cf. \cite[(3.7.4)]{Em2})
\begin{equation} \label{SSExt}
\Ext_L^i \left( \sigma', \ROrd[j]_P \left( \Ind_{P^-}^G \sigma \right) \right) \Rightarrow \Ext_G^{i+j} \left( \Ind_{P^-}^G \sigma', \Ind_{P^-}^G \sigma \right)
\end{equation}
where $\ROrd_P$ denotes the right derived functors of $\Ord_P : \Mod_G^\ladm(A) \to \Mod_L^\ladm(A)$.
Now assume that \cite[Conjecture 3.7.2]{Em2} is true, i.e. $\ROrd_P \iso \HOrd_P$.
Using Corollary \ref{coro:HOrd}, one can deduce from \eqref{SSExt} that:
\begin{itemize}
\item if $n < [F:\Qp]$, then \eqref{Extn} is an isomorphism;
\item if $n = [F:\Qp]$, then \eqref{Extn} is injective and if furthermore $\sigma,\sigma'$ are supercuspidal, then the dimension of its cokernel is at most $\card\{\alpha \in \Delta_\Lb^{\perp,1} \mid \sigma' \cong \sigma^\alpha \otimes (\omega^{-1} \circ \alpha)\}$.
\end{itemize}
One can also generalise Proposition \ref{prop:Ext1} and Theorem \ref{theo:Ext1Qp} (ii) and (iii) in all degrees $n \leq [F:\Qp]$.
\end{rema}

Finally, we complete Theorem \ref{theo:Ext1Qp} (i) when $\Gb$ is split and $\Zb$ is connected.

\begin{theo} \label{theo:Ext1Zcnx}
Assume $F=\Qp$, $\Gb$ split and $\Zb$ connected.
Let $\Pb = \Lb \Nb$ be a standard parabolic subgroup and $\sigma,\sigma'$ be supersingular representations of $L$ over $k$.
\begin{enumerate}
\item If $\sigma' \cong \sigma^\alpha \otimes (\omega^{-1} \circ \alpha) \not \cong \sigma$ for some $\alpha \in \Delta_\Lb^\perp$, then $\Ext_L^1(\sigma',\sigma)=0$ and
\begin{equation*}
\dim_k \Ext_G^1 \left( \Ind_{P^-}^G \sigma', \Ind_{P^-}^G \sigma \right) = 1.
\end{equation*}
\item If either $\sigma' \cong \sigma$ and $p \neq 2$, or $\sigma' \not \cong \sigma^\alpha \otimes (\omega^{-1} \circ \alpha)$ for any $\alpha \in \Delta_\Lb^\perp$, then the functor $\Ind_{P^-}^G$ induces a $k$-linear isomorphism
\begin{equation*}
\Ext_L^1 \left( \sigma', \sigma \right) \iso \Ext_G^1 \left( \Ind_{P^-}^G \sigma', \Ind_{P^-}^G \sigma \right).
\end{equation*}
\item If $p=2$, then the functor $\Ind_{P^-}^G$ induces a $k$-linear injection
\begin{equation*}
\Ext_L^1 \left( \sigma', \sigma \right) \hookrightarrow \Ext_G^1 \left( \Ind_{P^-}^G \sigma', \Ind_{P^-}^G \sigma \right)
\end{equation*}
whose cokernel is of dimension $\card \{ \alpha \in \Delta_\Lb^\perp \mid \sigma' \cong \sigma^\alpha \}$.
\end{enumerate}
\end{theo}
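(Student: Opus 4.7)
The plan is to combine the low-degree exact sequence coming from the spectral sequence \eqref{SSExt} (refining \eqref{SEExt1} by one term) with the explicit constructions of Propositions \ref{prop:Ext1prelim} and \ref{prop:Ext2prelim}.

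Because $\sigma$ is supersingular, hence right cuspidal, Corollary \ref{coro:HOrd} yields $\Ord_P(\Ind_{P^-}^G \sigma) \cong \sigma$ and
\begin{equation*}
\HOrd[1]_P(\Ind_{P^-}^G \sigma) \cong \bigoplus_{\alpha \in \Delta_\Lb^\perp} \sigma^\alpha \otimes (\omega^{-1} \circ \alpha),
\end{equation*}
using $\Delta_\Lb^{\perp,1} = \Delta_\Lb^\perp$ since $\Gb$ is split. Extending \eqref{SEExt1} by the next term of the spectral sequence produces
\begin{equation*}
0 \to \Ext_L^1(\sigma',\sigma) \overset{f_1}{\to} \Ext_G^1(\Ind_{P^-}^G \sigma', \Ind_{P^-}^G \sigma) \overset{g}{\to} H_\sigma \overset{d}{\to} \Ext_L^2(\sigma',\sigma) \overset{f_2}{\to} \Ext_G^2,
\end{equation*}
where $H_\sigma \coloneqq \bigoplus_{\alpha \in \Delta_\Lb^\perp} \Hom_L(\sigma', \sigma^\alpha \otimes (\omega^{-1} \circ \alpha))$ and $f_1, f_2$ are induced by $\Ind_{P^-}^G$. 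Writing $\zeta$ for the central character of $\sigma$, the connectedness of $\Zb$ furnishes for each $\alpha \in \Delta_\Lb^\perp$ a fundamental coweight $\lambda_\alpha \in \X_*(\Zb_\Lb)$, so Lemma \ref{lemm:gen} applies; combined with Lemma \ref{lemm:alpha} this gives $\sigma \cong \sigma^\alpha \otimes (\omega^{-1} \circ \alpha) \Leftrightarrow \zeta \circ \alpha^\vee = \omega^{-1}$, and pairwise distinct central characters for the twists $\sigma^\alpha \otimes (\omega^{-1} \circ \alpha)$ with $\zeta \circ \alpha^\vee \neq \omega^{-1}$.

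For case (i), the assumption $\sigma^\alpha \otimes (\omega^{-1} \circ \alpha) \not\cong \sigma$ forces $\zeta \circ \alpha^\vee \neq \omega^{-1}$, so $\sigma'$ and $\sigma$ have distinct central characters, whence $\Ext_L^1(\sigma',\sigma) = 0$ by the usual eigenspace decomposition for the locally finite $Z_L$-action in locally admissible representations. Lemma \ref{lemm:gen} also forces $\dim_k H_\sigma = 1$, so the exact sequence yields $\dim_k \Ext_G^1 \leq 1$, while Proposition \ref{prop:Ext1prelim}(i) supplies the matching lower bound. Case (ii) with $\sigma' \not\cong \sigma^\alpha \otimes (\omega^{-1} \circ \alpha)$ for every $\alpha$ is immediate, since $H_\sigma = 0$ forces $f_1$ to be an isomorphism. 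For case (ii) with $\sigma' \cong \sigma$ and $p \neq 2$, one has $\dim_k H_\sigma = \card\{\alpha \mid \zeta \circ \alpha^\vee = \omega^{-1}\}$; Proposition \ref{prop:Ext2prelim} gives $\dim_k \ker f_2 \geq \card\{\alpha \mid \zeta \circ \alpha^\vee = \omega^{-1}\}$, and since $\im d = \ker f_2$ sits inside $H_\sigma$, dimensions force $\im d = H_\sigma$, so $\ker d = \im g = 0$ and $f_1$ is surjective.

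For case (iii), note that $\omega = 1$ when $p = 2$, so $\dim_k H_\sigma = \card\{\alpha \mid \sigma' \cong \sigma^\alpha\}$, which provides the claimed upper bound on the cokernel of $f_1$. The matching lower bound is supplied by Proposition \ref{prop:Ext1prelim}: part (ii) applied when $\sigma' \cong \sigma$ gives the bound $\card\{\alpha \mid \zeta \circ \alpha^\vee = 1\} = \card\{\alpha \mid \sigma \cong \sigma^\alpha\}$; part (i) applied at any $\alpha_0$ with $\sigma' \cong \sigma^{\alpha_0} \not\cong \sigma$ gives the bound $1$, matching the uniqueness of $\alpha_0$ from Lemma \ref{lemm:gen}; the remaining subcase has $H_\sigma = 0$ and is trivial. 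The hardest step will be case (ii) with $\sigma' = \sigma$ and $p \neq 2$, where the cancellation of the full Hom term against $\ker f_2$ relies entirely on Proposition \ref{prop:Ext2prelim}, itself the unique source of the hypothesis $p \neq 2$.
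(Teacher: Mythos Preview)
Your approach is essentially the same as the paper's, and the treatment of cases (i), (iii), and the first subcase of (ii) is fine (indeed, in (iii) you are more explicit than the paper about the subcase $\sigma'\cong\sigma^{\alpha_0}\not\cong\sigma$). There is, however, one genuine technical gap in the crucial subcase $\sigma'\cong\sigma$, $p\neq 2$.

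You write that ``extending \eqref{SEExt1} by the next term of the spectral sequence'' produces a five-term sequence with middle term $H_\sigma=\bigoplus_{\alpha}\Hom_L(\sigma,\sigma^\alpha\otimes(\omega^{-1}\circ\alpha))$. This conflates two different objects. The spectral sequence \eqref{SSExt} involves the right derived functors $\ROrd[j]_P$ of $\Ord_P$ on $\Mod_G^\ladm(k)$, so its five-term sequence has $\Hom_L(\sigma,\ROrd[1]_P(\Ind_{P^-}^G\sigma))$ in the middle. The sequence \eqref{SEExt1}, by contrast, uses $\HOrd[1]_P$ and is \emph{not} the low-degree sequence of \eqref{SSExt}; it comes from a separate argument in \cite{Em2}. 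The identification $\ROrd[1]_P\cong\HOrd[1]_P$ is precisely Emerton's Conjecture 3.7.2, which is not known. Thus your five-term sequence with $H_\sigma$ in the middle is not justified as written.

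The repair is exactly what the paper does: invoke the natural injection $\ROrd[1]_P\hookrightarrow\HOrd[1]_P$ (\cite[Remark 3.7.3]{Em2}) to obtain only the \emph{inequality}
\[
\dim_k\Hom_L\!\left(\sigma,\ROrd[1]_P(\Ind_{P^-}^G\sigma)\right)\;\leq\;\dim_k H_\sigma\;=\;c\coloneqq\card\{\alpha\in\Delta_\Lb^\perp\mid\zeta\circ\alpha^\vee=\omega^{-1}\}.
\]
Your dimension count then still works: Proposition \ref{prop:Ext2prelim} gives $\dim_k\ker f_2\geq c$, and exactness gives $\dim_k\ker f_2=\dim_k\im d\leq\dim_k\Hom_L(\sigma,\ROrd[1]_P(\Ind_{P^-}^G\sigma))\leq c$, forcing $d$ injective and hence $g=0$. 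Note that this fix does not affect the other cases, where you only need the three-term sequence \eqref{SEExt1} (which legitimately uses $\HOrd[1]_P$) for the upper bound on $\coker f_1$.
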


\begin{proof}
Since $\sigma$ is absolutely irreducible, it has a central character $\zeta : Z_L \to k^\times$ (cf. \cite[Lemma 4.1.7]{Em2}).

We first assume that $\sigma' \cong \sigma^\alpha \otimes (\omega^{-1} \circ \alpha) \not \cong \sigma$ for some $\alpha \in \Delta_\Lb^\perp$.
We have $\zeta \circ \alpha^\vee \neq \omega^{-1}$ by Lemma \ref{lemm:alpha}, so that $\sigma$ and $\sigma'$ have distinct central characters by Lemma \ref{lemm:gen}, thus $\Ext_L^1(\sigma',\sigma)=0$ (cf. \cite[Proposition 8.1]{PasExt}).
Furthermore, $\sigma' \not \cong \sigma^\beta \otimes (\omega^{-1} \circ \beta)$ for any $\beta \in \Delta_\Lb^\perp \backslash \{\alpha\}$ (since the central characters are distinct by Lemma \ref{lemm:gen}).
Using \eqref{SEExtQpR} with $\Pb'=\Pb$ and $\Lb'=\Lb$, we deduce that
\begin{equation*}
\dim_k \Ext_G^1 \left( \Ind_{P^-}^G \sigma', \Ind_{P^-}^G \sigma \right) \leq 1.
\end{equation*}
Since the left-hand side is non-zero by Proposition \ref{prop:Ext1prelim} (i), this proves (i).

We now prove (ii).
If $\sigma' \not \cong \sigma^\alpha \otimes (\omega^{-1} \circ \alpha)$ for any $\alpha \in \Delta_\Lb^\perp$, then the result follows from Theorem \ref{theo:Ext1Qp} (i).
Assume that $\sigma' \cong \sigma$ and $p \neq 2$.
The terms of low degree of \eqref{SSExt} form an exact sequence of $k$-vector spaces
\begin{multline} \label{SE5}
0 \to \Ext_L^1 \left( \sigma, \sigma \right) \to \Ext_G^1 \left( \Ind_{P^-}^G \sigma, \Ind_{P^-}^G \sigma \right) \to \Hom_L \left( \sigma, \ROrd[1]_P \left( \Ind_{P^-}^G \sigma \right) \right) \\
\to \Ext_L^2 \left( \sigma, \sigma \right) \to \Ext_G^2 \left( \Ind_{P^-}^G \sigma, \Ind_{P^-}^G \sigma \right).
\end{multline}
Since there is an injection of functors $\ROrd[1]_P \hookrightarrow \HOrd[1]_P$ (cf. \cite[Remark 3.7.3]{Em2}), we deduce from Corollary \ref{coro:HOrd} (ii) and Lemma \ref{lemm:gen} that
\begin{equation*}
\dim_k \Hom_L \left( \sigma, \ROrd[1]_P \left( \Ind_{P^-}^G \sigma \right) \right) \leq \card \left\{ \alpha \in \Delta_\Lb^\perp \middlevert \zeta \circ \alpha^\vee = \omega^{-1} \right\}.
\end{equation*}
Thus, we deduce from Proposition \ref{prop:Ext2prelim} that the third arrow of \eqref{SE5} is zero, hence the result.

Finally, we assume $p=2$ and we prove (iii).
By Proposition \ref{prop:Ext1prelim} (ii) we have a lower bound and using \eqref{SEExtQpR} with $\Pb'=\Pb$ and $\Lb'=\Lb$ we obtain an upper bound.
Using Lemmas \ref{lemm:alpha} and \ref{lemm:gen} together with the fact that $\omega=1$, we see that both are equal to $\card \{ \alpha \in \Delta_\Lb^\perp \mid \sigma' \cong \sigma^\alpha \}$.
\end{proof}

\begin{rema} \label{rema:ExtnZcnx}
Theorem \ref{theo:Ext1Zcnx} (i) can also be generalised in the context of Remark \ref{rema:Extn}.
Let $\Pb = \Lb \Nb$ be a standard parabolic subgroup and $\sigma,\sigma'$ be supersingular representations of $L$ over $k$ such that $\sigma' \cong \sigma^\alpha \otimes (\omega^{-1} \circ \alpha) \not \cong \sigma$ for some $\alpha \in \Delta_\Lb^\perp$.
Assume $\Gb$ split, $\Zb$ connected and \cite[Conjecture 3.7.2]{Em2} is true.
Using Corollary \ref{coro:HOrd} and Lemma \ref{lemm:gen}, one can deduce from \eqref{SSExt} that $\Ext^{[F:\Qp]}_L(\sigma',\sigma)=0$ and
\begin{equation*}
\dim_k \Ext^{[F:\Qp]}_G \left( \Ind_{P^-}^G \sigma', \Ind_{P^-}^G \sigma \right) = 1.
\end{equation*}
\end{rema}

\begin{coro} \label{coro:Ext1Zcnx}
Assume $\Gb$ split and $\Zb$ connected.
If Conjecture \ref{conj:HOrd} is true for $A=k$, $n=1$ and $\Iw^J=1$, then \cite[Conjecture 3.17]{JHB} is true.
\end{coro}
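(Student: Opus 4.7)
The plan is to give an exhaustive case analysis following Conjecture \ref{intro:conj:Ext1}, piecing together the unconditional results of Section 4 together with Proposition \ref{prop:Ext1} (the only place where the hypothesis on Conjecture \ref{intro:conj:HOrd} enters). Two standing observations simplify the bookkeeping: supersingular representations are simultaneously left and right cuspidal (as they are supercuspidal), and $\Gb$ being split forces $\Delta^1 = \Delta$ and hence $\Delta_\Lb^{\perp,1} = \Delta_\Lb^\perp$ for every standard Levi subgroup $\Lb$.

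First I would dispatch the easy cases. Case (i) is precisely Proposition \ref{prop:Ext1}. Case (ii) is exactly Theorem \ref{theo:Ext1Zcnx} (i), which is already proved under the weaker assumption of $\Gb$ split with connected centre. For case (iii) with $\Pb' \subseteq \Pb$, I would apply Corollary \ref{coro:Ext1F} (i) when $F \neq \Qp$ and Theorem \ref{theo:Ext1Qp} (ii) when $F = \Qp$ and $\Pb' \subsetneq \Pb$; case (iv) is handled symmetrically via Corollary \ref{coro:Ext1F} (ii) and Theorem \ref{theo:Ext1Qp} (iii).

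The residual situation is $F = \Qp$ and $\Pb' = \Pb$. Being in the ``otherwise'' regime means no $\alpha \in \Delta_\Lb^\perp$ simultaneously satisfies $\sigma' \cong \sigma^\alpha \otimes (\omega^{-1} \circ \alpha)$ and $\sigma^\alpha \otimes (\omega^{-1} \circ \alpha) \not \cong \sigma$. If no $\alpha \in \Delta_\Lb^\perp$ achieves the isomorphism $\sigma' \cong \sigma^\alpha \otimes (\omega^{-1} \circ \alpha)$ at all, Theorem \ref{theo:Ext1Qp} (i) applies. Otherwise every such $\alpha$ forces $\sigma^\alpha \otimes (\omega^{-1} \circ \alpha) \cong \sigma$, and in particular $\sigma' \cong \sigma$; for $p \neq 2$ Theorem \ref{theo:Ext1Zcnx} (ii) then yields the isomorphism directly.

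The hard part will be the final sub-case: $p = 2$, $\sigma' \cong \sigma$, and some $\alpha \in \Delta_\Lb^\perp$ satisfying $\sigma^\alpha \cong \sigma$ (equivalent, since $\omega = 1$, to $\sigma^\alpha \otimes (\omega^{-1} \circ \alpha) \cong \sigma$). Here Theorem \ref{theo:Ext1Zcnx} (iii) would a priori give a cokernel of dimension $\card\{\beta \in \Delta_\Lb^\perp \mid \sigma^\beta \cong \sigma\} \geq 1$, whereas the conjecture predicts an isomorphism. The resolution must come from the standing hypothesis that $\Ind_{P^-}^G \sigma$ is irreducible, which is required by Conjecture \ref{intro:conj:Ext1} precisely when $p = 2$. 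By the Abe--Henniart--Herzig--Vignéras classification, whenever $\sigma^\alpha \otimes (\omega^{-1} \circ \alpha) \cong \sigma$ for some $\alpha \in \Delta_\Lb^\perp$, the representation $\sigma$ extends canonically to an admissible representation of $L_{\Delta_\Lb \cup \{\alpha\}}$, and $\Ind_{P^-}^G \sigma$ is realised as a non-split extension involving parabolic induction from this strictly larger Levi, hence is reducible. Irreducibility therefore rules out such $\alpha$, the cokernel in Theorem \ref{theo:Ext1Zcnx} (iii) vanishes, and the remaining sub-case collapses to the desired isomorphism.
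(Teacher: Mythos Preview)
Your proposal is correct and follows essentially the same case-by-case approach as the paper's proof. The only notable difference is in the $p=2$ endgame: where you invoke the Abe--Henniart--Herzig--Vignéras classification to argue that $\sigma^\alpha \cong \sigma$ forces reducibility of $\Ind_{P^-}^G \sigma$, the paper cites the cleaner equivalence ``$\Ind_{P^-}^G \sigma$ irreducible $\Leftrightarrow$ $\sigma^\alpha \not\cong \sigma$ for all $\alpha \in \Delta_\Lb^\perp$'' via \cite[Lemma~5.8]{Abe} together with Lemma~\ref{lemm:gen}, and then reads off directly from Theorem~\ref{theo:Ext1Zcnx}~(iii) that the cokernel vanishes; your description of $\Ind_{P^-}^G \sigma$ as a ``non-split extension'' is slightly imprecise (the actual structure comes from the $I(P,\sigma,Q)$ constituents), but the conclusion is the same.
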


\begin{proof}
Even though \cite[Conjecture 3.17]{JHB} is formulated under the hypotheses $\Gb$ split, $\Zb$ connected and $\Gb^\der$ simply connected, we do not need the last one to prove it:
(i) is Proposition \ref{prop:Ext1}, which is conditional to Conjecture \ref{conj:HOrd} for $A=k$, $n=1$ and $\Iw^J=1$;
(ii) is Theorem \ref{theo:Ext1Zcnx} (i);
(iii) and (iv) are Corollary \ref{coro:Ext1F} (i) and (ii) respectively when $F \neq \Qp$, Theorem \ref{theo:Ext1Qp} (ii) and (iii) respectively when $F=\Qp$ and $\Pb' \neq \Pb$, Theorem \ref{theo:Ext1Zcnx} (ii) when $F=\Qp$, $\Pb'=\Pb$ and $p \neq 2$, and Theorem \ref{theo:Ext1Zcnx} (iii) when $F=\Qp$, $\Pb'=\Pb$ and $p=2$ (noting that if $p=2$, then $\omega=1$ and $\Ind_{P^-}^G \sigma$ is irreducible if and only if $\sigma^\alpha \not \cong \sigma$ for all $\alpha \in \Delta_\Lb^\perp$, cf. \cite[Lemma 5.8]{Abe} and Lemma \ref{lemm:gen}).
\end{proof}

\subsection{\texorpdfstring{Results for unitary continuous $p$-adic representations}{Results for unitary continuous p-adic representations}} \label{ssec:Extcont}

Let $H$ be a $p$-adic Lie group.
A \emph{continuous representation} of $H$ over $E$ is an $E$-Banach space $\Pi$ endowed with an $E$-linear action of $H$ such that the map $H \times \Pi \to \Pi$ is continuous.
It is \emph{admissible} if the continuous dual $\Pi^* \coloneqq \Hom_H^\cont(\Pi,E)$ is of finite type over the Iwasawa algebra $E \otimes_\Oc \Oc \llbrack H_0 \rrbrack$ for some (equivalently any) compact open subgroup $H_0 \subseteq H$ (cf. \cite{ST}).
It is \emph{unitary} if there exists an $H$-stable bounded open $\Oc$-lattice $\Pi^0 \subseteq \Pi$.
We write $\Ban_H^\admu(E)$ for the category of admissible unitary continuous representations of $H$ over $E$ and $H$-equivariant $E$-linear continuous morphisms.
It is an $E$-abelian category.

We fix a uniformiser $\varpi$ of $\Oc$.
Following \cite[§~2.4]{Em1}, we let $\Mod_H^{\varpi-\adm}(\Oc)^\fl$ be the category of $\varpi$-torsion-free $\varpi$-adically complete and separated $\Oc$-modules $\Pi^0$ such that $\Pi^0/\varpi\Pi^0$ is admissible as a smooth representation of $H$ over $k$ and $H$-equivariant $\Oc$-linear morphisms.
It is an $\Oc$-abelian category and the localised category $E \otimes_\Oc \Mod_H^{\varpi-\adm}(\Oc)^\fl$ is equivalent to $\Ban_H^\admu(E)$.

The $E$-vector spaces $\Ext_H^1(\Pi',\Pi)$ of Yoneda extensions between admissible unitary continuous representations $\Pi,\Pi'$ of $H$ over $E$ are computed in $\Ban_H^\admu(E)$.
For all $n \geq 1$, the $\Oc/\varpi^n\Oc$-modules $\Ext_H^1(\pi',\pi)$ of Yoneda extensions between admissible smooth representations $\pi,\pi'$ of $H$ over $\Oc/\varpi^n\Oc$ are computed in $\Mod_H^\adm(\Oc/\varpi^n\Oc)$.
The following result is a slight generalisation of \cite[Proposition B.2]{JH}.

\begin{prop} \label{prop:devExt1}
Let $H$ be a $p$-adic Lie group, $\Pi,\Pi'$ be admissible unitary continuous representations of $H$ and $\pi,\pi'$ be the reductions mod $\varpi$ of $H$-stable bounded open $\Oc$-lattices $\Pi^0,\Pi'^0$ of $\Pi,\Pi'$ respectively.
Assume that $\dim_k \Hom_H(\pi',\pi)<\infty$.
There is an $E$-linear isomorphism
\begin{equation*}
\Ext_H^1 \left( \Pi', \Pi \right) \cong E \otimes_\Oc \varprojlim_{n \geq 1} \left( \Ext_H^1 \left( \Pi'^0 / \varpi^n \Pi'^0, \Pi^0 / \varpi^n \Pi^0 \right) \right).
\end{equation*}
Furthermore, $\dim_E \Ext_H^1(\Pi',\Pi) \leq \dim_k \Ext_H^1(\pi', \pi)$.
\end{prop}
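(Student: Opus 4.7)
The plan is to follow the dévissage strategy of \cite[Proposition B.2]{JH}, the only new input being that the finiteness assumption on $\Hom_H(\pi',\pi)$ is used to control the dimension bound even when $\Pi',\Pi$ are not assumed (e.g.) topologically irreducible. The proof proceeds in three steps.

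First, one identifies $\Ext_H^1(\Pi',\Pi)$ with $E \otimes_\Oc \Ext_{\Mod_H^{\varpi-\adm}(\Oc)^\fl}^1(\Pi'^0,\Pi^0)$. Given a Yoneda extension $0 \to \Pi \to \Ec \to \Pi' \to 0$ in $\Ban_H^\admu(E)$, one constructs an $H$-stable bounded open $\Oc$-lattice $\Ec^0 \subseteq \Ec$ fitting in a short exact sequence $0 \to \Pi^0 \to \Ec^0 \to \Pi'^0 \to 0$ in $\Mod_H^{\varpi-\adm}(\Oc)^\fl$ (for instance, starting from any bounded $H$-stable lattice in $\Ec$ and intersecting/saturating so that its image in $\Pi'$ is $\Pi'^0$ and its intersection with $\Pi$ is $\Pi^0$, possibly up to rescaling by a power of $\varpi$, which is absorbed when inverting $\varpi$). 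Any two such choices of lattice differ by a homothety, so produce the same class after $E \otimes_\Oc -$.

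Second, one identifies $\Ext_{\Mod_H^{\varpi-\adm}(\Oc)^\fl}^1(\Pi'^0,\Pi^0)$ with $\varprojlim_{n \geq 1} \Ext_H^1(\Pi'^0/\varpi^n\Pi'^0,\Pi^0/\varpi^n\Pi^0)$ after inverting $\varpi$. In one direction, reducing an integral extension $\Ec^0$ modulo $\varpi^n$ yields a compatible family of mod $\varpi^n$ extensions (well-defined since $\Ec^0$ is $\varpi$-torsion-free, so $\Ec^0/\varpi^n\Ec^0$ is an extension of $\Pi'^0/\varpi^n\Pi'^0$ by $\Pi^0/\varpi^n\Pi^0$ in $\Mod_H^\adm(\Oc/\varpi^n)$). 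In the other direction, given a compatible family $(\Ec_n)_{n \geq 1}$, the $\varpi$-adic completion $\varprojlim_n \Ec_n$ provides an integral extension inducing the family back. The kernel and cokernel of the natural map are annihilated by a power of $\varpi$ (arising from Mittag-Leffler defects of the inverse system $\Hom_H(\Pi'^0/\varpi^n\Pi'^0,\Pi^0/\varpi^n\Pi^0)$, which are finitely generated over $\Oc/\varpi^n$ once one inputs the finiteness hypothesis below), hence they vanish after $E \otimes_\Oc -$. This yields the first claim.

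Third, for the dimension bound, one proves by induction on $n$ that the $\Oc/\varpi^n$-module $\Ext_H^1(\Pi'^0/\varpi^n\Pi'^0,\Pi^0/\varpi^n\Pi^0)$ has length at most $n \cdot \dim_k \Ext_H^1(\pi',\pi)$. The inductive step uses the short exact sequence
\begin{equation*}
0 \to \Pi^0/\varpi^{n-1}\Pi^0 \xrightarrow{\varpi} \Pi^0/\varpi^n\Pi^0 \to \pi \to 0
\end{equation*}
and its analogue $0 \to \pi' \to \Pi'^0/\varpi^n\Pi'^0 \to \Pi'^0/\varpi^{n-1}\Pi'^0 \to 0$ (multiplication by $\varpi^{n-1}$), applied in either variable of $\Ext_H^1$; the finiteness of $\Hom_H(\pi',\pi)$ is exactly what guarantees that the connecting maps in the resulting long exact sequences have finite-dimensional images, so that the successive quotients embed into $\Ext_H^1(\pi',\pi)$. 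Inverting $\varpi$ and passing to the inverse limit of Step~2 then gives $\dim_E \Ext_H^1(\Pi',\Pi) \leq \dim_k \Ext_H^1(\pi',\pi)$.

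The main technical obstacle will be Step~2: making precise the Mittag-Leffler argument controlling the kernel and cokernel of the map from integral $\Ext^1$ to the inverse limit, and checking that the integral extension produced by $\varpi$-adic completion in the reverse direction indeed lands in $\Mod_H^{\varpi-\adm}(\Oc)^\fl$ (i.e. that the completed middle term is still $\varpi$-torsion-free with admissible reduction mod $\varpi$). Both points ultimately rely on the finiteness hypothesis on $\Hom_H(\pi',\pi)$ and standard Noetherianity of the relevant Iwasawa algebras.
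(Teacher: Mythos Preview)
Your proposal is essentially correct and follows the same route as the paper, which simply refers back to \cite[Proposition B.2]{JH} and notes that the finite-length hypothesis there is only used to ensure that $\Hom_H(\Pi'^0/\varpi^n\Pi'^0,\Pi^0/\varpi^n\Pi^0)$ is of finite type over $\Oc/\varpi^n\Oc$ for all $n$, and that this finiteness follows by an easy induction from $\dim_k\Hom_H(\pi',\pi)<\infty$. Your Steps~1--3 reproduce the structure of that argument.

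One small clarification: in Step~3 you say that the finiteness of $\Hom_H(\pi',\pi)$ is ``exactly what guarantees that the connecting maps \dots have finite-dimensional images, so that the successive quotients embed into $\Ext_H^1(\pi',\pi)$''. This is slightly off. The embedding of successive quotients into $\Ext_H^1(\pi',\pi)$ comes straight from the six-term exact sequence and needs no finiteness assumption; the length inequality $\text{length}_{\Oc/\varpi^n}\Ext_H^1(\Pi'^0/\varpi^n,\Pi^0/\varpi^n)\leq n\cdot\dim_k\Ext_H^1(\pi',\pi)$ is then automatic (and vacuous if the right-hand side is infinite). The place where $\dim_k\Hom_H(\pi',\pi)<\infty$ is genuinely needed is exactly where you put it in Step~2: to prove, by the same kind of d\'evissage, that each $\Hom_H(\Pi'^0/\varpi^n\Pi'^0,\Pi^0/\varpi^n\Pi^0)$ is finitely generated over $\Oc/\varpi^n\Oc$, so that the inverse system of $\Hom$'s satisfies Mittag--Leffler and the comparison map between integral $\Ext^1$ and the inverse limit becomes an isomorphism after inverting $\varpi$. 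The paper's proof singles out precisely this point.
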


\begin{proof}
In the proof of \cite[Proposition B.2]{JH}, the hypothesis that $\pi'$ is of finite length is only used to prove that $\Hom_H(\Pi'^0/\varpi^n\Pi'^0,\Pi^0/\varpi^n\Pi^0)$ is of finite type over $\Oc/\varpi^n\Oc$ for all $n \geq 1$.
But this can be proved by induction using that $\dim_k \Hom_H(\pi',\pi)<\infty$.
\end{proof}

Let $\Pb = \Lb \Nb$ be a parabolic subgroup.
We recall that the continuous parabolic induction functor is defined for any continuous representation $\Sigma$ of $L$ over $E$ by
\begin{equation*}
\Ind_{P^-}^G \Sigma \coloneqq \left\{ f : G \to \Sigma \text{ continuous} \middlevert f(pg) = p \cdot f(g) \ \forall p \in P^-, \forall g \in G \right\}.
\end{equation*}
We obtain an $E$-linear exact functor $\Ind_{P^-}^G : \Ban_L^\admu(E) \to \Ban_G^\admu(E)$ (cf. \cite[§~4.1]{Em1}).
Furthermore, there is a natural $G$-equivariant $E$-linear continuous isomorphism (cf. \cite[Lemma 4.1.3]{Em1})
\begin{equation*}
\Ind_{P^-}^G \Sigma \cong E \otimes_\Oc \varprojlim_{n \geq 1} \left( \Ind_{P^-}^G \left( \Sigma^0 / \varpi^n \Sigma^0 \right) \right).
\end{equation*}
We extend the definition of the ordinary parts functor to any admissible unitary continuous representation $\Pi$ of $G$ over $E$ by setting
\begin{equation*}
\Ord_P \Pi \coloneqq E \otimes_\Oc \varprojlim_{n \geq 1} \left( \Ord_P \left( \Pi^0 / \varpi^n \Pi^0 \right) \right)
\end{equation*}
for some (equivalently any) $G$-stable bounded open $\Oc$-lattice $\Pi^0 \subseteq \Pi$.
We obtain an $E$-linear left-exact functor $\Ord_P : \Ban_G^\admu(E) \to \Ban_L^\admu(E)$ which is a left quasi-inverse and the right adjoint of $\Ind_{P^-}^G$ (cf. \cite[Theorem 3.4.8, Corollary 4.3.5 and Theorem 4.4.6]{Em1}).

\begin{defi}
We say that an admissible unitary continuous representation $\Sigma$ of $L$ over $E$ is \emph{right cuspidal} if $\Ord_Q \Sigma = 0$ for any proper parabolic subgroup $\Qb \subset \Lb$.
\end{defi}

\begin{rema} \label{rema:Jacquetcont}
We also extend the Jacquet functor to continuous representations of $G$ over $E$ by taking the Hausdorff completion of the $N$-coinvariants.
We obtain the left adjoint of $\Ind_P^G$ by Frobenius reciprocity and the universal property of coinvariants.
However, we do not know whether it preserves admissibility.
For unitary representations, it does not behave well with respect to reduction mod $\varpi^n$ ($n \geq 1$).
Nevertheless, we say that an admissible unitary continuous representation $\Sigma$ of $L$ over $E$ is \emph{left cuspidal} if $\Sigma_{N_Q} = 0$ for any proper parabolic subgroup $\Qb \subset \Lb$ with unipotent radical $\Nb_\Qb$.
\end{rema}

We now turn to extensions computations.
Our main tool is the following result, which gives a weak $p$-adic analogue of the exact sequence \eqref{SEExt1}.

\begin{prop} \label{prop:SEExtcont}
Let $\Pb = \Lb \Nb$ be a standard parabolic subgroup, $\Sigma,\Sigma'$ be admissible unitary continuous representations of $L$ respectively over $E$ and $\sigma,\sigma'$ be the reductions mod $\varpi$ of $L$-stable bounded open $\Oc$-lattices $\Sigma^0,\Sigma'^0$ of $\Sigma,\Sigma'$ respectively.
Assume that $\dim_k \Hom_L(\sigma',\sigma)<\infty$.
There is a natural exact sequence of $E$-vector spaces
\begin{multline*}
0 \to \Ext_L^1 \left( \Sigma', \Sigma \right) \to \Ext_G^1 \left( \Ind_{P^-}^G \Sigma', \Ind_{P^-}^G \Sigma \right) \\
\to \Hom_L \left( \Sigma', E \otimes_\Oc \varprojlim_{n \geq 1} \left( \HOrd[1]_P \left( \Ind_{P^-}^G \left( \Sigma^0 / \varpi^n \Sigma^0 \right) \right) \right) \right).
\end{multline*}
\end{prop}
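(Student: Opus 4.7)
The plan is to obtain the desired sequence by taking an inverse limit over $n$ of Emerton's exact sequence \eqref{SEExt1} applied to the reductions mod $\varpi^n$ of the lattices $\Sigma^0$, $\Sigma'^0$ and $\Ind_{P^-}^G \Sigma^0$, and then to identify the first two terms via Proposition~\ref{prop:devExt1}.

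First I would verify that Proposition~\ref{prop:devExt1} applies both to $(\Sigma',\Sigma)$ as admissible unitary representations of $L$ and to $(\Ind_{P^-}^G\Sigma',\Ind_{P^-}^G\Sigma)$ as admissible unitary representations of $G$. For $L$ the finiteness of $\Hom$ is the hypothesis; for $G$, full faithfulness of $\Ind_{P^-}^G$ on the category of admissible smooth representations (\cite[Corollary 4.3.5 and Theorem 4.4.6]{Em1}) gives $\Hom_G(\Ind_{P^-}^G\sigma',\Ind_{P^-}^G\sigma) \cong \Hom_L(\sigma',\sigma)$, which is finite-dimensional over $k$. Moreover, $\Ind_{P^-}^G \Sigma^0$ is a $G$-stable bounded open $\Oc$-lattice in $\Ind_{P^-}^G \Sigma$ whose reduction mod $\varpi^n$ equals $\Ind_{P^-}^G(\Sigma^0/\varpi^n\Sigma^0)$ by \cite[Lemma 4.1.3]{Em1}.

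Second, for each $n \geq 1$ I apply the exact sequence \eqref{SEExt1} with $A=\Oc/\varpi^n\Oc$, $\sigma=\Sigma'^0/\varpi^n\Sigma'^0$, and $\pi=\Ind_{P^-}^G(\Sigma^0/\varpi^n\Sigma^0)$. Using that $\Ord_P$ is a left quasi-inverse of $\Ind_{P^-}^G$, the first term simplifies to $\Ext_L^1(\Sigma'^0/\varpi^n\Sigma'^0,\Sigma^0/\varpi^n\Sigma^0)$, and the whole three-term sequence is natural in $n$. Taking the inverse limit (which is left-exact) and then tensoring with $E$ (which is exact by flatness of $E$ over $\Oc$) yields a left-exact sequence of $E$-vector spaces in which Proposition~\ref{prop:devExt1} identifies the first term with $\Ext_L^1(\Sigma',\Sigma)$ and the middle one with $\Ext_G^1(\Ind_{P^-}^G\Sigma',\Ind_{P^-}^G\Sigma)$.

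The main obstacle is the final identification of the rightmost term. Setting $M := \varprojlim_n \HOrd[1]_P(\Ind_{P^-}^G(\Sigma^0/\varpi^n\Sigma^0))$, the universal property of $\varprojlim$ together with the fact that the $n$-th factor is killed by $\varpi^n$ gives
\[
\varprojlim_n \Hom_L\!\left(\Sigma'^0/\varpi^n\Sigma'^0, \HOrd[1]_P(\Ind_{P^-}^G(\Sigma^0/\varpi^n\Sigma^0))\right) \cong \Hom_L(\Sigma'^0, M).
\]
Tensoring with $E$ and extending $E$-linearly from $\Sigma'^0$ to $\Sigma' = E\otimes_\Oc\Sigma'^0$ should produce the target $\Hom_L(\Sigma', E \otimes_\Oc M)$; the non-trivial direction uses the boundedness of $\Sigma'^0$ to clear a uniform denominator when restricting an $L$-equivariant $E$-linear map defined on $\Sigma'$ back to the lattice $\Sigma'^0$.
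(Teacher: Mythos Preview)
Your approach is essentially the one the paper takes: apply \eqref{SEExt1} at each level $n$, pass to the inverse limit, invert $\varpi$, and invoke Proposition~\ref{prop:devExt1}. The setup in your first two paragraphs is fine.

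The gap is in the last step. You correctly identify $\varprojlim_n \Hom_L(\Sigma'^0/\varpi^n\Sigma'^0, X_n) \cong \Hom_L(\Sigma'^0, M)$ with $M = \varprojlim_n X_n$, but the passage
\[
E \otimes_\Oc \Hom_L(\Sigma'^0, M) \;\cong\; \Hom_L(\Sigma', E \otimes_\Oc M)
\]
is not automatic: for the right-hand side even to make sense in $\Ban_L^\admu(E)$ you need $E \otimes_\Oc M$ to be an admissible unitary continuous representation, i.e.\ $M$ to lie in $\Mod_L^{\varpi-\adm}(\Oc)^\fl$. Your ``clear a uniform denominator'' argument tacitly assumes $M$ is $\varpi$-torsion-free (so that its image in $E \otimes_\Oc M$ is a bounded open lattice), and you have not checked this. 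Compare Remark~\ref{rema:SEExtcont}, which singles out exactly this torsion issue as the obstruction in the general case.

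The missing ingredient, which the paper supplies, is that the composite functor $\HOrd[1]_P \circ \Ind_{P^-}^G$ is \emph{left-exact} on $\Mod_L^\adm(\Oc/\varpi^n\Oc)$: this follows from the long exact sequence for $\HOrd_P$ together with $\Ord_P \circ \Ind_{P^-}^G \cong \mathrm{id}$ (so the connecting map out of degree $0$ vanishes). Once you know this, Emerton's results \cite[Corollary 3.4.5, Proposition 3.4.3]{Em1} give that $M$ is $\varpi$-adically admissible, $\varpi$-torsion-free, and carries the $\varpi$-adic topology, whence $E \otimes_\Oc M \in \Ban_L^\admu(E)$ and your identification goes through.
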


\begin{proof}
For all $n \geq 1$, \eqref{SEExt1} with $A=\Oc/\varpi^n\Oc$, $\pi=\Ind_{P^-}^G(\Sigma^0/\varpi^n\Sigma^0)$ and $\sigma=\Sigma'^0/\varpi^n\Sigma'^0$ yields, using \cite[Proposition 4.3.4]{Em1}, an exact sequence of $\Oc/\varpi^n\Oc$-modules
\begin{multline} \label{SEExt1A}
0 \to \Ext_L^1 \left( \Sigma'^0 / \varpi^n \Sigma'^0, \Sigma^0 / \varpi^n \Sigma^0 \right) \to \Ext_G^1 \left( \Ind_{P^-}^G \left( \Sigma'^0 / \varpi^n \Sigma'^0 \right), \Ind_{P^-}^G \left( \Sigma^0 / \varpi^n \Sigma^0 \right) \right) \\
\to \Hom_L \left( \Sigma'^0 / \varpi^n \Sigma'^0, \HOrd[1]_P \left( \Ind_{P^-}^G \left( \Sigma^0 / \varpi^n \Sigma^0 \right) \right) \right).
\end{multline}
Note that the composite $\HOrd[1]_P \circ \Ind_{P^-}^G : \Mod_L^\adm(\Oc/\varpi^n\Oc) \to \Mod_L^\adm(\Oc/\varpi^n\Oc)$ is left-exact for all $n \geq 1$ by \cite[Proposition 4.3.4]{Em1} and \cite[Corollary 3.4.8]{Em2}.
Thus $\varprojlim_{n \geq 1} (\HOrd[1]_P(\Ind_{P^-}^G(\Sigma^0/\varpi^n\Sigma^0)))$ is a $\varpi$-adically admissible representation of $L$ over $\Oc$ by \cite[Corollary 3.4.5]{Em1}.
Furthermore, it is $\varpi$-torsion-free and the projective limit topology coincide with the $\varpi$-adic topology (cf. \cite[Proposition 3.4.3 (1) and (3)]{Em1}).
Thus
\begin{equation*}
E \otimes_\Oc \varprojlim_{n \geq 1} \left( \HOrd[1]_P \left( \Ind_{P^-}^G \left( \Sigma^0 / \varpi^n \Sigma^0 \right) \right) \right)
\end{equation*}
is an admissible unitary continuous representation of $L$ over $E$.
Taking the projective limit over $n \geq 1$ of \eqref{SEExt1A} and inverting $\varpi$ and using Proposition \ref{prop:devExt1} yields the desired exact sequence.
\end{proof}

\begin{rema} \label{rema:SEExtcont}
In order to obtain an analogue of \eqref{SEExt1} for any admissible unitary continuous representations $\Sigma,\Pi$ of $L,G$ respectively over $E$, one has to prove that the $\varpi$-torsion of $\varprojlim_{n \geq 1} (\HOrd[1]_P(\Pi^0/\varpi^n\Pi^0))$ is of bounded exponent (i.e. annihilated by a power of $\varpi$) for some (equivalently any) $G$-stable bounded open $\Oc$-lattice $\Pi^0 \subseteq \Pi$.
\end{rema}

We now use Proposition \ref{prop:SEExtcont} to compute extensions between parabolically induced representations.

\begin{theo} \label{theo:Ext1contQp}
Assume $F=\Qp$.
Let $\Pb = \Lb \Nb,\Pb' = \Lb' \Nb'$ be standard parabolic subgroups, $\Sigma,\Sigma'$ be admissible unitary continuous representations of $L,L'$ respectively over $E$ and $\sigma,\sigma'$ be the reductions mod $\varpi$ of $L,L'$-stable bounded open $\Oc$-lattices of $\Sigma,\Sigma'$ respectively.
Assume that $\dim_k \Hom_G(\Ind_{P'^-}^G \sigma',\Ind_{P^-}^G \sigma) < \infty$ and $\Sigma$ is right cuspidal.
\begin{enumerate}
\item If $\Pb'=\Pb$, $\Sigma,\Sigma'$ are topologically irreducible and $\Sigma' \not \cong \Sigma^\alpha \otimes (\varepsilon^{-1} \circ \alpha)$ for all $\alpha \in \Delta_\Lb^{\perp,1}$, then the functor $\Ind_{P^-}^G$ induces an $E$-linear isomorphism
\begin{equation*}
\Ext_L^1 \left( \Sigma', \Sigma \right) \iso \Ext_G^1 \left( \Ind_{P^-}^G \Sigma', \Ind_{P^-}^G \Sigma \right).
\end{equation*}
\item If $\Pb' \subsetneq \Pb$, then the functor $\Ind_{P^-}^G$ induces an $E$-linear isomorphism
\begin{equation*}
\Ext_L^1 \left( \Ind_{L \cap P'^-}^L \Sigma', \Sigma \right) \iso \Ext_G^1 \left( \Ind_{P'^-}^G \Sigma', \Ind_{P^-}^G \Sigma \right).
\end{equation*}
\end{enumerate}
\end{theo}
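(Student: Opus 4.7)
The plan is to apply the $p$-adic analogue \ref{prop:SEExtcont} of the fundamental short exact sequence in both cases, after reducing case (ii) to the shape of Proposition \ref{prop:SEExtcont} via transitivity of parabolic induction. Concretely, for case (ii), the natural $G$-equivariant continuous isomorphism $\Ind_{P^-}^G \circ \Ind_{L \cap P'^-}^L \cong \Ind_{P'^-}^G$ (coming from $\Pb'^- = (\Lb \cap \Pb'^-) \Nb^-$) rewrites $\Ext_G^1(\Ind_{P'^-}^G \Sigma', \Ind_{P^-}^G \Sigma)$ as $\Ext_G^1(\Ind_{P^-}^G(\Ind_{L \cap P'^-}^L \Sigma'), \Ind_{P^-}^G \Sigma)$, with $\Ind_{L \cap P'^-}^L \Sigma'$ still admissible unitary. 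Proposition \ref{prop:SEExtcont} then applies: the needed finite-dimensionality of $\Hom_L$ follows from the hypothesis on $\Hom_G$ via the full-faithfulness identification $\Hom_G(\Ind_{P^-}^G \tau', \Ind_{P^-}^G \tau) = \Hom_L(\tau', \tau)$.

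The heart of the argument is to compute the last term in Proposition \ref{prop:SEExtcont}, namely $E \otimes_\Oc \varprojlim_n \HOrd[1]_P(\Ind_{P^-}^G(\Sigma^0/\varpi^n \Sigma^0))$. Since $[F:\Qp]=1$, this is governed by Corollary \ref{coro:HOrd} (ii), whose hypothesis requires $\Ord_{L \cap s_\alpha P s_\alpha^{-1}}(\Sigma^0/\varpi^n \Sigma^0) = 0$ for every $\alpha \in \Delta^1 \setminus (\Delta_\Lb \cup \Delta_\Lb^\perp)$. For such $\alpha$, $\Lb \cap s_\alpha \Pb s_\alpha^{-1}$ is a proper parabolic of $\Lb$, so the right cuspidality of $\Sigma$ gives $\Ord_{L \cap s_\alpha P s_\alpha^{-1}} \Sigma = 0$. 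Since $\varprojlim_n \Ord_{L \cap s_\alpha P s_\alpha^{-1}}(\Sigma^0/\varpi^n \Sigma^0)$ is $\varpi$-adically admissible, hence $\varpi$-torsion-free, it must itself be zero, and so is each $\Ord_{L \cap s_\alpha P s_\alpha^{-1}}(\Sigma^0/\varpi^n \Sigma^0) = \Ord_{L \cap s_\alpha P s_\alpha^{-1}}(\Sigma^0)/\varpi^n \Ord_{L \cap s_\alpha P s_\alpha^{-1}}(\Sigma^0)$. Applying Corollary \ref{coro:HOrd} (ii) to each $\Sigma^0/\varpi^n \Sigma^0$, taking the projective limit (the direct sum is finite so commutes with $\varprojlim$), and inverting $\varpi$ yields a natural $L$-equivariant continuous isomorphism
\begin{equation*}
E \otimes_\Oc \varprojlim_n \HOrd[1]_P(\Ind_{P^-}^G(\Sigma^0/\varpi^n \Sigma^0)) \cong \bigoplus_{\alpha \in \Delta_\Lb^{\perp,1}} \Sigma^\alpha \otimes (\varepsilon^{-1} \circ \alpha),
\end{equation*}
using that the reduction mod $\varpi^n$ of $\varepsilon$ is $\omega$.

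It then remains to show the vanishing of the induced Hom term. In case (i), $\Sigma^\alpha \otimes (\varepsilon^{-1} \circ \alpha)$ is topologically irreducible (conjugation and twist by a character preserve topological irreducibility) and by hypothesis non-isomorphic to $\Sigma'$, so each summand of $\Hom_L(\Sigma', \Sigma^\alpha \otimes (\varepsilon^{-1} \circ \alpha))$ vanishes. In case (ii), the continuous adjunction between $\Ind_{L \cap P'^-}^L$ and $\Ord_{L \cap P'}$ yields
\begin{equation*}
\Hom_L\bigl(\Ind_{L \cap P'^-}^L \Sigma', \Sigma^\alpha \otimes (\varepsilon^{-1} \circ \alpha)\bigr) = \Hom_{L'}\bigl(\Sigma', \Ord_{L \cap P'}(\Sigma^\alpha \otimes (\varepsilon^{-1} \circ \alpha))\bigr),
\end{equation*}
and since $\alpha \perp \Delta_\Lb \supseteq \Delta_{\Lb'}$, $s_\alpha$ stabilises $\Lb \cap \Pb'$, so $\Ord_{L \cap P'}(\Sigma^\alpha \otimes (\varepsilon^{-1} \circ \alpha)) \cong (\Ord_{L \cap P'} \Sigma)^\alpha \otimes (\varepsilon^{-1} \circ \alpha)$. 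Finally, $\Ord_{L \cap P'} \Sigma = 0$ by right cuspidality of $\Sigma$, as $\Pb' \subsetneq \Pb$ forces $\Lb' \subsetneq \Lb$ and hence $\Lb \cap \Pb'$ to be a proper parabolic of $\Lb$.

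The main obstacle is the interface between the mod-$\varpi^n$ derived ordinary parts functors and their $p$-adic counterparts: one must check that the right cuspidality hypothesis on the Banach space $\Sigma$ (stated only via the limit defining $\Ord_Q \Sigma$) descends to the vanishing of $\Ord_Q(\Sigma^0/\varpi^n\Sigma^0)$ for \emph{every} individual $n$, which is what Corollary \ref{coro:HOrd} (ii) requires on the mod-$\varpi^n$ side. This is what the $\varpi$-torsion-freeness and $\varpi$-adic completeness of $\varprojlim_n \Ord_Q(\Sigma^0/\varpi^n \Sigma^0)$ deliver, but the argument must be made cleanly to avoid circularity with the extension of $\Ord_P$ to the continuous setting.
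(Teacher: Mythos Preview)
Your overall strategy matches the paper's, but there is one substantive difference in how the term
\[
E \otimes_\Oc \varprojlim_n \HOrd[1]_P\bigl(\Ind_{P^-}^G(\Sigma^0/\varpi^n\Sigma^0)\bigr)
\]
is computed, and it is precisely at the point you flag as the ``main obstacle''.

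You want to apply Corollary~\ref{coro:HOrd}~(ii) at each finite level $n$, for which you need $\Ord_Q(\Sigma^0/\varpi^n\Sigma^0)=0$ (with $Q = L \cap s_\alpha P s_\alpha^{-1}$, $\alpha \in \Delta^1 \setminus (\Delta_\Lb \cup \Delta_\Lb^\perp)$). Your deduction that $\varprojlim_n \Ord_Q(\Sigma^0/\varpi^n\Sigma^0) = 0$ is correct (torsion-freeness together with $\Ord_Q\Sigma=0$). But the further step ``hence each $\Ord_Q(\Sigma^0/\varpi^n\Sigma^0) = \Ord_Q(\Sigma^0)/\varpi^n\Ord_Q(\Sigma^0) = 0$'' does \emph{not} follow from $\varpi$-torsion-freeness and $\varpi$-adic completeness of the limit alone: a projective limit can vanish while individual terms do not. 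What is actually needed is that the natural maps $\varprojlim_m \Ord_Q(\Sigma^0/\varpi^m) \to \Ord_Q(\Sigma^0/\varpi^n)$ are surjective, i.e.\ a Mittag--Leffler statement, or equivalently the compatibility of $\Ord_Q$ with reduction mod $\varpi^n$. If you can source this from \cite{Em1} your route goes through and is in fact slightly cleaner than the paper's; but the two abstract properties you invoke do not by themselves deliver it.

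The paper sidesteps this issue. Rather than invoking Corollary~\ref{coro:HOrd}~(ii) level by level, it splits $\HOrd[1]_P(\Ind_{P^-}^G(\Sigma^0/\varpi^n\Sigma^0))$ via Propositions~\ref{prop:FilPJ} and~\ref{prop:HOrd1}~(i) into summands indexed by $\alpha \in \Delta^1 \setminus \Delta_\Lb$. For $\alpha \in \Delta_\Lb^{\perp,1}$ it computes the summand directly as $(\Sigma^0/\varpi^n)^\alpha \otimes (\omega^{-1}\circ\alpha)$, giving $\Sigma^\alpha \otimes(\varepsilon^{-1}\circ\alpha)$ after $E\otimes_\Oc\varprojlim$. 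For $\alpha \notin \Delta_\Lb^\perp$ it uses only the $B_L$-filtration of Theorem~\ref{theo:HOrd}, whose graded pieces are (as $\Oc/\varpi^n$-modules) of the form $\Clisc(U'_L,\Ord_Q(\Sigma^0/\varpi^n\Sigma^0))$; since a compatible family $(f_n)$ in such an inverse system evaluates at each point of $U'_L$ to an element of $\varprojlim_n \Ord_Q(\Sigma^0/\varpi^n\Sigma^0)=0$, each $\varprojlim$ of a graded piece vanishes, and one concludes by induction on the filtration using left-exactness of $\varprojlim$. Thus the paper only ever needs the vanishing of the \emph{limit} $\varprojlim_n \Ord_Q(\Sigma^0/\varpi^n)$, never of the individual terms. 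The rest of your argument (transitivity of induction, the Hom computation in both cases) coincides with the paper's.
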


\begin{rema}
Assume $\Pb'=\Pb$ and $\Sigma,\Sigma'$ topologically irreducible.
We do not know the dimension of the cokernel of the $E$-linear injection $\Ext_L^1(\Sigma',\Sigma) \hookrightarrow \Ext_G^1(\Ind_{P^-}^G \Sigma',\Ind_{P^-}^G \Sigma)$ induced by $\Ind_{P^-}^G$, but we prove that it is at most $\card\{\alpha \in \Delta_\Lb^{\perp,1} \mid \Sigma' \cong \Sigma^\alpha \otimes (\varepsilon^{-1} \circ \alpha)\}$ (see the proof).
If $\Sigma,\Sigma'$ are absolutely topologically irreducible and supercuspidal, then letting $\zeta : Z_L \to \Oc^\times \subset E^\times$ be the central character of $\Sigma$ (cf. \cite[Theorem 1.1, 2)]{DS}), we expect this dimension to be equal to
\begin{equation*}
\card \left\{ \alpha \in \Delta_\Lb^{\perp,1} \middlevert \Sigma' \cong \Sigma^\alpha \otimes \left( \varepsilon^{-1} \circ \alpha \right) \text{ and } \zeta \circ \alpha^\vee \neq \varepsilon^{-1} \right\}.
\end{equation*}
\end{rema}

\begin{proof}
Let $\Sigma^0 \subseteq \Sigma$ be an $L$-stable bounded open $\Oc$-lattice.
For all $n \geq 1$, we deduce from Propositions \ref{prop:FilPJ} and \ref{prop:HOrd1} (i) that there is a natural $L$-equivariant $\Oc/\varpi^n\Oc$-linear isomorphism
\begin{equation} \label{H1OrdA}
\HOrd[1]_P \left( \Ind_{P^-}^G \left( \Sigma^0 / \varpi^n \Sigma^0 \right) \right) \cong \bigoplus_{\alpha \in \Delta^1 \backslash \Delta_\Lb} \HOrd[1]_P \left( \cind_{P^-}^{P^- s_\alpha P} \left( \Sigma^0 / \varpi^n \Sigma^0 \right) \right).
\end{equation}
Furthermore, if $\alpha \in \Delta_\Lb^{\perp,1}$, then there is a natural $L$-equivariant $\Oc/\varpi^n\Oc$-linear isomorphism
\begin{equation*}
\HOrd[1]_P \left( \cind_{P^-}^{P^- s_\alpha P} \left( \Sigma^0 / \varpi^n \Sigma^0 \right) \right) \cong \left( \Sigma^0 / \varpi^n \Sigma^0 \right)^\alpha \otimes \left( \omega^{-1} \circ \alpha \right)
\end{equation*}
hence a natural $L$-equivariant $E$-linear continuous isomorphism
\begin{equation*}
E \otimes_\Oc \varprojlim_{n \geq 1} \left( \HOrd[1]_P \left( \cind_{P^-}^{P^- s_\alpha P} \left( \Sigma^0 / \varpi^n \Sigma^0 \right) \right) \right) \cong \Sigma^\alpha \otimes \left( \varepsilon^{-1} \circ \alpha \right),
\end{equation*}
whereas if $\alpha \not \in \Delta_\Lb^\perp$, then there is natural filtration of $\HOrd[1]_P(\cind_{P^-}^{P^- s_\alpha P}(\Sigma^0/\varpi^n\Sigma^0))$ by $B_L$-subrepresentations such that each term of the associated graded representation is isomorphic as an $\Oc/\varpi^n\Oc$-modules to
\begin{equation*}
\Clisc\! \left( U'_L, \Ord_{L \cap s_\alpha P s_\alpha^{-1}} \left( \Sigma^0 / \varpi^n \Sigma^0 \right) \right)
\end{equation*}
for some closed subgroup $\Ub'_\Lb \subseteq \Ub_\Lb$, and since $\Ord_{L \cap s_\alpha P s_\alpha^{-1}} \Sigma = 0$ we deduce that
\begin{equation*}
E \otimes_\Oc \varprojlim_{n \geq 1} \left( \HOrd[1]_P \left( \cind_{P^-}^{P^- s_\alpha P} \left( \Sigma^0 / \varpi^n \Sigma^0 \right) \right) \right) = 0.
\end{equation*}
Thus, taking the projective limit of \eqref{H1OrdA} over $n \geq 1$ and inverting $\varpi$ yields a natural $L$-equivariant $E$-linear continuous isomorphism
\begin{equation} \label{H1Ordcont}
E \otimes_\Oc \varprojlim_{n \geq 1} \left( \HOrd[1]_P \left( \Ind_{P^-}^G \left( \Sigma^0 / \varpi^n \Sigma^0 \right) \right) \right) \cong \bigoplus_{\alpha \in \Delta_\Lb^{\perp,1}} \Sigma^\alpha \otimes \left( \varepsilon^{-1} \circ \alpha \right).
\end{equation}

Now Proposition \ref{prop:SEExtcont} with $\Ind_{L \cap P'^-}^L \Sigma'$ instead of $\Sigma'$ yields, using \eqref{H1Ordcont}, an exact sequence of $E$-vector spaces
\begin{multline} \label{SEExtQpcont}
0 \to \Ext_L^1 \left( \Ind_{L \cap P'^-}^L \Sigma', \Sigma \right) \to \Ext_G^1 \left( \Ind_{P'^-}^G \Sigma', \Ind_{P^-}^G \Sigma \right) \\
\to \bigoplus_{\alpha \in \Delta_\Lb^{\perp,1}} \Hom_L \left( \Ind_{L \cap P'^-}^L \Sigma', \Sigma^\alpha \otimes \left( \varepsilon^{-1} \circ \alpha \right) \right).
\end{multline}
If $\Pb'=\Pb$ and $\Sigma,\Sigma'$ are topologically irreducible, then $\Sigma^\alpha \otimes (\varepsilon^{-1} \circ \alpha)$ is also topologically irreducible for all $\alpha \in \Delta_\Lb^{\perp,1}$, and thus the last term of \eqref{SEExtQpcont} has dimension equal to $\card\{\alpha \in \Delta_\Lb^{\perp,1} \mid \Sigma' \cong \Sigma^\alpha \otimes (\varepsilon^{-1} \circ \alpha)\}$, hence (i).
If $\Pb' \subsetneq \Pb$, then $\Lb \cap \Pb'$ is a proper parabolic subgroup of $\Lb$ so that
\begin{equation*}
\Ord_{L \cap P'} \left( \Sigma^\alpha \otimes \left( \varepsilon^{-1} \circ \alpha \right) \right) = \left( \Ord_{L \cap P'} \Sigma \right)^\alpha \otimes \left( \varepsilon^{-1} \circ \alpha \right) = 0
\end{equation*}
for all $\alpha \in \Delta_\Lb^{\perp,1}$, thus the last term of \eqref{SEExtQpcont} is zero, hence (ii).
\end{proof}

\begin{rema}
Theorem \ref{theo:Ext1Qp} (iii) cannot be directly lifted to characteristic $0$ because we do not have a weak $p$-adic analogue of the exact sequence \eqref{SEExt2} (since it uses the Jacquet functor, see Remark \ref{rema:Jacquetcont}).
However, assuming Conjecture \ref{conj:HOrd} true for $A=\Oc/\varpi^r\Oc$ ($r \geq 1$), $n=1$, $I \subsetneq J$ and $\Iw^J=s_\alpha$ ($\alpha \in \Delta^1 \backslash J$), one can recover this case: with notation and assumptions as in Theorem \ref{theo:Ext1contQp}, if $\Pb \subsetneq \Pb'$ and $\Sigma'$ is left cuspidal, then the functor $\Ind_{P'^-}^G$ induces an $E$-linear isomorphism
\begin{equation*}
\Ext_{L'}^1 \left( \Sigma', \Ind_{L' \cap P^-}^L \Sigma \right) \iso \Ext_G^1 \left( \Ind_{P'^-}^G \Sigma', \Ind_{P^-}^G \Sigma \right).
\end{equation*}
\end{rema}

\begin{theo} \label{theo:Ext1contF}
Assume $F \neq \Qp$.
Let $\Pb = \Lb \Nb$ be a standard parabolic subgroup, $\Sigma,\Sigma'$ be admissible unitary continuous representations of $L$ over $E$ and $\sigma,\sigma'$ be the reductions mod $\varpi$ of $L$-stable bounded open $\Oc$-lattices of $\Sigma,\Sigma'$ respectively.
Assume that $\dim_k \Hom_L(\sigma',\sigma) < \infty$.
Then, the functor $\Ind_{P^-}^G$ induces an $E$-linear isomorphism
\begin{equation*}
\Ext_L^1 \left( \Sigma', \Sigma \right) \iso \Ext_G^1 \left( \Ind_{P^-}^G \Sigma', \Ind_{P^-}^G \Sigma \right).
\end{equation*}
\end{theo}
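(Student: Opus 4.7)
The plan is to mimic the strategy of Theorem \ref{theo:Ext1F} and pass to characteristic $0$ via the limit-of-lattices technique already developed for Theorem \ref{theo:Ext1contQp}. Since $F \neq \Qp$ means $[F:\Qp] \geq 2$, the key vanishing result Corollary \ref{coro:HOrd} (i) applies in degree $n=1$, which is exactly what is needed to kill the obstruction term in the exact sequence of Proposition \ref{prop:SEExtcont}.

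The main steps are as follows. First, I would fix $L$-stable bounded open $\Oc$-lattices $\Sigma^0,\Sigma'^0$ of $\Sigma,\Sigma'$ respectively and apply Proposition \ref{prop:SEExtcont} to obtain the natural exact sequence of $E$-vector spaces
\begin{multline*}
0 \to \Ext_L^1 \left( \Sigma', \Sigma \right) \to \Ext_G^1 \left( \Ind_{P^-}^G \Sigma', \Ind_{P^-}^G \Sigma \right) \\
\to \Hom_L \left( \Sigma', E \otimes_\Oc \varprojlim_{n \geq 1} \left( \HOrd[1]_P \left( \Ind_{P^-}^G \left( \Sigma^0 / \varpi^n \Sigma^0 \right) \right) \right) \right),
\end{multline*}
whose first arrow is the map induced by $\Ind_{P^-}^G$. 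Second, for each $n \geq 1$, the quotient $\Sigma^0/\varpi^n\Sigma^0$ is an admissible (hence locally admissible) smooth representation of $L$ over the Artinian local $\Oc$-algebra $A=\Oc/\varpi^n\Oc$, so Corollary \ref{coro:HOrd} (i) applied with this $A$ and with $n=1<[F:\Qp]$ yields
\begin{equation*}
\HOrd[1]_P \left( \Ind_{P^-}^G \left( \Sigma^0/\varpi^n\Sigma^0 \right) \right) = 0.
\end{equation*}
Third, taking the projective limit over $n\geq 1$ and inverting $\varpi$, the third term in the exact sequence above vanishes.

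Finally, the resulting exact sequence forces the injection $\Ext_L^1(\Sigma',\Sigma) \hookrightarrow \Ext_G^1(\Ind_{P^-}^G \Sigma', \Ind_{P^-}^G \Sigma)$ to be surjective as well, which is the desired isomorphism. There is essentially no obstacle here: the proof is a direct $p$-adic deformation of Theorem \ref{theo:Ext1F}, and both ingredients (Proposition \ref{prop:SEExtcont} and Corollary \ref{coro:HOrd} (i)) have already been established in the preceding sections. The only point that deserves a brief mention is that the first map in the exact sequence really is the one induced by $\Ind_{P^-}^G$, which follows from the naturality of the construction of \eqref{SEExt1A} combined with Frobenius reciprocity.
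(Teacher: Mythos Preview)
Your proof is correct and follows essentially the same approach as the paper: apply Proposition~\ref{prop:SEExtcont} and then use Corollary~\ref{coro:HOrd}~(i) with $A=\Oc/\varpi^n\Oc$ for each $n\geq 1$ to kill the obstruction term. The paper's proof is more terse but identical in substance.
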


\begin{proof}
Let $\Sigma^0 \subseteq \Sigma$ be an $L$-stable bounded open $\Oc$-lattice.
By Corollary \ref{coro:HOrd} (i), we have $\HOrd[1]_P(\Ind_{P^-}^G (\Sigma^0/\varpi^n\Sigma^0))=0$ for all $n \geq 1$.
Thus, the result follows from Proposition \ref{prop:SEExtcont}.
\end{proof}

We end with a remark on the case where there is no inclusion between the two parabolic subgroups.

\begin{rema}
Let $\Pb = \Lb \Nb,\Pb' = \Lb' \Nb'$ be standard parabolic subgroups, $\Sigma,\Sigma'$ be admissible unitary continuous representations of $L,L'$ respectively over $E$ and $\sigma,\sigma'$ be the reductions mod $\varpi$ of $L,L'$-stable bounded open $\Oc$-lattices of $\Sigma,\Sigma'$ respectively.
Assume Conjecture \ref{conj:HOrd} is true for $A=\Oc/\varpi^r\Oc$ ($r \geq 1$), $n=1$ and $\Iw^J=1$.
Assume further $\dim_k \Hom_G(\Ind_{P'^-}^G \sigma',\Ind_{P^-}^G \sigma) < \infty$ and the $\varpi$-torsion of $\varprojlim_{n \geq 1} (\HOrd[1]_{L \cap P}(\Sigma^0/\varpi^n\Sigma^0))$ is of bounded exponent (see Remark \ref{rema:SEExtcont}).
Then, one can prove the following $p$-adic analogue of Proposition \ref{prop:Ext1}: if $\Pb' \not \subseteq \Pb$, $\Pb \not \subseteq \Pb'$, and $\Sigma,\Sigma'$ are right,left cuspidal respectively, then
\begin{equation*}
\Ext_G^1 \left( \Ind_{P'^-}^G \Sigma', \Ind_{P^-}^G \Sigma \right) = 0.
\end{equation*}
\end{rema}

\bibliographystyle{alpha}
\bibliography{parabolique}

\end{document}